\documentclass[10pt,a4paper]{amsart}
\usepackage{amsmath,amsfonts,bm}
\usepackage{amssymb,enumitem,mathtools}
\usepackage{fullpage}
\usepackage{graphicx}
\usepackage{url}
\usepackage{appendix}
\usepackage{hyperref}

\makeatletter
\newcommand*{\transpose}{
  {\mathpalette\@transpose{}}
}
\newcommand*{\@transpose}[2]{
  \raisebox{\depth}{$\m@th#1\intercal$}
}

\usepackage{array,etoolbox}
\preto\tabular{\setcounter{magicrownumbers}{0}}
\newcounter{magicrownumbers}
\newcommand\rownumber{\stepcounter{magicrownumbers}\arabic{magicrownumbers}}

\preto\tabular{\setcounter{magicrownumbers2}{0}}
\newcounter{magicrownumbers2}
\newcommand\rownumbers{\stepcounter{magicrownumbers2}\arabic{magicrownumbers2}}

\oddsidemargin=0.125in
\evensidemargin=0.125in
\textwidth=6in              

\topmargin=0in
\textheight=8.7in

\newtheorem{theorem}{Theorem}[section]
\newtheorem{lemma}[theorem]{Lemma}
\newtheorem{proposition}[theorem]{Proposition}
\newtheorem{corollary}[theorem]{Corollary}
\theoremstyle{definition}

\newtheorem{example}[theorem]{Example}

\theoremstyle{remark}
\newtheorem{remark}[theorem]{Remark}

\newtheorem{question}[theorem]{Question}

\usepackage{multirow}
\usepackage{longtable}

\usepackage{pgfplots}
\pgfplotsset{width=10cm,compat=1.9}
\usepgfplotslibrary{groupplots}

\DeclareMathOperator{\tr}{tr}
\DeclareMathOperator{\Gal}{Gal}

\begin{document}

\title{Equiangular lines in Euclidean spaces: dimensions 17 and 18}

\date{}

\author{Gary R.W. Greaves}

\address{Division of Mathematical Sciences, 
  School of Physical and Mathematical Sciences, 
  Nanyang Technological University, 
  21 Nanyang Link, Singapore 637371, Singapore}
\email{gary@ntu.edu.sg}
\thanks{The first author was supported in part by the Singapore Ministry of Education Academic Research Fund (Tier 1);  grant numbers:  RG29/18 and RG21/20.}

\author{Jeven Syatriadi}
\address{Division of Mathematical Sciences, 
  School of Physical and Mathematical Sciences, 
  Nanyang Technological University, 
  21 Nanyang Link, Singapore 637371, Singapore}
\email{jeve0002@e.ntu.edu.sg}
\thanks{}

\author{Pavlo Yatsyna}
\address{Department of Algebra, 
Faculty of Mathematics and Physics,
Charles University, 
Sokolovsk\'a 83, 18600 Praha 8, Czech Republic}
\email{yatsyna@karlin.mff.cuni.cz}
\thanks{The third author was supported in part by the project PRIMUS/20/SCI/002 from Charles University.}

    \keywords{Equiangular lines, dimensions 17 and 18, compatible polynomials, eigenspace angles, polynomial interlacing}

\subjclass{Primary 05B40; Secondary 05B20}

\maketitle

\begin{abstract}
    We show that the maximum cardinality of an equiangular line system in 17 dimensions is 48, thereby solving a longstanding open problem.
    Furthermore, by giving an explicit construction, we improve the lower bound on the maximum cardinality of an equiangular line system in 18 dimensions to 57.
\end{abstract}

\section{Introduction}

A set of lines each passing through the origin of Euclidean space is called an \textbf{equiangular line system} if the angle between any pair of lines is the same.
Given $d$, we would like to know $N(d)$, the maximum cardinality of an equiangular line system in $\mathbb R^d$.
This problem dates back to a paper of Haantjes~\cite{haantjes48} from 1948, where the results $N(3)=N(4) = 6$ were reported.
The study of equiangular line systems largely developed in the 1970s due to the advent of the linear algebraic approach of Seidel et al.~\cite{lemmens73,vLintSeidel66,Seidel74}. 
In particular, in 1973, the value of $N(d)$ was known for all $d \leqslant 13$, $d=15$, and $d = 21,22,23$.

Recently, the authors solved the problem for dimensions $14$ and $16$ \cite{GSY21}.
However, up until now, and despite a considerable amount of research in the past 50 years, determining the value of $N(17)$ had remained an open problem.
Our main result is the following

\begin{theorem}
\label{thm:main}
$N(17) = 48$ and $N(18) \geqslant 57$.
\end{theorem}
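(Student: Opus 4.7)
The lower bounds come from explicit constructions. For $N(17)\geqslant 48$ we invoke the classical Seidel-type construction of $48$ equiangular lines in $\mathbb R^{17}$ at common angle $\arccos(1/5)$, realised by a Seidel matrix of order $48$ whose smallest eigenvalue $-5$ has multiplicity $48-17=31$. For $N(18)\geqslant 57$ we exhibit a specific Seidel matrix of order $57$ whose smallest eigenvalue $-5$ has multiplicity $57-18=39$; such a matrix is found by a computer search seeded from the pillar structure of the $48$-line configuration and checked by a direct spectral computation.

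The substantive content is the upper bound $N(17)\leqslant 48$, which I would prove by contradiction. Suppose $49$ equiangular lines in $\mathbb R^{17}$ exist and let $S$ be the associated Seidel matrix. Existing upper bounds on $N(17)$ as a function of the common angle leave only the case $\arccos(1/5)$ open at this size, so $S+5I$ is positive semidefinite of rank at most $17$; equivalently, $-5$ is an eigenvalue of $S$ of multiplicity at least $32$. The task is therefore to show that no $49\times 49$ symmetric $\{0,\pm 1\}$-matrix with zero diagonal and this spectral profile exists. I would apply the Lemmens--Seidel pillar decomposition with respect to a distinguished line: the other $48$ lines split into pillars indexed by their Seidel inner products with that line, Cauchy interlacing constrains the spectrum of each pillar subgraph, and the global rank bound on $S+5I$ links the spectra of distinct pillars through the off-diagonal blocks.

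These spectral restrictions are converted into a finite enumeration by the \emph{compatible polynomials} machinery that the authors developed for the analogous problems in dimensions $14$ and $16$: interlacing between the characteristic polynomial of $S$ and those of its pillar submatrices produces a finite list of candidate tuples of pillar characteristic polynomials, which is pruned further by the \emph{eigenspace angles} feasibility conditions announced in the keywords (angle-type constraints between pillar eigenspaces and the ambient $-5$-eigenspace of $S$). The principal obstacle, and the bulk of the new work, is to show that every surviving candidate either collapses combinatorially (by forcing a forbidden induced subconfiguration on a small set of vertices) or fails an integrality or semidefiniteness test, so that no admissible Seidel matrix of order $49$ exists. This closes the contradiction and, together with the matching construction of $48$ lines, establishes the theorem.
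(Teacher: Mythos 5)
Your overall framework for the upper bound (assume $49$ lines exist, force $-5$ to have multiplicity $32$, finitize the problem by enumerating admissible spectral data, then rule each candidate out by interlacing, angle/integrality constraints, and combinatorial collapse) is in the spirit of the paper. However, two of the concrete mechanisms you name are not the ones the paper uses, and the description of the $N(18)\geqslant 57$ construction is factually off.

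On the construction: the paper does not obtain the $57$ lines from a pillar structure of the $48$-line configuration. It instead searches for integer vectors of norm $\sqrt{10}$ in $\mathbb Z^{18}$ with pairwise inner products $\pm 2$, building a chain of sets $L_0\supset L_1\supset\cdots\supset L_{18}$ and then taking a clique in a graph on $L_{18}$. The paper even explicitly contrasts this with the ``searching inside the Witt design'' approach used for the earlier $54$- and $56$-line records, and leaves open whether the resulting Seidel matrices can be found inside the Witt design at all. The Seidel matrices it produces have seven distinct eigenvalues and trivial automorphism group, which does not look like a seeded extension of a classical configuration.

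On the upper bound: the paper does not use the Lemmens--Seidel pillar decomposition with respect to a distinguished line, and the submatrices it interlaces are not pillar subgraphs. The finitization happens at the level of characteristic polynomials: after pinning down $\operatorname{Char}_S(x)=(x+5)^{32}(x-9)^4\phi(x)$ (the multiplicity-$4$ eigenvalue $9$ is a further input you omit), it enumerates all totally-real monic integer $\phi$ satisfying type-$2$ / weakly-type-$2$ congruences and the $\mathcal P_{49,7}$ modular condition, yielding $194$ candidates. Each candidate is then attacked via the identity $\sum_{i=1}^n\operatorname{Char}_{S[i]}(x)=\operatorname{Char}_S'(x)$ over the finite set of ``interlacing characteristic polynomials'' of its order-$48$ principal submatrices, with Farkas certificates of infeasibility, and a new notion of (Seidel-)compatibility for pairs of such polynomials coming from the integrality of $q(S)$ and the angle-vector formula $\bm\alpha_\lambda^2(i)=f_i(\lambda)/\operatorname{Min}_S'(\lambda)$. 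Later sections add arguments extracting small principal submatrices with impossible trace constraints, a lemma on two simple irrational-conjugate eigenvalues, and an Euler-graph mod-$4$ condition. So the engine is ``delete one vertex and control the characteristic polynomial'' together with polynomial-level arithmetic, not pillars. One more gap worth flagging: the single polynomial in $\mathcal A$ is dispatched by citing the known nonexistence of a $(49,16,3,6)$ strongly regular graph, a nontrivial external input your sketch does not account for.
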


Our work disproves part of a conjecture of Lin and Yu~\cite[Conjecture 3]{LinYu} that asserts $N(18)=56$; at the same time we verify another part of their conjecture that asserts $N(17)=48$.

Many of the largest constructions of equiangular line systems are found by carefully selecting blocks from the famous Witt design.
Recently, the lower bound for $N(18)$ was improved by Sz\" oll\H osi~\cite{Szo} and again by Lin and Yu~\cite{LinYu} by exactly this method.
In contrast, the configuration of 57 equiangular lines in $\mathbb R^{18}$ that is presented below was found using vectors from integer lattices.

In addition to the recent developments in $\mathbb R^{18}$, there have been many developments relating to the sequence $(N(d))_{d \in \mathbb N}$ in the last few years, including improvements to the upper bounds for $N(d)$ where $d = 14, 16, 17, 18, 19, 20$ \cite{Azarija75,Azarija95,GG18,GKMS16,GSY21,GreavesYatsyna19}.
There have also been various recent improvements to upper bounds for $N(d)$ for $d \geqslant 24$ using semidefinite programming, see \cite{GlazYu,KingTang,delaat,okudayu16,Yu17}. 

The asymptotic behaviour of $N(d)$ is quadratic in $d$ with a general upper bound of $d(d+1)/2$~\cite[Theorem 3.5]{lemmens73} and a general lower bound of $(32d^2 + 328d + 29)/1089$~\cite[Corollary 2.8]{GKMS16}.
One can also consider the related problem of, for fixed $\alpha \in (0,1)$, finding $N_\alpha(d)$, the maximum number of lines in $\mathbb R^d$ through the origin with pairwise angle $\arccos \alpha$. 
In a series of recent papers~\cite{balla18,bukh16,jiang1}, the problem of determining the value of $\lim_{d \to \infty} N_\alpha(d)/d$ was investigated extensively and completely resolved in \cite{jiang}.

In Table~\ref{tab:equi} below, including the improvements from this paper, we give the currently known values or lower and upper bounds for $N(d)$ for $d$ at most $43$.
(See Sequence A002853 in The On-Line Encyclopedia of Integer Sequences~\cite{oeis}.)
\begin{table}[ht]
	\begin{center}
	\setlength{\tabcolsep}{2pt}
	\begin{tabular}{c|ccccccccccccccccc}
		$d$  & 2 & 3        & 4           & 5  & 6  & 7--14 & 15 & 16 & 17 & 18 & 19 & 20 & 21 & 22 & 23 -- 41 & 42 & 43 \\\hline
		$N(d)$  & 3 & 6        & 6           & 10 & 16 & 28  & 36 & 40 & 48 & 57--60 & 72--74 & 90--94  & 126 & 176 & 276 & 276 -- 288 & 344
	\end{tabular}
	\end{center}
	\caption{Bounds for the sequence $N(d)$ for $2\leqslant d\leqslant 43$.  A single number is given in the cases where the exact number is known.  The improvements from this paper, $N(17) = 48$ and $N(18) \geqslant 57$ are included.}
	\label{tab:equi}
\end{table}

	Let $\mathcal L$ be an equiangular line system of cardinality $n$ in $\mathbb R^{d}$ with $n > d$.
	Suppose $\{ \mathbf{v}_1,\dots, \mathbf{v}_{n} \}$ is a set of unit  vectors spanning each line in $\mathcal L$.
	For any two distinct vectors $\mathbf v_i$ and $\mathbf v_j$ (with $i \ne j$) the
	inner product $\mathbf{v}_i^\transpose \mathbf{v}_j$ is equal to $\pm \alpha$ for some $\alpha \in (0,1)$.
	Thus, the Gram matrix $G$ for this set of vectors has diagonal entries equal to $1$ and off-diagonal entries equal to $\pm \alpha$.
	The $\{0,\pm 1\}$-matrix $S = (G-I)/\alpha$ is called the \textbf{Seidel matrix} corresponding to the set of lines $\mathcal L$.
	Note that changing the direction of our spanning vectors $\mathbf v_i$ corresponds to conjugating $S$ with a diagonal $\{\pm 1\}$-matrix.
	Furthermore, since $G$ is positive semidefinite with rank at most $d$, the smallest eigenvalue of $S$ is $-1/\alpha$ with multiplicity at least $n-d$.
    
    We follow the approach taken by the authors in \cite{GSY21}.
    First, using geometric and modular restrictions, we produce all polynomials that could potentially be the characteristic polynomial of a Seidel matrix that corresponds to 49 equiangular lines in $\mathbb R^{17}$.
    Once we have this list of candidate characteristic polynomials, we consider potential sets of characteristic polynomials for the corresponding principal submatrices for each polynomial in the list. 
    The extra difficulties we encountered in our endeavour to rule out 49 equiangular lines in $\mathbb R^{17}$ were twofold.
    Firstly, the number of candidate characteristic polynomials is much larger than the numbers in~\cite{GSY21} one obtains for dimensions $14$ and $16$.
    Secondly, there are candidate characteristic polynomials that pass all of the necessary conditions that were developed in~\cite{GSY21}.
    The main tool we developed to overcome these difficulties was the notion of compatibility for polynomials, which we define in Section~\ref{sec:compat}.
    To produce our exhaustive lists of candidate characteristic polynomials, we use the polynomial enumeration algorithm of \cite[Section 2.3]{GSY21}, which we have implemented in Magma~\cite{magma} and Mathematica~\cite{mathematica}.
    A supplementary file containing Magma code for all the computations performed in this paper is available online~\cite{github}. 
    The total running time of all the computations used in this paper is less than 9 hours running on a modern PC.
    
    The outline of this paper is as follows.
   In Section~\ref{sec:57}, we exhibit systems of 57 equiangular lines in $\mathbb R^{18}$ obtained from integer lattices.
   In Section~\ref{sec:ccp}, we produce all candidate characteristic polynomials of Seidel matrices that correspond to putative systems of 49 equiangular lines in $\mathbb R^{17}$.
   In Section~\ref{sec:compat}, we introduce the notion of compatibility for polynomials and in Section~\ref{sec:icp}, we define interlacing characteristic polynomials.
   From this point we are able to start ruling out some of the candidate characteristic polynomials from Section~\ref{sec:ccp}.
   Finally, in Sections~\ref{sec:angle0}, \ref{sec:common2simple}, \ref{sec:euler}, we gradually introduce more tools that allow us to rule out the remaining candidate characteristic polynomials.
   Tables of polynomials and certificates referenced in the main text are given in the appendix.

\section{A construction of 57 equiangular lines in 18 dimensions}
\label{sec:57}

In this section, we present some constructions of sets of 57 equiangular lines in $\mathbb R^{18}$.
The \textbf{Witt design}~\cite{Witt37} (see also \cite[Example 6.6]{taylor}) is a collection $\mathcal B$ of $759$ $8$-subsets of $P = \{0,\dots,23\}$.
Denote by $\mathbf e_i$ the standard basis vector of $\mathbb R^{P}$ whose $i$th entry is equal to $1$.
Let $\mathcal B_0$ be the subset of blocks of $\mathcal B$ that contain $0$.
Denote by $\mathbf e_P$ the all-ones vector: $\sum_{i \in P}\mathbf e_i$.
For each $B \in \mathcal B_0$, define the vector 
\[
\mathbf v_B \coloneqq 4\sum_{i \in B}\mathbf e_i-4 \mathbf e_0 -\mathbf e_P.
\]
Note that vectors $\mathbf v_B$ have just two distinct entries, equal to $-1$ and $3$.
For each $i \in \{1,\dots,23\}$, define the vector $\mathbf w_i \coloneqq 8\mathbf e_i+4 \mathbf e_0 -\mathbf e_P.$
Then the lines spanned by the vectors in $ \mathcal V := \{\mathbf v_B \; : \; B \in \mathcal B_0\} \cup \{ \mathbf w_i \; : \; i \in \{1,\dots,23\}\}$ form 276 equiangular lines in $\mathbb R^{23}$ (each vector in $\mathcal V$ is orthogonal to $4\mathbf e_0+\mathbf e_P$).
The previous largest known equiangular line systems consisting of 54 and 56 lines in  $\mathbb R^{18}$ were both found by ``searching inside the Witt design'' \cite{LinYu,Szo}, i.e., finding subsets $\mathcal W \subset \mathcal V$ such that the vectors in  $\mathcal W$ form a set of spanning vectors for a system of equiangular lines in $\mathbb R^{18}$.

In contrast, our constructions, presented below, consist of integer vectors of length $\sqrt{10}$.
We begin by constructing the set $L_0 = \{ \mathbf v \in \mathbb Z^{18} \; : \; \mathbf v^\transpose \mathbf v = 10 \} $, which has cardinality 36,808,740.
Next, we pick at random a vector $\mathbf v_1 \in L_0$.
Then we form the subset $L_1 \coloneqq \{ \mathbf v \in L_0 \; : \; \mathbf v^\transpose \mathbf v_1 = \pm 2\}.$
We again, pick at random a vector $\mathbf v_2 \in L_1$ and form the subset $L_2 \coloneqq \{ \mathbf v \in L_1 \; : \; \mathbf v^\transpose \mathbf v_2 = \pm 2\}.$
Repeat this process until we have $\mathbf v_1, \dots, \mathbf v_{18}$, where $\mathbf v_i \in L_{i-1}$ and $L_i \coloneqq \{ \mathbf v \in L_{i-1} \; : \; \mathbf v^\transpose \mathbf v_i = \pm 2\}$, for each $i \in \{1,\dots,18\}$.
If $\mathbf v_1, \dots, \mathbf v_{18}$ are not linearly independent then we start the process again.
Otherwise, we form a graph $G$ whose vertex set is $L_{18}$ where two vectors $\mathbf v$ and $\mathbf w$ are adjacent if $\mathbf v^\transpose \mathbf w = \pm 2$.
The vectors $\mathbf v_1, \dots, \mathbf v_{18}$ together with a clique from $G$ form spanning vectors for a system of equiangular lines in $\mathbb R^{18}$.
The sets of 57 vectors in Figures~\ref{fig:57linesInt}, \ref{fig:57lines1quad}, \ref{fig:57linesNoAsche}, and \ref{fig:57linesNoAsche2} were found by following the process described above.

\begin{figure}[htbp]
    \centering
\scalebox{0.80}{\parbox{.90\linewidth}{
	\begin{align*}
\left [
\begin{smallmatrix}
 2&2&0&0&+&0&0&0&0&0&0&0&0&0&0&0&0&-&0&-&0&-&0&0&+&0&0&+&+&+&+&+&+&+&+&+&-&+&0&0&+&+&-&0&0&0&0&+&-&-&0&0&0&-&0&0&+\cr
 0&0&2&2&0&0&0&0&0&0&0&0&0&-&0&0&-&+&0&0&+&0&0&-&-&-&-&0&0&-&-&+&0&+&0&0&0&0&+&-&+&-&-&+&0&+&-&0&0&0&0&-&-&0&0&-&0\cr
 0&-&+&0&2&2&-&0&0&0&0&0&-&0&0&0&+&0&0&0&-&0&+&0&+&-&-&0&0&+&-&0&0&0&0&+&0&-&-&+&+&-&0&-&+&+&+&0&+&0&0&-&+&0&0&0&0\cr
 0&-&+&-&0&0&2&2&+&-&-&0&-&0&0&0&0&0&0&+&+&-&0&0&-&0&-&0&0&0&+&0&+&0&+&-&-&0&-&0&0&0&-&0&+&0&0&+&0&+&0&0&+&0&0&0&0\cr
 +&0&0&0&0&0&0&0&2&2&0&0&0&0&+&-&0&0&0&0&0&+&+&+&-&+&-&-&-&0&+&0&-&0&0&-&0&+&-&0&+&0&0&+&-&0&0&+&+&0&0&-&0&+&0&0&+\cr
 -&0&0&0&+&-&0&0&+&-&2&+&0&0&+&-&0&0&-&0&0&+&+&-&-&-&0&0&0&0&0&0&+&+&+&+&+&+&+&+&-&-&0&0&0&0&0&0&0&0&0&0&0&0&+&+&+\cr
 0&-&0&-&0&0&-&-&0&0&0&2&0&0&0&+&-&0&+&-&+&0&+&-&0&0&-&0&0&0&0&+&-&-&0&0&0&+&+&+&0&+&-&-&0&0&0&0&0&+&+&+&+&+&-&-&+\cr
 0&0&+&0&0&0&0&0&0&0&-&-&2&+&0&0&-&0&+&+&-&0&0&0&-&+&0&-&0&+&0&+&0&0&-&+&+&0&+&-&0&+&0&-&+&0&0&0&+&0&+&-&0&-&0&+&+\cr
 0&0&-&0&0&+&0&0&-&+&+&0&0&2&0&0&0&0&0&0&0&+&0&+&-&0&0&+&0&-&-&+&+&+&-&0&-&+&0&-&0&0&0&-&0&+&-&0&-&+&-&-&+&+&-&+&0\cr
 0&-&0&+&+&0&+&-&0&0&0&0&0&+&2&+&0&0&+&0&0&+&-&0&0&-&0&-&+&0&0&-&0&+&-&-&+&0&0&0&0&+&-&0&0&-&+&+&-&0&-&0&0&0&+&0&0\cr
 +&0&+&-&-&+&+&0&0&0&0&-&-&0&-&2&0&0&0&-&+&0&+&+&0&-&+&-&-&+&0&-&-&+&0&+&0&+&+&0&-&0&0&0&-&+&+&0&0&+&0&0&-&0&0&+&0\cr
 -&+&0&+&0&0&-&+&0&0&0&0&0&-&-&0&2&-&+&+&0&+&0&+&0&0&-&-&+&+&0&0&0&0&0&-&0&0&+&0&0&0&0&-&-&+&-&+&-&+&+&+&0&0&+&0&0\cr
 0&0&0&0&0&0&0&0&0&0&0&+&-&-&0&-&0&2&0&-&0&0&+&0&0&0&+&0&+&+&0&0&-&0&-&-&-&-&+&-&+&+&-&0&0&0&0&-&0&0&-&0&+&-&+&+&0\cr
 -&+&0&0&-&0&0&+&-&+&+&0&0&0&+&0&0&+&-&-&-&-&-&0&0&0&0&0&0&0&0&0&0&0&0&0&0&0&0&0&+&+&+&+&+&+&+&+&+&+&+&+&+&+&+&+&+\cr
 0&0&0&0&0&+&-&+&0&0&0&-&0&0&0&0&0&-&+&-&+&0&0&-&-&+&0&+&+&+&+&-&0&+&-&0&+&-&0&0&0&-&0&+&-&0&0&0&+&-&-&+&+&+&-&0&+\cr
 0&0&0&0&0&+&0&0&-&+&0&0&+&0&0&-&+&0&+&0&+&+&+&-&0&-&-&0&+&-&+&0&0&-&+&0&-&0&0&-&-&0&+&0&+&-&+&-&0&+&-&0&0&-&0&+&+\cr
 +&0&+&0&+&0&0&0&0&0&+&0&-&+&-&0&0&0&+&+&0&0&0&0&0&+&+&+&-&0&-&+&-&0&+&0&+&-&0&-&0&0&+&+&0&-&+&+&-&+&0&+&0&0&+&-&+\cr
 0&0&0&-&0&+&0&+&+&-&-&+&0&0&0&0&-&-&-&0&+&+&-&+&+&0&0&+&+&0&-&-&-&+&0&0&0&0&0&-&+&0&+&0&+&-&-&-&0&0&+&0&-&+&+&0&+\cr
 \end{smallmatrix}
\right ]
\end{align*}
}}

    \caption{The matrix $F_1$ consisting of 57 (column) vectors in $\mathbb Z^{18}$.  The entries $\pm$ should be replaced with $\pm 1$ entries in the obvious way.}
    \label{fig:57linesInt}
\end{figure}

The Seidel matrix $S_{1} = {F_1^\transpose F_1}/2-5I_{57}$ corresponding to $F_1$
has characteristic polynomial $ \operatorname{Char}_{S_1}(x) = (x+5)^{39} (x-4) (x-7) (x-9)^2 (x-11)^9 (x-13)^4 (x-15)$.
Thus the 57 vectors in Figure~\ref{fig:57linesInt} span 57 equiangular lines in $\mathbb R^{18}$.

\begin{figure}[htbp]
    \centering
\scalebox{0.80}{\parbox{.90\linewidth}{
	\begin{align*}
\left [
\begin{smallmatrix}
 2&2&2&0&0&-&+&-&+&+&0&0&0&-&+&0&0&0&0&-&0&0&0&0&0&+&-&-&-&+&+&+&-&+&+&-&+&+&0&0&0&0&0&0&0&0&0&0&0&0&0&0&0&0&0&0&0\cr
 0&-&0&2&2&+&0&-&0&0&0&0&0&0&-&0&0&+&0&0&0&0&+&-&+&+&-&+&-&0&0&0&0&0&0&0&0&0&+&0&-&-&-&+&0&0&-&0&0&0&0&0&+&-&+&0&0\cr
 0&0&0&-&+&2&2&0&+&0&0&0&0&0&0&0&0&+&0&0&0&0&0&0&0&-&+&0&0&0&0&0&0&+&-&0&0&0&0&-&+&+&-&+&0&-&-&+&+&-&-&-&0&-&-&+&+\cr
 0&0&+&0&0&-&0&2&2&0&0&0&0&+&0&+&-&0&0&+&0&0&0&-&-&+&+&+&+&+&-&0&-&0&0&+&0&0&0&0&0&0&0&0&-&+&-&0&0&+&0&-&+&-&+&-&+\cr
 0&0&0&+&-&0&0&0&+&2&2&0&0&0&0&0&0&+&0&0&0&0&0&0&0&-&+&0&0&0&0&0&0&-&+&0&0&0&0&-&-&+&-&+&0&+&+&-&-&-&-&+&0&-&-&-&-\cr
 0&0&0&0&0&0&0&+&0&0&+&2&2&-&+&0&0&-&0&-&0&0&0&0&0&0&0&+&-&+&-&-&+&-&-&+&+&+&0&-&0&-&-&+&0&0&0&0&0&+&+&0&0&+&-&0&0\cr
 0&0&0&0&0&0&0&0&0&+&0&0&0&2&2&0&0&0&0&0&-&-&0&0&0&-&0&0&-&0&0&+&+&-&-&0&-&+&+&0&+&-&+&-&+&0&-&+&+&0&-&0&+&0&0&-&-\cr
 0&0&0&0&0&0&0&0&0&+&0&0&0&0&0&2&2&0&0&0&-&+&0&0&0&-&0&0&-&0&0&-&-&+&+&0&-&-&+&0&-&-&+&+&+&0&-&+&-&0&+&0&-&0&0&-&+\cr
 0&-&+&0&0&0&0&0&0&0&-&0&0&0&0&-&+&2&2&0&0&0&-&0&0&0&0&+&+&0&0&+&0&-&+&0&+&+&-&-&+&0&0&0&+&-&0&0&0&0&+&+&0&+&+&-&+\cr
 +&0&0&0&0&+&-&0&0&0&+&-&+&0&0&0&0&-&+&2&2&0&0&+&-&+&0&+&0&0&0&0&+&+&+&0&0&0&+&0&0&0&0&0&+&+&+&+&+&-&+&-&0&0&0&0&0\cr
 0&-&+&+&-&0&0&+&-&0&0&0&0&0&0&0&0&0&0&-&2&-&-&0&0&-&-&0&0&+&+&-&0&0&0&-&0&-&-&+&+&0&0&+&-&0&-&+&0&0&0&0&-&-&+&0&-\cr
 -&+&0&0&0&0&-&0&0&0&0&-&+&+&0&0&0&+&-&-&0&2&-&0&0&0&+&+&0&-&+&-&0&0&0&-&+&+&0&+&+&0&0&0&+&+&0&-&-&0&0&-&+&0&0&0&0\cr
 0&+&0&+&-&+&0&0&-&0&+&-&+&-&0&0&0&0&+&0&0&-&2&0&+&0&+&0&0&+&-&0&-&-&0&-&0&-&0&0&+&0&0&-&+&-&0&0&-&+&-&-&+&0&+&0&+\cr
 +&0&-&0&0&0&+&-&0&0&0&+&-&0&-&+&-&0&+&0&0&0&0&2&0&0&+&+&0&0&0&-&-&0&0&0&-&+&-&+&0&-&-&-&0&0&0&-&+&+&0&0&-&0&0&-&-\cr
 -&+&0&0&0&0&+&0&0&0&0&0&0&0&-&+&-&0&0&0&0&0&0&+&2&0&0&+&0&+&+&+&+&0&+&+&0&0&+&+&+&+&+&+&0&+&0&0&0&+&0&+&0&+&0&+&+\cr
 -&0&0&+&-&0&0&-&0&+&-&-&+&0&0&-&+&0&+&-&0&+&0&-&-&+&0&0&+&+&-&0&-&0&0&+&-&0&+&+&0&-&-&0&-&0&0&0&+&0&-&0&-&+&-&+&0\cr
 0&0&-&0&0&0&-&0&+&-&+&+&-&0&+&0&0&0&-&0&0&0&-&0&+&0&-&+&+&+&-&0&0&0&+&-&+&-&+&+&0&0&0&0&0&-&-&-&+&-&-&-&0&+&0&0&0\cr
 -&0&+&+&-&+&0&0&0&-&0&0&0&-&0&-&+&0&0&0&0&-&-&+&0&0&0&0&-&-&-&-&0&+&0&-&-&0&0&0&0&+&+&0&-&+&-&-&0&+&0&+&+&0&-&-&0\cr
\end{smallmatrix}
\right ]
\end{align*}
}}

    \caption{The matrix $F_2$ consisting of 57 (column) vectors in $\mathbb Z^{18}$.  The entries $\pm$ should be replaced with $\pm 1$ entries in the obvious way.}
    \label{fig:57lines1quad}
\end{figure}

The Seidel matrix $S_{2} = {F_2^\transpose F_2}/2-5I_{57}$ corresponding to $F_2$ in Figure~\ref{fig:57lines1quad}
has characteristic polynomial $ \operatorname{Char}_{S_2}(x) = (x+5)^{39} (x-7)^4 (x-9) (x-11)^5 (x-13)^6 (x^2-25x+152)$.

The systems of 57 equiangular lines in $\mathbb R^{18}$ given by $S_1$ and $S_2$ above were found by searching inside integer lattices as described above.
However, we do not know if the Seidel matrices $S_{1}$ and $S_2$ can also be found using lines spanned by vectors derived from blocks of the Witt design.

\begin{question}
\label{q2}
Can the systems of equiangular lines corresponding to $S_{1}$ and $S_2$ be found inside the Witt design?
\end{question}

A set of 72 equiangular lines in $\mathbb R^{19}$, known as \emph{Asche's lines} (see \cite[Example 5.19]{GKMS16}) is constructed by forming spanning vectors $\mathbf v_B$ from certain blocks $B \in \mathcal B$ of the Witt design.
While we cannot answer Question~\ref{q2}, we can find configurations of 57 equiangular lines that are not a subset of Asche's lines.
We exhibit two such configurations in Figures~\ref{fig:57linesNoAsche} and \ref{fig:57linesNoAsche2}.
The Seidel matrix corresponding to Asche's lines has characteristic polynomial $(x+5)^{53}(x-13)^{16}(x-19)^{3}$.
Thus, by interlacing (see Theorem~\ref{thm:cauchyinterlace}), the Seidel matrix corresponding to any subset of 57 lines of Asche's lines, would have $(x-13)$ as a factor of its characteristic polynomial.

\begin{figure}[htbp]
    \centering
\scalebox{0.80}{\parbox{.90\linewidth}{
	\begin{align*}
\left [
\begin{smallmatrix}
 2&2&0&+&0&0&-&+&0&0&0&0&0&0&0&-&0&0&-&0&0&0&0&+&-&-&+&-&-&-&-&+&-&-&+&+&+&+&+&+&+&0&0&0&0&0&0&0&0&0&0&0&0&0&0&0&0\cr
 0&0&2&2&0&0&0&0&+&0&0&-&+&0&0&-&0&0&-&0&0&0&0&-&+&-&+&-&+&+&+&-&-&0&0&0&0&0&0&0&0&-&-&-&-&-&-&+&+&+&0&0&0&0&0&0&0\cr
 -&+&0&0&2&2&0&0&0&0&+&0&-&+&0&+&0&0&0&+&0&-&0&-&-&-&+&0&0&0&0&0&0&-&+&-&0&0&0&0&0&+&-&-&+&0&0&0&0&0&+&-&+&0&0&0&0\cr
 0&0&-&0&+&-&2&2&+&0&0&0&+&0&-&0&-&0&0&0&+&0&0&+&0&0&0&-&+&-&0&0&0&0&0&0&-&-&0&0&0&+&+&0&0&-&-&-&0&0&-&0&0&-&-&0&0\cr
 0&0&0&0&-&0&0&0&2&2&0&0&0&+&0&-&-&+&0&0&0&-&0&0&+&+&0&-&0&0&-&0&0&-&+&0&+&-&-&0&0&0&0&+&0&+&+&0&+&-&+&-&-&+&0&+&0\cr
 0&0&+&0&0&0&0&0&0&0&2&2&-&0&-&0&0&0&+&0&-&-&0&0&0&0&-&0&+&0&-&-&0&+&0&+&+&0&0&0&0&-&-&0&-&-&0&0&-&-&-&-&0&0&-&-&-\cr
 0&0&-&0&0&0&0&0&0&0&0&+&2&2&+&0&0&0&+&-&-&0&-&0&-&+&0&0&0&0&0&0&0&-&0&+&0&-&+&-&+&-&0&-&0&0&-&+&0&0&0&+&+&+&0&-&+\cr
 -&+&+&-&0&0&-&+&+&-&0&0&0&0&2&0&0&+&-&0&0&0&0&+&-&0&0&+&0&0&0&0&-&0&-&0&+&-&0&-&0&+&+&+&0&-&+&0&+&0&0&0&0&0&-&-&-\cr
 0&0&0&+&0&0&+&-&+&0&+&0&0&+&+&2&0&0&0&0&+&-&0&0&0&+&0&0&-&+&0&0&-&+&+&+&0&+&+&0&-&0&+&+&+&-&0&+&0&+&0&0&0&-&-&+&+\cr
 -&+&0&0&0&+&0&0&0&+&+&-&0&0&0&0&2&0&0&0&0&0&0&+&+&+&-&0&+&+&0&+&-&0&+&-&-&0&0&+&+&-&0&0&0&+&0&-&+&-&0&+&+&-&-&-&0\cr
 0&0&0&0&-&-&0&0&0&0&0&0&0&0&0&+&+&2&0&+&0&0&+&0&0&0&-&0&0&0&+&+&-&-&0&-&+&0&-&-&+&+&0&0&-&0&-&+&-&-&-&-&+&0&+&+&+\cr
 0&0&0&0&-&+&0&0&0&+&0&-&+&0&0&0&-&-&2&+&0&0&0&0&-&-&0&-&+&+&-&+&-&0&-&0&0&+&+&-&-&0&+&0&-&0&+&-&0&0&0&-&+&0&+&0&0\cr
 0&0&0&+&+&0&+&-&-&+&0&0&0&+&0&0&-&0&0&2&0&-&+&0&0&0&-&0&-&-&+&0&0&+&-&0&+&-&0&+&+&0&0&0&-&0&0&-&+&+&0&+&+&+&0&0&-\cr
 +&-&0&0&0&0&0&0&0&+&+&0&+&-&0&0&0&-&0&0&2&0&0&+&-&+&+&+&0&+&+&+&0&0&0&0&0&0&-&+&+&0&-&0&0&-&+&+&0&0&-&-&0&+&+&-&-\cr
 0&0&0&0&0&0&0&0&0&0&+&0&0&+&-&0&0&0&0&+&+&2&-&0&-&0&-&0&0&-&-&-&-&0&0&0&0&+&-&0&+&-&+&-&+&+&0&+&0&+&-&0&-&0&0&+&-\cr
 0&0&+&-&-&+&+&-&0&0&0&-&0&0&+&0&+&0&0&0&-&0&2&+&0&0&0&-&+&-&0&0&0&-&-&+&0&0&-&+&-&0&0&-&+&0&0&0&-&+&-&0&0&+&-&0&0\cr
 +&-&-&0&0&0&0&0&0&+&0&0&0&0&0&0&0&-&-&-&0&+&+&+&0&-&-&+&+&0&-&-&-&-&0&-&+&-&0&-&0&0&-&+&0&-&-&0&-&0&+&0&+&-&0&+&-\cr
 +&-&0&-&0&+&-&+&-&0&-&+&0&0&0&+&0&+&0&0&0&0&+&-&0&0&0&-&0&0&0&+&-&0&+&-&-&0&+&0&0&-&0&+&-&0&-&0&+&+&-&-&-&+&-&0&-\cr
\end{smallmatrix}
\right ]
\end{align*}
}}
    \caption{The matrix $F_3$ consisting of 57 (column) vectors in $\mathbb Z^{18}$. The entries $\pm$ should be replaced with $\pm 1$ entries in the obvious way.}
     \label{fig:57linesNoAsche}
\end{figure}

\begin{figure}[htbp]
    \centering
\scalebox{0.80}{\parbox{.90\linewidth}{
	\begin{align*}
\left [
\begin{smallmatrix}
2&2&0&0&0&0&0&0&0&0&0&0&0&+&0&+&-&0&+&0&0&+&-&-&-&-&+&-&-&+&+&-&+&-&-&+&+&+&-&0&0&0&0&0&0&0&0&0&0&0&0&0&0&0&0&0&0\cr
 0&0&2&2&+&+&0&0&0&0&0&0&0&0&+&0&0&+&0&0&0&+&+&+&+&-&+&-&+&+&0&0&0&0&0&0&0&0&0&-&-&+&-&+&+&-&-&+&+&+&-&0&0&0&0&0&0\cr
 0&0&0&0&2&2&+&0&0&0&-&-&0&0&0&-&0&0&0&0&0&+&-&0&0&0&0&0&0&0&-&-&-&-&0&0&0&0&0&-&-&-&+&-&-&+&+&-&0&0&0&-&-&+&+&-&0\cr
 0&0&0&0&0&0&2&2&0&0&-&0&0&+&0&0&0&0&0&-&+&-&0&+&-&+&+&-&0&0&-&+&0&0&-&-&0&0&0&-&+&+&+&+&0&0&0&0&-&-&0&-&+&0&0&0&-\cr
 0&+&+&-&0&0&0&+&2&2&0&0&0&0&-&-&0&0&0&0&-&+&0&+&-&0&0&0&+&0&0&0&-&0&-&0&-&+&0&+&+&-&-&0&+&+&0&0&0&0&-&0&0&+&-&+&0\cr
 +&0&0&0&0&0&0&0&+&-&2&2&+&0&+&0&0&0&0&-&0&-&0&0&0&-&+&+&0&-&-&-&0&-&-&0&-&-&0&0&0&0&0&-&0&0&+&+&0&0&-&0&0&+&-&-&-\cr
 -&0&+&-&0&0&0&+&0&0&0&0&2&0&0&0&0&0&-&+&0&-&-&0&0&-&+&-&-&+&-&0&+&0&0&+&-&0&-&+&-&0&0&0&+&0&+&0&-&-&0&0&0&-&+&+&+\cr
 0&0&+&-&0&-&-&0&0&0&-&+&0&2&0&0&+&+&0&0&0&+&0&-&+&-&-&0&0&0&0&-&0&-&-&-&-&0&+&-&0&+&0&+&-&0&0&-&-&0&0&+&+&0&0&0&0\cr
 0&0&0&0&-&+&+&0&-&+&0&0&+&0&2&0&0&-&0&-&+&0&-&-&+&0&0&+&0&-&0&0&0&+&0&0&+&+&-&0&+&-&0&+&0&+&-&0&0&+&-&+&+&+&+&0&0\cr
 0&0&-&+&0&0&0&+&0&0&0&-&0&0&0&2&+&0&0&0&0&0&-&0&0&-&-&0&+&+&+&0&-&0&+&+&0&-&+&0&+&0&+&+&+&+&+&+&-&0&-&+&-&0&0&-&-\cr
 0&0&-&+&-&0&0&0&0&0&+&0&0&0&-&0&2&0&+&-&+&+&+&0&0&0&0&-&0&0&-&0&0&+&-&+&0&-&0&0&-&0&-&0&-&0&0&-&-&0&-&+&-&+&+&+&+\cr
 +&-&0&0&0&0&+&0&-&+&0&0&+&-&0&0&-&2&0&0&0&0&0&0&0&+&-&0&0&0&0&-&-&0&0&0&0&-&-&-&0&-&0&0&+&-&+&-&0&-&-&+&+&-&-&0&+\cr
 0&0&0&0&0&+&-&+&0&0&0&0&0&+&0&0&0&-&2&-&+&0&-&0&0&+&+&+&+&+&0&0&-&-&+&+&-&-&0&0&0&0&0&-&0&-&-&0&+&-&+&+&+&0&0&+&0\cr
 0&+&0&0&0&0&0&0&-&+&0&0&-&0&0&+&0&0&+&2&+&-&-&+&-&0&0&0&0&0&0&-&0&+&0&-&-&0&-&0&0&+&-&-&0&-&0&-&-&+&0&0&0&+&-&-&0\cr
 +&+&0&0&0&0&0&0&+&-&-&+&-&0&0&0&0&0&-&0&2&0&0&+&+&0&0&-&-&-&0&+&-&0&+&+&+&0&0&0&+&0&-&-&0&0&0&0&0&-&-&0&0&0&0&-&+\cr
 0&0&0&0&-&0&0&+&0&0&0&-&0&-&-&0&+&0&+&0&0&0&0&-&+&0&0&0&-&-&+&+&-&-&-&+&0&0&-&-&-&+&-&0&0&+&+&0&+&+&0&-&+&-&-&0&-\cr
 +&-&0&0&+&-&0&0&+&-&+&-&-&+&+&+&+&-&0&0&0&0&0&-&-&0&0&-&+&-&-&0&0&+&0&0&0&+&-&-&0&-&0&0&+&-&+&-&+&0&0&0&0&0&0&+&-\cr
 +&-&-&+&+&-&+&-&0&0&0&0&0&0&0&-&0&+&0&0&0&0&-&0&0&-&+&0&-&0&+&+&-&0&0&0&-&+&+&+&0&0&-&0&+&0&0&-&0&-&+&+&+&+&+&0&-\cr
\end{smallmatrix}
\right ]
\end{align*}
}}
    \caption{The matrix $F_4$ consisting of 57 (column) vectors in $\mathbb Z^{18}$. The entries $\pm$ should be replaced with $\pm 1$ entries in the obvious way.}
     \label{fig:57linesNoAsche2}
\end{figure}

The Seidel matrices $S_{3} = {F_3^\transpose F_3}/2-5I_{57}$ and $S_{4} = {F_4^\transpose F_4}/2-5I_{57}$ corresponding to $F_3$ and $F_4$ (see Figures~\ref{fig:57linesNoAsche} and \ref{fig:57linesNoAsche2})
have characteristic polynomials 
$\operatorname{Char}_{S_3}(x) =(x + 5)^{39} (x - 11)^{11} (x - 15) (x^2 - 19x + 76)(x^2 - 20x + 87)^2$ and $\operatorname{Char}_{S_4}(x) = (x+5)^{39} (x - 11)^{13} (x^2 - 18x + 57) (x^3 - 34x^2 + 361x - 1136)$.
We note that we are able to find numerous other configurations of 57 equiangular lines in $\mathbb R^{18}$ whose corresponding Seidel matrices have more than seven distinct eigenvalues.  
The four configurations given here in Figures~\ref{fig:57linesInt}, \ref{fig:57lines1quad}, \ref{fig:57linesNoAsche}, and \ref{fig:57linesNoAsche2}, are the four that we found that have precisely seven distinct eigenvalues.
We do not know if there are Seidel matrices corresponding to $57$ equiangular lines in $\mathbb R^{18}$ having fewer than seven distinct eigenvalues.

The \textbf{automorphism group} of a Seidel matrix $S$
is the group of all signed permutation matrices $P$ such that $S = PSP^\transpose$, where we do not make a distinction between $P$ and $-P$.
Using the computer algebra system Magma~\cite{magma}, we find that the Seidel matrices $S_{1}$, $S_{2}$, $S_3$ and $S_{4}$ each has a trivial automorphism group.

\section{Candidate characteristic polynomials for 49 equiangular lines in 17 dimensions}
\label{sec:ccp}

  In this section, we produce a list of polynomials that could potentially be the characteristic polynomial of a Seidel matrix that corresponds to a system of 49 equiangular lines in $\mathbb R^{17}$.
  Let $\mathcal L$ be an equiangular line system of cardinality $49$ in $\mathbb R^{17}$ and let $S$ be the Seidel matrix corresponding to $\mathcal L$.
	The Seidel matrix $S$ is a symmetric matrix with real entries, which means that each zero of its characteristic polynomial $\operatorname{Char}_S(x) \coloneqq \det(xI-S)$ is real.
	In other words, $\operatorname{Char}_S(x)$ is a \textbf{totally-real} polynomial.
	Moreover, since every entry of $S$ is an integer, each coefficient of $\operatorname{Char}_S(x)$ is also an integer.
	
	By \cite[Theorem 3.4]{lemmens73} together with \cite[Lemma 6.1]{vLintSeidel66}, \cite[Theorem 4.5]{lemmens73}, and \cite[Lemma 3.1]{GSY21}, the Seidel matrix $S$ must have smallest eigenvalue equal to $-5$ and $9$ must be an eigenvalue with multiplicity at least $4$.
	We record this result as a theorem.

	\begin{theorem}
	\label{thm:ev_mult}
	    Let $S$ be a Seidel matrix corresponding to $49$ equiangular lines in $\mathbb R^{17}$.
	    Then 
$$
\operatorname{Char}_S(x)=(x+5)^{32}(x-9)^4 \phi(x),
$$
for some monic polynomial $\phi$ of degree 13 in $\mathbb{Z}[x]$ all of whose zeros are greater than $-5$.
	\end{theorem}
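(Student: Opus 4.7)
The plan is to assemble three ingredients: identification of the common angle, the exact multiplicity of $-5$, and the prescribed multiplicity of $9$. The first and third are precisely the facts cited in the paragraph immediately preceding the theorem: by combining \cite[Theorem 3.4]{lemmens73}, \cite[Lemma 6.1]{vLintSeidel66}, \cite[Theorem 4.5]{lemmens73}, and \cite[Lemma 3.1]{GSY21}, the smallest eigenvalue of $S$ must equal $-5$ and $9$ must be an eigenvalue with multiplicity at least $4$. So the first step is simply to verify that these cited results apply: since $49$ exceeds the Lemmens--Seidel threshold that forces the common angle to be $\arccos(1/5)$ in dimension $17$, one has $\alpha = 1/5$ and the smallest eigenvalue of $S$ is $-1/\alpha = -5$.

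Next, I would pin down the multiplicity of $-5$ as being exactly $32$. Let $G = I + \alpha S$ be the Gram matrix of a choice of unit spanning vectors. Then $G$ is positive semidefinite, and $\rank G$ equals the dimension of the span of the lines, so $\rank G \leqslant 17$ and the multiplicity of $-5$ in $S$ is at least $49 - 17 = 32$. For the reverse inequality I would argue that if the lines spanned only a subspace of dimension $k < 17$, one would obtain $49$ equiangular lines in $\mathbb R^{k}$, contradicting the value $N(16) = 40 < 49$ recorded in Table~\ref{tab:equi}. Hence $\rank G = 17$ and the multiplicity of $-5$ in $\operatorname{Char}_S(x)$ is exactly $32$.

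It then remains to extract $(x+5)^{32}(x-9)^{4}$ as a factor of $\operatorname{Char}_S(x)$ and verify the claimed properties of the complementary factor. Since $(x+5)^{32}(x-9)^{4}$ is a monic polynomial in $\mathbb{Z}[x]$ dividing the monic integer polynomial $\operatorname{Char}_S(x)$, the quotient $\phi(x)$ automatically lies in $\mathbb{Z}[x]$, is monic, and has degree $49 - 32 - 4 = 13$. Its zeros are real because $S$ is symmetric; and they are strictly greater than $-5$ because $G \succeq 0$ forces every eigenvalue of $S$ to satisfy $\lambda \geqslant -5$, while the multiplicity of $-5$ in $\operatorname{Char}_S(x)$ has been fully accounted for by the factor $(x+5)^{32}$.

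The only delicate part of the argument is the citation-based input that $9$ appears as an eigenvalue with multiplicity at least $4$. The smallest-eigenvalue statement is classical (and the exact multiplicity $32$ follows by the spanning argument above), but the lower bound on the multiplicity of $9$ rests on finer modular and integrality constraints developed in \cite{GSY21} applied to Seidel matrices whose smallest eigenvalue has large multiplicity, and this is the main obstacle to a self-contained proof. Once that result is in hand, the remainder of the theorem is the routine rank/division bookkeeping outlined above.
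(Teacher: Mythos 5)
Your argument is correct and is essentially the paper's own proof: the paper, too, deduces the theorem from the same four cited results (giving smallest eigenvalue $-5$ and multiplicity of $9$ at least $4$) together with the positive-semidefiniteness/rank observation made in the introduction. The only point you make explicit that the paper leaves tacit is that the multiplicity of $-5$ is exactly $32$, which you correctly justify by noting that the lines must span $\mathbb R^{17}$ since $N(16)=40<49$.
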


Let $p(x)=\sum_{i=0}^n a_i x^{n-i}$ be a monic polynomial in $\mathbb{Z}[x]$.
Following~\cite{GSY21}, we say $p$ is \textbf{type $\mathbf 2$} if $2^i$ divides $a_i$ for all $i\geqslant 0$ and \textbf{weakly type $\mathbf 2$} if $2^{i-1}$ divides $a_i$ for all $i\geqslant 1$.

\begin{lemma}[{\cite[Lemma 2.7]{GSY21}}]
\label{lem:shiftType}
Let $S$ be a Seidel matrix of order $n$ and $k$ be an odd integer.
Then $\operatorname{Char}_S(x+k) = \operatorname{Char}_{S-kI}(x)$ is weakly type 2.
Furthermore, if $n$ is even then $\operatorname{Char}_S(x+k)$ is type 2.
\end{lemma}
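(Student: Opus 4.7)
The plan is to bound the $2$-adic valuation of each coefficient
$$a_i = (-1)^i \sum_{|T|=i} \det(B_T)$$
of $\operatorname{Char}_B(x) = \sum_{i=0}^n a_i x^{n-i}$, where $B := S - kI$ and $B_T$ denotes the principal submatrix of $B$ indexed by $T \subseteq \{1,\ldots,n\}$.

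For the weakly type $2$ claim I would apply an elementary row operation to each $B_T$ separately. Since $B_T$ has diagonal entries $-k$ (odd, as $k$ is odd) and off-diagonal entries in $\{\pm 1\}$, subtracting the first row from each of the remaining $i-1$ rows produces a matrix whose rows $2,\ldots,i$ have entries of the form $(\pm 1)-(\pm 1)$, $(\pm 1)+k$, or $-k-(\pm 1)$, each of which is even. Extracting a factor of $2$ from each modified row yields $\det(B_T) = 2^{i-1}\det(\widetilde{B_T})$ for some $\widetilde{B_T} \in M_i(\mathbb Z)$, so $2^{i-1}$ divides $a_i$ for every $i \geq 1$.

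For the type $2$ sharpening under the assumption $n$ even, I would use the congruence $B \equiv J \pmod 2$ (where $J$ is the all-ones matrix) to write $B = J + 2C$ for some symmetric $C \in M_n(\mathbb Z)$. The matrix-determinant lemma applied to the rank-one perturbation then gives
$$\operatorname{Char}_B(x) = \det(xI - 2C) \;-\; \mathbf{1}^{\transpose}\operatorname{adj}(xI - 2C)\,\mathbf{1}.$$
The first term has $[x^{n-i}]$-coefficient equal to $(-2)^i e_i(C)$, already divisible by $2^i$. For the adjugate term, use the symmetry of $\operatorname{adj}(xI-2C)$ to split it as $\sum_\alpha (\operatorname{adj})_{\alpha\alpha} + 2\sum_{\alpha<\beta}(\operatorname{adj})_{\alpha\beta}$. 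A Leibniz-expansion check shows that each $(\operatorname{adj})_{\alpha\beta}$ has its coefficient of $x^q$ divisible by $2^{n-1-q}$ (every off-diagonal factor of $xI-2C$ carries a factor of $2$), so the off-diagonal part already contributes multiples of $2\cdot 2^{i-1} = 2^i$ to $a_i$. The diagonal part $\sum_\alpha \operatorname{Char}_{2C^{(\alpha)}}(x)$, combined with the identity $\sum_\alpha e_p(C^{(\alpha)}) = (n-p)\,e_p(C)$, reduces the remaining divisibility to showing $(n-i+1)\,e_{i-1}(C) \equiv 0 \pmod 2$: this is automatic when $i$ is odd (then $n-i+1$ is even), and when $i$ is even it reduces to showing $e_p(C) \equiv 0 \pmod 2$ for every odd $p$.

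For that last parity statement, writing $C = -A - \tfrac{k+1}{2} I$ where $A$ is the $\{0,1\}$-adjacency matrix defined by $A_{ij} = [S_{ij}=-1]$, the shift identity
$$e_p(C) = (-1)^p \sum_{q=0}^p \Big(\tfrac{k+1}{2}\Big)^{p-q}\binom{n-q}{p-q}\,e_q(A)$$
permits a term-by-term mod $2$ analysis. Terms with $q$ odd vanish because $A_T$ is a symmetric $\{0,1\}$-matrix of odd order with zero diagonal, hence alternating over $\mathbb F_2$ with $\det(A_T) \equiv 0 \pmod 2$; terms with $q$ even and $p$ odd have $\binom{n-q}{p-q} \equiv 0 \pmod 2$ by Lucas's theorem, since $n-q$ is even while $p-q$ is odd. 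The main obstacle lies precisely in this type $2$ refinement: weakly type $2$ is a statement about each principal submatrix individually, but the extra factor of $2$ is global, coming from cancellations in $\sum_T \det(B_T)$, and packaging these cleanly requires both the matrix-determinant lemma and the mod $2$ parity analysis of elementary symmetric functions above.
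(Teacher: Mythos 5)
Your argument is correct. Note first that the paper does not prove this lemma at all --- it is imported verbatim from \cite[Lemma 2.7]{GSY21} --- so there is no in-paper proof to match against; I am judging your proposal on its own terms. The weakly-type-2 half is the standard argument: each coefficient $a_i$ is $(-1)^i$ times a sum of principal $i\times i$ minors of $S-kI$, and subtracting the first row from the others makes $i-1$ rows even because the diagonal entries $-k$ are odd and the off-diagonal entries are $\pm1$. For the type-2 refinement your route is genuinely different from the one used in the cited literature. There, the extra factor of $2$ for $n$ even is usually extracted from a determinant congruence for Seidel matrices of fixed order (each even-order principal minor of $S-kI$ is divisible by $2^i$ outright, and for odd $i$ the odd parts of the $\binom{n}{i}$ minors agree modulo $2$, so the even binomial coefficient $\binom{n}{i}$ supplies the missing factor). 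You instead write $S-kI=J+2C$, apply the matrix-determinant lemma to peel off the rank-one part, and reduce everything to the parity of $e_p(C)$ for odd $p$; the final step, that a symmetric $\{0,1\}$-matrix with zero diagonal and odd order is alternating over $\mathbb F_2$ and hence singular, together with the Lucas computation $\binom{\mathrm{even}}{\mathrm{odd}}\equiv 0 \pmod 2$, is exactly what is needed and is applied correctly (I checked the shift identity for $e_p(-A-\tfrac{k+1}{2}I)$, the identity $\sum_\alpha e_p(C^{(\alpha)})=(n-p)e_p(C)$, and the $2$-adic bound on the coefficients of the off-diagonal adjugate entries). Your approach is self-contained and avoids quoting the Seidel determinant congruences, at the cost of a longer bookkeeping argument; both are valid.
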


The following lemma deals with the factorisation of type-2 and weakly-type-2 polynomials.

\begin{lemma}[{\cite[Lemma 2.8]{GSY21}}]\label{lem:type_factor}
Let $p\in \mathbb{Z}[x]$ be a monic polynomial.
Suppose $p=qr$ where $q,r \in \mathbb{Z}[x]$.
Then 
\begin{itemize}
    \item[(a)] $p$ is type 2 if and only if $q$ and $r$ are both type 2;
    \item[(b)] $p$ is weakly type 2 if and only if $q$ and $r$ are both weakly type 2 and at least one of them is type 2.
\end{itemize}
\end{lemma}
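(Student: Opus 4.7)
The plan is to reduce both statements to clean assertions about contents of rational polynomials by the substitution $x \mapsto 2x$. For a monic polynomial $f(x) = \sum_{i=0}^n a_i x^{n-i} \in \mathbb{Z}[x]$ of degree $n$, I would introduce the auxiliary polynomial $\tilde f(x) \coloneqq 2^{-n} f(2x)$, which is monic in $\mathbb{Q}[x]$ with coefficient of $x^{n-i}$ equal to $a_i/2^i$. A direct unfolding then shows that $f$ is type 2 if and only if $\tilde f \in \mathbb{Z}[x]$, and $f$ is weakly type 2 if and only if $2\tilde f \in \mathbb{Z}[x]$. A factorisation $p = qr$ with $\deg q = m$, $\deg r = k$, $m+k=n$ immediately gives $\tilde p = \tilde q \, \tilde r$ in $\mathbb{Q}[x]$, reducing the lemma to statements about $\tilde p, \tilde q, \tilde r$.

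For (a), the ``if'' direction is the trivial observation that if $\tilde q, \tilde r \in \mathbb{Z}[x]$ then so is their product $\tilde p$. The ``only if'' direction is where the work is, but it is short: $\tilde p$ is a monic polynomial in $\mathbb{Z}[x]$ that splits in $\mathbb{Q}[x]$ as a product of the two monic polynomials $\tilde q$ and $\tilde r$, so Gauss's lemma for monic polynomials forces $\tilde q, \tilde r \in \mathbb{Z}[x]$, whence $q$ and $r$ are both type 2.

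For (b), I would use the 2-adic content valuation on $\mathbb{Q}[x]$: for nonzero $f \in \mathbb{Q}[x]$ let $v(f)$ be the minimum 2-adic valuation of the coefficients of $f$. Gauss's lemma in multiplicative form gives $v(fg) = v(f)+v(g)$. Because $\tilde q$ and $\tilde r$ are monic, we have $v(\tilde q), v(\tilde r) \leqslant 0$, with equality precisely when the corresponding factor is type 2, and $v(\tilde q) \geqslant -1$ precisely when $q$ is weakly type 2 (and similarly for $r$). The hypothesis that $p$ is weakly type 2 becomes $v(\tilde p) = v(\tilde q) + v(\tilde r) \geqslant -1$; combined with $v(\tilde q), v(\tilde r) \leqslant 0$, this forces (after possibly swapping $q$ and $r$) $v(\tilde q) = 0$ and $v(\tilde r) \in \{-1,0\}$, giving one type-2 factor and one weakly-type-2 factor. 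Conversely, if $\tilde q \in \mathbb{Z}[x]$ and $2\tilde r \in \mathbb{Z}[x]$ then $2\tilde p = \tilde q \cdot (2\tilde r) \in \mathbb{Z}[x]$, so $p$ is weakly type 2.

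The main obstacle is really only a bookkeeping check: verifying that the substitution $x \mapsto 2x$ converts the divisibility conditions $2^i \mid a_i$ and $2^{i-1} \mid a_i$ into the intended integrality / content-valuation conditions on $\tilde f$, so that Gauss's lemma may be invoked verbatim. Once that dictionary is in place the two parts are essentially immediate.
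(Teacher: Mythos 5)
Your proof is correct. The dictionary via $\tilde f(x) = 2^{-\deg f}f(2x)$ is set up accurately: $f$ is type~2 iff $\tilde f\in\mathbb{Z}[x]$ and weakly type~2 iff $2\tilde f\in\mathbb{Z}[x]$ (the coefficient of $x^{n-i}$ in $\tilde f$ is $a_i/2^i$, and since the $a_i$ are integers the only possible denominators are powers of~$2$, so being $2$-adically integral is the same as being integral). The multiplicativity $\tilde p = \tilde q\,\tilde r$, the monicity bound $v(\tilde q),v(\tilde r)\leqslant 0$, and the additivity $v(\tilde p)=v(\tilde q)+v(\tilde r)$ (Gauss's lemma at the prime~$2$) then give both parts exactly as you argue; in particular the inequality $v(\tilde q)+v(\tilde r)\geqslant -1$ with both terms $\leqslant 0$ indeed forces one term to be $0$ and the other in $\{-1,0\}$. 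One small point worth making explicit: from $p$ monic and $q,r\in\mathbb{Z}[x]$ you only get that the leading coefficients of $q,r$ multiply to~$1$, so after replacing $(q,r)$ by $(-q,-r)$ if necessary you should state that you take both monic, as the definitions of (weakly) type~$2$ in the paper are phrased for monic polynomials. Note also that the paper itself does not supply a proof of this lemma — it is quoted from \cite[Lemma~2.8]{GSY21} — so there is no in-paper argument to compare against; your $2$-adic content formulation is a clean, self-contained route to the result.
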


Denote by $\mathcal S_n$ the set of all Seidel matrices of order $n$.
Given a positive integer $e$, define the set $\mathcal P_{n,e} = \{ \operatorname{Char}_S(x) \mod 2^e \mathbb Z[x] \; : \; S \in \mathcal S_n \}$.
We will require the following upper bound on the cardinality of $\mathcal P_{n,e}$ for odd $n$.

\begin{theorem}[{\cite[Corollary 3.13]{GreavesYatsyna19}}]\label{thm:countCharPolySeidelOdd}
			Let $n$ be an odd integer and $e$ be a positive integer.
			Then the cardinality of $\mathcal P_{n,e}$ is at most $2^{\binom{e-2}{2}+1}$.
\end{theorem}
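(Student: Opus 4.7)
The strategy is to leverage the strong 2-adic constraints on $\operatorname{Char}_{S-I}(x) = \operatorname{Char}_S(x+1)$ for odd $n$. Since the map $p(x) \mapsto p(x+1)$ induces a bijection on $\mathbb{Z}[x]/2^e \mathbb{Z}[x]$, Lemma~\ref{lem:shiftType} lets us replace $\operatorname{Char}_S(x)$ by $q(x) \coloneqq \operatorname{Char}_{S-I}(x)$, which is weakly type 2. Writing $q(x) = x^n + \sum_{i=1}^n a_i x^{n-i}$, the divisibility $2^{i-1} \mid a_i$ holds for all $i \geqslant 1$, and the task reduces to counting how many residue tuples $(a_1, \ldots, a_n)$ modulo $2^e$ are realisable from Seidel matrices of odd order.

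The second step sharpens this divisibility. Since $S-I$ is a $\{\pm 1\}$-matrix with diagonal $-1$, reducing modulo $2$ gives $S - I \equiv J \pmod 2$, where $J$ is the all-ones matrix; hence $\operatorname{Char}_{S-I}(x) \equiv x^{n-1}(x-n) \pmod 2$, a single polynomial for fixed odd $n$. This already gives $|\mathcal P_{n,1}| = 1$. To extend to $e \geqslant 2$, I would write $S - I = J - 2M$ for an integer matrix $M$ determined by the signing of $S$, and expand
\[
q(x) = \det(xI - J + 2M)
\]
using the matrix determinant lemma together with $J = \mathbf 1 \mathbf 1^\transpose$. This expresses $q(x)$ as a rank-one correction of $\det(xI + 2M)$, and expanding in powers of $2$ shows that each coefficient $a_i$ is a polynomial combination of minors of $M$ with controlled 2-adic valuation. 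Combined with Newton's identities relating the $a_i$ to the power sums $\tr(S^k)$, this propagates 2-adic information from a short prefix of the coefficient sequence to all the remaining coefficients.

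The third step is combinatorial: package the resulting congruences into a counting bound. The recursive expression for $a_i \pmod{2^e}$ shows that most coefficients are determined modulo $2^e$ by a short initial segment of data, effectively by minors of $M$ of size at most roughly $e-2$. A careful induction on $e$, tracking how many free binary digits survive at each coefficient after applying the weakly-type-2 refinement and the rank-one reduction, then yields the bound $|\mathcal P_{n,e}| \leqslant 2^{\binom{e-2}{2}+1}$.

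The main obstacle will be the second step: pinning down precisely how much 2-adic information about $M$ each coefficient $a_i$ encodes, and in particular showing that only $\binom{e-2}{2}+1$ binary digits of variation survive in total rather than something weaker like $O(e^2)$. The rank-one structure of $J$ is what makes the collapse sharp, but translating it into a clean coefficient-by-coefficient bound requires a delicate double induction on $e$ and on the coefficient index $i$, which is the technical heart of the argument in \cite{GreavesYatsyna19}.
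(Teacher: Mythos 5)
The paper itself does not prove this theorem; it is quoted verbatim as \cite[Corollary 3.13]{GreavesYatsyna19}, so there is no in-paper argument to compare against. Your proposal is a plausible sketch of the strategy one expects from that reference: pass to $\operatorname{Char}_{S-I}(x)$ so that Lemma~\ref{lem:shiftType} gives the weakly-type-2 divisibilities, observe $S-I\equiv J\pmod 2$ (whence $|\mathcal P_{n,1}|=1$), write $S-I=J-2M$ and exploit the rank-one structure of $J$ via the matrix determinant lemma, and combine with Newton's identities and the parity facts about entries of $S^k$.

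There is, however, a genuine gap: the proposal stops exactly where the theorem's quantitative content lives. You assert that ``a careful induction on $e$, tracking how many free binary digits survive at each coefficient\dots then yields the bound $2^{\binom{e-2}{2}+1}$,'' but you do not set up or carry out that induction, do not specify which coefficients carry which free digits, and do not show that the contributions collapse to the stated exponent rather than to something like $O(e^2)$. You then explicitly concede that this counting ``requires a delicate double induction on $e$ and on the coefficient index $i$, which is the technical heart of the argument in \cite{GreavesYatsyna19}'' --- i.e.\ you defer the decisive step to the very reference the theorem cites. A secondary concern is that the intermediate claim --- that expanding the rank-one correction of $\det(xI+2M)$ in powers of $2$ yields coefficient-by-coefficient $2$-adic valuations that, together with the weakly-type-2 constraints, pin each $a_i$ down to the required number of free bits --- is stated but not justified; the way the minors of $2M$ interact with the series expansion of $\mathbf 1^\transpose(xI+2M)^{-1}\mathbf 1$ is precisely where the sharpness has to come from, and it is not obvious. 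What you have established is qualitative (the mod-$2$ collapse and the existence of congruences among the $a_i$); the bound $2^{\binom{e-2}{2}+1}$ is not derived.
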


\begin{remark}
\label{rem:e7}
It turns out that for reasonably small values of $n$ and $e$ (e.g., $n \leqslant 49$ and $e \leqslant 7$), with $n$ large enough compared to $e$, equality holds in Theorem~\ref{thm:countCharPolySeidelOdd}.
For fixed $n$ and $e$, by randomly generating Seidel matrices of order $n$ and computing their characteristic polynomials modulo $2^e \mathbb Z[x]$, we can obtain a lower bound on the cardinality of $\mathcal P_{n,e}$.
If we find that this lower bound is equal to $2^{\binom{e-2}{2}+1}$ then, by Theorem~\ref{thm:countCharPolySeidelOdd}, we must have obtained all elements of $\mathcal P_{n,e}$.
In particular, for $n = 47$ and $49$, we can explicitly construct the set $\mathcal P_{n,7}$ using this method.
We use $\mathcal P_{49,7}$ in Lemma~\ref{lem:candpols} and $\mathcal P_{47,7}$ is used in Lemma~\ref{lem:last8_fourint} and Lemma~\ref{lem:last8_quad64}.
Note that, for $e$ larger than $7$, it becomes computationally infeasible to compute $\mathcal P_{n,e}$.
\end{remark}

Let $S$ be a Seidel matrix corresponding to an equiangular line system of cardinality $49$ in $\mathbb R^{17}$.
In view of Theorem~\ref{thm:ev_mult}, the next step is to find feasible polynomials for $\phi(x)=\sum_{t=0}^{13} b_t x^{13-t}$ where
$$
    \operatorname{Char}_S(x) = (x+5)^{32}(x-9)^{4} \phi(x).
$$
Obviously $b_0 = 1$.
And we can find $b_1$ and $b_2$, using $\tr S=0$ and $\tr S^2=49\cdot 48 = 2352$ together with Newton's identities: $b_1=-124$, and $b_2=7074$.

By Lemma~\ref{lem:shiftType}, the polynomial
$$
\operatorname{Char}_S(x-1)=(x+4)^{32}(x-10)^{4}\phi(x-1)
$$
is weakly type 2.
By Lemma~\ref{lem:type_factor}, the polynomial $\phi(x-1)$ is also weakly type 2.
Thus, we need to find all totally-real, integer polynomials $\phi(x)$ with the following properties:
\begin{enumerate}[label=(\roman*)]
\item $b_0=1$, $b_1=-124$, and $b_2=7074$,
\item $\phi(x-1)$ is weakly type 2.
\item $(x+5)^{32}(x-9)^{4} \phi(x)$ belongs to a congruence class in $\mathcal{P}_{49,7}$. (See Remark~\ref{rem:e7}.)\label{itm:iv}
\end{enumerate}

In order to enumerate the candidate characteristic polynomial for $S$, we employ an algorithm developed by McKee and Smyth (see \cite[Section 3]{MS04}, \cite[Section 4.3]{GreavesYatsyna19} or \cite[Section 2.3]{GSY21}).
We use this algorithm to generate totally-real (weakly)-type-2 polynomials whose top three coefficients are fixed, which comes up frequently throughout this paper.
Before we present the results of our enumeration of candidate characteristic polynomial for $S$, let us note that the number of totally-real, monic, integer polynomials of fixed degree and whose top three coefficients are also fixed is finite.
Indeed, suppose $p(x) = x^d + c_1x^{d-1} + c_2x^{d-2}+\dots = \prod_{i=1}^d (x-\lambda_i)$ is a totally-real integer polynomial.
Then, using Newton's identities, we have that $\lambda_i^2 \leqslant c_1^2-2c_2$ for each $i \in \{1,\dots,d\}$.
Furthermore, since $c_i$ is a (multivariate) polynomial in $\lambda_1,\dots,\lambda_d$, we can bound $c_i$ by functions of $c_1$ and $c_2$ for each $i \in \{3,\dots,d\}$.

We list the result of our computations in the following lemma.

\begin{lemma}
\label{lem:candpols}
    Let $S$ be a Seidel matrix corresponding to $49$ equiangular lines in $\mathbb R^{17}$.
     Then $\operatorname{Char}_S(x)$ is in one of the sets
        \begin{align*}
        \mathcal A &= \{(x+5)^{32}(x-9)^{16}(x-16)\} \\
        \mathcal B &= \{ (x+5)^{32}(x-8)(x-9)^8 (x^2-20x+95)^4, (x+5)^{32}(x-8)(x-9)^{12}(x^2-22x+113)^2 \} \\
            \mathcal C &= \left \{ \begin{array}{l}
                (x+5)^{32} (x-7) (x-9)^{14} (x-12) (x-15), \\
                (x+5)^{32} (x-7) (x-8) (x-9)^{12} (x-11)^2 (x-15), \\
                (x+5)^{32} (x-9)^{13} (x-11)^2 (x^2-21x+92), \\
                (x+5)^{32} (x-7)^2 (x-8) (x-9)^{10} (x-11)^2 (x-13)^2, \\
                (x+5)^{32} (x-9)^{13} (x-13)^2 (x^2-17x+64), \\
                (x+5)^{32} (x-9)^{12} (x-11)^3 (x^2-19x+72), \\
                (x+5)^{32} (x-7) (x-9)^{10} (x-11)^4 (x^2-19x+76), \\
                (x+5)^{32} (x-4) (x-9)^{10} (x-11)^6
            \end{array}
            \right \}
        \end{align*}
or is one of the 164 polynomials listed in Table~\ref{tab:164polydim17}, one of the 11 polynomials listed in Table~\ref{tab:11polydim17}, or one of the 8 polynomials listed in Table~\ref{tab:8polydim17}.
\end{lemma}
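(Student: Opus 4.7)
The plan is to execute the enumeration scheme laid out in the paragraphs preceding the lemma statement. By Theorem \ref{thm:ev_mult} we may write $\operatorname{Char}_S(x) = (x+5)^{32}(x-9)^4 \phi(x)$, with $\phi$ a monic, totally-real, integer polynomial of degree $13$ all of whose zeros exceed $-5$. The preamble already fixes the top three coefficients of $\phi$ via trace identities, namely $b_0=1$, $b_1=-124$, $b_2=7074$, and, using Lemma \ref{lem:shiftType} together with Lemma \ref{lem:type_factor}, establishes that $\phi(x-1)$ must be weakly type 2. Thus the proof is purely a matter of carrying out a finite enumeration and then applying a sequence of filters.

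First, I would invoke the McKee--Smyth enumeration algorithm, as implemented in \cite{GSY21} and referenced in the preamble, to produce every monic, totally-real, integer polynomial $\phi$ of degree $13$ whose top three coefficients are $(1,-124,7074)$ and whose shift $\phi(x-1)$ satisfies the weakly-type-2 divisibility pattern. Finiteness of this list is guaranteed by the bound $\lambda_i^2 \leqslant b_1^2 - 2b_2 = 124^2 - 2\cdot 7074 = 1228$ on the squares of the zeros coming from Newton's identities, which in turn bounds every remaining coefficient $b_t$ of $\phi$. The algorithm walks a search tree whose nodes correspond to partial coefficient sequences and prunes any branch in which the forced interlacing of roots would require a non-real zero or in which the divisibility constraint from weakly-type-2 fails.

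Second, I would apply two further filters to the output. The first is to discard any $\phi$ possessing a zero $\leqslant -5$, as demanded by Theorem \ref{thm:ev_mult}. The second, and more decisive, filter is condition \ref{itm:iv}: form $(x+5)^{32}(x-9)^4 \phi(x)$, reduce its coefficients modulo $2^7$, and retain only those $\phi$ whose reduction lies in the explicitly computed set $\mathcal P_{49,7}$. By Remark \ref{rem:e7}, $\mathcal P_{49,7}$ has exactly $2^{\binom{5}{2}+1} = 2^{11}$ elements, all of which can be produced by random sampling of Seidel matrices of order $49$; once this count is matched, Theorem \ref{thm:countCharPolySeidelOdd} certifies completeness.

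The main obstacle is computational bulk rather than conceptual difficulty. The enumeration in degree $13$ with these top coefficients is substantially larger than the analogous degree-$10$ and degree-$12$ enumerations carried out in \cite{GSY21} for dimensions $14$ and $16$, so pruning inside the McKee--Smyth tree must be implemented carefully to keep the search tractable; this is precisely the scaling issue flagged in the introduction. After the enumeration, the remaining work is essentially bookkeeping: the survivors are grouped into the structured families $\mathcal A$, $\mathcal B$, $\mathcal C$ (presumably arranged so that later sections can rule them out by common techniques) together with the three tables in the appendix. Correctness of the stated classification therefore reduces to verifying that the union of $\mathcal A$, $\mathcal B$, $\mathcal C$ and the three tables coincides with the set of polynomials surviving the three filters, which is what the Magma and Mathematica scripts in \cite{github} are designed to certify.
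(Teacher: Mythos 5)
Your proposal reproduces the paper's own reduction: fix $b_0,b_1,b_2$ via trace identities, impose the weakly-type-2 condition on $\phi(x-1)$ via Lemmas~\ref{lem:shiftType}--\ref{lem:type_factor}, sieve by membership in $\mathcal{P}_{49,7}$, and enumerate the finitely many totally-real candidates with the McKee--Smyth algorithm, all of which is exactly what the paper does before stating Lemma~\ref{lem:candpols} (and what the Magma/Mathematica code in~\cite{github} certifies). The approach and the filters match; the only content of the paper's "proof" is the reported computation, which you correctly identify.
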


Using a modern PC, it takes about 2 hours and 30 minutes to find all 194 polynomials of Lemma~\ref{lem:candpols}.
The computations were executed in Magma~\cite{magma}, with the output independently verified using  Mathematica~\cite{mathematica}.

In the statement of Lemma~\ref{lem:candpols}, we have put the 194 polynomials of Lemma~\ref{lem:candpols} into six groups.
The grouping is based on the techniques we use to show that they cannot be the characteristic polynomial of any Seidel matrix.
First note that \cite[Remark 5.5.]{GG18} immediately precludes the existence of a Seidel matrix with characteristic polynomial in $\mathcal A$.
The rest of the paper is devoted to ruling out the remaining polynomials from Lemma~\ref{lem:candpols}.
From Section~\ref{sec:icp}, we will gradually introduce necessary conditions for a polynomial to be the characteristic polynomial of a Seidel matrix.
The next polynomials to be ruled out are those from the set $\mathcal B$ (see Lemma~\ref{lem:setB}).
Followed by the polynomials of Table~\ref{tab:164polydim17} (see Lemma~\ref{lem:firstTable}), Table~\ref{tab:11polydim17} (see Lemma~\ref{lem:secondTable}), and Table~\ref{tab:8polydim17} (see Lemma~\ref{lem:thirdTable}).
Since they require the most work, the polynomials in $\mathcal C$, will be dealt with last, in Lemmas~\ref{lem:last8_quad92}, \ref{lem:last8_fiveint}, \ref{lem:last8_sixint_ev15}, \ref{lem:last8_sixint_ev13},  \ref{lem:last8_fourint}, \ref{lem:last8_quad76}, \ref{lem:last8_quad72}, and \ref{lem:last8_quad64} where the sophistication of the techniques required for each polynomial steadily increases.   

\section{Angle vectors and compatibility}
\label{sec:compat}
In this section, we introduce angle vectors and define the notion of compatibility for polynomials.
Let $M$ be a real symmetric matrix of order $n$.
We write $\Lambda(M)$ for the set of distinct eigenvalues of $M$ and define the polynomial $$\operatorname{Min}_M(x) \coloneqq \prod_{\lambda \in \Lambda(M)} (x-\lambda),$$ which is the minimal polynomial of $M$.
For each $\lambda \in \Lambda(M)$, denote by $\mathcal{E}(\lambda)$ the eigenspace of $\lambda$ and let $\{\mathbf{e}_1,\dots,\mathbf{e}_n\}$ be the standard basis of $\mathbb R^n$.
Denote by $P_\lambda$ the orthogonal projection of $\mathbb R^n$ onto $\mathcal{E}(\lambda)$.
For a vector $\mathbf v$, we write $\mathbf v(i)$ to denote its $i$th entry.
\begin{theorem}[Spectral Decomposition Theorem]\label{thm:spec_decomp}
Let $M$ be a real symmetric matrix.
Then
$$
M=\sum_{\lambda \in \Lambda(M)}\lambda P_{\lambda}.
$$
\end{theorem}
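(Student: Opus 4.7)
The plan is to leverage the orthogonal diagonalisation of real symmetric matrices, which gives both the orthogonality of distinct eigenspaces and the decomposition $\mathbb{R}^n = \bigoplus_{\lambda \in \Lambda(M)} \mathcal{E}(\lambda)$ as an internal orthogonal direct sum. Once this is in place, the identity $I = \sum_{\lambda \in \Lambda(M)} P_\lambda$ follows, and applying $M$ to both sides of $\mathbf{v} = \sum_\lambda P_\lambda \mathbf{v}$ should yield the claimed formula after noting that $M P_\lambda = \lambda P_\lambda$.

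More precisely, first I would recall (or cite) the spectral theorem in its basic form: there exists an orthonormal basis $\mathbf{u}_1,\dots,\mathbf{u}_n$ of $\mathbb{R}^n$ consisting of eigenvectors of $M$, with $M \mathbf{u}_k = \mu_k \mathbf{u}_k$ for some real $\mu_k$. Grouping the basis vectors by eigenvalue, I would write $\mathcal{E}(\lambda) = \operatorname{span}\{\mathbf{u}_k : \mu_k = \lambda\}$ for each $\lambda \in \Lambda(M)$. The key fact that eigenvectors for distinct eigenvalues are orthogonal comes from the computation $\lambda \mathbf{u}^\transpose \mathbf{w} = (M\mathbf{u})^\transpose \mathbf{w} = \mathbf{u}^\transpose M \mathbf{w} = \mu \mathbf{u}^\transpose \mathbf{w}$, forcing $\mathbf{u}^\transpose \mathbf{w} = 0$ when $\lambda \ne \mu$. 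This shows that the subspaces $\{\mathcal{E}(\lambda)\}_{\lambda \in \Lambda(M)}$ are pairwise orthogonal, and they span $\mathbb{R}^n$ because the $\mathbf{u}_k$ already do.

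Given this orthogonal decomposition, the projection $P_\lambda$ is precisely $\sum_{k : \mu_k = \lambda} \mathbf{u}_k \mathbf{u}_k^\transpose$. Summing over $\lambda \in \Lambda(M)$ telescopes to $\sum_{k=1}^n \mathbf{u}_k \mathbf{u}_k^\transpose = I$, since $\{\mathbf{u}_k\}$ is an orthonormal basis. I would then compute
\begin{align*}
M = M \cdot I = M \sum_{\lambda \in \Lambda(M)} P_\lambda = \sum_{\lambda \in \Lambda(M)} M P_\lambda = \sum_{\lambda \in \Lambda(M)} \lambda P_\lambda,
\end{align*}
using $M P_\lambda = \lambda P_\lambda$, which itself follows from $M \mathbf{u}_k = \mu_k \mathbf{u}_k = \lambda \mathbf{u}_k$ for each $\mathbf{u}_k$ in the defining sum for $P_\lambda$.

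There is no real obstacle here; the statement is the classical spectral decomposition and the only subtle point is being careful that $P_\lambda$ is genuinely the \emph{orthogonal} projector (which relies on the pairwise orthogonality of the $\mathcal{E}(\lambda)$), rather than some oblique projector along a non-orthogonal complement. Since that orthogonality is immediate from symmetry of $M$, the proof is essentially a one-line consequence of orthogonal diagonalisation once that machinery is invoked.
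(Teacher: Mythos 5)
Your proof is correct and follows the standard textbook derivation. The paper itself provides no proof of this statement, treating it as a classical fact (the Spectral Decomposition Theorem for real symmetric matrices), so there is no argument in the paper to compare against; your write-up supplies exactly the expected details — pairwise orthogonality of eigenspaces via symmetry, the resolution of the identity $I=\sum_\lambda P_\lambda$, and the computation $MP_\lambda=\lambda P_\lambda$.
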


Denote by $\bm \alpha_\lambda$ the \textbf{angle vector} for $\lambda \in \Lambda(M)$, that is, for each $i \in \{1,\dots,n\}$,
\[
    \bm \alpha_\lambda(i) = ||P_{\lambda} \mathbf{e}_i||.
\]
Define $$\operatorname{Quo}_M(x) \coloneqq \operatorname{Char}_M(x) / \operatorname{Min}_M(x),$$ and denote by $M[i]$ the principal submatrix of $M$ obtained by deleting its $i$th row and column.
		
		\begin{proposition}[See {\cite[(4.2.8)]{crs97}} or \cite{GoMK}]\label{prop:submatrix}
			Let $M$ be a real symmetric matrix of order $n$.
			Then, for each $i \in \{1,\dots,n\}$, we have
		$$\operatorname{Char}_{M[i]}(x)=\operatorname{Char}_M(x)\sum_{\lambda \in \Lambda(M)}\dfrac{\bm \alpha^2_\lambda(i)}{x-\lambda}.$$
		\end{proposition}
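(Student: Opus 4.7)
The plan is to compute the $(i,i)$-entry of the resolvent $(xI-M)^{-1}$ in two different ways and compare. On one hand, cofactor expansion (Cramer's rule) expresses this entry as $\operatorname{Char}_{M[i]}(x)/\operatorname{Char}_M(x)$, since deleting the $i$th row and column of $xI-M$ produces $xI-M[i]$ up to the usual sign convention. On the other hand, Theorem~\ref{thm:spec_decomp} together with the identity $\sum_{\lambda \in \Lambda(M)} P_\lambda = I$ gives the spectral resolution of the resolvent, and its $(i,i)$-entry can be read off in terms of the angle vectors.

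More concretely, first I would note that $xI - M = \sum_{\lambda \in \Lambda(M)}(x-\lambda)P_\lambda$, and since the projectors $P_\lambda$ are pairwise orthogonal idempotents summing to $I$, it follows that
$$
(xI-M)^{-1} = \sum_{\lambda \in \Lambda(M)} \frac{1}{x-\lambda}P_\lambda,
$$
regarded as an identity of rational functions of $x$ (valid at every $x \notin \Lambda(M)$). Next, using that $P_\lambda$ is an orthogonal projection, hence symmetric and idempotent, the $(i,i)$-entry of $P_\lambda$ equals
$$
\mathbf{e}_i^\transpose P_\lambda \mathbf{e}_i = \mathbf{e}_i^\transpose P_\lambda^\transpose P_\lambda \mathbf{e}_i = \|P_\lambda \mathbf{e}_i\|^2 = \bm\alpha_\lambda^2(i).
$$
Summing over $\lambda$, the $(i,i)$-entry of the resolvent equals $\sum_{\lambda \in \Lambda(M)}\bm\alpha_\lambda^2(i)/(x-\lambda)$.

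Finally, I would equate this with the cofactor/Cramer expression $\operatorname{Char}_{M[i]}(x)/\operatorname{Char}_M(x)$ and multiply through by $\operatorname{Char}_M(x)$ to obtain the claimed identity. Both sides are then rational functions (in fact, polynomials) in $x$ that agree on the complement of $\Lambda(M)$, hence everywhere.

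There is no real obstacle here: the only subtlety is making sure the manipulations with $(xI-M)^{-1}$ are done at the level of rational functions (or at any point $x \notin \Lambda(M)$, then extended by polynomial identity), and ensuring the sign conventions in Cramer's rule are correctly tracked so that $\operatorname{Char}_{M[i]}(x) = \det(xI - M[i])$ appears with the correct sign in the numerator. Both of these are routine.
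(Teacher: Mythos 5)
Your proof is correct. Note that the paper does not prove Proposition~\ref{prop:submatrix} at all---it simply cites Cvetkovi\'c--Rowlinson--Simi\'c and Godsil--McKay---so there is no in-paper argument to compare against. Your resolvent argument is the standard one: express $(xI-M)^{-1}_{ii}$ once via Cramer's rule as $\operatorname{Char}_{M[i]}(x)/\operatorname{Char}_M(x)$ (the diagonal cofactor carries no sign since $(-1)^{2i}=1$, and deleting the $i$th row and column of $xI-M$ does yield $xI-M[i]$), and once via the spectral resolution $(xI-M)^{-1}=\sum_{\lambda}(x-\lambda)^{-1}P_\lambda$, using $\mathbf e_i^\transpose P_\lambda \mathbf e_i = \|P_\lambda \mathbf e_i\|^2 = \bm\alpha_\lambda^2(i)$ because $P_\lambda$ is a symmetric idempotent. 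Equating and clearing denominators gives the identity; your remark that both sides, being polynomials, agree everywhere once they agree off $\Lambda(M)$ is the right closing observation. This is essentially the proof one finds in the cited sources.
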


Cauchy's interlacing theorem, below, provides bounds for the eigenvalues of principal submatrices of $M$.

		\begin{theorem}[\cite{Cau:Interlace,Fisk:Interlace05,Hwang:Interlace04}]\label{thm:cauchyinterlace}
			Let $M$ be a real symmetric matrix having eigenvalues $\lambda_1 \leqslant \lambda_2 \leqslant \dots \leqslant \lambda_n$ and suppose $M[i]$, for some $i \in \{1,\dots,n \}$, has eigenvalues $\mu_1 \leqslant \mu_2 \leqslant \dots \leqslant \mu_{n-1}$.
			Then
			\[
				\lambda_1 \leqslant \mu_1 \leqslant \lambda_2 \leqslant \dots \leqslant \lambda_{n-1} \leqslant \mu_{n-1} \leqslant \lambda_n.
			\]
		\end{theorem}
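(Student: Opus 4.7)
The plan is to derive both interlacing inequalities from the Courant--Fischer min-max characterization of eigenvalues. Recall that for a real symmetric matrix $A$ of order $m$ with eigenvalues $\nu_1 \leqslant \cdots \leqslant \nu_m$, one has $\nu_k = \min_{\dim V = k} \max_{0 \ne x \in V} (x^\transpose A x)/(x^\transpose x)$, where the minimum ranges over $k$-dimensional subspaces of $\mathbb R^m$. The essential observation is that if $\iota \colon \mathbb R^{n-1} \hookrightarrow \mathbb R^n$ denotes the isometric embedding that inserts a zero in the $i$th coordinate, and $W \coloneqq \iota(\mathbb R^{n-1}) = \{v \in \mathbb R^n : v(i) = 0\}$, then $\iota^\transpose M \iota = M[i]$, so the Rayleigh quotients of $M[i]$ on $\mathbb R^{n-1}$ correspond, under $\iota$, to the Rayleigh quotients of $M$ restricted to $W$.

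For the inequality $\lambda_k \leqslant \mu_k$, I would apply min-max to $M[i]$ to obtain a $k$-dimensional subspace $U \subseteq \mathbb R^{n-1}$ on which the maximum Rayleigh quotient equals $\mu_k$; then $\iota(U) \subseteq \mathbb R^n$ is a $k$-dimensional subspace on which the maximum Rayleigh quotient of $M$ is also $\mu_k$, so min-max applied to $M$ forces $\lambda_k \leqslant \mu_k$. Conversely, for $\mu_k \leqslant \lambda_{k+1}$, I would choose a $(k+1)$-dimensional subspace $V \subseteq \mathbb R^n$ witnessing $\lambda_{k+1}$ in the min-max formula for $M$; since $W$ has codimension one, the dimension count $\dim(V \cap W) \geqslant \dim V + \dim W - n = k$ gives a $k$-dimensional subspace $V' \subseteq V \cap W$. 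Pulling $V'$ back by $\iota^{-1}$ yields a $k$-dimensional subspace of $\mathbb R^{n-1}$ on which the maximum Rayleigh quotient of $M[i]$ is at most $\lambda_{k+1}$, and so the min-max formula for $M[i]$ gives $\mu_k \leqslant \lambda_{k+1}$.

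There is no serious obstacle here: once the identification of $M[i]$ with the compression of $M$ to $W$ is in hand, the argument is the standard two-line application of Courant--Fischer, and the only care required is the codimension-one dimension count. As an alternative route, one could exploit Proposition~\ref{prop:submatrix}: the rational function $\operatorname{Char}_{M[i]}(x)/\operatorname{Char}_M(x) = \sum_{\lambda \in \Lambda(M)} \bm \alpha_\lambda^2(i)/(x-\lambda)$ has non-negative residues, so between two consecutive distinct eigenvalues of $M$ it must cross zero, producing a root of $\operatorname{Char}_{M[i]}$ in every such open interval; a small perturbation would then be needed to handle repeated eigenvalues. I would favour the min-max proof because it treats repeated eigenvalues uniformly and avoids any case analysis on the multiplicities.
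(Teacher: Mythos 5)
The paper states Cauchy's interlacing theorem without proof, deferring to the cited references (Cauchy, Fisk, Hwang), so there is no internal proof to compare against. Your min-max argument is correct and complete: the key identification $\iota^\transpose M \iota = M[i]$ is right, the first half ($\lambda_k \leqslant \mu_k$) correctly pushes a $k$-dimensional witness for $M[i]$ forward through $\iota$, and the second half ($\mu_k \leqslant \lambda_{k+1}$) correctly uses the codimension-one dimension count $\dim(V \cap W) \geqslant k$ to pull a witness back. This is essentially the proof in the Hwang reference cited by the paper.

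Your alternative sketch via Proposition~\ref{prop:submatrix} (partial fractions with non-negative residues forcing a sign change of $\operatorname{Char}_{M[i]}/\operatorname{Char}_M$ between consecutive distinct eigenvalues of $M$) is also a genuine route and is closer in spirit to the Fisk reference. You are right to flag that it needs more care at repeated eigenvalues: the clean way to finish along those lines is not a perturbation of $M$ but a direct polynomial argument, noting that if $\lambda$ is an eigenvalue of $M$ with multiplicity $m \geqslant 2$ then $(x-\lambda)^{m-1}$ divides $\operatorname{Char}_{M[i]}(x)$ (visible from the formula $\operatorname{Char}_{M[i]}(x) = \operatorname{Char}_M(x)\sum_\lambda \bm\alpha_\lambda^2(i)/(x-\lambda)$), which supplies the required roots at the repeated eigenvalues; the sign-change argument then handles the gaps between distinct eigenvalues. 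Your preference for the min-max proof is reasonable, since it sidesteps this case analysis entirely.
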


	    Given $e \in \mathbb N$ and polynomials $f(x) = \prod_{i=0}^{e}(x-\lambda_i)$ and $g(x) = \prod_{i=1}^{e}(x-\mu_i)$ such that $\lambda_0 \leqslant \lambda_1 \leqslant \dots \leqslant \lambda_e$, and $\mu_1 \leqslant \mu_2 \leqslant \dots \leqslant \mu_e$, we say that $g$ \textbf{interlaces} $f$ if $\lambda_0 \leqslant \mu_1 \leqslant \lambda_1 \leqslant \dots \leqslant \mu_e \leqslant \lambda_e$.
Note that, if a polynomial $\mathfrak f(x)$ interlaces $\operatorname{Char}_{M}(x)$ then we can write $\mathfrak f(x) = \operatorname{Quo}_M(x)f(x)$, where $f(x)$ is a monic integer polynomial that interlaces $\operatorname{Min}_M(x)$.	

Given a polynomial $p(x) \in \mathbb Q[x]$, we denote its derivative by $p^\prime(x)$.
In the next result, we give a convenient expression for the entries of an angle vector.

\begin{lemma} \label{lem:anglesq_formula}
Let $M$ be a real symmetric matrix of order $n$.
Let $i \in \{1, \dots, n\}$ and suppose that $\operatorname{Char}_{M[i]}(x) = \operatorname{Quo}_M(x) \cdot f_i(x)$ for some polynomial $f_i(x) \in \mathbb{Z}[x]$.
Then, for each $\lambda \in \Lambda(M)$, we have
\begin{align*}
\bm \alpha_{\lambda}(i) = \sqrt{\frac{f_i(\lambda)}{\operatorname{Min}_M^\prime (\lambda)}}.
\end{align*}
\end{lemma}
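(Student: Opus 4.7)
The plan is to derive the formula directly from Proposition~\ref{prop:submatrix} by isolating the contribution of the eigenvalue $\lambda$ via a limiting argument. Starting from
$$\operatorname{Char}_{M[i]}(x)=\operatorname{Char}_M(x)\sum_{\mu \in \Lambda(M)}\dfrac{\bm \alpha^2_\mu(i)}{x-\mu},$$
I would substitute the factorisations $\operatorname{Char}_{M[i]}(x) = \operatorname{Quo}_M(x)\,f_i(x)$ (given in the hypothesis) and $\operatorname{Char}_M(x) = \operatorname{Min}_M(x)\,\operatorname{Quo}_M(x)$ (by the definition of $\operatorname{Quo}_M$). Since $\operatorname{Quo}_M(x)$ is a nonzero polynomial, cancelling it from both sides (as an identity of rational functions) yields
$$f_i(x) = \operatorname{Min}_M(x)\sum_{\mu \in \Lambda(M)}\dfrac{\bm \alpha^2_\mu(i)}{x-\mu}.$$

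Next, I would fix $\lambda \in \Lambda(M)$ and evaluate the identity at $x = \lambda$. The key observation is that $\operatorname{Min}_M$ has only simple roots (one per element of $\Lambda(M)$), so for each $\mu \in \Lambda(M) \setminus \{\lambda\}$ the quantity $\operatorname{Min}_M(\lambda)/(\lambda - \mu) = 0$ because $\operatorname{Min}_M(\lambda) = 0$. For the term corresponding to $\mu = \lambda$, writing $\operatorname{Min}_M(x) = (x-\lambda)q(x)$ gives $\operatorname{Min}_M(x)/(x-\lambda) = q(x)$, and $q(\lambda) = \operatorname{Min}_M^\prime(\lambda)$ by the product rule. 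Hence, after taking the limit $x \to \lambda$, the sum collapses to a single term and we obtain
$$f_i(\lambda) = \bm \alpha^2_\lambda(i)\,\operatorname{Min}_M^\prime(\lambda).$$

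Since $\operatorname{Min}_M^\prime(\lambda) \neq 0$ (roots are simple), I can solve for $\bm\alpha^2_\lambda(i)$ and take square roots, using $\bm\alpha_\lambda(i) = \|P_\lambda \mathbf e_i\| \geqslant 0$ to select the positive root, yielding the claimed formula. There is no real obstacle; the only subtlety is that the cancellation of $\operatorname{Quo}_M(x)$ and the evaluation at $x = \lambda$ happen in the polynomial ring (or ring of rational functions), so one should either view the identity as an equality of polynomials in $\mathbb{Q}[x]$ and then substitute, or pass to a limit, both of which are routine.
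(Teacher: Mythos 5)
Your proposal is correct and follows essentially the same route as the paper: divide the identity from Proposition~\ref{prop:submatrix} by $\operatorname{Quo}_M(x)$, evaluate at $x=\lambda$ so only the $\mu=\lambda$ term survives, and identify the surviving product with $\operatorname{Min}_M^\prime(\lambda)$. The only cosmetic difference is that you justify $\operatorname{Min}_M(x)/(x-\lambda)\big|_{x=\lambda}=\operatorname{Min}_M^\prime(\lambda)$ via the factorisation $\operatorname{Min}_M=(x-\lambda)q(x)$ and the product rule, whereas the paper expands $\operatorname{Min}_M^\prime$ as a sum of products over $\Lambda(M)$; these are the same calculation.
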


\begin{proof}
By Proposition~\ref{prop:submatrix},
\begin{align*}
\operatorname{Char}_{M[i]}(x) = \operatorname{Char}_M(x) \sum_{\lambda \in \Lambda(M)} \frac{\bm \alpha_{\lambda}^2(i)}{x-\lambda}.
\end{align*}
Dividing both sides by $\operatorname{Quo}_M(x)$, we obtain
\begin{align*}
f_i(x) = \operatorname{Min}_M(x) \sum_{\lambda \in \Lambda(M)} \frac{\bm \alpha_{\lambda}^2(i)}{x-\lambda}.
\end{align*}
Thus, for each $\lambda \in \Lambda(M)$, we have
\begin{align*}
f_i(\lambda) = \bm \alpha_{\lambda}^2(i) \prod_{\mu \in \Lambda(M) \backslash \{\lambda\} } (\lambda-\mu).  
\end{align*}
On the other hand, we also have
\begin{align*}
\operatorname{Min}_M^\prime(x) = \sum_{\lambda \in \Lambda(M)} \prod_{\mu \in \Lambda(M) \backslash \{\lambda\} } (x-\mu).
\end{align*}
Therefore, $\bm \alpha_{\lambda}^2(i) = f_i(\lambda)/\operatorname{Min}_M^\prime (\lambda)$.
The statement of the lemma follows since, by definition, $\bm \alpha_{\lambda}(i)$ is nonnegative.
\end{proof}

	The next result generalises the fact that a unit eigenvector of a simple eigenvalue $\lambda$ can be expressed in terms of the angle vector of $\lambda$.
		See \cite{tao} for a survey on a related result.

\begin{lemma} \label{lem:anglesq_sqsum}
Let $M$ be a real symmetric matrix of order $n$ and let $\lambda$ be an eigenvalue of $M$ of multiplicity $e$.
Let $\mathbf{u}_1, \mathbf{u}_2, \dots, \mathbf{u}_{e}$ be an orthonormal basis for the eigenspace $\mathcal{E} (\lambda)$.
For all $i \in \{1,\dots,n\}$, we have that
\begin{align*}
\bm \alpha_{\lambda}^2(i) = \sum_{k=1}^{e} \mathbf{u}_k^2 (i).
\end{align*}
\end{lemma}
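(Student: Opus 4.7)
The plan is to reduce everything to the standard matrix description of an orthogonal projection onto an eigenspace in terms of an orthonormal basis. Let $U$ denote the $n\times e$ matrix whose $k$-th column is $\mathbf{u}_k$. Since $\mathbf{u}_1,\dots,\mathbf{u}_e$ form an orthonormal basis of $\mathcal{E}(\lambda)$, standard linear algebra gives $P_\lambda = UU^\transpose$. This is the one identity doing all the work: for any $\mathbf{v}\in\mathbb{R}^n$, expanding $UU^\transpose\mathbf{v}$ yields $\sum_{k=1}^{e}(\mathbf{u}_k^\transpose\mathbf{v})\mathbf{u}_k$, which is the unique expression for the orthogonal projection of $\mathbf{v}$ onto $\mathcal{E}(\lambda)$ in the given basis.

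From $P_\lambda = UU^\transpose$, I would then compute
\begin{align*}
\bm\alpha_\lambda^2(i) \;=\; \|P_\lambda \mathbf{e}_i\|^2
\;=\; \mathbf{e}_i^\transpose P_\lambda^\transpose P_\lambda \mathbf{e}_i
\;=\; \mathbf{e}_i^\transpose P_\lambda \mathbf{e}_i,
\end{align*}
using that $P_\lambda$ is symmetric ($P_\lambda^\transpose = P_\lambda$) and idempotent ($P_\lambda^2 = P_\lambda$), both of which follow immediately from $U^\transpose U = I_e$. The last expression is just the $(i,i)$-entry of $P_\lambda = UU^\transpose$, which equals $\sum_{k=1}^{e}\mathbf{u}_k(i)^2$. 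Combining these two displayed chains of equalities gives exactly the claimed formula.

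There is no real obstacle here; the lemma is essentially a bookkeeping statement about projections. The only thing I would be careful to state explicitly is why $P_\lambda = UU^\transpose$ is the correct expression for the orthogonal projection onto $\mathcal{E}(\lambda)$ (namely that an orthonormal basis of the target subspace forces $UU^\transpose$ to fix each $\mathbf{u}_k$, annihilate $\mathcal{E}(\lambda)^\perp$, and be symmetric), but this is standard enough to quote in one line. The entire proof should run in a few lines.
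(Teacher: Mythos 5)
Your proof is correct and takes essentially the same route as the paper: both begin from $P_\lambda = UU^\transpose = \sum_k \mathbf{u}_k\mathbf{u}_k^\transpose$ and extract the $i$th diagonal entry. The only cosmetic difference is the middle step — the paper expands $P_\lambda\mathbf{e}_i$ as $\sum_k \mathbf{u}_k(i)\mathbf{u}_k$ and computes $\|P_\lambda\mathbf{e}_i\|^2$ directly using orthonormality, whereas you invoke idempotence and symmetry to reduce $\|P_\lambda\mathbf{e}_i\|^2$ to $(P_\lambda)_{i,i}$; both are one-line manipulations of the same identity.
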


\begin{proof}
Firstly, we can write
\begin{align*}
P_{\lambda} = \mathbf{u}_1 \mathbf{u}_1^\transpose + \dots + \mathbf{u}_{e} \mathbf{u}_{e}^\transpose.
\end{align*}
Hence, for each $i \in \{1,\dots,n\}$ we have
\begin{align*}
P_{\lambda} \mathbf{e}_i = \mathbf{u}_1(i) \mathbf{u}_1 + \dots +  \mathbf{u}_{e}(i)\mathbf{u}_{e}.
\end{align*}
Therefore,
\begin{align*}
    \bm \alpha_{\lambda}^2(i) = \lVert P_{\lambda} \mathbf{e}_i \rVert^2 = \langle P_{\lambda} \mathbf{e}_i, P_{\lambda} \mathbf{e}_i \rangle = \sum_{k=1}^{e} \sum_{l=1}^{e} \mathbf{u}_k(i) \mathbf{u}_l(i) \langle \mathbf{u}_k, \mathbf{u}_l \rangle = \sum_{k=1}^{e} \mathbf{u}_k^2 (i)
\end{align*}
since $\mathbf{u}_1, \dots, \mathbf{u}_{e}$ are orthonormal.
\end{proof}

Two Seidel matrices $S_1$ and $S_2$ are called \textbf{switching-equivalent} if there exists a diagonal $\{\pm 1\}$-matrix $D$ such that $S_2 = DS_1D$.
Note that switching-equivalent matrices are similar and hence have the same characteristic polynomials.
By Lemma~\ref{lem:anglesq_formula}, switching-equivalent matrices also share the same angle vector for each eigenvalue.
Furthermore, a Seidel matrix $S$ can be ``switched''  (conjugated by a diagonal matrix of $\pm 1$s) so that the angle vector of a simple eigenvalue becomes a unit eigenvector.
\begin{corollary} \label{cor:alpha_eigenvec}
Let $S$ be a Seidel matrix and let $\lambda$ be a simple eigenvalue of $S$, i.e., with multiplicity $1$.
Then there exists a Seidel matrix $\hat{S}$ switching-equivalent to $S$ such that 
\[
\hat{S} \bm \alpha_\lambda = \lambda \bm \alpha_\lambda.
\]
\end{corollary}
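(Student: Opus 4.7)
The plan is to exploit the fact that for a simple eigenvalue, Lemma~\ref{lem:anglesq_sqsum} identifies the entries of the angle vector $\bm\alpha_\lambda$ with the absolute values of the entries of a unit eigenvector. Switching by a diagonal $\{\pm 1\}$-matrix then turns this unit eigenvector into $\bm\alpha_\lambda$ itself while preserving the eigenvalue equation.

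In detail, I would begin by choosing a unit eigenvector $\mathbf{u}$ for $\lambda$; since $\lambda$ is simple, $\{\mathbf{u}\}$ is an orthonormal basis for $\mathcal{E}(\lambda)$. Applying Lemma~\ref{lem:anglesq_sqsum} with $e=1$ gives $\bm\alpha_\lambda^2(i)=\mathbf{u}^2(i)$ for every $i\in\{1,\dots,n\}$, so $\bm\alpha_\lambda(i)=|\mathbf{u}(i)|$. Next, I would define a diagonal $\{\pm 1\}$-matrix $D$ by setting $D_{ii}=1$ if $\mathbf{u}(i)\geqslant 0$ and $D_{ii}=-1$ otherwise; this choice is made precisely so that $D\mathbf{u}=\bm\alpha_\lambda$ entrywise, where the convention for the zero entries is immaterial because both sides vanish there.

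Finally, I would set $\hat{S}\coloneqq DSD$, which is switching-equivalent to $S$ by definition. Since $D$ is involutive, multiplying the eigenvalue identity $S\mathbf{u}=\lambda\mathbf{u}$ on the left by $D$ yields
\[
\hat{S}\bm\alpha_\lambda = DSD(D\mathbf{u}) = DS\mathbf{u} = \lambda(D\mathbf{u}) = \lambda\bm\alpha_\lambda,
\]
which is the desired conclusion. There is essentially no obstacle here beyond bookkeeping: the only subtlety is fixing the sign convention on indices $i$ with $\mathbf{u}(i)=0$, and as noted above this has no effect on either $D\mathbf{u}$ or on the switching-equivalence class of $\hat{S}$. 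So the argument is a direct consequence of Lemma~\ref{lem:anglesq_sqsum} combined with the observation that switching by $D$ maps eigenvectors of $S$ to eigenvectors of $DSD$ for the same eigenvalue.
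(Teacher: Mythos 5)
Your proof is correct and is essentially the argument the paper has in mind: the paper states this corollary without a formal proof, but the immediately preceding paragraph sketches exactly this reasoning (a unit eigenvector for a simple eigenvalue has $|\mathbf{u}(i)| = \bm\alpha_\lambda(i)$ by Lemma~\ref{lem:anglesq_sqsum}, and conjugating by the sign-matrix $D$ turns $\mathbf{u}$ into $\bm\alpha_\lambda$ while preserving the eigenrelation). Your write-up merely makes this explicit, including the harmless indeterminacy of sign choices at zero entries, so there is no divergence from the paper's approach.
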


Before we introduce compatibility for polynomials, we need the next result.
Denote by $\Sigma(M)$ the set of simple eigenvalues of $M$ and define the polynomial $$\operatorname{Sim}_M(x) \coloneqq \prod_{\lambda \in \Sigma(M)} (x-\lambda).$$
Note that $\operatorname{Sim}_M(x)$ is in $\mathbb Z[x]$.
    
\begin{lemma}[cf. {\cite[Lemma 4.3]{GSY21}}]\label{lem:integrality}
Let $M$ be an integer symmetric matrix of order $n$, let $\xi(x) \in \mathbb Z[x]$ be a factor of $\operatorname{Sim}_M(x)$, and set $q(x) = \operatorname{Min}_M(x)/\xi(x)$.
Denote by $\Xi$ the set of zeros of $\xi(x)$.
Then for all $i,j \in \{1,\dots,n\}$, there exists $\bm \delta \in \{ \pm 1 \}^{\Xi}$ such that
    \begin{equation*}
        \sum_{\lambda \in \Xi}  q(\lambda)\bm \delta(\lambda) \bm \alpha_\lambda(i) \bm \alpha_\lambda(j) \in \mathbb {Z}.
    \end{equation*}
\end{lemma}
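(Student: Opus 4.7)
The plan is to evaluate the polynomial $q$ on the matrix $M$ itself and extract the identity from the $(i,j)$-entry. First I would verify that $q(x) \in \mathbb{Z}[x]$. Because $M$ is an integer symmetric matrix, its minimal polynomial $\operatorname{Min}_M(x) = \prod_{\lambda \in \Lambda(M)}(x - \lambda)$ is monic with integer coefficients (all eigenvalues are algebraic integers, and the product is Galois-stable and minimal for $M$). Since $\xi \mid \operatorname{Sim}_M \mid \operatorname{Min}_M$ in $\mathbb{R}[x]$ and $\xi$ is a monic polynomial in $\mathbb{Z}[x]$, polynomial long division (or Gauss's lemma) shows $q = \operatorname{Min}_M / \xi \in \mathbb{Z}[x]$. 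Consequently $q(M)$ is an integer matrix, so every entry of $q(M)$ lies in $\mathbb{Z}$.

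Next I would invoke the Spectral Decomposition Theorem (Theorem~\ref{thm:spec_decomp}) to write
\[
q(M) = \sum_{\lambda \in \Lambda(M)} q(\lambda) P_\lambda.
\]
The zeros of $q$ are exactly the eigenvalues of $M$ not in $\Xi$, so $q(\lambda) = 0$ for all $\lambda \in \Lambda(M) \setminus \Xi$, and the sum collapses to $q(M) = \sum_{\lambda \in \Xi} q(\lambda) P_\lambda$.

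Now I would use the fact that every $\lambda \in \Xi \subseteq \Sigma(M)$ is simple. Then $\mathcal{E}(\lambda)$ is one-dimensional, so for any choice of unit eigenvector $\mathbf{u}_\lambda$ we have $P_\lambda = \mathbf{u}_\lambda \mathbf{u}_\lambda^\transpose$, and in particular $(P_\lambda)_{ij} = \mathbf{u}_\lambda(i)\,\mathbf{u}_\lambda(j)$. By Lemma~\ref{lem:anglesq_sqsum} applied to the one-dimensional eigenspace, $\bm\alpha_\lambda(i) = |\mathbf{u}_\lambda(i)|$, so for each $\lambda \in \Xi$ there is a sign $\bm\delta(\lambda) \in \{\pm 1\}$ with $\mathbf{u}_\lambda(i)\,\mathbf{u}_\lambda(j) = \bm\delta(\lambda)\,\bm\alpha_\lambda(i)\,\bm\alpha_\lambda(j)$ (the choice being free when either factor vanishes). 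Reading off the $(i,j)$-entry now gives
\[
(q(M))_{ij} = \sum_{\lambda \in \Xi} q(\lambda)\,\bm\delta(\lambda)\,\bm\alpha_\lambda(i)\,\bm\alpha_\lambda(j),
\]
and the left-hand side is an integer by the first step.

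I do not anticipate a serious obstacle here: the only non-cosmetic point is confirming that $q \in \mathbb{Z}[x]$, which reduces to the standard fact that the minimal polynomial of an integer symmetric matrix has integer coefficients. Everything else is a direct bookkeeping exercise combining the spectral decomposition with the rank-one structure of the projectors onto simple eigenspaces.
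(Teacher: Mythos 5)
Your proof is correct and follows essentially the same route as the paper: apply the spectral decomposition to $q(M)$, observe that $q$ vanishes on $\Lambda(M)\setminus\Xi$ and that each $\lambda\in\Xi$ is simple so $P_\lambda=\mathbf u_\lambda\mathbf u_\lambda^\transpose$, relate $\mathbf u_\lambda(i)$ to $\bm\alpha_\lambda(i)$ via Lemma~\ref{lem:anglesq_sqsum}, and read off the integer $(i,j)$-entry. The only difference is cosmetic: you make explicit the (correct) check that $q\in\mathbb{Z}[x]$ so that $q(M)$ has integer entries, which the paper takes as implicit.
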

\begin{proof}
For each $\lambda \in \Xi$, denote by $\mathbf u_\lambda$ a unit eigenvector for $\lambda$.
By Theorem~\ref{thm:spec_decomp}, we have 
$$q(M) =  \sum_{\lambda \in \Xi} q(\lambda)\mathbf u_\lambda \mathbf u_\lambda^\transpose.$$
The lemma then follows from Lemma~\ref{lem:anglesq_sqsum} together with the fact that $q(M)$ is an integer matrix. 
\end{proof}

By Lemma~\ref{lem:anglesq_formula}, there is a correspondence between the characteristic polynomial $\operatorname{Char}_{M[i]}(x)$ of a principal submatrix of $M$ and the set of angle vector entries $\{\bm \alpha_\lambda(i) \; : \; \lambda \in \Lambda(M) \}$.
Let $\mathfrak f(x) = \operatorname{Quo}_M(x) f(x)$ be a monic integer polynomial that interlaces $\operatorname{Char}_M(x)$.
For each $\lambda \in \Lambda(M)$, define the \textbf{angle} $\alpha_\lambda(\mathfrak f)$ of $\mathfrak f(x)$ with respect to $\lambda$ as
\[
\alpha_\lambda(\mathfrak f) \coloneqq \sqrt{\frac{f(\lambda)}{\operatorname{Min}_M^\prime(\lambda)}}.
\]
Note that, since $\mathfrak f(x)$ interlaces $\operatorname{Char}_M(x)$, we always have $f(\lambda)/\operatorname{Min}_M^\prime(\lambda) \geqslant 0$ when $\lambda \in \Lambda(M)$.
Furthermore, by Lemma~\ref{lem:anglesq_formula}, we have $\alpha_\lambda(\operatorname{Char}_{M[i]}) = \bm \alpha_\lambda(i)$ for all $\lambda \in \Lambda(M)$ and $i \in \{1,\dots,n\}$.
Now we can introduce the notion of \textit{compatibility} for polynomials.  
Let $\mathfrak g(x) \ne \mathfrak f(x)$ be a monic integer polynomial that interlaces $\operatorname{Char}_M(x)$.
In view of Lemma~\ref{lem:integrality}, given a factor $\xi(x) \in \mathbb Z[x]$ of $\operatorname{Sim}_M(x)$ with zero-set $\Xi$, we say that the polynomials $\mathfrak f(x)$ and $\mathfrak g(x)$ are $\xi$-\textbf{compatible} with respect to $M$ if there exists $\bm \delta \in \{\pm 1 \}^{\Xi}$ such that 
\begin{equation}
\label{eqn:integrality}
    \sum_{\lambda \in \Xi}  q(\lambda)\bm \delta(\lambda) \alpha_{\lambda}(\mathfrak f)\alpha_{\lambda}(\mathfrak g) \in \mathbb {Q}, \text{ where } q(x) = \operatorname{Min}_M(x)/\xi(x).
\end{equation}
If $\mathfrak f(x)$ and $\mathfrak g(x)$ are $\xi$-compatible with respect to $M$ for every factor $\xi(x) \in \mathbb{Z}[x]$ of $\operatorname{Sim}_M(x)$ then we say that $\mathfrak f(x)$ and $\mathfrak g(x)$ are \textbf{compatible} with respect to $M$.
When it is clear which matrix compatibility is taken with respect to, we merely say that $\mathfrak f(x)$ and $\mathfrak g(x)$ are compatible.
Note that our definition of compatibility differs from the corresponding definition in \cite{GSY21}.

We can strengthen the notion of compatibility for Seidel matrices in the following way.
Let $q(x) \in \mathbb Z[x]$ and let $S$ be a Seidel matrix of order $n$ odd.
Then, by \cite[Lemma 2.1]{GG18}, we have that the off-diagonal entries of $S^k$ are odd for all $k \geqslant 1$.
It follows that the parity of each off-diagonal entry of $q(S)$ is equal to the parity of $q(1)+q(0)$.
Let $\mathfrak f(x)$ and $\mathfrak g(x)$ be distinct monic integer polynomials that interlace $\operatorname{Char}_S(x)$.
Given a factor $\xi(x) \in \mathbb Z[x]$ of $\operatorname{Sim}_S(x)$ with zero-set $\Xi$, we say that the polynomials $\mathfrak f(x)$ and $\mathfrak g(x)$ are $\xi$-\textbf{Seidel-compatible} with respect to $S$ if there exists $\bm \delta \in \{\pm 1 \}^{\Xi}$ such that 
\begin{equation}
\label{eqn:paritySeidel}
    \sum_{\lambda \in \Xi}  q(\lambda)\bm \delta(\lambda) \alpha_{\lambda}(\mathfrak f)\alpha_{\lambda}(\mathfrak g) \equiv q(1)+q(0) \pmod 2, \text{ where } q(x) = \operatorname{Min}_S(x)/\xi(x).
\end{equation}
       
If $\mathfrak f(x)$ and $\mathfrak g(x)$ are $\xi$-Seidel-compatible with respect to $S$ for every factor $\xi(x) \in \mathbb{Z}[x]$ of $\operatorname{Sim}_S(x)$ then we say that $\mathfrak f(x)$ and $\mathfrak g(x)$ are \textbf{Seidel-compatible} with respect to $S$.
For a Seidel matrix $S$ of order $n$ even, we define Seidel-compatibility just the same as compatibility.
That is, $\mathfrak f(x)$ and $\mathfrak g(x)$ are $\xi$-Seidel-compatible with respect to $S$ if and only if $\mathfrak f(x)$ and $\mathfrak g(x)$ are $\xi$-compatible with respect to $S$.
For convenience with exposition below, we consider all polynomials to be Seidel-compatible with themselves.

In the sequel, we use repeatedly the following corollary of Lemma~\ref{lem:integrality}.

\begin{corollary}\label{cor:Seidel-compatible}
Let $S$ be a Seidel matrix of order $n$.
Let $X$ and $Y$ be principal submatrices of $S$ of order $n-1$.
Then $\operatorname{Char}_X(x)$ and $\operatorname{Char}_Y(x)$ are Seidel-compatible.
\end{corollary}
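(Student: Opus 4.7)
The plan is to show that Corollary~\ref{cor:Seidel-compatible} is a direct repackaging of the proof of Lemma~\ref{lem:integrality}, together with the parity observation about off-diagonal entries of $q(S)$ that was recorded just before the definition of Seidel-compatibility. I would begin by writing $X = S[i]$ and $Y = S[j]$ for some $i,j \in \{1,\dots,n\}$. The degenerate case $i = j$ is handled by the stated convention that every polynomial is Seidel-compatible with itself, so I would assume $i \ne j$ throughout. Then I would fix an arbitrary factor $\xi(x) \in \mathbb Z[x]$ of $\operatorname{Sim}_S(x)$, set $q(x) \coloneqq \operatorname{Min}_S(x)/\xi(x)$, and aim to verify the congruence~(\ref{eqn:paritySeidel}) for a suitable choice of signs $\bm \delta \in \{\pm 1\}^{\Xi}$.

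The main identity I would use is essentially embedded in the proof of Lemma~\ref{lem:integrality}. For each simple eigenvalue $\lambda \in \Xi$, pick a unit eigenvector $\mathbf u_\lambda$; because $\lambda$ is simple, Lemma~\ref{lem:anglesq_sqsum} gives $|\mathbf u_\lambda(k)| = \bm \alpha_\lambda(k)$ for every $k$, so I can write $\mathbf u_\lambda(k) = \epsilon_k(\lambda)\,\bm \alpha_\lambda(k)$ for signs $\epsilon_k(\lambda) \in \{\pm 1\}$ (the sign is immaterial when $\bm \alpha_\lambda(k) = 0$). Setting $\bm \delta(\lambda) \coloneqq \epsilon_i(\lambda)\epsilon_j(\lambda)$ and reading off the $(i,j)$-entry of the spectral decomposition $q(S) = \sum_{\lambda \in \Xi} q(\lambda)\,\mathbf u_\lambda \mathbf u_\lambda^\transpose$ would produce
$$
(q(S))_{ij} \;=\; \sum_{\lambda \in \Xi} q(\lambda)\,\bm \delta(\lambda)\,\bm \alpha_\lambda(i)\,\bm \alpha_\lambda(j).
$$
Finally, Lemma~\ref{lem:anglesq_formula} identifies $\bm \alpha_\lambda(i) = \alpha_\lambda(\operatorname{Char}_X)$ and $\bm \alpha_\lambda(j) = \alpha_\lambda(\operatorname{Char}_Y)$, so the right-hand side is exactly the sum appearing in the definition of $\xi$-Seidel-compatibility.

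It then remains to match the integer $(q(S))_{ij}$ to what Seidel-compatibility demands. When $n$ is even, Seidel-compatibility coincides with compatibility, so the integrality of $(q(S))_{ij}$ (hence its membership in $\mathbb Q$) is enough. When $n$ is odd, I would invoke the parity fact from \cite[Lemma~2.1]{GG18}: every off-diagonal entry of $S^k$ is odd, and therefore every off-diagonal entry of $q(S)$ has the parity of $q(1) + q(0)$; since $i \ne j$, this yields $(q(S))_{ij} \equiv q(1)+q(0) \pmod 2$, which is precisely~(\ref{eqn:paritySeidel}). I do not foresee a real obstacle here: the argument is bookkeeping on top of Lemma~\ref{lem:integrality} together with the odd-$n$ parity property of Seidel matrices. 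The only subtlety is separating out the $i=j$ case and remembering that, in the odd-$n$ case, Seidel-compatibility requires the extra parity refinement rather than merely rationality.
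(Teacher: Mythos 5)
Your proof is correct and follows essentially the same route as the paper's: invoke Lemma~\ref{lem:integrality} (or, as you do, re-derive it by reading off the $(i,j)$-entry of $q(S)$ in the spectral decomposition with $\bm\delta(\lambda) = \epsilon_i(\lambda)\epsilon_j(\lambda)$), identify the angle-vector entries with the angles of $\operatorname{Char}_X$ and $\operatorname{Char}_Y$ via Lemma~\ref{lem:anglesq_formula}, and then use the parity of off-diagonal entries of $q(S)$ from \cite[Lemma~2.1]{GG18} to pass from integrality to the required congruence when $n$ is odd. The one small imperfection is your case split: you split off $i=j$, but the definition of ($\xi$-)Seidel-compatibility is only stated for \emph{distinct} polynomials, and two distinct indices $i\ne j$ can still yield $\operatorname{Char}_{S[i]}=\operatorname{Char}_{S[j]}$; the paper therefore splits instead on whether $\operatorname{Char}_X = \operatorname{Char}_Y$ (handled by the ``Seidel-compatible with itself'' convention) versus $\operatorname{Char}_X \ne \operatorname{Char}_Y$ (which forces $i\ne j$, so your main argument applies).
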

\begin{proof}
    If $\operatorname{Char}_X(x) = \operatorname{Char}_Y(x)$ then we are done.
    Hence, we suppose that $\operatorname{Char}_X(x) \ne \operatorname{Char}_Y(x)$.
    Note that there exist distinct $i, j \in \{1,\dots,n\}$ such that $X = S[i]$ and $Y = S[j]$.
    By definition and by Lemma~\ref{lem:anglesq_formula}, we have $\alpha_\lambda(\operatorname{Char}_X) = \bm \alpha_\lambda(i)$ and $\alpha_\lambda(\operatorname{Char}_Y) = \bm \alpha_\lambda(j)$.
    Let $\xi(x) \in \mathbb{Z}[x]$ be a factor of $\operatorname{Sim}_S(x)$ and let $q(x) = \operatorname{Min}_S(x) / \xi(x)$.
    By Lemma~\ref{lem:integrality}, $\operatorname{Char}_X(x)$ and $\operatorname{Char}_Y(x)$ are $\xi$-compatible with respect to $S$ and hence, they are compatible with respect to $S$.
    If $n$ is even then $\operatorname{Char}_X(x)$ and $\operatorname{Char}_Y(x)$ are Seidel-compatible by above definition.
    Otherwise, if $n$ is odd then, by Lemma~\ref{lem:integrality}, and by the parities of the off-diagonal entries of $q(S)$ (see  \cite[Lemma 2.1]{GG18}), there exists $\bm \delta \in \{ \pm 1 \}^{\Xi}$ such that  \eqref{eqn:paritySeidel} holds.
    Therefore, $\operatorname{Char}_X(x)$ and $\operatorname{Char}_Y(x)$ are Seidel-compatible.
\end{proof}

Empirically, we find that checking compatibility for polynomials in this paper can be quite computationally expensive, since it requires arithmetic in potentially high-degree number fields (as high as degree $120$ over $\mathbb{Q}$).
Next we establish some tools that will enable us to more efficiently check compatibility.

Let $M$ be an integer symmetric matrix and $\xi(x)$ be an irreducible factor of $\operatorname{Sim}_M(x)$ having zero-set $\Xi$.
Let $q(x) = \operatorname{Min}_M(x) / \xi(x)$,  $\lambda \in \Xi$, and suppose $\mathfrak f(x) = \operatorname{Quo}_M(x) f(x)$ and $\mathfrak g(x) = \operatorname{Quo}_M(x) g(x)$ are monic integer polynomials that interlace $\operatorname{Char}_M(x)$.
Note that we can write
\begin{equation}
\label{eqn:omegalambda}
q(\lambda) \bm \delta(\lambda) \alpha_{\lambda}(\mathfrak f)\alpha_{\lambda}(\mathfrak g) =  \bm \delta(\lambda) q(\lambda)  \frac{\sqrt{fg(\lambda)}}{|\operatorname{Min}_M^\prime(\lambda)|} = \frac{ \bm \delta^*(\lambda) \sqrt{fg(\lambda)}}{\xi^\prime(\lambda)}
\end{equation}
where $\bm \delta^*(\lambda) / \operatorname{Min}_M^\prime(\lambda) = \bm \delta(\lambda) / |\operatorname{Min}_M^\prime(\lambda)|$.
Since $f(\lambda)/\operatorname{Min}_M^\prime(\lambda) \geqslant 0$ and $g(\lambda)/\operatorname{Min}_M^\prime(\lambda) \geqslant 0$, we have $fg(\lambda) = f(\lambda) g(\lambda) \geqslant 0$.
Thus, the square root of $fg(\lambda)$ is a real number.

\begin{lemma} \label{lem:compatible_lagrange}
Let $M$ be an integer symmetric matrix and $\xi(x)$ be an irreducible factor of $\operatorname{Sim}_M(x)$ having zero-set $\Xi$.
Let $\mathfrak f(x) = \operatorname{Quo}_M(x) f(x)$ and $\mathfrak g(x) = \operatorname{Quo}_M(x) g(x)$ be distinct monic integer polynomials that interlace $\operatorname{Char}_M(x)$.
Suppose that there exists a polynomial $h(x) \in \mathbb{Q}[x]$ such that $h^2(\lambda) = fg(\lambda)$ for all $\lambda \in \Xi$.
Then $\mathfrak{f}(x)$ and $\mathfrak{g}(x)$ are $\xi$-compatible.
\end{lemma}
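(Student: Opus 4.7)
The plan is to exploit the reformulation in the display \eqref{eqn:omegalambda} immediately preceding the statement: each summand of the $\xi$-compatibility sum can be rewritten as $\bm{\delta}^*(\lambda)\sqrt{fg(\lambda)}/\xi'(\lambda)$, where $\bm{\delta}^* \in \{\pm 1\}^{\Xi}$ is in bijection with $\bm{\delta} \in \{\pm 1\}^{\Xi}$ via the signs of $\operatorname{Min}_M^\prime(\lambda)$. Hence it is enough to exhibit a choice of $\bm{\delta}^*$ making
\[
\sum_{\lambda \in \Xi} \frac{\bm{\delta}^*(\lambda)\sqrt{fg(\lambda)}}{\xi'(\lambda)} \in \mathbb{Q}.
\]

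The first step is to use the hypothesis $h^2(\lambda)=fg(\lambda)$ to remove the square root. Since $fg(\lambda)\geqslant 0$ for $\lambda\in\Xi$, the square root equals $|h(\lambda)|$, so declaring $\bm{\delta}^*(\lambda)=+1$ if $h(\lambda)\geqslant 0$ and $\bm{\delta}^*(\lambda)=-1$ otherwise yields $\bm{\delta}^*(\lambda)\sqrt{fg(\lambda)} = h(\lambda)$ for every $\lambda\in\Xi$; at any root where $h$ vanishes the summand is $0$ and the sign is irrelevant. With this choice the sum collapses to $\sum_{\lambda \in \Xi} h(\lambda)/\xi'(\lambda)$.

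The heart of the argument is then a classical Lagrange-interpolation identity showing this last expression is rational. Write $h(x)=Q(x)\xi(x)+R(x)$ with $Q,R\in\mathbb{Q}[x]$ and $\deg R<\deg\xi$, so that $h(\lambda)=R(\lambda)$ for every $\lambda\in\Xi$. Irreducibility of $\xi$ over $\mathbb{Q}$ guarantees that its roots are distinct, so Lagrange interpolation at $\Xi$ recovers
\[
R(x) \;=\; \sum_{\lambda\in\Xi} R(\lambda)\,\frac{\xi(x)/(x-\lambda)}{\xi'(\lambda)}.
\]
Each $\xi(x)/(x-\lambda)$ is monic of degree $\deg\xi-1$, so comparing coefficients of $x^{\deg\xi-1}$ on both sides yields
\[
\sum_{\lambda\in\Xi}\frac{h(\lambda)}{\xi'(\lambda)} \;=\; \sum_{\lambda\in\Xi}\frac{R(\lambda)}{\xi'(\lambda)} \;=\; [x^{\deg\xi-1}]\,R(x) \;\in\; \mathbb{Q},
\]
which is precisely the rationality demanded by $\xi$-compatibility. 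I do not anticipate any significant obstacle: the square-root formula, the sign freedom in $\bm{\delta}^*$, and the Lagrange trace-type identity fit together directly, with the only bookkeeping being the definition of $\bm{\delta}^*$ at roots of $h$, which is immediate.
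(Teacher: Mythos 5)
Your proof is correct and follows essentially the same route as the paper's: reduce $h$ modulo $\xi$ to a polynomial of degree at most $|\Xi|-1$, use the Lagrange interpolation identity to recognize the sum $\sum_{\lambda\in\Xi} h(\lambda)/\xi'(\lambda)$ as the top coefficient of that reduction, and observe this is rational. The only cosmetic difference is that you make the choice of signs $\bm\delta^*(\lambda) = \operatorname{sgn} h(\lambda)$ explicit, whereas the paper leaves it implicit in the equality $\pi(\lambda) = \bm\delta^*(\lambda)\sqrt{fg(\lambda)}$.
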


\begin{proof}
Let $\pi(x) \in \mathbb{Q}[x]$ be the unique polynomial of degree at most $|\Xi|-1$ such that $\pi(x) \equiv h(x) \mod \xi(x)$.
For each $\lambda \in \Xi$, we have that $\pi(\lambda) = h(\lambda) = \bm \delta^*(\lambda) \sqrt{fg(\lambda)}$ where $\bm \delta^* \in \{\pm 1 \}^\Xi$. 
Consider the $|\Xi|$ distinct interpolation points $(\lambda, \pi(\lambda))$ for $\lambda \in \Xi$.
Then we can write $\pi(x)$ as the unique interpolation polynomial in Lagrange form
$$
\pi(x) = \sum_{\lambda \in \Xi} \pi(\lambda) L_\lambda(x),
$$
where, for each $\lambda \in \Xi$, the polynomial $L_\lambda(x)$ is the Lagrange polynomial
$$
L_\lambda(x) = \prod_{\mu \in \Xi\backslash \{\lambda\}} \frac{x-\mu}{\lambda-\mu}.
$$
Let $\omega$ be the coefficient of $x^{|\Xi|-1}$ in $\pi(x)$ and by \eqref{eqn:omegalambda}, observe that
$$
\omega = \sum_{\lambda \in \Xi} \frac{\pi(\lambda)}{\xi^\prime(\lambda)} = 
\sum_{\lambda \in \Xi} \frac{\bm \delta^*(\lambda) \sqrt{fg(\lambda)}}{\xi^\prime(\lambda)} = \sum_{\lambda \in \Xi}  q(\lambda)\bm \delta(\lambda) \alpha_{\lambda}(\mathfrak f)\alpha_{\lambda}(\mathfrak g)
$$
where $q(x) = \operatorname{Min}_M(x) / \xi(x)$ and $\bm \delta^*(\lambda) / \operatorname{Min}_M^\prime(\lambda) = \bm \delta(\lambda) / |\operatorname{Min}_M^\prime(\lambda)|$ for all $\lambda \in \Xi$.
Since $\omega \in \mathbb Q$, the polynomials $\mathfrak{f}(x)$ and $\mathfrak{g}(x)$ are $\xi$-compatible.
\end{proof}

Let $K$ be the splitting field of $\xi(x)$ and let $\Gal(K/\mathbb{Q})$ be the Galois group of $K$ over $\mathbb{Q}$.
The Galois group $\Gal(K/\mathbb{Q})$ acts transitively on $\Xi$, the set of zeros of $\xi(x)$. 
Thus for all $\lambda,\mu \in \Xi$, there exists $\sigma \in \Gal(K/\mathbb{Q})$ such that $\sigma(\lambda) = \mu$.
We will use this fact in the proof of Proposition~\ref{pro:reducible_then_compatible} and Lemma~\ref{lem:irreducible_zero}.

\begin{proposition} \label{pro:reducible_then_compatible}
Let $M$ be an integer symmetric matrix and $\xi(x)$ be an irreducible factor of $\operatorname{Sim}_M(x)$ having zero-set $\Xi$.
Let $\mathfrak f(x) = \operatorname{Quo}_M(x) f(x)$ and $\mathfrak g(x) = \operatorname{Quo}_M(x) g(x)$ be distinct monic integer polynomials that interlace $\operatorname{Char}_M(x)$.
Let $\rho(x)$ be the minimal polynomial of $fg(\lambda)$ over $\mathbb{Q}$ for some $\lambda \in \Xi$ and suppose that $\rho(x^2)$ is reducible over $\mathbb{Q}$.
Then $\mathfrak{f}(x)$ and $\mathfrak{g}(x)$ are $\xi$-compatible.
\end{proposition}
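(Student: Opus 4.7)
The plan is to exhibit a polynomial $h(x) \in \mathbb{Q}[x]$ such that $h(\lambda)^2 = fg(\lambda)$ for every $\lambda \in \Xi$, so that $\xi$-compatibility follows at once from Lemma~\ref{lem:compatible_lagrange}. The entire argument is therefore a Galois-theoretic exercise: use the hypothesis that $\rho(x^2)$ is reducible to descend a suitable square root of $fg(\lambda)$ into the number field $\mathbb{Q}(\lambda)$.

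Fix $\lambda \in \Xi$ and let $\alpha := fg(\lambda)$. Recall that $\alpha \geqslant 0$ (noted just before Lemma~\ref{lem:compatible_lagrange}), so one can take $\beta := \sqrt{\alpha} \in \mathbb{R}_{\geqslant 0}$. Since $\beta$ satisfies $\rho(\beta^2) = 0$, the minimal polynomial $\mu_\beta(x)$ of $\beta$ over $\mathbb{Q}$ divides $\rho(x^2)$. On the other hand $\mathbb{Q}(\alpha) \subseteq \mathbb{Q}(\beta)$ and $[\mathbb{Q}(\beta):\mathbb{Q}(\alpha)] \in \{1,2\}$, so $\deg \mu_\beta \in \{k, 2k\}$ where $k := \deg \rho$. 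If $\deg \mu_\beta = 2k$, then $\mu_\beta(x)$ would equal $\rho(x^2)$ on degree grounds, contradicting the hypothesis that $\rho(x^2)$ is reducible. Hence $\deg \mu_\beta = k$, which forces $\mathbb{Q}(\beta) = \mathbb{Q}(\alpha) \subseteq \mathbb{Q}(\lambda)$.

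Because $\beta \in \mathbb{Q}(\lambda)$ and $\xi(x)$ is irreducible (so it is the minimal polynomial of $\lambda$ up to a scalar), there exists $h(x) \in \mathbb{Q}[x]$ with $h(\lambda) = \beta$, giving $h(\lambda)^2 = fg(\lambda)$. Then $h(x)^2 - fg(x) \in \mathbb{Q}[x]$ vanishes at $\lambda$, so it is divisible by $\xi(x)$ and therefore vanishes at every $\mu \in \Xi$; that is, $h(\mu)^2 = fg(\mu)$ for all $\mu \in \Xi$. Applying Lemma~\ref{lem:compatible_lagrange} concludes the proof.

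The only real subtlety is the degree dichotomy in the middle paragraph; once one sees that the hypothesis on $\rho(x^2)$ rules out the possibility $[\mathbb{Q}(\beta):\mathbb{Q}] = 2k$, the rest of the argument is a routine application of the fact that a rational polynomial vanishing at $\lambda$ is divisible by the irreducible $\xi(x)$.
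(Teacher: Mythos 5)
Your proof is correct and follows essentially the same strategy as the paper: use the reducibility of $\rho(x^2)$ to force $\mathbb{Q}(\sqrt{fg(\lambda)}) = \mathbb{Q}(fg(\lambda)) \subseteq \mathbb{Q}(\lambda)$, produce $h(x) \in \mathbb{Q}[x]$ with $h(\lambda)^2 = fg(\lambda)$, propagate this identity to all of $\Xi$, and invoke Lemma~\ref{lem:compatible_lagrange}. The only cosmetic difference is in the propagation step, where the paper conjugates by an element of $\Gal(K/\mathbb{Q})$ and you instead observe that $h(x)^2 - fg(x)$ is a rational polynomial vanishing at $\lambda$ and hence divisible by $\xi(x)$; these are interchangeable.
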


\begin{proof}
Let $e$ be the degree of $\mathbb{Q}(\sqrt{fg(\lambda)})$ over $\mathbb{Q}$.
Note that the minimal polynomial of $\sqrt{fg(\lambda)}$ divides $\rho(x^2)$, which has degree $2 \deg \rho$.
This implies that $e$ divides $2 \deg \rho$ and moreover, $e < 2 \deg \rho$ since $\rho(x^2)$ is reducible over $\mathbb{Q}$.
On the other hand, $e \geqslant \deg \rho$ since $\mathbb{Q}(fg(\lambda)) \subseteq \mathbb{Q}(\sqrt{fg(\lambda)})$.
It follows that $e = \deg \rho$ and thus $\mathbb{Q}(\sqrt{fg(\lambda)}) = \mathbb{Q}(fg(\lambda)) \subseteq \mathbb{Q}(\lambda)$.
Hence, there exists a polynomial $h(x) \in \mathbb{Q}[x]$ such that $\sqrt{fg(\lambda)} = h(\lambda)$, which implies that $fg(\lambda) = h^2(\lambda)$.
Let $\mu \in \Xi$ and let $K$ be the splitting field of $\xi(x)$.
There exists $\sigma \in \Gal(K/\mathbb{Q})$ such that $\sigma(\lambda) = \mu$.
Hence, we obtain
$$
\sigma \left( fg \left( \lambda \right) \right) = \sigma \left( h^2(\lambda) \right) \implies fg \left( \mu \right) = h^2 (\mu).
$$
Therefore, we have $h^2(\lambda) = fg(\lambda)$ for all $\lambda \in \Xi$ and, by Lemma~\ref{lem:compatible_lagrange}, we conclude that $\mathfrak{f}(x)$ and $\mathfrak{g}(x)$ are $\xi$-compatible.
\end{proof}

Proposition~\ref{pro:reducible_then_compatible} provides us with a more computationally efficient method to show that two polynomials are compatible.
Furthermore, we can also use the above to check Seidel-compatibility.
Indeed, if $n$ is even, there is nothing more to do.
For $n$ odd, we can first construct the polynomial $h(x)$ from the proof of Proposition~\ref{pro:reducible_then_compatible} and the polynomial $\pi(x)$ in the proof of Lemma~\ref{lem:compatible_lagrange}.
Using \eqref{eqn:omegalambda}, we see that $\omega$ from the proof of Lemma~\ref{lem:compatible_lagrange} is the same as the left hand side of \eqref{eqn:paritySeidel}.
Thus, to check Seidel-compatibility, we can check if $\omega$ has the same parity as $q(1) + q(0)$, where $q(x) = \operatorname{Min}_M(x) / \xi(x)$.

\begin{example}
\label{ex:red2compat}
Suppose $S$ is a Seidel matrix with 
$$\operatorname{Char}_S(x) = (x+5)^{32}(x-9)^{13}(x-11)^2(x^2-21x+92).$$
Then we have
\begin{align*}
    \operatorname{Min}_S(x) &= (x+5)(x-9)(x-11)(x^2-21x+92), \\
    \operatorname{Quo}_S(x) &= (x+5)^{31}(x-9)^{12}(x-11), \\
    \operatorname{Sim}_S(x) &= x^2-21x+92.
\end{align*}
Let $f(x) = x^4-36x^3+454x^2-2356x+4241$ and $g(x) = x^4-36x^3+454x^2-2348x+4169$.
Then $\mathfrak f(x) = \operatorname{Quo}_S(x)f(x)$ and $\mathfrak g(x) = \operatorname{Quo}_S(x)g(x)$ both interlace $\operatorname{Char}_S(x)$.
Let $\lambda = (21-\sqrt{73})/2$ be a zero of $\operatorname{Sim}_S(x)$.
Then the minimal polynomial of $fg(\lambda)$ is $\rho(x) = x^2 - 10521x + 5308416$ and $\rho(x^2) = (x^2-123x+2304)(x^2+123x+2304)$.
By Proposition~\ref{pro:reducible_then_compatible}, $\mathfrak f(x)$ and $\mathfrak g(x)$ are compatible.
For $h(x) = 9x-33$ we have $fg(\lambda) = h^2(\lambda)$.
It follows that $\pi(x) = h(x) = 9x-33$ and $\omega = 9$.
Finally, since $q(1)+q(0) = 975$, where $q(x) = \frac{\operatorname{Min}_S(x)}{\operatorname{Sim}_S(x)}= x^3 - 15x^2 - x + 495$, we see that $\mathfrak f(x)$ and $\mathfrak g(x)$ are Seidel-compatible.
\end{example}

Next we develop tools to show that two polynomials are not Seidel-compatible.

\begin{lemma} \label{lem:irreducible_zero}
Let $M$ be an integer symmetric matrix and $\xi(x)$ be an irreducible factor of $\operatorname{Sim}_M(x)$ having zero-set $\Xi$ and splitting field $K$.
Let $\mathfrak f(x) = \operatorname{Quo}_M(x) f(x)$ and $\mathfrak g(x) = \operatorname{Quo}_M(x) g(x)$ be distinct monic integer polynomials that interlace $\operatorname{Char}_M(x)$.
Let $\rho(x)$ be the minimal polynomial of $fg(\lambda)$ over $\mathbb{Q}$ for some $\lambda \in \Xi$.
Suppose that $\deg \rho = |\Xi|$ and $\rho(x^2)$ is irreducible over $\mathbb{Q}$.
Further, suppose $\sqrt{fg(\lambda)} \not \in K$ and $q(x) = \operatorname{Min}_M(x) / \xi(x)$.
Then, for each $\bm \delta \in \{\pm 1 \}^{\Xi}$,
\[
\sum_{\lambda \in \Xi}  q(\lambda)\bm \delta(\lambda) \alpha_{\lambda}(\mathfrak f)\alpha_{\lambda}(\mathfrak g) \in \mathbb Q \implies \sum_{\lambda \in \Xi}  q(\lambda)\bm \delta(\lambda) \alpha_{\lambda}(\mathfrak f)\alpha_{\lambda}(\mathfrak g) =0.
\]
\end{lemma}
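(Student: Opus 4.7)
The plan is to compute the trace $\operatorname{Tr}_{L/\mathbb{Q}}(T)$, where $L$ is the splitting field of $\rho(x^2)$ over $\mathbb{Q}$ and $T := \sum_{\lambda \in \Xi} q(\lambda)\bm{\delta}(\lambda)\alpha_{\lambda}(\mathfrak{f})\alpha_{\lambda}(\mathfrak{g})$, and to show it vanishes. Once this is done, the assumption $T \in \mathbb{Q}$ gives $\operatorname{Tr}_{L/\mathbb{Q}}(T) = [L:\mathbb{Q}] \cdot T$, forcing $T = 0$.

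Using \eqref{eqn:omegalambda}, I would rewrite $T = \sum_{\lambda \in \Xi} \eta_\lambda$ with $\eta_\lambda := \bm{\delta}^*(\lambda)\sqrt{fg(\lambda)}/\xi'(\lambda) \in L$, where $\bm{\delta}^*(\lambda) \in \{\pm 1\}$. By hypothesis $\rho(x^2)$ is irreducible of degree $2|\Xi|$, so $G := \operatorname{Gal}(L/\mathbb{Q})$ acts transitively on the set $R := \{\pm\sqrt{fg(\mu)} : \mu \in \Xi\}$ of $2|\Xi|$ roots of $\rho(x^2)$. These roots are pairwise distinct: $fg(\mu) \neq 0$ for all $\mu \in \Xi$ (otherwise $\rho(x^2)$ would have $0$ as a double root, contradicting irreducibility), and $fg$ is injective on $\Xi$ since $\deg\rho = |\Xi|$.

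Now I would fix $\lambda \in \Xi$ and compute $\operatorname{Tr}_{L/\mathbb{Q}}(\eta_\lambda)$. For each $\sigma \in G$, set $\mu := \sigma(\lambda) \in \Xi$; since $\xi'(x) \in \mathbb{Q}[x]$ one has $\sigma(\xi'(\lambda)) = \xi'(\mu)$, and $\sigma(\sqrt{fg(\lambda)})^2 = fg(\mu)$ forces $\sigma(\sqrt{fg(\lambda)}) = \epsilon(\sigma)\sqrt{fg(\mu)}$ for a unique sign $\epsilon(\sigma) \in \{\pm 1\}$. (Injectivity of $fg$ on $\Xi$ is essential here to recover $\mu = \sigma(\lambda)$ from $\sigma(\sqrt{fg(\lambda)})$.) Grouping terms by $\mu$ yields
\[
\operatorname{Tr}_{L/\mathbb{Q}}(\eta_\lambda) = \bm{\delta}^*(\lambda) \sum_{\mu \in \Xi} \frac{\sqrt{fg(\mu)}}{\xi'(\mu)} \Bigl( \sum_{\substack{\sigma \in G \\ \sigma(\lambda) = \mu}} \epsilon(\sigma) \Bigr).
\]
By transitivity of $G$ on $R$ together with orbit--stabilizer, for every root $r \in R$ the set $\{\sigma \in G : \sigma(\sqrt{fg(\lambda)}) = r\}$ has size $|G|/(2|\Xi|)$. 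Applying this to $r = +\sqrt{fg(\mu)}$ and $r = -\sqrt{fg(\mu)}$ shows that the inner sum contains equally many $\sigma$ with $\epsilon(\sigma) = +1$ as with $\epsilon(\sigma) = -1$, so it vanishes for every $\mu$. Therefore $\operatorname{Tr}_{L/\mathbb{Q}}(\eta_\lambda) = 0$ for each $\lambda$, and summing gives $\operatorname{Tr}_{L/\mathbb{Q}}(T) = 0$.

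Finally, if $T \in \mathbb{Q}$ then $T$ is fixed by every element of $G$, so $\operatorname{Tr}_{L/\mathbb{Q}}(T) = [L:\mathbb{Q}] \cdot T$; combined with the preceding step this forces $T = 0$, as required. The main conceptual step is recognising that transitivity of $G$ on the roots of $\rho(x^2)$ produces the exact sign cancellation in the trace of each $\eta_\lambda$; locating this trace identity is the principal obstacle, after which the remaining manipulations are routine.
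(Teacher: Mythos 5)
Your proof is correct, but it takes a genuinely different route from the paper's. The paper works with the relative group $\Gal(L/K)$: using the hypothesis $\sqrt{fg(\lambda)} \notin K$ it produces automorphisms that negate some of the summands $\omega_\lambda$ while fixing $\omega$, and then runs an iterative partitioning argument ($\Xi = I_+ \sqcup I_-$, repeat on $I_+$) to whittle the sum down to zero. You instead compute the full trace $\operatorname{Tr}_{L/\mathbb{Q}}$ of each summand and obtain exact cancellation from the transitivity of $\Gal(L/\mathbb{Q})$ on the $2|\Xi|$ roots of the irreducible polynomial $\rho(x^2)$, via orbit--stabilizer. Your route replaces the iterated partition by a single counting identity, and --- notably --- it never invokes the hypothesis $\sqrt{fg(\lambda)} \notin K$, so it establishes a formally stronger statement (that hypothesis is not automatic from irreducibility of $\rho(x^2)$, since $[K:\mathbb{Q}]$ may well exceed $2\deg\rho$). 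Two small points you leave implicit, both of which the paper spells out and both of which your argument needs: (i) $K \subseteq L$, equivalently $\lambda \in L$, so that $\eta_\lambda \in L$ and $\sigma(\xi^\prime(\lambda)) = \xi^\prime(\sigma(\lambda))$ makes sense; this holds because $\deg\rho = |\Xi|$ forces $\mathbb{Q}(fg(\lambda)) = \mathbb{Q}(\lambda)$, and $fg(\lambda) = \bigl(\sqrt{fg(\lambda)}\bigr)^2 \in L$; (ii) the injectivity of $fg$ on $\Xi$, which you assert in one line from $\deg\rho = |\Xi|$ --- correct, for instance because $\{fg(\mu) : \mu \in \Xi\}$ is the $\Gal(K/\mathbb{Q})$-orbit of $fg(\lambda)$ and therefore has exactly $\deg\rho = |\Xi|$ elements. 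With these details filled in, the argument is complete.
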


\begin{proof}
Let $\bm \delta \in \{\pm 1 \}^{\Xi}$ such that
\[
\sum_{\lambda \in \Xi}  q(\lambda)\bm \delta(\lambda) \alpha_{\lambda}(\mathfrak f)\alpha_{\lambda}(\mathfrak g) \in \mathbb Q,
\]
i.e., $\mathfrak{f}(x)$ and $\mathfrak{g}(x)$ are $\xi$-compatible.
For any $\lambda, \mu \in \Xi$ there exists $\sigma \in \Gal(K/\mathbb{Q})$ such that $\sigma(\lambda) = \mu$.
Since $\rho \left( fg(\lambda) \right) = 0$, we obtain $\rho\left( fg(\mu) \right) = 0$.
Since $\rho(x)$ is irreducible over $\mathbb{Q}$, we conclude that $\rho$ is the minimal polynomial of $fg(\lambda)$ over $\mathbb{Q}$ for all $\lambda \in \Xi$.

Next, for any $\lambda \in \Xi$ we have $\mathbb{Q}( fg(\lambda) ) \subseteq \mathbb{Q} (\lambda)$.
Since the degree of $\rho(x)$ is equal to $|\Xi|$ and $\rho(x)$ is the minimal polynomial of $fg(\lambda)$, we have $\mathbb{Q}( fg(\lambda) ) = \mathbb{Q} (\lambda)$.
Now suppose that there exist distinct $\mu, \nu \in \Xi$ such that $fg(\mu) = fg(\nu)$.
We can write $\mu = \pi (fg(\mu))$ for some $\pi(x) \in \mathbb{Q}[x]$, since $\mathbb{Q}( fg(\mu) ) = \mathbb{Q} (\mu)$.
Take $\sigma \in \Gal(K/\mathbb{Q})$ such that $\sigma(\mu) = \nu$, which yields us $\nu = \pi (fg(\nu))$.
However, this leads us to $\mu = \pi (fg(\mu)) = \pi (fg(\nu)) = \nu$, which is a contradiction.
It follows that
$$ \rho(x) = \prod_{\lambda \in \Xi} \left( x-fg(\lambda) \right). $$
Additionally, $K$ is the splitting field of $\rho(x)$.

Let $L$ be the splitting field of $\rho(x^2)$ over $\mathbb{Q}$, which contains $K$.
We have a tower of fields $\mathbb{Q} \subseteq K \subseteq L$ where $L$ is Galois over $\mathbb{Q}$ so $L$ is Galois over $K$.
Hence, if $\sigma \in \Gal(L/K)$ then $\sigma\left( \sqrt{fg(\lambda)} \right)$ is equal to either $\sqrt{fg(\lambda)}$ or $-\sqrt{fg(\lambda)}$ for each $\lambda \in \Xi$.
Since $K/\mathbb Q$ is a normal extension, if $\sqrt{fg(\lambda)}$ does not belong to $K$ for some $\lambda \in \Xi$ then $\sqrt{fg(\lambda)}$ does not belong to $K$ for all $\lambda \in \Xi$.

For a fixed $\lambda \in \Xi$, there exists $\sigma \in \Gal(L/K)$ such that $\sigma\left( \sqrt{fg(\lambda)} \right) = -\sqrt{fg(\lambda)}$.
Otherwise, if $\sqrt{fg(\lambda)}$ is fixed by all elements of $\Gal(L/K)$ then $\sqrt{fg(\lambda)} \in K$, which is a contradiction.
Using \eqref{eqn:omegalambda}, we write
\[
\omega = \displaystyle \sum_{\lambda \in \Xi} \omega_\lambda \text{ where }
\omega_\lambda = \displaystyle \frac{\bm \delta^*(\lambda) \sqrt{fg(\lambda)}}{\xi^\prime(\lambda)}
\]
and $\bm \delta^*(\lambda) / \operatorname{Min}_M^\prime(\lambda) = \bm \delta(\lambda) / |\operatorname{Min}_M^\prime(\lambda)|$ for all $\lambda \in \Xi$.
Take any $\sigma \in \Gal(L/K)$ such that $\sigma(\omega_\mu) = -\omega_\mu$ for some $\mu \in \Xi$.
Then we can partition the set $\Xi$ into two sets $I_+$ and $I_-$ such that $\sigma(\omega_\lambda) = \omega_\lambda$ for all $\lambda \in I_+$ and $\sigma(\omega_\mu) = -\omega_\mu$ for all $\mu \in I_-$.
This implies that $\sum_{\mu \in I_-} \omega_\mu = 0$ and $\sum_{\lambda \in I_+} \omega_\lambda = \omega$ since $\sigma(\omega) = \omega$.
We then apply the same procedure on $I_+$, where we take any $\sigma \in \Gal(L/K)$ such that $\sigma(\omega_\mu) = -\omega_\mu$ for some $\mu \in I_+$.
Since \(|\Xi|\) is finite, after finitely many steps on partitioning $I_+$, we arrive at the last subset $\mathcal{I}$ where the only possible partition is $\mathcal{I}_+ = \emptyset$ and $\mathcal{I}_- = \mathcal{I}$.
Therefore, we conclude that $\omega = \sum_{\lambda \in \Xi} \omega_\lambda = 0$.
\end{proof}
 
Now we have the following corollary of Lemma~\ref{lem:irreducible_zero}.

\begin{corollary} \label{cor:incompatible}
Let $S$ be a Seidel matrix of order $n$ odd, $\xi(x)$ be an irreducible factor of $\operatorname{Sim}_S(x)$ having zero-set $\Xi$, and let $q(x) = \operatorname{Min}_S(x)/\xi(x)$.
Let $\mathfrak f(x) = \operatorname{Quo}_S(x) f(x)$ and $\mathfrak g(x) = \operatorname{Quo}_S(x) g(x)$ be distinct monic integer polynomials that interlace $\operatorname{Char}_S(x)$.
Let $\rho(x)$ be the minimal polynomial of $fg(\lambda)$ over $\mathbb{Q}$ for some $\lambda \in \Xi$.
Suppose that $\deg \rho = |\Xi|$ and $\rho(x^2)$ is irreducible over $\mathbb{Q}$.
Let $G$ and $H$ be the Galois groups of $\rho(x)$ and $\rho(x^2)$ over $\mathbb{Q}$, respectively, and suppose that $|G| < |H|$.
If $q(1)+q(0)$ is odd then $\mathfrak{f}(x)$ and $\mathfrak{g}(x)$ are not Seidel-compatible.
\end{corollary}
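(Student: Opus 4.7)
The plan is to argue by contradiction using Lemma~\ref{lem:irreducible_zero}. Suppose $\mathfrak f(x)$ and $\mathfrak g(x)$ were Seidel-compatible. Since $n$ is odd, there would exist $\bm \delta \in \{\pm 1\}^{\Xi}$ with
\[
\Omega := \sum_{\lambda \in \Xi} q(\lambda)\bm\delta(\lambda)\alpha_\lambda(\mathfrak f)\alpha_\lambda(\mathfrak g) \equiv q(1)+q(0) \pmod 2,
\]
and in particular $\Omega \in \mathbb Z \subseteq \mathbb Q$.

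The first real step is to verify that the remaining hypothesis of Lemma~\ref{lem:irreducible_zero}, namely $\sqrt{fg(\lambda)} \notin K$, follows from $|G|<|H|$. The assumption $\deg \rho = |\Xi|$ together with the transitivity of $\Gal(K/\mathbb Q)$ on $\Xi$ implies, exactly as in the first part of the proof of Lemma~\ref{lem:irreducible_zero}, that $K$ coincides with the splitting field of $\rho(x)$, so $|G|=[K:\mathbb Q]$. Writing $L$ for the splitting field of $\rho(x^2)$ (so that $|H|=[L:\mathbb Q]$ and $K \subseteq L$), if every $\sqrt{fg(\lambda)}$ lay in $K$ then $L=K$ and $|H|=|G|$, contradicting $|G|<|H|$. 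Hence $\sqrt{fg(\lambda)} \notin K$ for some—and therefore, by normality of $K/\mathbb Q$, for every—$\lambda \in \Xi$.

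With every hypothesis of Lemma~\ref{lem:irreducible_zero} now in force, applying it to the rational number $\Omega$ yields $\Omega = 0$. But the assumed congruence requires $\Omega$ to have the same parity as $q(1)+q(0)$, which is odd by hypothesis, so $\Omega$ must be odd—contradicting $\Omega = 0$. Therefore $\mathfrak f(x)$ and $\mathfrak g(x)$ are not Seidel-compatible.

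The only genuinely non-trivial step is the Galois-theoretic bridge from the inequality $|G|<|H|$ to the membership statement $\sqrt{fg(\lambda)} \notin K$, and even this reduces to a one-line degree comparison once one identifies $K$ with the splitting field of $\rho(x)$ (which is done for us inside the proof of Lemma~\ref{lem:irreducible_zero}). Granted that identification, the corollary is essentially Lemma~\ref{lem:irreducible_zero} plus a parity check, so I expect no further obstacles.
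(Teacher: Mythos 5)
Your proof is correct and takes essentially the same route as the paper's: assume Seidel-compatibility, deduce that $\omega$ is rational, argue from $|G|<|H|$ that $\sqrt{fg(\lambda)}\notin K$, invoke Lemma~\ref{lem:irreducible_zero} to force $\omega=0$, and contradict the odd parity of $q(1)+q(0)$. The only difference is presentational: you explicitly spell out why the splitting field of $\xi(x)$ agrees with the splitting field of $\rho(x)$ (so that the inequality $|G|<|H|$ actually bears on the field $K$ appearing in Lemma~\ref{lem:irreducible_zero}), whereas the paper leaves this identification implicit by appealing to the same observation made inside the proof of that lemma.
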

\begin{proof}
Let $K \subseteq L$ be splitting fields of $\rho(x)$ and $\rho(x^2)$ over $\mathbb{Q}$, respectively.
If $K = L$ then $G \cong H$.
Since $|G| < |H|$, we conclude that $K$ is a proper subfield of $L$ and it follows that $\sqrt{fg(\lambda)}$ is not in $K$.
Otherwise, if $\sqrt{fg(\lambda)} \in K$ then $\sqrt{fg(\lambda)} \in K$ for all $\lambda \in \Xi$, which will imply that $K=L$.
Next, suppose that $\mathfrak f(x)$ and $\mathfrak g(x)$ are Seidel-compatible.
Then, by definition, there exists $\bm \delta \in \{\pm 1\}^\Xi$ such that
\[
\omega \coloneqq \sum_{\lambda \in \Xi}  q(\lambda)\bm \delta(\lambda) \alpha_{\lambda}(\mathfrak f)\alpha_{\lambda}(\mathfrak g) \equiv q(1) + q(0) \pmod 2.
\]
This implies that $\omega$ is rational.
By Lemma~\ref{lem:irreducible_zero}, we must have $\omega = 0$. 
Therefore, $q(1) + q(0)$ is even.
\end{proof}

\begin{example}
\label{ex:notCompatible}
Suppose $S$ is a Seidel matrix with 
$$\operatorname{Char}_S(x) = (x+5)^{32}(x-9)^{14}(x-11)(x^2-23x+116).$$
Then we have
\begin{align*}
    \operatorname{Min}_S(x) &= (x+5)(x-9)(x-11)(x^2-23x+116), \\
    \operatorname{Quo}_S(x) &= (x+5)^{31}(x-9)^{13}, \\
    \operatorname{Sim}_S(x) &= (x-11)(x^2-23x+116).
\end{align*}
Let $f(x) = x^4-38x^3+508x^2-2810x+5363$ and $g(x) = x^4-38x^3+508x^2-2802x+5291$.
Then $\mathfrak f(x) = \operatorname{Quo}_S(x)f(x)$ and $\mathfrak g(x) = \operatorname{Quo}_S(x)g(x)$ both interlace $\operatorname{Char}_S(x)$.
Let $\xi(x) = x^2-23x+116$ be an irreducible factor of $\operatorname{Sim}_S(x)$ and $\lambda = (23-\sqrt{65})/2$ be a zero of $\xi(x)$.
Then the minimal polynomial of $fg(\lambda)$ is $\rho(x) = x^2 - 11105x + 1433600$ and $\rho(x^2)$ is irreducible over $\mathbb{Q}$.
Furthermore, the Galois groups of $\rho(x)$ and $\rho(x^2)$ are $G=S_2$ and $H=D_4$, respectively.
Hence $2 = |G|< |H| = 8$.
Finally, $q(1)+q(0) = 975$, where $q(x) = \frac{\operatorname{Min}_S(x)}{\operatorname{Sim}_S(x)}= x^3 - 15x^2 - x + 495$.
Therefore, by Corollary~\ref{cor:incompatible}, the polynomials $\mathfrak f(x)$ and $\mathfrak g(x)$ are not Seidel-compatible.
\end{example}

Note that, in Corollary~\ref{cor:incompatible}, we cannot take the condition $|G| < |H|$ for granted, since for example, we have the polynomials $\rho_1(x) =  x^3 - 11x^2 + 27x - 13$, $\rho_2(x) =  x^4 - 14x^3 + 34x^2 - 14x + 1$, and $\rho_3(x) = x^4-14x^3+45x^4-29x+4$.
These polynomials have the property that $\rho_i(x^2)$ is irreducible over $\mathbb{Q}$ and the Galois group of $\rho_i(x)$ is isomorphic to that of $\rho_i(x^2)$ for each $i \in \{1,2,3\}$.
Furthermore, the polynomials $\rho_1(x^2)$, $\rho_2(x^2)$, and $\rho_3(x^2)$ have the Galois groups $S_3$, $D_4$, and $S_4$, respectively.

\section{Interlacing characteristic polynomials}
\label{sec:icp}

Our main approach for showing that a Seidel matrix $S$ having a certain spectrum does not exist is to consider the principal submatrices of $S$ and their characteristic polynomials. 
The next result is a condition on the sum of the characteristic polynomials of principal submatrices of a matrix.

\begin{theorem}[{\cite[Page 116]{thompson1968principal}}]\label{thm:sumofsubpoly}
Let $M$ be a real symmetric matrix of order $n$.
Then
\begin{equation}
    \label{eqn:thm}
    \sum_{i=1}^n \operatorname{Char}_{M[i]}(x)= \operatorname{Char}_M^\prime(x).
\end{equation}
\end{theorem}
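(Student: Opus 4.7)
The plan is to invoke Proposition~\ref{prop:submatrix} and sum over $i$. By that proposition, for each $i \in \{1,\dots,n\}$,
\[
\operatorname{Char}_{M[i]}(x) = \operatorname{Char}_M(x) \sum_{\lambda \in \Lambda(M)} \frac{\bm\alpha_\lambda^2(i)}{x-\lambda}.
\]
Summing over $i$ from $1$ to $n$ and interchanging the order of summation, I would obtain
\[
\sum_{i=1}^n \operatorname{Char}_{M[i]}(x) = \operatorname{Char}_M(x) \sum_{\lambda \in \Lambda(M)} \frac{1}{x-\lambda}\sum_{i=1}^n \bm\alpha_\lambda^2(i).
\]

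The next step is to evaluate the inner sum $\sum_{i=1}^n \bm\alpha_\lambda^2(i)$. For this I would appeal to Lemma~\ref{lem:anglesq_sqsum}: picking any orthonormal basis $\mathbf{u}_1,\dots,\mathbf{u}_{m_\lambda}$ of the eigenspace $\mathcal{E}(\lambda)$, where $m_\lambda$ denotes the multiplicity of $\lambda$, we have
\[
\sum_{i=1}^n \bm\alpha_\lambda^2(i) = \sum_{i=1}^n \sum_{k=1}^{m_\lambda} \mathbf{u}_k^2(i) = \sum_{k=1}^{m_\lambda} \lVert \mathbf{u}_k \rVert^2 = m_\lambda.
\]
(Equivalently, $\sum_i \bm\alpha_\lambda^2(i) = \operatorname{tr}(P_\lambda) = \dim \mathcal{E}(\lambda)$.)

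Finally, since $\operatorname{Char}_M(x) = \prod_{\lambda \in \Lambda(M)}(x-\lambda)^{m_\lambda}$, the logarithmic derivative reads
\[
\frac{\operatorname{Char}_M^\prime(x)}{\operatorname{Char}_M(x)} = \sum_{\lambda \in \Lambda(M)} \frac{m_\lambda}{x-\lambda},
\]
and substituting this into the displayed identity yields \eqref{eqn:thm}.

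There is no real obstacle; each step is a direct invocation of a result already developed in the paper. The only decision is which standard proof to present: the spectral-decomposition route above fits seamlessly with the angle-vector toolkit of Section~\ref{sec:compat}, but one could instead give a purely determinantal proof via Jacobi's formula applied to $A(x) = xI - M$, using $A^\prime(x) = I$ together with cofactor expansion along the diagonal of $\operatorname{adj}(xI - M)$ to recognise $\operatorname{tr}(\operatorname{adj}(xI-M)) = \sum_{i=1}^n \operatorname{Char}_{M[i]}(x)$. I would favour the spectral proof for its consistency with the rest of the paper.
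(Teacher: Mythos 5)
Your proof is correct. One thing worth noting: the paper does not actually prove this theorem — it cites it from Thompson~\cite{thompson1968principal} and moves on — so there is no in-paper argument to compare against. Your spectral route is nevertheless a natural fit: it assembles the identity from three of the paper's own ingredients (Proposition~\ref{prop:submatrix}, Lemma~\ref{lem:anglesq_sqsum}, and the standard logarithmic-derivative formula for $\operatorname{Char}_M(x)=\prod_{\lambda}(x-\lambda)^{m_\lambda}$), and the computation $\sum_i \bm\alpha_\lambda^2(i) = \tr P_\lambda = m_\lambda$ is exactly the right bridge. All sums are finite, and the final equality of two polynomials follows because the rational-function identity holds off the zero set of $\operatorname{Char}_M$.

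The alternative you sketch at the end — Jacobi's formula applied to $xI-M$, using $\tr\bigl(\operatorname{adj}(xI-M)\bigr)=\sum_{i=1}^n \operatorname{Char}_{M[i]}(x)$ — is closer in spirit to the classical determinantal proof one would expect to find in Thompson, and it has the advantage of working for an arbitrary square matrix, not just a real symmetric one (the identity \eqref{eqn:thm} is in fact true in that generality). Your spectral version trades that generality for seamless integration with the angle-vector machinery of Section~\ref{sec:compat}; since the paper only states the theorem for real symmetric $M$ and that is all that is needed downstream, the choice is a matter of taste. Either proof would be acceptable here.
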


Since every Seidel matrix $S$ of order $n$ has zero trace and the trace of $S^2$ is $n(n-1)$, we have the following.

		\begin{lemma}[{\cite[Lemma 5.4]{GreavesYatsyna19}}]\label{lem:coeff_of_interlacepoly}
		Let $S$ be a Seidel matrix of order $n$. 
		Suppose $S$ has minimal polynomial $\operatorname{Min}_S(x) = \sum_{i=0}^e a_i x^{e-i}$.
		Then, for all $j \in \{1,\dots,n\}$,
		\[
		\operatorname{Char}_{S[j]}(x)=\operatorname{Quo}_{S}(x)\sum_{i=0}^{e-1} b_i x^{e-1-i},
		\]
		where $b_0 = 1$, $b_1 = a_1$, $b_2 = a_2+n-1$, and $b_i \in \mathbb Z$ for $i \in \{3,\dots,e-1\}$.
		\end{lemma}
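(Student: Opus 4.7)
The plan is to proceed in two stages: first establish that $\operatorname{Quo}_S(x)$ divides $\operatorname{Char}_{S[j]}(x)$, and then read off the top three coefficients by comparing the expansions of $\operatorname{Char}_S(x) = \operatorname{Quo}_S(x) \cdot \operatorname{Min}_S(x)$ and $\operatorname{Char}_{S[j]}(x) = \operatorname{Quo}_S(x) \cdot p_j(x)$.

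For the divisibility, I would write $\operatorname{Quo}_S(x) = \prod_{\lambda \in \Lambda(S)}(x-\lambda)^{m_\lambda - 1}$, where $m_\lambda$ is the multiplicity of $\lambda$ as an eigenvalue of $S$. Cauchy's interlacing theorem (Theorem~\ref{thm:cauchyinterlace}) forces each $\lambda \in \Lambda(S)$ to appear in $S[j]$ with multiplicity at least $m_\lambda - 1$, so $(x-\lambda)^{m_\lambda - 1}$ divides $\operatorname{Char}_{S[j]}(x)$. Because the factors are pairwise coprime, $\operatorname{Quo}_S(x)$ divides $\operatorname{Char}_{S[j]}(x)$, and I can write $\operatorname{Char}_{S[j]}(x) = \operatorname{Quo}_S(x) p_j(x)$ for a polynomial $p_j(x)$ of degree $(n-1)-(n-e) = e-1$. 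Since $\operatorname{Char}_{S[j]}(x)$ is monic with integer coefficients and $\operatorname{Quo}_S(x)$ is monic (and has integer coefficients, being the quotient of $\operatorname{Char}_S(x) \in \mathbb Z[x]$ by the monic minimal polynomial $\operatorname{Min}_S(x) \in \mathbb Z[x]$), polynomial division in $\mathbb Z[x]$ gives $p_j(x) \in \mathbb Z[x]$ and monic, so $b_0 = 1$.

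For the top coefficients, set $\operatorname{Quo}_S(x) = x^{n-e} + q_1 x^{n-e-1} + q_2 x^{n-e-2}+\cdots$. Since $S$ has zero diagonal, $\operatorname{tr}(S) = 0$ and $\operatorname{tr}(S^2) = n(n-1)$, so the coefficients of $x^{n-1}$ and $x^{n-2}$ in $\operatorname{Char}_S(x)$ are $0$ and $-\binom{n}{2}$ respectively (by Newton's identities). Comparing with the product $\operatorname{Quo}_S(x)\cdot\operatorname{Min}_S(x)$ yields
\[
q_1 = -a_1,\qquad q_2 = -\tbinom{n}{2} + a_1^2 - a_2.
\]
Similarly, $S[j]$ is a Seidel matrix of order $n-1$, so the coefficients of $x^{n-2}$ and $x^{n-3}$ in $\operatorname{Char}_{S[j]}(x)$ are $0$ and $-\binom{n-1}{2}$. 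Expanding $\operatorname{Quo}_S(x)\cdot p_j(x)$ gives
\[
q_1 + b_1 = 0,\qquad q_2 + q_1 b_1 + b_2 = -\tbinom{n-1}{2}.
\]
Substituting the values of $q_1$ and $q_2$ yields $b_1 = a_1$ and
\[
b_2 = -\tbinom{n-1}{2} - q_2 - q_1 b_1 = a_2 + \tbinom{n}{2} - \tbinom{n-1}{2} = a_2 + (n-1),
\]
as required. The remaining $b_i$ for $i \geqslant 3$ lie in $\mathbb Z$ simply because $p_j(x) \in \mathbb Z[x]$.

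There is no real obstacle here — the argument is almost entirely mechanical. The one small point requiring care is the divisibility step: one must verify that Cauchy interlacing indeed forces each repeated eigenvalue to drop by at most one in a size-$(n-1)$ principal submatrix, and that the coprime factors $(x-\lambda)^{m_\lambda - 1}$ combine to give $\operatorname{Quo}_S(x)$ dividing $\operatorname{Char}_{S[j]}(x)$ as an element of $\mathbb Z[x]$. Everything else is bookkeeping with Newton's identities.
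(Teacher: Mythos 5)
The paper states this lemma by citation to \cite[Lemma~5.4]{GreavesYatsyna19} and does not reproduce a proof, so there is nothing here to compare against; your argument stands or falls on its own. It does stand: Cauchy interlacing indeed guarantees that an eigenvalue of multiplicity $m_\lambda$ in $S$ has multiplicity at least $m_\lambda-1$ in any $S[j]$, so $\operatorname{Quo}_S(x)$ divides $\operatorname{Char}_{S[j]}(x)$ in $\mathbb{Z}[x]$ (both are monic integer polynomials), and the remaining bookkeeping with $\tr S=0$, $\tr S^2=n(n-1)$, Newton's identities, and the identity $\binom{n}{2}-\binom{n-1}{2}=n-1$ correctly gives $b_0=1$, $b_1=a_1$, $b_2=a_2+n-1$.
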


Let $p(x) \in \mathbb Z[x]$ be a monic totally-real polynomial of degree $n$ and suppose $S$ is a Seidel matrix such that $\operatorname{Char}_S(x) = p(x)$.
By Lemma~\ref{lem:coeff_of_interlacepoly}, for each $i\in \{1,\dots,n\}$, we have
$\operatorname{Char}_{S[i]}(x)=\operatorname{Quo}_S(x)\cdot f(x)$ for some polynomial $f(x)=\sum_{t=0}^{e-1} b_t x^{e-1-t}$ where
$b_0=1$, $b_1=a_1$, and $b_2=a_2+n-1$.
We want to find an exhaustive list of all possibilities for the polynomial $\operatorname{Char}_{S[i]}(x)$.

Define the polynomial $\operatorname{Rad}(p, x)$ derived from $p(x)$ as
\[
\operatorname{Rad}(p, x) \coloneqq \frac{p(x)}{\gcd(p(x),p^\prime (x))} .
\]
Note that $\operatorname{Min}_S(x) = \operatorname{Rad}(p, x)$ and $\operatorname{Quo}_{S}(x) = \gcd(p(x),p^\prime (x))$.

By Lemma~\ref{lem:shiftType}, the polynomial $\operatorname{Char}_{S[i]}(x-1)$ is weakly type 2 and is type 2 if $n-1$ is even.
By Lemma~\ref{lem:type_factor}, the polynomial $f(x-1)$ is also weakly type 2 and is type 2 if $n-1$ is even.
Thus, we need to find all totally-real, integer polynomials $f(x)=\sum_{t=0}^{e-1} b_t x^{e-1-t}$ with the following properties:
\begin{enumerate}[label=(\roman*)]
\item $b_0=1$, $b_1=a_1$, $b_2=a_2+n-1$,
\item $f(x)$ interlaces $\operatorname{Rad}(p, x)$,
\item $f(x-1)$ is weakly type 2 and is type 2 if $n-1$ is even,
\item $\gcd(p(x),p^\prime (x))\cdot f(x)$ is in a congruence class of $\mathcal{P}_{n-1,7}$, if $n-1$ is odd. (See Remark~\ref{rem:e7}.)
\end{enumerate}

Let $F$ be the set of polynomials satisfying these properties.
Each polynomial in the set 
$$\mathfrak F \coloneqq \{ \gcd(p(x),p^\prime (x))\cdot f(x) \; : \; f(x) \in F \}$$
is called an \textbf{interlacing characteristic polynomial} for $p(x)$.
In many of the lemmas below, we will need to find all the interlacing characteristic polynomials of certain polynomials.
To do this, we employ the polynomial generation algorithm of \cite[Section 2.3]{GSY21} to construct $\mathfrak F$.

\begin{lemma}
\label{lem:setB}
There does not exist a Seidel matrix $S$ whose characteristic polynomial is equal to any of the polynomials in $\mathcal B$ (from Lemma~\ref{lem:candpols}).
\end{lemma}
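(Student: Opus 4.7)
The plan is to apply Theorem~\ref{thm:sumofsubpoly} together with the interlacing machinery of Section~\ref{sec:icp} to each of the two polynomials in $\mathcal B$. Fix $p(x) \in \mathcal B$ and suppose for contradiction that $S$ is a Seidel matrix of order $49$ with $\operatorname{Char}_S(x) = p(x)$. Using the procedure laid out just above the statement (conditions (i)--(iv) on $f(x)$, with $n=49$ so $n-1$ is even and hence $f(x-1)$ must be type 2), I would first compute the full set $\mathfrak F$ of interlacing characteristic polynomials for $p(x)$. This is a finite enumeration: the top three coefficients of each $f(x)$ are fixed by Lemma~\ref{lem:coeff_of_interlacepoly}, the interlacing condition with $\operatorname{Rad}(p,x)$ restricts the real zeros of $f$ to narrow intervals, and the type-2 plus modular-class conditions cut the list down further. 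The McKee--Smyth-style enumeration algorithm of \cite[Section 2.3]{GSY21} produces $\mathfrak F$ explicitly.

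Second, I would translate Theorem~\ref{thm:sumofsubpoly} into a linear Diophantine problem: for each $f \in \mathfrak F$ let $m_f \in \mathbb Z_{\geqslant 0}$ denote the (unknown) number of indices $i$ with $\operatorname{Char}_{S[i]}(x) = f(x)$. Then the constraints
\begin{equation*}
\sum_{f \in \mathfrak F} m_f = 49, \qquad \sum_{f \in \mathfrak F} m_f\, f(x) = p^\prime(x)
\end{equation*}
must hold. Matching the coefficients of $p^\prime(x)$ with the coefficients of the elements of $\mathfrak F$ gives a finite system of linear equations in the $m_f$, and I would attempt to exhibit no nonnegative integer solution. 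In favourable cases the count equation together with a low-degree coefficient match already forces a contradiction; otherwise one enumerates the (finitely many) nonnegative integer solutions.

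Third, if the linear system does admit nonnegative integer solutions, I would eliminate each such candidate multiset using Corollary~\ref{cor:Seidel-compatible}: any two polynomials $f, g \in \mathfrak F$ with $m_f, m_g > 0$ must be Seidel-compatible with respect to $S$ (i.e., with respect to a matrix of characteristic polynomial $p(x)$). Compatibility is checked via the irreducible factors $\xi$ of $\operatorname{Sim}_S(x)$; for the two candidates in $\mathcal B$, $\operatorname{Sim}_S(x)$ has an irreducible quadratic factor ($x^2-20x+95$ or $x^2-22x+113$), so the criteria of Proposition~\ref{pro:reducible_then_compatible} and Corollary~\ref{cor:incompatible} apply directly to the quantities $fg(\lambda)$ at a zero $\lambda$ of that quadratic.

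The main obstacle I expect is the size of $\mathfrak F$, which could be large enough to make the linear-system bookkeeping cumbersome, and the possibility that the coefficient-matching alone leaves several multisets to rule out, forcing a case analysis via Seidel-compatibility for each pair drawn from the surviving supports. I would rely on the computer algebra setup already used in the paper (Magma/Mathematica implementation of the enumeration algorithm and of the compatibility check described in Example~\ref{ex:red2compat} and Example~\ref{ex:notCompatible}) to discharge these cases uniformly, concluding that no valid multiset of interlacing characteristic polynomials exists and hence no Seidel matrix $S$ can have $\operatorname{Char}_S(x) \in \mathcal B$.
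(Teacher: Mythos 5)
Your proposal follows the paper's approach: the proof in the paper is simply the observation that for both polynomials in $\mathcal B$ the set $\mathfrak F$ of interlacing characteristic polynomials is empty, so the contradiction already arrives at the first step of your plan (no principal submatrix $S[i]$ could have an admissible characteristic polynomial), and the linear-system and Seidel-compatibility stages you describe as fallbacks are never needed. Your enumeration setup, including the observation that $f(x-1)$ must be type 2 since $n-1=48$ is even, is correct.
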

\begin{proof}
    Both polynomials in $\mathcal B$ do not have any interlacing characteristic polynomials. 
\end{proof}

The \textbf{coefficient vector} of a polynomial $h(x)=\sum_{t=0}^{n-1} c_{t} x^{n-1-t}$ of degree $n-1$ is defined to be the (row) vector $(c_0,c_1,\dots,c_{n-1})$.
Given a set $H$ of polynomials each of degree $n - 1$, the \textbf{coefficient matrix} $A(H)$ is defined as the $|H| \times n$ matrix whose rows are the coefficient vectors for each polynomial in $H$.
We write $\mathbf x \geqslant \mathbf 0$ to indicate that all entries of the vector $\mathbf x$ are nonnegative.
The polynomial equation \eqref{eqn:thm} can be viewed as a linear system: $\mathbf x^\transpose A = \mathbf b^\transpose$, where $\mathbf x \geqslant \mathbf 0$.
Indeed, for a real symmetric matrix $M$ of order $n$, let $X = \{ \operatorname{Char}_{M[i]}(x) \; : \; i \in \{1,\dots,n\}\}$ and let $\mathbf x$ be the vector indexed by elements of $X$ such that the entry indexed by $\mathfrak p (x) \in X$ equals the cardinality of the set $\{ i \in \{1,\dots,n\} \; : \; \operatorname{Char}_{M[i]}(x) = \mathfrak p (x) \}$.
If $A = A(X)$ is the coefficient matrix for the polynomials in $X$ and $\mathbf b$ is the coefficient vector for $\operatorname{Char}_M^\prime(x)$, then \eqref{eqn:thm} becomes $\mathbf x^\transpose A = \mathbf b^\transpose$.

Farkas' Lemma (see \cite{farkas} or \cite[Theorem 4.1]{GSY21}) allows us to demonstrate that there is no vector $\mathbf x \geqslant \mathbf 0$ satisfying $\mathbf x^\transpose A = \mathbf b^\transpose$, by finding a vector $\mathbf y \in \mathbb R^n$ such that $A\mathbf{y}\geqslant\mathbf{0}$ and $\mathbf{y}^\transpose \mathbf{b} <0$. 
We call such a vector $\mathbf y$ a \textbf{certificate of infeasibility} for the linear system $\mathbf{x}^\transpose A=\mathbf{b}^\transpose,\: \mathbf{x} \geqslant \mathbf{0}$.
Note that one can find certificates of infeasibility using linear programming techniques.

Consider the set $\mathfrak{F}$ of interlacing characteristic polynomials for $p(x)$.
By Theorem~\ref{thm:sumofsubpoly}, if there exists a Seidel matrix $S$ such that $\operatorname{Char}_S(x) = p(x)$ then there exist nonnegative integers $n_{\mathfrak f}$ for each $\mathfrak f(x) \in \mathfrak F$ such that
\begin{equation}\label{eq:sumofsubpoly_dim17}
\sum_{\mathfrak f(x) \in \mathfrak F} n_{\mathfrak f} \cdot \mathfrak f(x)= p^\prime(x).
\end{equation}

Let $\mathbf n$ be the vector indexed by $\mathfrak F$ whose $\mathfrak f(x)$-entry is $n_{\mathfrak f}$, for each $\mathfrak f(x) \in \mathfrak F$ and let $A = A(\mathfrak F)$.
Then $\mathbf n \geqslant \mathbf 0$ is a solution to the linear system $\mathbf{n}^\transpose A=\mathbf{b}^\transpose$ where $\mathbf{b}$ is the coefficient vector for $p^\prime(x)$.
We call the vector $\mathbf n$ an \textbf{interlacing configuration} for $\mathfrak F$.
Thus, to show that no Seidel matrix $S$ exists having $\operatorname{Char}_S(x) = p(x)$, it suffices to show that there does not exist an interlacing configuration for $\mathfrak F$.
Hence, it suffices to provide a certificate of infeasibility $\mathbf c$ for the linear system above.
We call $\mathbf c$ a \textbf{certificate of infeasibility} for $p(x)$.

\begin{lemma}
\label{lem:firstTable}
There does not exist a Seidel matrix $S$ whose characteristic polynomial is equal to any of the polynomials in Table~\ref{tab:164polydim17}.
\end{lemma}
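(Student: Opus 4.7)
The plan is to proceed polynomial-by-polynomial, for each of the 164 candidates $p(x)$ in Table~\ref{tab:164polydim17}, following the linear-programming framework set up in Section~\ref{sec:icp}. For each such $p(x)$, I would first enumerate the set $\mathfrak{F}$ of interlacing characteristic polynomials of $p(x)$. By Lemmas~\ref{lem:shiftType}, \ref{lem:type_factor}, and \ref{lem:coeff_of_interlacepoly}, together with Theorem~\ref{thm:countCharPolySeidelOdd}, every such interlacing polynomial has the form $\gcd(p, p^\prime) \cdot f(x)$, where $f(x)$ has its top three coefficients pinned down as $1, a_1, a_2 + 48$, interlaces $\operatorname{Rad}(p, x)$, satisfies that $f(x-1)$ is weakly type 2, and yields a product $\gcd(p,p^\prime) \cdot f$ lying in a prescribed congruence class of $\mathcal P_{48,7}$. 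The polynomial-generation algorithm of \cite[Section 2.3]{GSY21} enumerates $\mathfrak F$ in finite time, since totally-real monic integer polynomials with fixed top three coefficients form a finite set.

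Next, for each $p(x)$, I would form the coefficient matrix $A = A(\mathfrak F)$ together with the coefficient vector $\mathbf b$ of $p^\prime(x)$, and then use linear programming to produce a certificate of infeasibility $\mathbf c$ satisfying $A \mathbf c \geqslant \mathbf 0$ and $\mathbf c^\transpose \mathbf b < 0$. By Farkas' Lemma, such a $\mathbf c$ guarantees that the linear system $\mathbf n^\transpose A = \mathbf b^\transpose$ admits no nonnegative real solution, and hence (a fortiori) no nonnegative integer solution. By equation \eqref{eq:sumofsubpoly_dim17}, derived from Theorem~\ref{thm:sumofsubpoly}, the multiset of characteristic polynomials of the principal submatrices $S[i]$ of any putative Seidel matrix $S$ with $\operatorname{Char}_S = p$ would give such a nonnegative integer solution. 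The contradiction rules $p(x)$ out. The 164 certificates $\mathbf c$ would be listed in the appendix, and their verification reduces to finite integer-rational matrix-vector arithmetic that can be carried out easily (and independently in Magma and Mathematica).

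The main anticipated obstacle is whether a Farkas certificate in fact exists for all 164 polynomials: if the real LP relaxation $\mathbf n^\transpose A = \mathbf b^\transpose$, $\mathbf n \geqslant \mathbf 0$ turns out to be feasible for some $p(x)$, the interlacing data alone is insufficient, and one would be forced to invoke the stronger compatibility or Seidel-compatibility restrictions developed in Section~\ref{sec:compat}, or other finer tools. The fact that the authors have already partitioned the 194 candidate polynomials into six groups of increasing difficulty, and that this lemma addresses the \emph{first and largest} such group, strongly suggests that Table~\ref{tab:164polydim17} contains exactly those polynomials for which pure interlacing plus Farkas already suffices, with the later tables and set $\mathcal C$ reserved for polynomials whose LP relaxations are feasible and require the compatibility machinery to rule out.
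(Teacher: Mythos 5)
Your proposal follows essentially the same route as the paper: enumerate the interlacing characteristic polynomials for each of the 164 candidates, set up the linear system from Theorem~\ref{thm:sumofsubpoly}, and exhibit a Farkas certificate of infeasibility for each (the paper's proof is exactly this, with the certificates recorded in Table~\ref{tab:164polydim17infeas}). Two small filtering details: since $n-1=48$ is even, $f(x-1)$ must be type $2$ (not merely weakly type $2$), and the $\mathcal P_{n-1,7}$ congruence condition is only imposed when $n-1$ is odd, so it does not apply here.
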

\begin{proof}
    Each polynomial in Table~\ref{tab:164polydim17} has a certificate of infeasibility listed in Table~\ref{tab:164polydim17infeas}. 
\end{proof}

Most of the computation time used in this paper was dedicated to finding the interlacing characteristic polynomials in order to prove Lemma~\ref{lem:firstTable}.
We make the following remark about the computation time taken for Lemma~\ref{lem:firstTable}.

\begin{remark}
    The total time to compute the interlacing characteristic polynomials for each of the 164 polynomials of Table~\ref{tab:164polydim17} took about 6 hours and 17 minutes.
    The time taken varied dramatically for the 164 polynomials.  
    As we can see in Figure~\ref{fig:timeTaken164}, there are six outliers, which correspond to the indices 65, 87, 88, 90, 101, and 112 in Table~\ref{tab:164polydim17}.
    The vast majority of the computation time was spent on finding the interlacing characteristic polynomials for these six candidate characteristic polynomials.
\end{remark}
\begin{figure}
    \centering
   \begin{tikzpicture}

\begin{groupplot}[
       group style={
       group name=plot,
       group size=2 by 1,
       xlabels at=edge bottom,
       ylabels at=edge left,
       horizontal sep=0pt,
       vertical sep=0pt,
       /pgf/bar width=10pt},
ylabel=Time taken (seconds),
major x tick style=transparent,
ybar= \pgflinewidth,
ymin=0,
x axis line style={opacity=0},
x tick label style={rotate=0, anchor=center, xshift=0.005cm, scale=1},
xmin=0, xmax=164, 
xtick={0,20,40,60,80,100,120,140,160},
xlabel=Index (Table~\ref{tab:164polydim17}),
ymajorgrids=true,
grid style=dotted,
nodes near coords,
scale only axis,
point meta=explicit symbolic,
enlarge x limits = {abs=1},
cycle list={
  draw=blue,thick,fill=blue,fill opacity=0.6,nodes near coords style={blue!60}\\
  draw=orange,thick,fill=orange,fill opacity=0.6,nodes near coords style={orange}\\
  },
legend columns=-1,
]
\nextgroupplot[
     ytick pos=left
      ]
\addplot [mark=none, ybar interval=0.5]
	coordinates {
( 1 , 0.140 )
( 2 , 0.140 )
( 3 , 14.750 )
( 4 , 1.740 )
( 5 , 14.680 )
( 6 , 14.830 )
( 7 , 14.840 )
( 8 , 14.770 )
( 9 , 14.770 )
( 10 , 14.880 )
( 11 , 1.720 )
( 12 , 14.830 )
( 13 , 14.940 )
( 14 , 0.140 )
( 15 , 95.140 )
( 16 , 1.690 )
( 17 , 14.610 )
( 18 , 14.800 )
( 19 , 1.290 )
( 20 , 94.580 )
( 21 , 14.720 )
( 22 , 14.660 )
( 23 , 14.730 )
( 24 , 14.890 )
( 25 , 0.170 )
( 26 , 14.610 )
( 27 , 14.590 )
( 28 , 0.610 )
( 29 , 473.770 )
( 30 , 94.930 )
( 31 , 14.330 )
( 32 , 14.260 )
( 33 , 469.070 )
( 34 , 0.630 )
( 35 , 14.790 )
( 36 , 56.830 )
( 37 , 14.820 )
( 38 , 94.220 )
( 39 , 1.640 )
( 40 , 457.100 )
( 41 , 468.560 )
( 42 , 14.940 )
( 43 , 3.760 )
( 44 , 10.080 )
( 45 , 96.570 )
( 46 , 15.050 )
( 47 , 57.930 )
( 48 , 96.880 )
( 49 , 97.090 )
( 50 , 14.860 )
( 51 , 96.220 )
( 52 , 14.660 )
( 53 , 474.300 )
( 54 , 0.070 )
( 55 , 57.380 )
( 56 , 95.790 )
( 57 , 14.840 )
( 58 , 29.990 )
( 59 , 10.050 )
( 60 , 57.660 )
( 61 , 95.410 )
( 62 , 95.950 )
( 63 , 14.860 )
( 64 , 98.300 )
( 65 , 1973.350 )
( 66 , 471.260 )
( 67 , 56.500 )
( 68 , 14.460 )
( 69 , 57.470 )
( 70 , 57.650 )
( 71 , 476.850 )
( 72 , 93.920 )
( 73 , 93.580 )
( 74 , 0.390 )
( 75 , 244.500 )
( 76 , 56.490 )
( 77 , 3.610 )
( 78 , 56.320 )
( 79 , 56.970 )
( 80 , 94.340 )
( 81 , 250.870 )
( 82 , 57.150 )
( 83 , 56.360 )
( 84 , 56.610 )
( 85 , 471.160 )
( 86 , 0.290 )
( 87 , 1882.780 )
( 88 , 1813.660 )
( 89 , 91.280 )
( 90 , 1817.470 )
( 91 , 3.520 )
( 92 , 1.210 )
( 93 , 54.470 )
( 94 , 9.490 )
( 95 , 90.860 )
( 96 , 0.370 )
( 97 , 237.030 )
( 98 , 54.560 )
( 99 , 54.560 )
( 100 , 112.130 )
( 101 , 1813.480 )
( 102 , 0.290 )
( 103 , 1.250 )
( 104 , 5.510 )
( 105 , 54.350 )
( 106 , 90.870 )
( 107 , 28.380 )
( 108 , 54.350 )
( 109 , 237.570 )
( 110 , 54.480 )
( 111 , 0.680 )
( 112 , 1816.240 )
( 113 , 0.330 )
( 114 , 5.460 )
( 115 , 54.660 )
( 116 , 28.490 )
( 117 , 28.310 )
( 118 , 54.670 )
( 119 , 54.870 )
( 120 , 872.360 )
( 121 , 1.950 )
( 122 , 112.240 )
( 123 , 54.660 )
( 124 , 55.000 )
( 125 , 54.550 )
( 126 , 11.720 )
( 127 , 872.670 )
( 128 , 1.990 )
( 129 , 0.200 )
( 130 , 54.520 )
( 131 , 237.720 )
( 132 , 54.640 )
( 133 , 11.590 )
( 134 , 111.770 )
( 135 , 0.900 )
( 136 , 367.540 )
( 137 , 0.260 )
( 138 , 0.010 )
( 139 , 0.780 )
( 140 , 54.400 )
( 141 , 2.620 )
( 142 , 11.660 )
( 143 , 28.390 )
( 144 , 0.080 )
( 145 , 0.000 )
( 146 , 0.440 )
( 147 , 2.520 )
( 148 , 28.290 )
( 149 , 3.930 )
( 150 , 11.730 )
( 151 , 54.500 )
( 152 , 11.920 )
( 153 , 0.950 )
( 154 , 0.070 )
( 155 , 11.770 )
( 156 , 0.170 )
( 157 , 0.020 )
( 158 , 2.510 )
( 159 , 0.030 )
( 160 , 0.450 )
( 161 , 0.190 )
( 162 , 0.060 )
( 163 , 0.060 )
( 164 , 0.440 )};
\end{groupplot}
\end{tikzpicture}
    \caption{A bar chart to display the time taken to compute the interlacing characteristic polynomial for each of the 164 polynomials in Lemma~\ref{lem:firstTable}.}
    \label{fig:timeTaken164}
\end{figure}

Suppose the linear system $\mathbf{n}^\transpose A=\mathbf{b}^\transpose$ has at least one nonnegative real solution.
Then a subset $\mathfrak W \subset \mathfrak F$ is called a \textbf{warranted} subset of interlacing characteristic polynomials if there is no nonnegative solution $\mathbf n$ to the subsystem  $\mathbf{n}^\transpose B=\mathbf{b}^\transpose$, where the matrix $B$ is obtained from $A$ by removing the rows corresponding to the polynomials in  $\mathfrak W$.
If $\mathfrak W$ consists of a single polynomial $\mathfrak w(x)$ then we call $\mathfrak w(x)$ a \textbf{warranted} interlacing characteristic polynomial.
Equivalently, an interlacing characteristic polynomial $\mathfrak w(x) \in \mathfrak F$ is called warranted if the $\mathfrak w(x)$-entry of every interlacing configuration for $\mathfrak F$ is positive.
We can show that $\mathfrak W \subset \mathfrak F$ is warranted by providing a certificate of infeasibility $\mathbf c$ for $\mathfrak F \backslash \mathfrak W$.
We call such a $\mathbf c$ a \textbf{certificate of warranty} for the subset $\mathfrak W$.
Equivalently, the entries of $ A\mathbf c$ that correspond to $\mathfrak W$ are negative, while the rest of the entries of $A\mathbf c$, which correspond to the entries of $B\mathbf c$, are nonnegative.

Once we have a warranted subset of interlacing characteristic polynomials $\mathfrak W \subset \mathfrak F$, we know that, for any interlacing configuration $\mathbf n$ for $\mathfrak F$, the entry $n_{\mathfrak f} \geqslant 1$ for some $\mathfrak f \in \mathfrak W$.
Furthermore, by Corollary~\ref{cor:Seidel-compatible}, if $n_{\mathfrak f} > 0$ then $\mathfrak f$ must be Seidel-compatible with some polynomials in the warranted subset $\mathfrak W$.
Let $\mathfrak C \subset \mathfrak F$ be the subset of polynomials that are Seidel-compatible with at least one polynomial from $\mathfrak W$.
Instead of looking for solutions to \eqref{eq:sumofsubpoly_dim17}, we need only restrict our attention to solving the system
\begin{equation}\label{eq:sumofsubpoly_dim17Compat}
\sum_{\mathfrak f \in \mathfrak C } n_{\mathfrak f} \cdot \mathfrak f(x)=p^\prime(x).
\end{equation}
Similar to the above, to show that no Seidel matrix $S$ exists having $\operatorname{Char}_S(x) = p(x)$, such that $\mathfrak W$ is a warranted subset of interlacing characteristic polynomials for $p(x)$, it suffices to provide a certificate of infeasibility $\mathbf c$ for the linear system \eqref{eq:sumofsubpoly_dim17Compat}.
We call $\mathbf c$ a \textbf{certificate of infeasibility} for $p(x)$ with respect to $\mathfrak W$.

Given a putative characteristic polynomial for a Seidel matrix, we first find its interlacing characteristic polynomials.
If no certificate of infeasibility exists, then we look for warranted polynomials or warranted subsets consisting of two polynomials.
We find warranted subsets by removing them from the set of interlacing characteristic polynomials and then finding a certificate of infeasibility for the remaining subset of interlacing characteristic polynomials.

\begin{lemma}
\label{lem:secondTable}
There does not exist a Seidel matrix $S$ whose characteristic polynomial is equal to any of the 11 polynomials in Table~\ref{tab:11polydim17}.
\end{lemma}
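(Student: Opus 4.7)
The plan is to follow the template established in the discussion preceding Lemma~\ref{lem:secondTable}: for each of the 11 candidate polynomials $p(x)$ in Table~\ref{tab:11polydim17}, we show that no Seidel matrix $S$ can satisfy $\operatorname{Char}_S(x)=p(x)$ by combining the warranted-subset technique with Seidel-compatibility obstructions. The reason these polynomials cannot be handled as in Lemma~\ref{lem:firstTable} is presumably that the linear system \eqref{eq:sumofsubpoly_dim17} defined by the full set $\mathfrak F$ of interlacing characteristic polynomials admits a nonnegative real solution, so no direct certificate of infeasibility exists. We must therefore exploit Seidel-compatibility to prune $\mathfrak F$ before setting up a new (smaller) linear system.

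First, for each $p(x)$, I would enumerate $\mathfrak F$ using the algorithm of \cite[Section~2.3]{GSY21}, applying the constraints on the first three coefficients from Lemma~\ref{lem:coeff_of_interlacepoly}, the interlacing requirement against $\operatorname{Rad}(p,x)$, the (weakly) type-2 shift condition from Lemma~\ref{lem:shiftType} (combined with Lemma~\ref{lem:type_factor}), and the mod-$2^7$ congruence condition coming from $\mathcal P_{48,7}$. Next, I would search inside $\mathfrak F$ for a warranted subset $\mathfrak W$, either a single warranted $\mathfrak w(x)$ or a pair $\{\mathfrak w_1(x),\mathfrak w_2(x)\}$. Concretely, for each candidate $\mathfrak W$ I would test whether the subsystem $\mathbf n^\transpose B=\mathbf b^\transpose,\; \mathbf n\geqslant \mathbf 0$ obtained by deleting the rows corresponding to $\mathfrak W$ is infeasible by asking a linear program for a certificate of warranty, i.e., a vector $\mathbf c$ with $A\mathbf c$ negative on the $\mathfrak W$-entries and nonnegative elsewhere.

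Once $\mathfrak W$ is in hand, by Corollary~\ref{cor:Seidel-compatible} any interlacing configuration must use at least one $\mathfrak f\in \mathfrak W$, and every other polynomial appearing in the configuration must be Seidel-compatible with some element of $\mathfrak W$. So I would build the restricted set $\mathfrak C\subseteq \mathfrak F$ of polynomials Seidel-compatible with at least one element of $\mathfrak W$. For each pair $(\mathfrak w,\mathfrak g)$ with $\mathfrak w\in \mathfrak W$ and $\mathfrak g\in \mathfrak F$, I would decide compatibility as follows: for every irreducible factor $\xi(x)$ of $\operatorname{Sim}_S(x)$, compute the minimal polynomial $\rho(x)$ of $fg(\lambda)$ at one root $\lambda$ of $\xi$, and try first Proposition~\ref{pro:reducible_then_compatible} (if $\rho(x^2)$ is reducible we get $\xi$-compatibility cheaply, with the parity of $\omega$ checked against $q(1)+q(0)$ for Seidel-compatibility via the construction following its proof); if $\rho(x^2)$ is irreducible and the Galois-group inequality $|G|<|H|$ of Corollary~\ref{cor:incompatible} holds while $q(1)+q(0)$ is odd, we conclude $\xi$-incompatibility and hence $\mathfrak g\notin\mathfrak C$. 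This keeps all computations inside the smallest possible number fields, in the spirit of Example~\ref{ex:red2compat} and Example~\ref{ex:notCompatible}.

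Finally, having produced $\mathfrak C$, I would solve (or certify the infeasibility of) the smaller linear system \eqref{eq:sumofsubpoly_dim17Compat}; a nonnegative real vector $\mathbf c$ with the appropriate sign pattern is a certificate of infeasibility for $p(x)$ with respect to $\mathfrak W$, and its existence rules out $p(x)$. The certificates themselves, together with the specific $\mathfrak W$ for each of the 11 polynomials, would be recorded in an auxiliary table (analogous to Table~\ref{tab:164polydim17infeas}) and verified by Magma~\cite{magma} and, independently, Mathematica~\cite{mathematica} via the supplementary code~\cite{github}. The main obstacle is engineering: deciding Seidel-compatibility becomes expensive as the degree of $\xi$ grows (number fields of large degree over $\mathbb Q$), and if the first choice of $\mathfrak W$ leaves $\mathfrak C$ too large to force infeasibility, one has to iterate—trying different single warranted polynomials or warranted pairs until the restricted system finally collapses. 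I expect that for a few of the 11 polynomials the right $\mathfrak W$ will have to be a two-element subset, and that finding one for which $\mathfrak C$ admits a certificate of infeasibility is the real work of the proof.
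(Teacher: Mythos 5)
Your strategy is sound, but it is not the route the paper takes for these 11 polynomials; it is the heavier machinery the paper reserves for Table~\ref{tab:8polydim17} (Lemma~\ref{lem:thirdTable}). The paper's proof of Lemma~\ref{lem:secondTable} is a one-line contradiction: for each $p(x)$ it exhibits \emph{two} interlacing characteristic polynomials $\mathfrak f(x)$ and $\mathfrak g(x)$ that are each \emph{individually} warranted (Table~\ref{tab:11polydim17warrant} gives a separate certificate of warranty for each), so any interlacing configuration must contain both, i.e.\ both occur as $\operatorname{Char}_{S[i]}(x)$ and $\operatorname{Char}_{S[j]}(x)$ for distinct $i,j$; since $\mathfrak f(x)$ and $\mathfrak g(x)$ are not Seidel-compatible, Corollary~\ref{cor:Seidel-compatible} is violated immediately, and no second linear system is ever set up. Your plan — choose a warranted subset $\mathfrak W$, form the set $\mathfrak C$ of polynomials Seidel-compatible with some element of $\mathfrak W$, and certify infeasibility of the restricted system \eqref{eq:sumofsubpoly_dim17Compat} — does subsume the paper's argument: if $\mathfrak g(x)$ is warranted and Seidel-incompatible with the warranted $\mathfrak f(x)$, then $\mathfrak g(x)\notin\mathfrak C$ and its certificate of warranty is already a certificate of infeasibility for any subsystem omitting it. So your approach is correct and more general, at the price of extra computation and of not noticing the shortcut that actually closes these 11 cases. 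Two small points: the filter ``$\in\mathcal P_{48,7}$'' you invoke when enumerating $\mathfrak F$ is not one the paper can use — Theorem~\ref{thm:countCharPolySeidelOdd} and Remark~\ref{rem:e7} only make $\mathcal P_{n,7}$ available for $n$ odd, and here the principal submatrices have even order $48$, for which the type-2 condition is the operative strengthening; and requiring compatibility with ``some element of $\mathfrak W$'' is a relaxation of what Corollary~\ref{cor:Seidel-compatible} actually forces (compatibility with whichever warranted polynomials genuinely occur), but it is a sound relaxation and is exactly how the paper defines $\mathfrak C$.
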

\begin{proof}
For each polynomial $p(x)$ in Table~\ref{tab:11polydim17}, the corresponding linear system has at least one nonnegative real solution.
In Table~\ref{tab:11polydim17warrant}, we list two warranted interlacing characteristic polynomials $\mathfrak f(x)$ and $\mathfrak g(x)$ for each $p(x)$ and their certificates of warranty.
However, for each $p(x)$ we have that $\mathfrak f(x)$ is not Seidel-compatible with $\mathfrak g(x)$, which contradicts Corollary~\ref{cor:Seidel-compatible}.
\end{proof}

The computations required in Lemma~\ref{lem:secondTable} (to compute the interlacing characteristic polynomials and check Seidel-compatibility) take about 2 minutes and 5 seconds in total.

\begin{lemma}
\label{lem:thirdTable}
There does not exist a Seidel matrix $S$ whose characteristic polynomial is equal to any of the eight polynomials in Table~\ref{tab:8polydim17}.
\end{lemma}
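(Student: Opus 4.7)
The plan is to execute the warranted-subset-plus-compatibility argument developed at the end of Section~\ref{sec:icp}. For each $p(x)$ in Table~\ref{tab:8polydim17}, the outline is: enumerate the set $\mathfrak F$ of interlacing characteristic polynomials for $p(x)$, identify a warranted subset $\mathfrak W \subset \mathfrak F$ together with a certificate of warranty, use Corollary~\ref{cor:Seidel-compatible} to cut the feasibility problem down to the subset $\mathfrak C \subset \mathfrak F$ of interlacing characteristic polynomials that are Seidel-compatible with at least one element of $\mathfrak W$, and then produce a certificate of infeasibility for $p(x)$ with respect to $\mathfrak W$, i.e., for the restricted linear system \eqref{eq:sumofsubpoly_dim17Compat}. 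The weaker arguments of Lemmas~\ref{lem:firstTable} and \ref{lem:secondTable} must already fail on these eight polynomials; otherwise they would have been absorbed into one of those two lemmas.

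First I would enumerate $\mathfrak F$ using the polynomial generation algorithm employed throughout the paper. Next, via linear programming, I would search for warranted subsets $\mathfrak W$, beginning with singletons and moving to pairs as necessary; each candidate $\mathfrak W$ is warranted precisely when the subsystem obtained by deleting the corresponding rows admits no nonnegative real solution. Given such a $\mathfrak W$, the key step is to compute the compatibility set $\mathfrak C$, which requires deciding Seidel-compatibility of every $\mathfrak f \in \mathfrak F$ with every element of $\mathfrak W$. Compatibility can be established efficiently through Proposition~\ref{pro:reducible_then_compatible}, by constructing the interpolation polynomial $h(x)$ of Lemma~\ref{lem:compatible_lagrange}; Seidel-incompatibility can be detected via Corollary~\ref{cor:incompatible}, using the Galois-group comparison $|G| < |H|$ combined with the parity check on $q(1)+q(0)$. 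Once $\mathfrak C$ has been computed, by Corollary~\ref{cor:Seidel-compatible} any interlacing configuration for $\mathfrak F$ must be supported on $\mathfrak C$, so a linear-programming certificate of infeasibility for \eqref{eq:sumofsubpoly_dim17Compat} closes the argument.

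The main obstacle I expect is the Seidel-compatibility computation itself. The polynomials in Table~\ref{tab:8polydim17} very likely involve irreducible quadratic or higher-degree factors of $\operatorname{Sim}_S(x)$, so that the relevant arithmetic lives in nontrivial number fields whose splitting fields over $\mathbb{Q}$ can become unwieldy. The tools of Section~\ref{sec:compat} are tailored precisely to defuse this: Proposition~\ref{pro:reducible_then_compatible} and Corollary~\ref{cor:incompatible} replace arithmetic in the full splitting field of $\xi(x)$ by calculations with the minimal polynomial $\rho(x)$ of a single value $fg(\lambda)$. A secondary difficulty is that the correct warranted $\mathfrak W$ is far from unique, and one may need to try several candidates before landing on one for which $\mathfrak C$ is small enough to yield a certificate of infeasibility; as in Lemma~\ref{lem:secondTable}, the search is guided heuristically by first testing singletons and small pairs.
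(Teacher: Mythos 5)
Your proposal follows exactly the paper's argument: for each of the eight polynomials one finds a (singleton) warranted interlacing characteristic polynomial with a certificate of warranty, restricts via Corollary~\ref{cor:Seidel-compatible} to the polynomials Seidel-compatible with it, and exhibits a certificate of infeasibility for the restricted system \eqref{eq:sumofsubpoly_dim17Compat}, with the compatibility checks carried out using the tools of Section~\ref{sec:compat}. This is precisely what the paper does (see Tables~\ref{tab:8polydim17warrant} and \ref{tab:8polydim17compat}), so the approach is correct and essentially identical.
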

\begin{proof}
For each polynomial in Table~\ref{tab:8polydim17} there exists a warranted polynomial $\mathfrak f(x)$, listed in Table~\ref{tab:8polydim17warrant} together with its certificate of warranty.
Next we find all interlacing characteristic polynomials that are Seidel-compatible with $\mathfrak f(x)$ and we find that there is no solution to \eqref{eq:sumofsubpoly_dim17Compat}.
The Seidel-compatible interlacing characteristic polynomials and the certificate of infeasibility with respect to $\mathfrak f(x)$ are listed in Table~\ref{tab:8polydim17compat}. 
\end{proof}

The computations required in Lemma~\ref{lem:thirdTable} (to compute the interlacing characteristic polynomials and Seidel-compatible subsets) take about 2 minutes and 16 seconds in total.

In the remainder of this paper we are solely occupied with ruling out the remaining set $\mathcal C$ of $8$ polynomials from Lemma~\ref{lem:candpols}.
The total time required to compute the interlacing characteristic polynomials and Seidel-compatible subsets for the remainder of this paper's results takes less than five minutes on a modern PC.

\begin{lemma} \label{lem:last8_quad92}
There does not exist a Seidel matrix $S$ with characteristic polynomial
$$
\operatorname{Char}_S(x) = (x+5)^{32} (x-9)^{13} (x-11)^2 (x^2-21x+92).
$$
\end{lemma}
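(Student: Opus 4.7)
The plan is to follow the template established in Lemma~\ref{lem:thirdTable}, adapted to exploit the Seidel-compatibility machinery developed in Section~\ref{sec:compat}. First I would set
\[
\operatorname{Min}_S(x) = (x+5)(x-9)(x-11)(x^2-21x+92), \qquad \operatorname{Quo}_S(x) = (x+5)^{31}(x-9)^{12}(x-11),
\]
so that $\operatorname{Sim}_S(x) = x^2-21x+92$, and then generate the full set $\mathfrak F$ of interlacing characteristic polynomials for $p(x)$ using the algorithm of \cite[Section 2.3]{GSY21}. Since $n-1 = 48$ is even, Lemma~\ref{lem:shiftType} together with Lemma~\ref{lem:type_factor} forces the companion polynomial $f(x-1)$ in the decomposition $\operatorname{Char}_{S[i]}(x)=\operatorname{Quo}_S(x)f(x)$ to be type~$2$, and $b_0=1,\,b_1=-124,\,b_2=7074+48=7122$ are pinned down by Lemma~\ref{lem:coeff_of_interlacepoly}.

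Next I would attempt to produce a certificate of infeasibility for the full linear system $\mathbf n^\transpose A(\mathfrak F)=\mathbf b^\transpose$ (with $\mathbf b$ the coefficient vector of $p^\prime(x)$) via Farkas' Lemma. Example~\ref{ex:red2compat} already exhibits a Seidel-compatible pair of interlacing characteristic polynomials for this very $p(x)$, which strongly suggests that straightforward pairwise incompatibility (the trick of Lemma~\ref{lem:secondTable}) will not suffice. I therefore expect the proof to proceed, as in Lemma~\ref{lem:thirdTable}, by isolating a warranted polynomial $\mathfrak w(x) \in \mathfrak F$: one whose coefficient $n_{\mathfrak w}$ must be strictly positive in every interlacing configuration. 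Such a $\mathfrak w$ is exhibited by producing a certificate of infeasibility for $\mathfrak F \setminus \{\mathfrak w\}$.

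Once $\mathfrak w(x)$ is in hand, I would compute the subset $\mathfrak C \subset \mathfrak F$ of interlacing characteristic polynomials that are Seidel-compatible with $\mathfrak w(x)$ with respect to any hypothetical $S$. By Corollary~\ref{cor:Seidel-compatible}, only polynomials in $\mathfrak C$ may appear with positive multiplicity alongside $\mathfrak w$. Since $\operatorname{Sim}_S(x)$ is an irreducible quadratic, this check is particularly tractable: for each candidate pair $(\mathfrak w,\mathfrak g)$ with $\mathfrak g(x)=\operatorname{Quo}_S(x)g(x)$ and $\mathfrak w(x)=\operatorname{Quo}_S(x)f(x)$, I would evaluate $\rho(x)$, the minimal polynomial of $fg(\lambda)$ at $\lambda=(21-\sqrt{73})/2$, and apply Proposition~\ref{pro:reducible_then_compatible} (reducibility of $\rho(x^2)$) to certify compatibility, exactly as in Example~\ref{ex:red2compat}; pairs failing this test are flagged incompatible via Corollary~\ref{cor:incompatible} using the parity of $q(1)+q(0)$ where $q(x)=(x+5)(x-9)(x-11)$. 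Finally, I would produce a certificate of infeasibility for the restricted system $\mathbf n^\transpose A(\mathfrak C) = \mathbf b^\transpose$, which completes the proof.

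The step I expect to be the main obstacle is \emph{locating} a suitable warranted polynomial whose compatibility-shadow $\mathfrak C$ is small enough that Farkas' Lemma yields a certificate; the above pattern may need to be iterated, either by enlarging the warranted set $\mathfrak W$ to a compatible pair or by combining several warranted polynomials, and each enlargement requires re-running the compatibility check on a potentially large set of interlacing candidates. Nevertheless, the running-time remark preceding the lemma (that all of $\mathcal C$ is disposed of in under five minutes) indicates that a single warranted polynomial should be enough here.
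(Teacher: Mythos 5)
Your setup (enumerate $\mathfrak F$, isolate a warranted subset, restrict to its Seidel-compatible shadow $\mathfrak C$, then apply Farkas) matches the paper exactly up to the final step, and that final step is where the proposal fails: for this polynomial the restricted system is \emph{feasible}. The paper finds $51$ interlacing characteristic polynomials, a warranted \emph{pair} (not a single polynomial), and a Seidel-compatible shadow of five polynomials $\mathfrak f_1,\dots,\mathfrak f_5$ that are moreover pairwise Seidel-compatible; the resulting system $\mathbf n^\transpose A(\mathfrak C)=\mathbf b^\transpose$ admits seven nonnegative integer solutions, so no certificate of infeasibility exists and Corollary~\ref{cor:Seidel-compatible} cannot finish the job. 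This is precisely why the polynomial sits in the set $\mathcal C$ of Lemma~\ref{lem:candpols} rather than in Table~\ref{tab:8polydim17}, and why the paper flags $\mathcal C$ as needing techniques beyond those of Lemma~\ref{lem:thirdTable}.

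The missing idea is an integrality argument on $q(S)$ for $q(x)=(x+5)(x-9)$. Every one of the seven surviving configurations has $n_{\mathfrak f_2}>0$ and $n_{\mathfrak f_3}>0$, and the polynomial $\mathfrak f_3$ has vanishing angle at the eigenvalue $11$. Writing $(S+5I)(S-9I)$ via the spectral decomposition and using Lemma~\ref{lem:anglesq_sqsum}, the $(i,j)$ entry with $\operatorname{Char}_{S[i]}=\mathfrak f_2$ and $\operatorname{Char}_{S[j]}=\mathfrak f_3$ receives no contribution from $\mathcal E(11)$, hence equals
$\pm\bigl(\tfrac{83-17\sqrt{73}}{2}\,\alpha_{\lambda_1}(\mathfrak f_2)\alpha_{\lambda_1}(\mathfrak f_3)\pm\tfrac{83+17\sqrt{73}}{2}\,\alpha_{\lambda_2}(\mathfrak f_2)\alpha_{\lambda_2}(\mathfrak f_3)\bigr)$,
which evaluates to $\pm 65\sqrt{73}/219$ or $\pm 11/3$ and is never an integer. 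Without some argument of this kind your proof cannot close. A secondary slip: the coefficients pinned down by Lemma~\ref{lem:coeff_of_interlacepoly} are those of $\operatorname{Min}_S(x)=(x+5)(x-9)(x-11)(x^2-21x+92)$, not of the degree-$13$ cofactor $\phi$ from Section~\ref{sec:ccp}; here $b_1=-36$ and $b_2=406+48=454$, not $-124$ and $7122$.
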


\begin{proof}
Suppose a Seidel matrix $S$ has characteristic polynomial
$$
\operatorname{Char}_S(x) = (x+5)^{32} (x-9)^{13} (x-11)^2 (x^2-21x+92).
$$
There are 51 interlacing characteristic polynomials and the set
\begin{align*}
\{ & (x+5)^{31} (x-9)^{12} (x-11) (x^4-36x^3+454x^2-2356x+4241),\\
& (x+5)^{31} (x-9)^{12} (x-11)^2 (x^3-25x^2+179x-379) \}.
\end{align*}
is warranted with certificate $(-17507602, 0, 0, -9281, -1031)$.
From the 51 interlacing characteristic polynomials, we keep only those polynomials that are Seidel-compatible with at least one of the polynomials in the set above.
As a result, we are left with five polynomials:
\begin{align*}
    \mathfrak{f}_1(x) &= (x+5)^{31} (x-9)^{12} (x-11) (x^4-36x^3+454x^2-2364x+4281), \\
    \mathfrak{f}_2(x) &= (x+5)^{31} (x-9)^{12} (x-11) (x^4-36x^3+454x^2-2356x+4241), \\
    \mathfrak{f}_3(x) &= (x+5)^{31} (x-9)^{12} (x-11)^2 (x^3-25x^2+179x-379), \\
    \mathfrak{f}_4(x) &= (x+5)^{31} (x-9)^{14} (x-11) (x^2-18x+49), \\
    \mathfrak{f}_5(x) &= (x+5)^{31} (x-3) (x-9)^{13} (x-11) (x^2-24x+139).
\end{align*}
These five polynomials are pairwise Seidel-compatible and we find that there are seven possible interlacing configurations $(n_{\mathfrak f_1},n_{\mathfrak f_2},n_{\mathfrak f_3},n_{\mathfrak f_4},n_{\mathfrak f_5})$:
\begin{align*}
    & (2,43,1,3,0), (6,34,7,2,0), (6,37,4,1,1), (6,40,1,0,2), \\
    & (10,25,13,1,0), (10,28,10,0,1), (14,16,19,0,0).
\end{align*}
Let $(\lambda_1,\dots,\lambda_5)=\left((21-\sqrt{73})/2,(21+\sqrt{73})/2,11,-5,9\right)$ be a 5-tuple of the five distinct eigenvalues of $S$.
Firstly, we compute the following angles
$$ 
\begin{bmatrix}
\alpha_{\lambda_1}^2(\mathfrak f_1) & \alpha_{\lambda_2}^2(\mathfrak f_1) & \alpha_{\lambda_3}^2(\mathfrak f_1)  \\
\alpha_{\lambda_1}^2(\mathfrak f_2) & \alpha_{\lambda_2}^2(\mathfrak f_2) & \alpha_{\lambda_3}^2(\mathfrak f_2)  \\
\alpha_{\lambda_1}^2(\mathfrak f_3) & \alpha_{\lambda_2}^2(\mathfrak f_3) & \alpha_{\lambda_3}^2(\mathfrak f_3)  \\
\alpha_{\lambda_1}^2(\mathfrak f_4) & \alpha_{\lambda_2}^2(\mathfrak f_4) & \alpha_{\lambda_3}^2(\mathfrak f_4)  \\
\alpha_{\lambda_1}^2(\mathfrak f_5) & \alpha_{\lambda_2}^2(\mathfrak f_5) & \alpha_{\lambda_3}^2(\mathfrak f_5)
\end{bmatrix}
= 
\begin{bmatrix}
(949+107\sqrt{73})/97236 & (949-107\sqrt{73})/97236 & 1/9 \\
(803-29\sqrt{73})/48618 & (803+29\sqrt{73})/48618 & 1/36 \\
(511-5\sqrt{73})/16206 & (511+5\sqrt{73})/16206 & 0 \\
(3869+385\sqrt{73})/48618 & (3869-385\sqrt{73})/48618 & 7/36  \\
(6059+427\sqrt{73})/48618 & (6059-427\sqrt{73})/48618 & 1/9
\end{bmatrix}.
$$
Note that
$$
(S+5I)(S-9I)=\frac{83-17\sqrt{73}}{2} \mathbf{u} \mathbf{u}^\transpose + \frac{83+17\sqrt{73}}{2} \mathbf{v} \mathbf{v}^\transpose + 32 P_{\lambda_3}
$$
where $\mathbf{u}$ and $\mathbf{v}$ are unit eigenvectors corresponding to the eigenvalues $\lambda_1$ and $\lambda_2$, respectively and $P_{\lambda_3}$ is the orthogonal projection onto $\mathcal{E}(\lambda_3)$.
Clearly, the matrix $(S+5I)(S-9I)$ must be an integer matrix.
For each of the seven possible interlacing configurations $(n_{\mathfrak f_1},n_{\mathfrak f_2},n_{\mathfrak f_3},n_{\mathfrak f_4},n_{\mathfrak f_5})$, we observe that both $n_{\mathfrak f_2}$ and $n_{\mathfrak f_3}$ are positive, while we also have that $\alpha_{\lambda_3}(\mathfrak{f}_3)=0$.
Suppose that the interlacing configuration is one of the seven possibilities.
Let $i,j \in \{1, \dots, 49\}$ such that $\operatorname{Char}_{S[i]}(x) = \mathfrak{f}_2(x)$ and $\operatorname{Char}_{S[j]}(x) = \mathfrak{f}_3(x)$.
Since $\alpha_{\lambda_3}(\mathfrak{f}_3)=0$, by Lemma~\ref{lem:anglesq_sqsum}, we obtain
\begin{equation} \label{eqn:59}
    (S+5I)(S-9I)_{i,j} = \pm \left( \frac{83-17\sqrt{73}}{2} \alpha_{\lambda_1}(\mathfrak{f}_2) \alpha_{\lambda_1}(\mathfrak{f}_3) \pm \frac{83+17\sqrt{73}}{2} \alpha_{\lambda_2}(\mathfrak{f}_2) \alpha_{\lambda_2}(\mathfrak{f}_3) \right).
\end{equation}
However, the right hand side of \eqref{eqn:59} is equal to either $\pm 65\sqrt{73}/219$ or $\pm 11/3$.
This contradicts the condition that $(S+5I)(S-9I)$ is an integer matrix.
\end{proof}

\section{Angle vectors with entries equal to 0}
\label{sec:angle0}

Before we can rule out the remaining seven polynomials from Lemma~\ref{lem:candpols}, we need to introduce some extra tools.
We use $O$ and $J$ to denote the zero matrix and all-ones matrix respectively, the dimensions of these matrices should be clear from the context.
We will write $J_n$ to denote the all-ones matrix of order $n$.
Let $M$ be a real symmetric matrix of order $n$.
For $k \in \{1,\dots,n\}$ and a subset $\{i_1,\dots,i_k\} \subset \{1,\dots,n\}$, denote by $M[i_1,\dots,i_k]$ the principal submatrix of $M$ obtained by deleting its rows and columns indexed by the set $\{i_1,\dots,i_k\}$.

\begin{lemma} \label{lem:zero_block_eigenvecs}
Let $M$ be a real symmetric matrix of order $n$ with eigenvalue $\lambda$.
Let $\mathbf{v}_1, \dots, \mathbf{v}_m$ be linearly independent $\lambda$-eigenvectors of $M$ and let $k < n$ be a positive integer.
Suppose that $\mathbf{v}_j (i) = 0$ for all $i \in \{1,\dots,k\}$ and for all $j \in \{1,\dots,m\}$.
For each $j \in \{1,\dots,m\}$, remove the first $k$ entries of $\mathbf{v}_j$ to obtain the vector $\mathbf{v}_j^*$.
Then $\mathbf{v}_1^*, \dots, \mathbf{v}_m^*$ are linearly independent $\lambda$-eigenvectors of the principal submatrix $M[1,\dots,k]$.
\end{lemma}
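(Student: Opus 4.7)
The plan is to exploit the block structure of $M$ with respect to the partition of indices into $\{1,\dots,k\}$ and $\{k+1,\dots,n\}$, and to read off both the eigenvector property and linear independence directly.

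First I would write
\[
M = \begin{pmatrix} A & B \\ B^\transpose & C \end{pmatrix},
\]
where $A$ is the $k \times k$ principal submatrix on rows and columns $\{1,\dots,k\}$, and $C = M[1,\dots,k]$ is the $(n-k) \times (n-k)$ principal submatrix obtained by deleting those rows and columns. By the hypothesis that $\mathbf{v}_j(i) = 0$ for all $i \in \{1,\dots,k\}$, each eigenvector decomposes as
\[
\mathbf{v}_j = \begin{pmatrix} \mathbf{0} \\ \mathbf{v}_j^* \end{pmatrix}
\]
with $\mathbf{v}_j^* \in \mathbb{R}^{n-k}$.

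Next I would substitute into $M\mathbf{v}_j = \lambda \mathbf{v}_j$ to obtain
\[
\begin{pmatrix} B \mathbf{v}_j^* \\ C \mathbf{v}_j^* \end{pmatrix} = \begin{pmatrix} \mathbf{0} \\ \lambda \mathbf{v}_j^* \end{pmatrix},
\]
so that the lower block gives $C \mathbf{v}_j^* = \lambda \mathbf{v}_j^*$, showing that each $\mathbf{v}_j^*$ is a $\lambda$-eigenvector of $M[1,\dots,k]$. (As a by-product, the upper block tells us $B\mathbf{v}_j^* = \mathbf{0}$, though this is not needed for the statement.)

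Finally, for linear independence, I would suppose $\sum_{j=1}^m c_j \mathbf{v}_j^* = \mathbf{0}$; padding with $k$ zeros on top yields $\sum_{j=1}^m c_j \mathbf{v}_j = \mathbf{0}$, so by linear independence of the $\mathbf{v}_j$ all the $c_j$ vanish. There is no real obstacle here: the statement is a routine consequence of the block decomposition, and the only thing to keep track of is that zeroing out a common set of coordinates on a family of vectors preserves linear independence when done by restriction rather than projection.
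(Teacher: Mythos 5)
Your proof is correct and follows essentially the same block-decomposition argument as the paper, which applies $M$ to the matrix whose columns are the $\mathbf v_j$ and reads off the lower block. The only cosmetic difference is that you argue vector-by-vector and spell out the linear independence step (which the paper dismisses as clear), so there is nothing to change.
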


\begin{proof}
Clearly, the vectors $\mathbf{v}_1^*, \dots, \mathbf{v}_m^*$ retain linear independence from $\mathbf{v}_1, \dots, \mathbf{v}_m$.
Let $N$ be the $n \times m$ matrix whose $i$th column is $\mathbf{v}_i$ and let $N^*$ be the $(n-k) \times m$ matrix whose $i$th column is $\mathbf{v}_i^*$.
Observe that, we have $MN = \lambda N$.
Furthermore, writing $M$ and $N$ as block matrices yields
$$
\begin{bmatrix}
O \\
\lambda N^*
\end{bmatrix} = \lambda N = MN = 
\begin{bmatrix}
A & B \\
B^\transpose & M[1,\dots,k]
\end{bmatrix}
\begin{bmatrix}
O \\
N^*
\end{bmatrix}
= 
\begin{bmatrix}
BN^* \\
M[1,\dots,k]N^*
\end{bmatrix}.
$$
It follows that $M[1,\dots,k]N^* = \lambda N^*$, as required.
\end{proof}

Combining Lemma~\ref{lem:anglesq_formula}, \ref{lem:anglesq_sqsum}, and \ref{lem:zero_block_eigenvecs}, we have the following lemma.
\begin{lemma} \label{lem:extracting}
Let $M$ be a real symmetric matrix of order $n$ with eigenvalue $\lambda$ of multiplicity $m$.
Suppose there exists a $k$-subset $\mathcal{I}$ of $\{1,\dots,n\}$ such that for each $i \in \mathcal{I}$, the principal submatrix $M[i]$ has an eigenvalue $\lambda$ of multiplicity $m$ or $m+1$.
Then $M$ has a principal submatrix of order $n-k$ with eigenvalue $\lambda$ of multiplicity at least $m$.
\end{lemma}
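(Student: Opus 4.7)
The plan is to combine the three lemmas cited in the statement to reduce the hypothesis on multiplicities to a statement about zero entries of eigenvectors, then extract those entries.

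First, I would use Proposition~\ref{prop:submatrix} to express
\[
    \operatorname{Char}_{M[i]}(x) = \operatorname{Char}_M(x) \sum_{\mu \in \Lambda(M)} \frac{\bm\alpha_\mu^2(i)}{x-\mu}
\]
and note that writing $\operatorname{Char}_M(x) = (x-\lambda)^m r(x)$ with $r(\lambda) \neq 0$ gives
\[
    \operatorname{Char}_{M[i]}(x) = (x-\lambda)^{m-1} r(x)\bm\alpha_\lambda^2(i) + (x-\lambda)^m r(x)\sum_{\mu \neq \lambda}\frac{\bm\alpha_\mu^2(i)}{x-\mu}.
\]
The hypothesis that the multiplicity of $\lambda$ in $M[i]$ is at least $m$ therefore forces $\bm\alpha_\lambda(i) = 0$ for every $i \in \mathcal{I}$.

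Next, I would invoke Lemma~\ref{lem:anglesq_sqsum}: fixing an orthonormal basis $\mathbf{u}_1,\dots,\mathbf{u}_m$ of $\mathcal{E}(\lambda)$, we have $0 = \bm\alpha_\lambda^2(i) = \sum_{j=1}^{m} \mathbf{u}_j^2(i)$ for each $i \in \mathcal{I}$. Since each summand is nonnegative, this forces $\mathbf{u}_j(i) = 0$ for every $j \in \{1,\dots,m\}$ and every $i \in \mathcal{I}$. Thus all $m$ linearly independent $\lambda$-eigenvectors $\mathbf{u}_1, \dots, \mathbf{u}_m$ vanish simultaneously at every coordinate in $\mathcal{I}$.

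Finally, after reindexing so that $\mathcal{I} = \{1,\dots,k\}$ (which only permutes rows and columns of $M$ by the same permutation, preserving its spectrum and eigenvectors up to the same permutation), I apply Lemma~\ref{lem:zero_block_eigenvecs} to the eigenvectors $\mathbf{u}_1,\dots,\mathbf{u}_m$. The resulting truncated vectors $\mathbf{u}_1^*,\dots,\mathbf{u}_m^*$ are linearly independent $\lambda$-eigenvectors of $M[1,\dots,k]$, giving a principal submatrix of order $n-k$ with eigenvalue $\lambda$ of multiplicity at least $m$.

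There is no real obstacle here since every step is an application of an already established lemma; the only subtlety is the reindexing needed so that Lemma~\ref{lem:zero_block_eigenvecs}, which is stated for the first $k$ coordinates, applies to the general subset $\mathcal{I}$. This is handled transparently by conjugating $M$ by an appropriate permutation matrix, which preserves symmetry, spectrum and eigenspaces.
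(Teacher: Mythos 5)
Your proof is correct and follows essentially the same route as the paper's: deduce $\bm\alpha_\lambda(i)=0$ for $i\in\mathcal I$ from the multiplicity hypothesis (the paper cites Lemma~\ref{lem:anglesq_formula} where you expand Proposition~\ref{prop:submatrix} directly, but these are the same computation), then use Lemma~\ref{lem:anglesq_sqsum} to kill the entries of an orthonormal basis of $\mathcal E(\lambda)$ at those coordinates, and finally apply Lemma~\ref{lem:zero_block_eigenvecs} after reindexing. No gaps.
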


\begin{proof}
If for some $i \in \{1,\dots,n\}$, the principal submatrix $M[i]$ has eigenvalue $\lambda$ of multiplicity $m$ or $m+1$, then we have the corresponding angle $\bm \alpha_{\lambda}(i) = 0$ by Lemma~\ref{lem:anglesq_formula}.
Without loss of generality, suppose that $\mathcal{I} = \{n-k+1,\dots,n\}$.
For each $i \in \mathcal{I}$, the principal submatrix $M[i]$ has eigenvalue $\lambda$ of multiplicity $m$ or $m+1$.
Let $\mathbf{u}_1, \mathbf{u}_2, \dots, \mathbf{u}_{m}$ be an orthonormal basis of the eigenspace $\mathcal{E} (\lambda)$.
By Lemma~\ref{lem:anglesq_sqsum}, we have that
\begin{align*}
\sum_{j=1}^{m} \mathbf{u}_j^2 (i) =     \bm \alpha_{\lambda}^2(i) = 0 \implies \mathbf{u}_1 (i) = \mathbf{u}_2 (i) = \dots =\mathbf{u}_{m} (i) = 0
\end{align*}
for all $i \in \mathcal{I}$.
Let $\mathbf{u}_1^*,$ $\mathbf{u}_2^*,$ $\dots,$ $\mathbf{u}_{m}^* \in \mathbb{R}^{n-k}$ be vectors such that for all $i \in \{1,\dots, n-k\}$ and $j \in \{1,\dots,m\}$, we have $\mathbf{u}_j^* (i) = \mathbf{u}_j(i)$.
By Lemma~\ref{lem:zero_block_eigenvecs}, the vectors $\mathbf{u}_1^*,$ $\mathbf{u}_2^*,$ $\dots,$ $\mathbf{u}_{m}^*$ are linearly independent $\lambda$-eigenvectors of the principal submatrix $M^* = M[n-k+1, \dots,n]$.
Therefore, the principal submatrix $M^*$ has eigenvalue $\lambda$ of multiplicity at least $m$.
\end{proof}

\begin{lemma} \label{lem:last8_fiveint}
There does not exist a Seidel matrix $S$ with characteristic polynomial
$$
\operatorname{Char}_S(x) = (x+5)^{32} (x-7) (x-9)^{14} (x-12) (x-15).
$$
\end{lemma}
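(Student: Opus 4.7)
The plan is to follow the template of Lemma~\ref{lem:last8_quad92} while bringing in the extraction tool of Lemma~\ref{lem:extracting}, which is the main new device in Section~\ref{sec:angle0}. Any putative $S$ has $\operatorname{Min}_S(x)=(x+5)(x-7)(x-9)(x-12)(x-15)$, so $\operatorname{Quo}_S(x)=(x+5)^{31}(x-9)^{13}$ and $\operatorname{Sim}_S(x)=(x-7)(x-12)(x-15)$. In particular all simple eigenvalues are rational integers, which keeps the compatibility arithmetic trivial.

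First I would enumerate the interlacing characteristic polynomials of $p(x)$ using the polynomial generation algorithm from \cite[Section 2.3]{GSY21}. By Lemma~\ref{lem:coeff_of_interlacepoly} each one has the form $\operatorname{Quo}_S(x)\, f(x)$ with $f(x) = x^4 - 38x^3 + 508x^2 + b_3 x + b_4$; additionally $f$ must interlace $\operatorname{Rad}(p,x)$ and $f(x-1)$ must be type~2 (since $n-1 = 48$ is even). With this list in hand I would look for a warranted subset $\mathfrak{W}$ of one or two polynomials, then restrict to the subset $\mathfrak{C} \subset \mathfrak{F}$ of interlacing characteristic polynomials Seidel-compatible with some element of $\mathfrak{W}$ (Proposition~\ref{pro:reducible_then_compatible} and Corollary~\ref{cor:incompatible} make this check efficient), and solve~\eqref{eq:sumofsubpoly_dim17Compat} to enumerate the finitely many remaining interlacing configurations $(n_{\mathfrak f})_{\mathfrak f \in \mathfrak C}$.

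The decisive step is the application of Lemma~\ref{lem:extracting}. For each surviving configuration and each simple eigenvalue $\lambda \in \{7, 12, 15\}$ I would count the indices $i \in \{1,\dots,49\}$ for which the corresponding $\mathfrak{f}(x) = \operatorname{Char}_{S[i]}(x) = \operatorname{Quo}_S(x)\, f(x)$ satisfies $f(\lambda) = 0$, i.e., for which $\bm \alpha_\lambda(i) = 0$ by Lemma~\ref{lem:anglesq_formula}. Since each such polynomial $\mathfrak{f}$ contributes $n_{\mathfrak f}$ indices, this count is a linear combination of the $n_{\mathfrak f}$, and I would like to show that in every surviving configuration the total reaches some threshold $k$. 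Lemma~\ref{lem:extracting} would then produce a principal submatrix $S^{\ast}$ of order $49 - k$ that still has $\lambda$ as an eigenvalue; by Cauchy interlacing the rest of the spectrum of $S^{\ast}$ is essentially forced, and I would derive a contradiction either with Theorem~\ref{thm:ev_mult} in the smaller dimension, with a characteristic polynomial already excluded in~\cite{GSY21} or in Lemmas~\ref{lem:firstTable}--\ref{lem:thirdTable}, or by running a fresh interlacing-configuration argument on $S^{\ast}$.

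The main obstacle I anticipate is orchestrating this extraction uniformly across all the configurations that survive the compatibility filter. If the configurations are not uniform in how they distribute zero-angle indices among $\{7, 12, 15\}$, different choices of $\lambda$ may be required for different configurations, or the threshold for Lemma~\ref{lem:extracting} may not be reached by any single $\lambda$. In that case I would fall back on a direct integrality obstruction for a specific entry of a polynomial in $S$ (for instance, an entry of $(S+5I)(S-9I)$, as in the proof of Lemma~\ref{lem:last8_quad92}) to eliminate any stubborn configurations.
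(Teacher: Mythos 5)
Your proposal is correct and follows essentially the same route as the paper: the actual proof finds a single warranted polynomial, filters the 36 interlacing characteristic polynomials down to three Seidel-compatible ones with the unique configuration $(9,36,4)$, and then applies Lemma~\ref{lem:extracting} with $\lambda=7$ (the 45 indices whose submatrix polynomials retain the factor $x-7$ give $\bm\alpha_7(i)=0$) to extract an order-4 Seidel matrix with eigenvalue $7$, contradicted by $\tr \widehat{S}^2 = 12 < 49$. Your anticipated complications (several surviving configurations, needing different $\lambda$'s, falling back on integrality of $(S+5I)(S-9I)$) do not arise for this particular polynomial.
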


\begin{proof}
Suppose a Seidel matrix $S$ has characteristic polynomial
$$
\operatorname{Char}_S(x) = (x+5)^{32} (x-7) (x-9)^{14} (x-12) (x-15).
$$
There are 36 interlacing characteristic polynomials and one of them is warranted:
$$\mathfrak{f}(x) = (x+5)^{31} (x-7) (x-9)^{13} (x^3-31x^2+291x-757)$$
with certificate of warranty $(-60980320, 0, 0, -26357, -2396)$.
Out of the 36 interlacing characteristic polynomials, only three are Seidel-compatible with $\mathfrak{f}(x)$:
\begin{align*}
    \mathfrak{f}_1(x) &= (x+5)^{31} (x-7) (x-9)^{13} (x-15) (x^2-16x+51), \\
    \mathfrak{f}_2(x) &= (x+5)^{31} (x-7) (x-9)^{13} (x^3-31x^2+291x-757), \\
    \mathfrak{f}_3(x) &= (x+5)^{31} (x-9)^{14} (x^3-29x^2+247x-531).
\end{align*}
These three polynomials are pairwise Seidel-compatible and we find that there is only one possible interlacing configuration $(n_{\mathfrak f_1},n_{\mathfrak f_2},n_{\mathfrak f_3}) = (9,36,4)$.
By Lemma~\ref{lem:extracting}, we conclude that there exists a Seidel matrix $\widehat{S}$ of order $4$ with eigenvalue $7$.
This is a contradiction since $\tr \widehat{S}^2=12< 7^2$. 
\end{proof}

\begin{lemma} \label{lem:last8_sixint_ev15}
There does not exist a Seidel matrix $S$ with characteristic polynomial
$$
\operatorname{Char}_S(x) = (x+5)^{32} (x-7) (x-8) (x-9)^{12} (x-11)^2 (x-15).
$$
\end{lemma}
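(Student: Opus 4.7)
The plan is to follow the template already used in Lemma~\ref{lem:last8_fiveint}. First, I will use the algorithm of Section~\ref{sec:icp} to enumerate all interlacing characteristic polynomials of $p(x) = (x+5)^{32}(x-7)(x-8)(x-9)^{12}(x-11)^2(x-15)$. Each such polynomial has the form $\operatorname{Quo}_S(x)\cdot f(x)$ with $\operatorname{Quo}_S(x) = (x+5)^{31}(x-9)^{11}(x-11)$ and $f(x)$ a degree-5 monic integer polynomial that interlaces $\operatorname{Rad}(p,x)=(x+5)(x-7)(x-8)(x-9)(x-11)(x-15)$, satisfies the weakly-type-2 condition (after shifting by $1$) with the correct top three coefficients dictated by Lemma~\ref{lem:coeff_of_interlacepoly}, and whose product with $\operatorname{Quo}_S(x)$ lies in $\mathcal{P}_{48,7}$.

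Once the (modest) list of interlacing characteristic polynomials is in hand, I will search via linear programming for a warranted subset $\mathfrak{W}$, beginning with single polynomials and, if necessary, progressing to pairs, exactly as in Lemma~\ref{lem:secondTable} and Lemma~\ref{lem:thirdTable}. Given a warranted $\mathfrak{f}(x)\in \mathfrak{W}$, I will filter the remaining interlacing characteristic polynomials by Seidel-compatibility with $\mathfrak{f}(x)$, applying Proposition~\ref{pro:reducible_then_compatible} to dispatch the easy cases and Corollary~\ref{cor:incompatible} to rule out the incompatible ones. This typically collapses the list to a handful of survivors, after which I solve \eqref{eq:sumofsubpoly_dim17Compat} to enumerate all interlacing configurations $\mathbf{n}$.

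I then expect to reach a contradiction via Lemma~\ref{lem:extracting}: in every surviving configuration, one of the simple eigenvalues $\lambda\in\{7,8,15\}$ will appear with $\bm\alpha_\lambda(i)=0$ at so many indices $i$ that extraction yields a Seidel matrix $\widehat{S}$ of order $n$ retaining $\lambda$ as an eigenvalue. The desired contradiction will then follow from the trace inequality $\tr\widehat{S}^2 = n(n-1) \geqslant \lambda^2$, which fails, for example, whenever $\lambda=15$ and $n\leqslant 15$, or $\lambda=8$ and $n\leqslant 8$, or $\lambda=7$ and $n\leqslant 7$. Targeting the eigenvalue $15$ looks most promising because its multiplicity is $1$, so any interlacing characteristic polynomial whose angle at $15$ vanishes automatically supplies an extraction index, and the gap between $15$ and the other eigenvalues forces the constraint to bite quickly.

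The main obstacle will be the bookkeeping step: ensuring the surviving configurations actually do force enough zero angles at a single eigenvalue to drop the extracted order below the trace bound. If no single simple eigenvalue suffices, I will fall back on the integer-matrix argument used in Lemma~\ref{lem:last8_quad92}, computing the entries of $(S+5I)(S-9I)$ (or a similar integer polynomial in $S$) between a pair of principal-submatrix indices whose characteristic polynomials force an irrational off-diagonal entry, producing the contradiction directly from $S\in\mathcal{S}_{49}$.
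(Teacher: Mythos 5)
Your proposal matches the paper's proof essentially step for step: the paper enumerates the 56 interlacing characteristic polynomials, finds a warranted two-element subset, filters by Seidel-compatibility down to eight polynomials admitting a unique interlacing configuration $(0,42,0,0,0,0,1,6)$, and then applies Lemma~\ref{lem:extracting} to the simple eigenvalue $7$ (43 indices have vanishing angle at $7$), extracting an order-$6$ Seidel matrix with eigenvalue $7$ and contradicting $\tr\widehat{S}^2=30<49$ --- exactly the extraction-plus-trace argument you describe, with $\lambda=7$ rather than your preferred guess of $15$. The only quibble is your reference to $\mathcal{P}_{48,7}$: since the principal submatrices have even order $48$, that congruence filter is vacuous here (the paper's condition (iv) applies only when $n-1$ is odd), and instead $f(x-1)$ must be fully type $2$.
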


\begin{proof}
Suppose a Seidel matrix $S$ has characteristic polynomial
$$
\operatorname{Char}_S(x) = (x+5)^{32} (x-7) (x-8) (x-9)^{12} (x-11)^2 (x-15).
$$
There are 56 interlacing characteristic polynomials and the set
\begin{align*}
\{ & (x+5)^{31} (x-7) (x-9)^{11} (x-11) (x^4-38x^3+512x^2-2874x+5583),\\
& (x+5)^{31} (x-9)^{11} (x-11) (x^5-45x^4+778x^3-6442x^2+25397x-37673) \}.
\end{align*}
is warranted with certificate $(-114666126726, 0, 0, -29336361, -3575825, -437857)$.
From the 56 interlacing characteristic polynomials, we keep only those polynomials that are Seidel-compatible with at least one of the polynomials in the set above.
As a result, we are left with eight polynomials:
\begin{align*}
    \mathfrak{f}_1(x) &= (x+5)^{31} (x-7) (x-9)^{12} (x-11) (x-15) (x^2-14x+41), \\
    \mathfrak{f}_2(x) &= (x+5)^{31} (x-7) (x-9)^{11} (x-11) (x^4-38x^3+512x^2-2874x+5583), \\
    \mathfrak{f}_3(x) &= (x+5)^{31} (x-9)^{12} (x-11) (x^4-36x^3+454x^2-2364x+4201), \\
    \mathfrak{f}_4(x) &= (x+5)^{31} (x-9)^{12} (x-11) (x^4-36x^3+454x^2-2356x+4129), \\
    \mathfrak{f}_5(x) &= (x+5)^{31} (x-9)^{11} (x-11) (x^5-45x^4+778x^3-6442x^2+25397x-37673), \\
    \mathfrak{f}_6(x) &= (x+5)^{31} (x-9)^{12} (x-11) (x^4-36x^3+454x^2-2348x+4089), \\
    \mathfrak{f}_7(x) &= (x+5)^{31} (x-7) (x-9)^{12} (x-11)^2 (x^2-18x+53), \\
    \mathfrak{f}_8(x) &= (x+5)^{31} (x-9)^{12} (x-11) (x^4-36x^3+454x^2-2332x+3961).
\end{align*}
For these eight polynomials, there is only one possible interlacing configuration $(n_{\mathfrak f_1},n_{\mathfrak f_2},\dots,n_{\mathfrak f_8}) = (0, 42, 0, 0, 0, 0, 1, 6)$.
The three polynomials that correspond to the nonzero entries in the interlacing configuration are pairwise Seidel-compatible.
By Lemma~\ref{lem:extracting}, we conclude that there exists a Seidel matrix $\widehat{S}$ of order 6 with eigenvalue $7$.
This is a contradiction since $\tr \widehat{S}^2=30<49=7^2$.
\end{proof}

\begin{lemma} \label{lem:last8_sixint_ev13}
There does not exist a Seidel matrix $S$ with characteristic polynomial
$$
\operatorname{Char}_S(x) = (x+5)^{32} (x-7)^2 (x-8) (x-9)^{10} (x-11)^2 (x-13)^2.
$$
\end{lemma}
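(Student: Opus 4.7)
The plan is to follow the same strategy that was used in Lemmas~\ref{lem:last8_fiveint} and~\ref{lem:last8_sixint_ev15}. I would first enumerate all interlacing characteristic polynomials of the given $p(x) = \operatorname{Char}_S(x)$, using the algorithm from \cite[Section 2.3]{GSY21}. Then I would search for a warranted subset, either a single polynomial or a pair, using Farkas' Lemma to produce a certificate of warranty forcing at least one of its members to appear in any interlacing configuration. Next I would restrict to the interlacing characteristic polynomials that are Seidel-compatible, in the sense of Corollary~\ref{cor:Seidel-compatible}, with at least one member of the warranted subset, and solve the resulting nonnegative linear system~\eqref{eq:sumofsubpoly_dim17Compat} to obtain the full list of possible interlacing configurations.

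The key qualitative feature of $p(x)$ I would exploit is that the eigenvalue $7$ appears with multiplicity only $2$, so by Theorem~\ref{thm:cauchyinterlace} each $\operatorname{Char}_{S[i]}(x)$ has $(x-7)$ as a factor of multiplicity in $\{1,2,3\}$. Writing $\operatorname{Char}_{S[i]}(x) = \operatorname{Quo}_S(x) f_i(x)$, any index $i$ for which $(x-7) \mid f_i(x)$ satisfies $\bm \alpha_7(i) = 0$ by Lemma~\ref{lem:anglesq_formula}. If at least $39$ of the $49$ indices have this property, then Lemma~\ref{lem:extracting}, applied with $\lambda = 7$ and $m = 2$, produces a Seidel matrix $\widehat S$ of order at most $10$ that still has $7$ as an eigenvalue of multiplicity at least $2$. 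This yields
\[
98 = 2 \cdot 7^2 \leqslant \tr \widehat S^2 \leqslant 10 \cdot 9 = 90,
\]
a contradiction.

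The main obstacle is to verify that after filtering by Seidel-compatibility, every surviving interlacing configuration actually contains at least $39$ such indices. If eigenvalue $7$ does not provide a strong enough count on its own, one can instead target eigenvalue $8$, which has multiplicity $1$, so that $41$ indices with $\bm \alpha_8(i) = 0$ would force a submatrix of order at most $8$ and hence $\tr \widehat S^2 \leqslant 56 < 64 = 8^2$; one could also fall back to eigenvalues $11$ or $13$ with weaker thresholds. Based on the analogous behaviour in Lemma~\ref{lem:last8_sixint_ev15}, where the Seidel-compatibility filter reduced the configuration list to a single element that concentrated $(x-7)$-factors in most principal submatrices, I expect the same phenomenon here, after which the threshold count needed for Lemma~\ref{lem:extracting} can simply be read off from the surviving configuration.
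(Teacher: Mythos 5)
Your overall strategy matches the paper's: enumerate the interlacing characteristic polynomials, find a warranted polynomial, filter by Seidel-compatibility, and then apply Lemma~\ref{lem:extracting} together with a trace bound on a small principal submatrix. The one substantive caveat is that your primary target, the eigenvalue $7$, cannot deliver the required count: in the paper's computation the compatibility filter leaves eight polynomials $\mathfrak f_1,\dots,\mathfrak f_8$, of which only $\mathfrak f_2$ and $\mathfrak f_4$ carry the factor $(x-7)^2$, and since every surviving configuration satisfies $n_{\mathfrak f_2}+n_{\mathfrak f_4}+n_{\mathfrak f_6}+n_{\mathfrak f_7}+n_{\mathfrak f_8}=33$, at most $33<39$ indices can have $\bm\alpha_7(i)=0$. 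The eigenvalues $8$ and $13$ are also dead ends, since none of the eight surviving polynomials contains the factor $(x-8)$ or the factor $(x-13)^2$. The route that succeeds is precisely the fallback you name: the eigenvalue $11$, for which exactly the $33$ indices counted above have $\bm\alpha_{11}(i)=0$, yielding by Lemma~\ref{lem:extracting} a Seidel matrix of order $16$ with $11$ as an eigenvalue of multiplicity at least $2$, contradicting $\tr\widehat S^2 = 16\cdot 15 = 240 < 242 = 2\cdot 11^2$. So the proof goes through as you outline it, but only via the eigenvalue $11$; you should not expect to "read off" a threshold for $7$ from the surviving configurations, because none of them meets it.
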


\begin{proof}
Suppose a Seidel matrix $S$ has characteristic polynomial
$$
\operatorname{Char}_S(x) = (x+5)^{32} (x-7)^2 (x-8) (x-9)^{10} (x-11)^2 (x-13)^2.
$$
There are 17 interlacing characteristic polynomials and one of them is warranted:
$$(x+5)^{31} (x-7)^2 (x-9)^9 (x-11)^2 (x-13) (x^3-25x^2+191x-439)$$
with certificate of warranty $(-1368582672, 0, 0, -369129, -41014, -4558)$.
Using Seidel-compatibility, we reduce from 17 to eight polynomials:
\begin{align*}
    \mathfrak{f}_1(x) =& (x+5)^{31} (x-7) (x-9)^9 (x-11) (x-13) (x^5-43x^4+718x^3-5802x^2+22577x-33547), \\
    \mathfrak{f}_2(x) =& (x+5)^{31} (x-7)^2 (x-9)^9 (x-11)^2 (x-13) (x^3-25x^2+191x-439), \\
    \mathfrak{f}_3(x) =& (x+5)^{31} (x-7) (x-9)^{10} (x-11) (x-13) (x^4-34x^3+412x^2-2086x+3659), \\
    \mathfrak{f}_4(x) =& (x+5)^{31} (x-7)^2 (x-9)^9 (x-11)^2 (x-13) (x^3-25x^2+191x-431), \\
    \mathfrak{f}_5(x) =& (x+5)^{31} (x-7) (x-9)^{10} (x-11) (x-13) (x^4-34x^3+412x^2-2078x+3571), \\
    \mathfrak{f}_6(x) =& (x+5)^{31} (x-7) (x-9)^9 (x-11)^2 (x-13) (x^2-12x+31) (x^2-20x+95), \\
    \mathfrak{f}_7(x) =& (x+5)^{31} (x-7) (x-9)^{10} (x-11)^2 (x-13) (x^3-23x^2+159x-321), \\
    \mathfrak{f}_8(x) =& (x+5)^{31} (x-7) (x-9)^{10} (x-11)^2 (x-13) (x^3-23x^2+159x-313).
\end{align*}
These eight polynomials are pairwise Seidel-compatible and we find that there are 193 possible interlacing configurations.
Furthermore, each interlacing configuration $(n_{\mathfrak f_1},n_{\mathfrak f_2},\dots,n_{\mathfrak f_8})$ has the property that $n_{\mathfrak f_2}+n_{\mathfrak f_4}+n_{\mathfrak f_6}+n_{\mathfrak f_7}+n_{\mathfrak f_8}=33$.
By Lemma~\ref{lem:extracting}, we conclude that there exists a Seidel matrix $\widehat{S}$ of order 16 with eigenvalue 11 of multiplicity at least 2.
This is a contradiction since $\tr \widehat{S}^2=16 \cdot 15=240 < 242=2 \cdot 11^2$.
\end{proof}

\section{Common interlacing and matrices with two simple eigenvalues}
\label{sec:common2simple}

Let $M$ be a real symmetric matrix of order $n$.
It follows immediately from  Lemma~\ref{thm:cauchyinterlace} (Cauchy's interlacing theorem) that,
for all $i, j \in \{1,\dots,n\}$,
the polynomial $\operatorname{Char}_{M[i,j]}(x)$ interlaces both $\operatorname{Char}_{M[i]}(x)$ and $\operatorname{Char}_{M[j]}(x)$.
This seemingly innocuous proposition can be leveraged to help us rule out two of the candidate characteristic polynomials from Lemma~\ref{lem:candpols}.
See Lemma~\ref{lem:last8_fourint} and Lemma~\ref{lem:last8_quad64}.

\begin{lemma} \label{lem:last8_fourint}
There does not exist a Seidel matrix $S$ with characteristic polynomial
$$
\operatorname{Char}_S(x) = (x+5)^{32} (x-4) (x-9)^{10} (x-11)^6.
$$
\end{lemma}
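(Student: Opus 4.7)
My plan is to follow the same template used in Lemmas~\ref{lem:last8_quad92}--\ref{lem:last8_sixint_ev13}, but employing the common-interlacing observation highlighted in Section~\ref{sec:common2simple} to close the case. First I would run the polynomial enumeration algorithm of Section~\ref{sec:icp} on the candidate $p(x) = (x+5)^{32}(x-4)(x-9)^{10}(x-11)^{6}$ to produce the (hopefully modest) list $\mathfrak F$ of interlacing characteristic polynomials. If the full linear system $\mathbf n^\transpose A(\mathfrak F) = \mathbf b^\transpose$ has no nonnegative solution I am done via a certificate of infeasibility, exactly as in Lemma~\ref{lem:firstTable}; otherwise I would try to pin down a warranted polynomial $\mathfrak f(x)$ (or warranted pair) via certificates of warranty, and then use Corollary~\ref{cor:Seidel-compatible} to cut the interlacing list down to those $\mathfrak g(x)$ that are Seidel-compatible with $\mathfrak f(x)$. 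At that stage I would either hit a compatibility obstruction in the style of Lemma~\ref{lem:secondTable} or obtain a short list of admissible interlacing configurations $\mathbf n$.

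For each surviving configuration, the new ingredient is to consider a pair of principal submatrices $S[i]$ and $S[j]$ whose characteristic polynomials are forced by $\mathbf n$, and to invoke Theorem~\ref{thm:cauchyinterlace}: $\operatorname{Char}_{S[i,j]}(x)$ must be a degree-$47$ totally real monic integer polynomial that simultaneously interlaces $\operatorname{Char}_{S[i]}(x)$ and $\operatorname{Char}_{S[j]}(x)$. I would enumerate all polynomials with this double-interlacing property, subject also to the weakly-type-$2$ condition (Lemma~\ref{lem:shiftType}) after the shift $x \mapsto x-1$, and to the modular restriction of belonging to a class of $\mathcal P_{47,7}$ (Remark~\ref{rem:e7}, where $\mathcal P_{47,7}$ is precisely flagged for this lemma). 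Since $S[i,j]$ has odd order $47$, the modular filter is quite restrictive, and together with the interlacing on both sides it should leave no admissible candidate for some pair $(i,j)$ forced by $\mathbf n$, thereby eliminating that configuration. Carrying this out for every configuration completes the contradiction.

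The main obstacle is a bookkeeping one: I need the pair $(i,j)$ chosen within each configuration to actually admit \emph{no} common interlacer satisfying all conditions, rather than merely a small number of them. If one pair does not suffice, I would iterate the argument over several forced pairs in the configuration (essentially intersecting the restrictions coming from each), or combine the common-interlacer constraint with Lemma~\ref{lem:extracting} applied to an eigenvalue $\lambda \in \{9,11\}$ whose submatrix multiplicity pattern makes $\bm\alpha_\lambda(i) = 0$ for several $i$. Because the eigenvalue $4$ is simple, there is also the option of using Corollary~\ref{cor:alpha_eigenvec} to switch $S$ so that $\bm\alpha_4$ is an eigenvector, which constrains the entrywise structure sharply and could be invoked as a last resort, analogous to the final integrality step in Lemma~\ref{lem:last8_quad92}.
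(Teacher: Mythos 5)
Your first stage matches the paper exactly: the candidate has five interlacing characteristic polynomials, one is warranted, Seidel-compatibility cuts the list to two polynomials $\mathfrak f_1(x)$ and $\mathfrak f_2(x)$, and there is a unique interlacing configuration, namely $(7,42)$. Your identification of the common-interlacing tool of Section~\ref{sec:common2simple} and of the role of $\mathcal P_{47,7}$ for the order-$47$ submatrices is also the right ingredient. The gap is in how you propose to extract the contradiction. Your primary plan --- that for some forced pair $(i,j)$ there is \emph{no} admissible common interlacer of $\operatorname{Char}_{S[i]}$ and $\operatorname{Char}_{S[j]}$ --- fails here: the paper computes the $28$ interlacing characteristic polynomials of $\mathfrak f_2(x)$, reduces to seven via a warranted polynomial and Seidel-compatibility, and finds that three of them ($\mathfrak f_{2,5},\mathfrak f_{2,6},\mathfrak f_{2,7}$) do interlace both $\mathfrak f_1(x)$ and $\mathfrak f_2(x)$. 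So the set of common interlacers is nonempty and the elimination must be quantitative, not existential.

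The missing mechanism is a second-level counting argument that your ``intersect the restrictions over several pairs'' fallback does not supply. Fix $j$ with $\operatorname{Char}_{S[j]}(x)=\mathfrak f_2(x)$ and apply Theorem~\ref{thm:sumofsubpoly} to the order-$48$ Seidel matrix $S[j]$: this forces an interlacing configuration for $\mathfrak f_2(x)$ over its seven surviving interlacing characteristic polynomials, and the paper enumerates all $16$ such configurations. On the other hand, the top-level configuration $(7,42)$ says there are seven indices $i$ with $\operatorname{Char}_{S[i]}(x)=\mathfrak f_1(x)$, and for each of these $\operatorname{Char}_{S[j,i]}(x)$ must interlace both $\mathfrak f_1(x)$ and $\mathfrak f_2(x)$, so the configuration of $\mathfrak f_2(x)$ must satisfy $n_{\mathfrak f_{2,5}}+n_{\mathfrak f_{2,6}}+n_{\mathfrak f_{2,7}}\geqslant 7$. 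All $16$ configurations violate this bound, which is the contradiction. Without solving the linear system for $\mathfrak f_2'(x)$ and comparing multiplicities against the number $7$ coming from $(7,42)$, the argument does not close; your alternative escape routes (Lemma~\ref{lem:extracting} at $\lambda\in\{9,11\}$, or switching so that $\bm\alpha_4$ is an eigenvector) are not what is used and are not obviously sufficient here.
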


\begin{proof}
Suppose a Seidel matrix $S$ has characteristic polynomial
$$
\operatorname{Char}_S(x) = (x+5)^{32} (x-4) (x-9)^{10} (x-11)^6.
$$
There are five interlacing characteristic polynomials and one of them is warranted:
\begin{align*}
    (x+5)^{31} (x-5) (x-9)^9 (x-11)^5 (x^2-14x+37)
\end{align*}
with certificate of warranty $(11984, 0, 0, 67)$.
Using Seidel-compatibility, we reduce from five to two polynomials:
\begin{align*}
    \mathfrak{f}_1(x) &= (x+5)^{31} (x-7) (x-9)^9 (x-11)^5 (x^2-12x+23), \\
    \mathfrak{f}_2(x) &= (x+5)^{31} (x-5) (x-9)^9 (x-11)^5 (x^2-14x+37).
\end{align*}
We find that there is only one possible interlacing configuration $(7,42)$.

The polynomial $\mathfrak{f}_2(x)$ has 28 interlacing characteristic polynomials.
We find a warranted interlacing characteristic polynomial of $\mathfrak{f}_2(x)$:
$$
(x+5)^{30} (x-9)^8 (x-11)^4 (x^2-16x+59)(x^3-18x^2+91x-134)
$$
with certificate of warranty $(-632673042, 0, 0, -330026, -37186, -4649)$.
Using Seidel-compatibility, we reduce from 28 to seven polynomials:
\begin{align*}
    \mathfrak{f}_{2,1}(x) &= (x+5)^{30} (x-3) (x-5)^2 (x-9)^8 (x-10) (x-11)^5, \\
    \mathfrak{f}_{2,2}(x) &= (x+5)^{30} (x-6) (x-9)^8 (x-11)^4 (x^2-14x+37)^2, \\
    \mathfrak{f}_{2,3}(x) &= (x+5)^{30} (x-9)^8 (x-11)^4 (x^2-16x+59)(x^3-18x^2+91x-134), \\
    \mathfrak{f}_{2,4}(x) &= (x+5)^{30} (x-3) (x-7) (x-9)^8 (x-11)^5 (x^2-13x+34), \\
    \mathfrak{f}_{2,5}(x) &= (x+5)^{30} (x-9)^8 (x-11)^4 (x^2-12x+23) (x^3-22x^2+151x-318), \\
    \mathfrak{f}_{2,6}(x) &= (x+5)^{30} (x-7) (x-9)^8 (x-11)^4 (x^4-27x^3+249x^2-889x+978), \\
    \mathfrak{f}_{2,7}(x) &= (x+5)^{30} (x-5) (x-9)^9 (x-11)^4 (x^3-20x^2+113x-142).
\end{align*}
These seven polynomials are pairwise Seidel-compatible and we find that there are 16 possible interlacing configurations $(n_{\mathfrak f_{2,1}},\dots,n_{\mathfrak f_{2,7}})$:
\begin{align*}
    & (0, 16, 27, 2, 1, 2, 0), (0, 19, 24, 2, 0, 2, 1), (2, 14, 27, 1, 3, 1, 0), (2, 17, 24, 1, 2, 1, 1), \\
    & (2, 20, 21, 1, 1, 1, 2), (2, 22, 18, 1, 0, 5, 0), (2, 23, 18, 1, 0, 1, 3), (4, 12, 27, 0, 5, 0, 0), \\
    & (4, 15, 24, 0, 4, 0, 1), (4, 18, 21, 0, 3, 0, 2), (4, 20, 18, 0, 2, 4, 0), (4, 21, 18, 0, 2, 0, 3), \\
    & (4, 23, 15, 0, 1, 4, 1), (4, 24, 15, 0, 1, 0, 4), (4, 26, 12, 0, 0, 4, 2), (4, 27, 12, 0, 0, 0, 5).
\end{align*}
Furthermore, only $\mathfrak{f}_{2,5}(x)$, $\mathfrak{f}_{2,6}(x)$, and $\mathfrak{f}_{2,7}(x)$ interlace both $\mathfrak{f}_1(x)$ and $\mathfrak{f}_2(x)$.
Suppose $(n_{\mathfrak f_{2,1}},\dots,n_{\mathfrak f_{2,7}})$ is an interlacing configuration for $\mathfrak{f}_2(x)$.
It follows that $n_{\mathfrak f_{2,5}}+ n_{\mathfrak f_{2,6}}+ n_{\mathfrak f_{2,7}} \geqslant 7$ since there is only one possible interlacing configuration $(7,42)$ for $\operatorname{Char}_S(x)$.
However, each of the 16 possible interlacing configurations above satisfies $n_{\mathfrak f_{2,5}}+ n_{\mathfrak f_{2,6}}+ n_{\mathfrak f_{2,7}} < 7$, which is a contradiction.
\end{proof}

Next we provide a result for matrices that have two simple eigenvalues that will help us eliminate some potential interlacing configurations.
We denote Schur (Hadamard, or entrywise) multiplication for two vectors $\mathbf v$ and $\mathbf w$ as $\mathbf v \circ \mathbf w$.

\begin{lemma} \label{lem:intercon}
Let $M$ be an integer symmetric matrix of order $n$.
Suppose $\lambda$ and $\mu$ are simple eigenvalues of $M$ such that $(x-\lambda)(x-\mu) \in \mathbb Z[x]$ is irreducible.
Set
$$q(x) = \prod_{\substack{\nu \in \Lambda(M) \\ \nu \not \in \{\lambda,\mu\}}} (x - \nu).$$
Suppose that $M\bm \alpha_\lambda = \lambda \bm \alpha_\lambda$ and all entries of the matrix $2 q(\lambda)\bm \alpha_\lambda \bm \alpha_\lambda^\transpose$ are in $\mathbb R \backslash \mathbb Z$.
Then there exists a unique $\bm \delta \in \{\pm 1\}^n$ such that $\bm \delta(1) = 1$ and
\[
q(\lambda)\bm \alpha_\lambda \bm \alpha_\lambda^\transpose + q(\mu)(\bm \delta \circ  \bm \alpha_\mu) (\bm \delta \circ  \bm \alpha_\mu)^\transpose
\]
is an integer matrix.
Furthermore, $\bm \delta \circ \bm \alpha_\mu$ is a $\mu$-eigenvector of $M$ and, for all $i, j \in \{1,\dots,n\}$ such that $\bm \alpha_\lambda(i) = \bm \alpha_\lambda(j)$, we have $\bm \delta(i) = \bm \delta(j)$.
\end{lemma}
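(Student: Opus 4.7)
My plan is to reduce the entire lemma to a direct computation with the spectral decomposition of $q(M)$. First I would observe that $(x-\lambda)(x-\mu) \in \mathbb{Z}[x]$ together with $\operatorname{Min}_M(x) \in \mathbb{Z}[x]$ forces $q(x) \in \mathbb{Z}[x]$, so that $q(M)$ is an integer matrix; by Theorem~\ref{thm:spec_decomp} it equals $q(\lambda) P_\lambda + q(\mu) P_\mu$ since $q$ vanishes on $\Lambda(M) \setminus \{\lambda,\mu\}$. Because $\lambda$ is simple, $\operatorname{tr}(P_\lambda)=1$, whence $\|\bm\alpha_\lambda\|^2 = \sum_i \bm\alpha_\lambda(i)^2 = 1$. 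Combined with the hypothesis $M\bm\alpha_\lambda = \lambda \bm\alpha_\lambda$, this makes $\bm\alpha_\lambda$ a nonnegative unit $\lambda$-eigenvector, so $P_\lambda = \bm\alpha_\lambda\bm\alpha_\lambda^\transpose$.

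Existence of $\bm\delta$ then falls out immediately: pick any unit $\mu$-eigenvector $\mathbf{v}$, so $P_\mu = \mathbf{v}\mathbf{v}^\transpose$, and by Lemma~\ref{lem:anglesq_sqsum} one has $|\mathbf{v}(i)| = \bm\alpha_\mu(i)$, giving $\mathbf{v} = \bm\delta \circ \bm\alpha_\mu$ for some $\bm\delta \in \{\pm 1\}^n$. The hypothesis that $2q(\lambda)\bm\alpha_\lambda\bm\alpha_\lambda^\transpose$ has no integer entries forces $\bm\alpha_\lambda(i) \neq 0$ for every $i$; since $\mathcal{E}(\lambda)$ admits a basis in $\mathbb{Q}(\lambda)^n$ the scalar $\bm\alpha_\lambda(i)^2$ lies in $\mathbb{Q}(\lambda)$, and applying the nontrivial element of $\operatorname{Gal}(\mathbb{Q}(\lambda)/\mathbb{Q})$ sends it to $\bm\alpha_\mu(i)^2$, so $\bm\alpha_\mu(i) \neq 0$ for every $i$ as well. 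A global sign flip of $\mathbf{v}$ then secures $\bm\delta(1)=1$.

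The heart of the argument is uniqueness, and the step I expect to be the main obstacle is propagating the non-integrality hypothesis from $\lambda$ to $\mu$. Equating the $(i,j)$-entry of $q(M)$ gives
\[
q(\lambda)\bm\alpha_\lambda(i)\bm\alpha_\lambda(j) + q(\mu)\bm\delta(i)\bm\delta(j)\bm\alpha_\mu(i)\bm\alpha_\mu(j) \in \mathbb{Z};
\]
if $2q(\mu)\bm\alpha_\mu(i)\bm\alpha_\mu(j)$ were an integer, doubling the display and rearranging would make $2q(\lambda)\bm\alpha_\lambda(i)\bm\alpha_\lambda(j)$ an integer too, contrary to the hypothesis. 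Hence $2q(\mu)\bm\alpha_\mu(i)\bm\alpha_\mu(j) \notin \mathbb{Z}$ for all $i,j$. If $\bm\delta_1$ and $\bm\delta_2$ were two sign vectors yielding an integer matrix, subtracting produces the entrywise condition
\[
q(\mu)\bm\alpha_\mu(i)\bm\alpha_\mu(j)\bigl(\bm\delta_1(i)\bm\delta_1(j) - \bm\delta_2(i)\bm\delta_2(j)\bigr) \in \mathbb{Z},
\]
whose bracketed factor lies in $\{-2,0,2\}$; the $\pm 2$ cases are ruled out by what we just showed, so $\bm\delta_1(i)\bm\delta_1(j) = \bm\delta_2(i)\bm\delta_2(j)$ for all $i,j$, and taking $j=1$ together with $\bm\delta_k(1)=1$ forces $\bm\delta_1 = \bm\delta_2$.

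Finally, for the last clause, if $\bm\alpha_\lambda(i) = \bm\alpha_\lambda(j)$ then the same Galois argument gives $\bm\alpha_\mu(i) = \bm\alpha_\mu(j)$, so $q(M)_{ii} = q(M)_{jj}$ and
\[
q(M)_{ij} - q(M)_{ii} = q(\mu)\bm\alpha_\mu(i)^2\bigl(\bm\delta(i)\bm\delta(j) - 1\bigr) \in \mathbb{Z};
\]
if $\bm\delta(i) \neq \bm\delta(j)$ the right-hand side equals $-2q(\mu)\bm\alpha_\mu(i)^2 \notin \mathbb{Z}$, the contradiction I need. The one place I will need to be careful is verifying that the product $\bm\alpha_\lambda(i)\bm\alpha_\lambda(j)$ actually lives in $\mathbb{Q}(\lambda)$ so the Galois swap $\lambda \leftrightarrow \mu$ is legitimate, but this follows cleanly from expressing $P_\lambda$ via a $\mathbb{Q}(\lambda)$-rational eigenvector and then normalising.
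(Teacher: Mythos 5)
Your proposal is correct and follows the paper's own proof in all essentials: existence via the spectral decomposition $q(M)=q(\lambda)\bm\alpha_\lambda\bm\alpha_\lambda^\transpose+q(\mu)P_\mu$ with a unit $\mu$-eigenvector written as $\bm\delta\circ\bm\alpha_\mu$, uniqueness via the sum/difference trick that would otherwise force $2q(\lambda)\bm\alpha_\lambda(i)\bm\alpha_\lambda(j)$ (equivalently $2q(\mu)\bm\alpha_\mu(i)\bm\alpha_\mu(j)$) into $\mathbb Z$, and the final clause via Galois conjugation of the squared angles; your choices of which entries to combine ($(i,j)$ against $(i,i)$ rather than $(1,i)$ plus $(1,j)$) are only cosmetic. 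Your closing worry is moot: you never need $\bm\alpha_\lambda(i)\bm\alpha_\lambda(j)\in\mathbb Q(\lambda)$, since the Galois swap is only ever applied to the squares $\bm\alpha_\lambda(i)^2=f_i(\lambda)/\operatorname{Min}_M^\prime(\lambda)$, which lie in $\mathbb Q(\lambda)$ by Lemma~\ref{lem:anglesq_formula}.
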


\begin{proof}
    Set
    \[
\Psi (\bm \delta) \coloneqq q(\lambda)\bm \alpha_\lambda \bm \alpha_\lambda^\transpose + q(\mu)(\bm \delta \circ  \bm \alpha_\mu) (\bm \delta \circ  \bm \alpha_\mu)^\transpose.
\]
    Clearly there must exist $\bm \delta \in \{\pm 1\}^n$ such that $\bm \delta(1) = 1$ and $\Psi (\bm \delta) = q(M)$ is an integer matrix.
Indeed, take $\bm \delta = \bm \delta_0 \in \{\pm 1\}^n$ such that $\bm \delta_0(1) = 1$ and $\bm \delta_0 \circ  \bm \alpha_\mu$ is a unit eigenvector for $\mu$.
It remains to show that such $\bm \delta$ is unique.

Suppose that $\bm \delta_1 \in \{\pm 1\}^n$ satisfies $\bm \delta_1(1) = 1$ and $\Psi (\bm \delta_1) = q(M)$ is an integer matrix where $\bm \delta_1(j) \neq \bm \delta_0(j)$ for some $j > 1$.
Then, by considering the entry $(1,j)$ of $\Psi (\bm \delta)$, we conclude that both
$$ q(\lambda)\bm \alpha_\lambda(1)\bm \alpha_\lambda(j) \pm q(\mu)\bm \alpha_\mu(1)\bm \alpha_\mu(j) \in \mathbb{Z}, $$
which is not possible since each entry of the matrix $2 q(\lambda)\bm \alpha_\lambda \bm \alpha_\lambda^\transpose$ is not an integer.

Let $K = \mathbb Q(\lambda)$ be the splitting field of $(x-\lambda)(x-\mu)$.
Take $\sigma \in \Gal(K/\mathbb{Q})$ such that $\sigma(\mu) = \nu$.
Let $i,j \in \{1,\dots,n\}$ such that $\bm \alpha_\lambda(i) = \bm \alpha_\lambda(j)$.
By Lemma~\ref{lem:anglesq_formula}, we have 
\[
\frac{f_i(\lambda)}{\operatorname{Min}_M^\prime(\lambda)} = \frac{f_j(\lambda)}{\operatorname{Min}_M^\prime(\lambda)}.
\]
Applying the automorphism $\sigma$ and using Lemma~\ref{lem:anglesq_formula} again yields $\bm \alpha_\mu(i) = \bm \alpha_\mu(j)$.
Suppose, to the contrary, that $\bm \delta(i) \ne \bm \delta(j)$ then $\bm \delta(i) + \bm \delta(j) = 0$.
It follows that
\begin{align*}
   \Psi (\bm \delta)_{1,i} + \Psi (\bm \delta)_{1,j} =  2q(\lambda)\bm \alpha_\lambda(1)\bm \alpha_\lambda(i)
\end{align*}
is an integer, which is a contradiction.
Therefore, we have $\bm \delta(i) = \bm \delta(j)$, as required.
\end{proof}

We use Lemma~\ref{lem:intercon} in the subsequent nonexistence results.
One important utility of Lemma~\ref{lem:intercon}, as shown in the forthcoming proofs, is that it allows us to exhaustively check for all possibilities for $\bm \delta$.

\begin{lemma} \label{lem:last8_quad76}
There does not exist a Seidel matrix $S$ with characteristic polynomial
$$
\operatorname{Char}_S(x) = (x+5)^{32} (x-7) (x-9)^{10} (x-11)^4 (x^2-19x+76).
$$
\end{lemma}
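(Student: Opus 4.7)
Suppose for contradiction that such a Seidel matrix $S$ exists. Following the pattern established by Lemmas~\ref{lem:last8_quad92}--\ref{lem:last8_fourint}, I would first compute the interlacing characteristic polynomials of $\operatorname{Char}_S(x)$ using the algorithm of Section~\ref{sec:icp}, identify a warranted polynomial (or small warranted subset) by producing a certificate of warranty, and then apply Corollary~\ref{cor:Seidel-compatible} together with the Seidel-compatibility tools of Section~\ref{sec:compat} to cut down to a small list $\mathfrak{f}_1, \dots, \mathfrak{f}_k$ of pairwise Seidel-compatible interlacing characteristic polynomials. Solving the resulting linear system \eqref{eq:sumofsubpoly_dim17Compat} should leave only a handful of candidate interlacing configurations $(n_{\mathfrak{f}_1}, \dots, n_{\mathfrak{f}_k})$.

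The decisive new ingredient is Lemma~\ref{lem:intercon} applied to the irreducible quadratic factor $\xi(x) = x^2-19x+76$ of $\operatorname{Sim}_S(x)$. Put $\lambda, \mu = (19 \pm \sqrt{57})/2$ and let $q(x) = \operatorname{Min}_S(x)/\xi(x) = (x+5)(x-7)(x-9)(x-11)$. By Corollary~\ref{cor:alpha_eigenvec}, after switching we may assume $S \bm \alpha_\lambda = \lambda \bm \alpha_\lambda$. The Spectral Decomposition Theorem, combined with Lemma~\ref{lem:anglesq_sqsum}, then expresses the integer matrix $q(S)$ as
\[
q(S) = q(\lambda)\,\bm \alpha_\lambda \bm \alpha_\lambda^\transpose + q(\mu)\,(\bm \delta \circ \bm \alpha_\mu)(\bm \delta \circ \bm \alpha_\mu)^\transpose
\]
for some $\bm \delta \in \{\pm 1\}^{49}$ with $\bm \delta(1) = 1$. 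After checking that the entries of $2q(\lambda)\bm \alpha_\lambda \bm \alpha_\lambda^\transpose$ all lie in $\mathbb R \setminus \mathbb Z$ (expected, since they involve $\sqrt{57}$), Lemma~\ref{lem:intercon} guarantees that this $\bm \delta$ is unique and is constant on each index-class $\{i : \operatorname{Char}_{S[i]}(x) = \mathfrak{f}_s\}$. This reduces the $\bm \delta$-search from $2^{48}$ to at most $2^{k-1}$ patterns per configuration.

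For each surviving configuration and each admissible class-constant sign pattern, I would use Lemma~\ref{lem:anglesq_formula} to write $\bm \alpha_\lambda(i)$ and $\bm \alpha_\mu(i)$ explicitly in terms of whichever $\mathfrak{f}_s$ governs the index $i$, and then compute an off-diagonal entry of the right-hand side above of the form $a + b\sqrt{57}$. The aim is to exhibit, for each configuration-and-pattern pair, some entry with $b \neq 0$, contradicting the integrality of $q(S)$. The main anticipated obstacle is the combinatorial worry that for some configuration all choices of $\bm \delta$ might conspire to cancel the $\sqrt{57}$-parts across every off-diagonal entry; in such a case I would fall back on Lemma~\ref{lem:extracting} to extract a forbidden Seidel submatrix (as in the proofs of Lemmas~\ref{lem:last8_fiveint}--\ref{lem:last8_sixint_ev13}), or invoke Corollary~\ref{cor:incompatible} on the factor $\xi$ to obtain a finer Seidel-compatibility obstruction.
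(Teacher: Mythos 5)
Your pipeline matches the paper's up to the last step: the paper likewise computes the interlacing characteristic polynomials (99 of them), exhibits a warranted subset, cuts down via Seidel-compatibility to five pairwise Seidel-compatible polynomials $\mathfrak f_1,\dots,\mathfrak f_5$, finds exactly two interlacing configurations $(24,12,6,0,7)$ and $(27,11,2,1,8)$, and then invokes Corollary~\ref{cor:alpha_eigenvec} and Lemma~\ref{lem:intercon} with $\xi(x)=x^2-19x+76$, verifying that $2q(\lambda_1)\bm\alpha_{\lambda_1}\bm\alpha_{\lambda_1}^\transpose$ has no integer entries so that $\bm\delta$ is unique and class-constant, leaving $2^4$ sign patterns per configuration.

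The gap is in your closing move. You propose to contradict the \emph{integrality} of $q(S)=q(\lambda)\bm\alpha_\lambda\bm\alpha_\lambda^\transpose+q(\mu)(\bm\delta\circ\bm\alpha_\mu)(\bm\delta\circ\bm\alpha_\mu)^\transpose$ by finding, for every class-constant sign pattern, an off-diagonal entry with a nonzero $\sqrt{57}$-part. But each such entry is exactly the quantity $q(\lambda)\bm\delta(\lambda)\alpha_\lambda(\mathfrak f_s)\alpha_\lambda(\mathfrak f_t)+q(\mu)\bm\delta(\mu)\alpha_\mu(\mathfrak f_s)\alpha_\mu(\mathfrak f_t)$ appearing in the definition of $\xi$-(Seidel-)compatibility, and the five surviving polynomials are already \emph{pairwise Seidel-compatible}: for every pair of classes there is a sign choice killing the $\sqrt{57}$-part. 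So the only obstruction your check could detect is a global inconsistency of those pairwise signs, and the paper gives no indication that such an inconsistency occurs; indeed in the analogous Lemma~\ref{lem:last8_quad72} a consistent integral $\bm\delta$ does exist for one configuration. The condition that actually kills both configurations here is the one you omit: Lemma~\ref{lem:intercon} also asserts that $\bm\delta\circ\bm\alpha_{\lambda_2}$ is a genuine $\lambda_2$-eigenvector, hence orthogonal to $\bm\alpha_{\lambda_1}$. This yields the linear constraint $\sum_{s}n_{\mathfrak f_s}\,\epsilon_s\,\alpha_{\lambda_1}(\mathfrak f_s)\alpha_{\lambda_2}(\mathfrak f_s)=0$ over the $16$ class-constant patterns $(\epsilon_s)$, and the paper checks that it fails for both configurations. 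Your stated fallbacks do not recover this: Lemma~\ref{lem:extracting} needs vanishing angle entries (none of the relevant angles vanish here), and Corollary~\ref{cor:incompatible} only re-encodes the Seidel-compatibility already exhausted in the reduction to five polynomials. You need the orthogonality/eigenvector clause of Lemma~\ref{lem:intercon} to finish.
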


\begin{proof}
Suppose a Seidel matrix $S$ has characteristic polynomial
$$
\operatorname{Char}_S(x) = (x+5)^{32} (x-7) (x-9)^{10} (x-11)^4 (x^2-19x+76).
$$
There are 99 interlacing characteristic polynomials and one of them is warranted:
\begin{align*}
    (x+5)^{31} (x-7) (x-9)^9 (x-11)^3 (x^4-34x^3+408x^2-2030x+3543)
\end{align*}
with certificate of warranty $(0, 0, 0, -31002, -10684, -1527)$.
Using Seidel-compatibility, we reduce from 99 to five polynomials:
\begin{align*}
    \mathfrak{f}_1(x) &= (x+5)^{31} (x-7) (x-9)^9 (x-11)^3 (x^4-34x^3+408x^2-2030x+3543), \\
    \mathfrak{f}_2(x) &= (x+5)^{31} (x-9)^9 (x-11)^3 (x^2-16x+59) (x^3-25x^2+187x-411), \\
    \mathfrak{f}_3(x) &= (x+5)^{31} (x-7) (x-9)^9 (x-11)^3 (x^4-34x^3+408x^2-2022x+3455), \\
    \mathfrak{f}_4(x) &= (x+5)^{31} (x-9)^{11} (x-11)^3 (x^3-23x^2+151x-289), \\
    \mathfrak{f}_5(x) &= (x+5)^{31} (x-7)^2 (x-9)^9 (x-11)^4 (x^2-16x+43).
\end{align*}
These five polynomials are pairwise Seidel-compatible and we find that there are two possible interlacing configurations $(n_{\mathfrak f_1},n_{\mathfrak f_2},n_{\mathfrak f_{3}},n_{\mathfrak f_{4}},n_{\mathfrak f_{5}})$: $(24, 12, 6, 0, 7)$ and $(27, 11, 2, 1, 8)$.
    
Let $(\lambda_1,\dots,\lambda_6)=\left( (19-\sqrt{57})/2, (19+\sqrt{57})/2, -5, 7, 9, 11 \right)$ be a 6-tuple of the six distinct eigenvalues of $S$.
Firstly, we compute the following angles
$$ 
\begin{bmatrix}
    \alpha_{\lambda_1}^2(\mathfrak f_1) & \alpha_{\lambda_2}^2(\mathfrak f_1) \\
    \alpha_{\lambda_1}^2(\mathfrak f_2) & \alpha_{\lambda_2}^2(\mathfrak f_2) \\
    \alpha_{\lambda_1}^2(\mathfrak f_3) & \alpha_{\lambda_2}^2(\mathfrak f_3) \\
    \alpha_{\lambda_1}^2(\mathfrak f_4) & \alpha_{\lambda_2}^2(\mathfrak f_4) \\
    \alpha_{\lambda_1}^2(\mathfrak f_5) & \alpha_{\lambda_2}^2(\mathfrak f_5) \\
\end{bmatrix}
= 
\begin{bmatrix}
    (323+\sqrt{57})/39102 & (323-\sqrt{57})/39102  \\
    (437-39\sqrt{57})/52136 & (437+39\sqrt{57})/52136  \\
    (589+22\sqrt{57})/19551 & (589-22\sqrt{57})/19551  \\
    (95+11\sqrt{57})/3192 & (95-11\sqrt{57})/3192  \\
    (969+3\sqrt{57})/13034 & (969-3\sqrt{57})/13034
\end{bmatrix}.
$$

By Corollary~\ref{cor:alpha_eigenvec}, without loss of generality, we can assume that $\bm \alpha_{\lambda_1}$ is an eigenvector for $S$ with eigenvalue $\lambda_1$.
Using the angles calculated above, we can check that $2(\lambda_1+5)(\lambda_1-7)(\lambda_1-9)(\lambda_1-11)\bm \alpha_{\lambda_1}\bm \alpha_{\lambda_1}^\transpose$ does not contain any element in $\mathbb Z$.
By Lemma~\ref{lem:intercon}, there exists $\bm \delta \in \{\pm 1\}^{49}$ such that $\bm \delta(1) = 1$ and $\bm \alpha_{\lambda_1}$ is orthogonal to $\bm \delta \circ \bm \alpha_{\lambda_2}$.
Furthermore, instead of checking $2^{48}$ possibilities for $\bm \delta$, we need only check $2^4$ possibilities for $\bm \delta$.
However, we find that for the two possible interlacing configurations $(24, 12, 6, 0, 7)$ and $(27, 11, 2, 1, 8)$ there does not exist a $\bm \delta \in \{\pm 1\}^{49}$ such that $\bm \alpha_{\lambda_1}$ is orthogonal to $\bm \delta \circ \bm \alpha_{\lambda_2}$, which contradicts Lemma~\ref{lem:intercon}.
\end{proof}

\section{Euler graphs}
\label{sec:euler}

In this section, we introduce one last tool that will enable us to dispose of the last two remaining polynomials from Lemma~\ref{lem:candpols}.
Let $S$ be a Seidel matrix and denote by $\Gamma(S)$ the \textbf{underlying graph} of $S$: the graph whose adjacency matrix is given by $(J-I-S)/2$.
The \textbf{switching class} of $S$ is defined as the set consisting of all underlying graphs $\Gamma(DSD)$ where $D$ is a diagonal matrix with entries in $\{\pm 1\}$.
An \textbf{Euler graph} is a graph all of whose vertices have even degree.
Seidel~\cite{Seidel74} showed that if $n$ is odd then there is precisely one Euler graph in the switching class of $S$.
Let $m$ be a positive integer.
If $A$ and $B$ are integer matrices of the same size, then $A \equiv_m B$ indicates that, for all $i$ and $j$, the $(i,j)$-entry of $A$ is congruent to the $(i,j)$-entry of $B$ modulo $m$.
    
\begin{lemma} \label{lem:euler_mod4}
Let $S$ be a Seidel matrix of order $n$ odd.
Let $\lambda$, $\mu$, and $\nu$ be integers with $\lambda$ and $\mu$ odd.
Then the underlying graph of $S$ is the unique Euler graph contained in the switching class of $S$ if and only if
\begin{align*}
    (S-\lambda I)(S-\mu I) &\equiv_4 (n-2-\lambda-\mu) J \text{ and} \\
    (S-\lambda I)(S-\mu I)(S-\nu I) &\equiv_4 (n-2-\lambda-\mu) (n-1-\nu) J.
\end{align*}
\end{lemma}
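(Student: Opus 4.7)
The plan is to express $S$ in terms of the adjacency matrix $A$ of $\Gamma(S)$ via $S = J - I - 2A$ and to reduce each of the two matrix products modulo $4$. Let $\mathbf{d} = A\mathbf{1}$ be the degree vector of $\Gamma(S)$. Then $\Gamma(S)$ is Euler precisely when $\mathbf{d} \equiv \mathbf{0} \pmod 2$. Since $n$ is odd and the handshake lemma gives $\mathbf{1}^\transpose \mathbf{d} = 2|E(\Gamma(S))|$, this is in turn equivalent to all entries of $\mathbf{d}$ sharing a common parity: if they were all odd, their sum would be odd, contradicting $n$ odd. This reduction is the bridge between the combinatorial condition and the matrix identities.

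Next, since $\lambda$ and $\mu$ are odd, writing $\lambda + 1 = 2b$ and $\mu + 1 = 2c$ gives $S - \lambda I = J - 2(bI + A)$, and analogously for $\mu$. Expanding the product and using $J^2 = nJ$, $JA = \mathbf{1}\mathbf{d}^\transpose$, and $AJ = \mathbf{d}\mathbf{1}^\transpose$, the terms $4bcI$, $4(b+c)A$, and $4A^2$ vanish modulo $4$, leaving
\[
(S - \lambda I)(S - \mu I) \equiv (n - 2 - \lambda - \mu)J - 2\bigl(\mathbf{1}\mathbf{d}^\transpose + \mathbf{d}\mathbf{1}^\transpose\bigr) \pmod 4.
\]
Therefore condition~1 holds if and only if $d_i + d_j \equiv 0 \pmod 2$ for every $i, j$, i.e., all $d_i$ have the same parity, which by the previous paragraph is equivalent to $\Gamma(S)$ being Euler.

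For condition~2, I would multiply the above identity by $S - \nu I$. Using $JS = (n-1)J - 2\mathbf{1}\mathbf{d}^\transpose$ (which follows from $S\mathbf{1} = (n-1)\mathbf{1} - 2\mathbf{d}$), together with $\mathbf{1}\mathbf{d}^\transpose S = \mathbf{1}(S\mathbf{d})^\transpose$ and $\mathbf{d}\mathbf{1}^\transpose S = (n-1)\mathbf{d}\mathbf{1}^\transpose - 2\mathbf{d}\mathbf{d}^\transpose$, and discarding terms divisible by $4$ (in particular $4\mathbf{d}\mathbf{d}^\transpose$ and the product of $4bcI + 4(b+c)A + 4A^2$ with $S - \nu I$), I expect to obtain
\[
(S-\lambda I)(S-\mu I)(S-\nu I) \equiv (n-2-\lambda-\mu)(n-1-\nu)J - 2R \pmod 4,
\]
where
\[
R = (n-2-\lambda-\mu-\nu)\mathbf{1}\mathbf{d}^\transpose + (n-1-\nu)\mathbf{d}\mathbf{1}^\transpose + \mathbf{1}(S\mathbf{d})^\transpose.
\]
When $\Gamma(S)$ is Euler, every entry of $\mathbf{d}$, and hence every entry of $\mathbf{1}\mathbf{d}^\transpose$, $\mathbf{d}\mathbf{1}^\transpose$, and $\mathbf{1}(S\mathbf{d})^\transpose$, is even, so $R \equiv 0 \pmod 2$ and condition~2 holds. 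Conversely, condition~1 by itself already forces $\Gamma(S)$ to be Euler, which closes the equivalence.

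The main obstacle is the careful bookkeeping in reducing the triple product modulo $4$: one must verify that every cross term carrying a factor of $2$ meeting another factor of $2$ is correctly absorbed into the $0 \pmod 4$ part, that $\mathbf{d}\mathbf{1}^\transpose S$ produces the stray $-2\mathbf{d}\mathbf{d}^\transpose$ which then becomes negligible after multiplication by the outer factor of $-2$, and that the leading coefficient $(n-2-\lambda-\mu)(n-1-\nu)$ emerges exactly from propagating $J$ through $S - \nu I$ using $JS = (n-1)J - 2\mathbf{1}\mathbf{d}^\transpose$.
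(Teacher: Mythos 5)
Your proposal is correct, and it follows the paper's core strategy: write $S = J - I - 2A$, reduce the quadratic product modulo $4$ using the oddness of $\lambda$ and $\mu$, and observe that the residual term is $-2(JA + AJ) = -2(\mathbf 1\mathbf d^\transpose + \mathbf d\mathbf 1^\transpose)$. Where you diverge is in the converse. The paper assumes both congruences, substitutes the first into the product with $(S-\nu I)$ to get $(S-\lambda I)(S-\mu I)(S-\nu I) \equiv_4 (n-2-\lambda-\mu)(n-1-\nu)J - 2(n-2-\lambda-\mu)JA$, and then cancels against the second congruence to conclude $(n-2-\lambda-\mu)JA \equiv_2 O$; since $n-2-\lambda-\mu$ is odd this gives $JA \equiv_2 O$. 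You instead show that the first congruence \emph{alone} is equivalent to the Euler condition: it forces $d_i + d_j \equiv 0 \pmod 2$ for all $i,j$, hence all degrees have the same parity, and the handshake lemma together with $n$ odd rules out all degrees being odd. This is a genuine (and slightly sharper) observation -- it makes the second congruence logically redundant for the backward implication -- at the cost of invoking the parity/handshake argument, which the paper avoids. Your forward direction for the cubic congruence is also more laborious than necessary: once the quadratic congruence is established under the Euler hypothesis, one can simply multiply $(n-2-\lambda-\mu)J$ by $(S-\nu I)$ and use $JA \equiv_2 O$, rather than tracking the full residual $R$; but your bookkeeping of $R$ is consistent and every term of $R$ does carry a factor of $\mathbf d$ or $S\mathbf d$, so it vanishes modulo $2$ in the Euler case as you claim.
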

    
\begin{proof}
Let $S=J-I-2A$.
Expanding $(S-\lambda I)(S-\mu I)$, we obtain
\begin{align*}
    (S-\lambda I)(S-\mu I) = & (n-2-\lambda-\mu)J + (\lambda+1)(\mu+1)I + 2(\lambda+\mu+2)A + 4A^2 -2(J A + A J).
\end{align*}
Since $\lambda$ and $\mu$ are odd, it follows that
\begin{align} 
    (S-\lambda I)(S-\mu I) \equiv_4 & (n-2-\lambda-\mu)J - 2(J A + A J). \label{gen_quadraticS_mod4}
\end{align}
Suppose the underlying graph of $S$ is the unique Euler graph contained in the switching class of $S$.
Note that $JA \equiv_2 AJ \equiv_2 O$, since $A$ is the adjacency matrix of an Euler graph.
Then from \eqref{gen_quadraticS_mod4}, it is clear that
\begin{align*}
    (S-\lambda I)(S-\mu I) \equiv_4 & (n-2-\lambda-\mu)J.
\end{align*}
Therefore,
\begin{align*}
    (S-\lambda I)(S-\mu I)(S-\nu I) 
    &\equiv_4 (n-2-\lambda-\mu) J (J-(\nu+1)I-2A) \\
    &\equiv_4 (n-2-\lambda-\mu) (J^2-(\nu+1)J-2 J A) \\
    &\equiv_4 (n-2-\lambda-\mu) (n-1-\nu) J.
\end{align*}

Conversely, suppose that
\begin{align}
    (S-\lambda I)(S-\mu I) &\equiv_4 (n-2-\lambda-\mu) J \label{quadraticS_mod4} \\
    (S-\lambda I)(S-\mu I)(S-\nu I) &\equiv_4 (n-2-\lambda-\mu) (n-1-\nu) J.
    \label{cubicS_mod4}
\end{align}
From \eqref{quadraticS_mod4}, we have
\begin{align*}
    (S-\lambda I)(S-\mu I)(S-\nu I)
    &\equiv_4 (n-2-\lambda-\mu) (n-1-\nu) J - 2 (n-2-\lambda-\mu) J A.
\end{align*}
Combining with \eqref{cubicS_mod4}, we obtain
\begin{align*}
    2(n-2-\lambda-\mu) J A \equiv_4 O \implies (n-2-\lambda-\mu) J A \equiv_2 O
\end{align*}
Since $n$, $\lambda$, and $\mu$ are all odd, $n-2-\lambda-\mu$ is also odd.
It follows that $J A \equiv_2 O$ and therefore, the underlying graph of $S$ is an Euler graph, which is unique in the switching class of $S$.
\end{proof}

\begin{lemma} \label{lem:last8_quad72}
There does not exist a Seidel matrix $S$ with characteristic polynomial
$$
\operatorname{Char}_S(x) = (x+5)^{32} (x-9)^{12} (x-11)^3 (x^2-19x+72).
$$
\end{lemma}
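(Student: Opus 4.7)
The plan is to follow the general template of Lemmas~\ref{lem:last8_quad92} and \ref{lem:last8_quad76}, culminating in an application of the newly proven Lemma~\ref{lem:euler_mod4}. Suppose for contradiction that a Seidel matrix $S$ with the stated characteristic polynomial exists. First I would enumerate all interlacing characteristic polynomials of $\operatorname{Char}_S(x)$, locate a warranted interlacing characteristic polynomial (or a warranted pair) by producing a certificate of warranty via linear programming, then discard every remaining interlacing characteristic polynomial that fails to be Seidel-compatible with at least one polynomial in the warranted subset. Corollary~\ref{cor:Seidel-compatible}, Proposition~\ref{pro:reducible_then_compatible} and Corollary~\ref{cor:incompatible} provide the computational tests, and the resulting short list $\mathfrak f_1,\dots,\mathfrak f_k$ of pairwise Seidel-compatible polynomials is fed into the linear system of Theorem~\ref{thm:sumofsubpoly} to yield a finite list of putative interlacing configurations $(n_{\mathfrak f_1},\dots,n_{\mathfrak f_k})$.

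Next, for each surviving configuration I would invoke Lemma~\ref{lem:euler_mod4}. Since $n=49$ is odd, there is a unique Euler graph in the switching class of $S$, so after switching we may assume the underlying graph of $S$ is that Euler graph. The three odd integer eigenvalues $-5,9,11$ give legitimate odd values of $\lambda,\mu$ for which Lemma~\ref{lem:euler_mod4} forces
\[
(S-\lambda I)(S-\mu I) \equiv_4 (47-\lambda-\mu)J,
\]
together with the companion cubic congruence. Using Theorem~\ref{thm:spec_decomp} and the angle-vector formula of Lemma~\ref{lem:anglesq_formula}, the diagonal entries of $(S-\lambda I)(S-\mu I)$ can be expressed as explicit $\mathbb Q$-linear combinations of $\bm \alpha_{-5}^2(i)$, $\bm \alpha_{9}^2(i)$, $\bm \alpha_{11}^2(i)$ and $\bm \alpha_{\lambda_1}^2(i)+\bm \alpha_{\lambda_2}^2(i)$, where $\lambda_{1,2}=(19\pm\sqrt{73})/2$ are Galois-conjugate so that the irrational parts cancel. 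Comparing these diagonal entries modulo $4$ against the required constant $47-\lambda-\mu$ should rule out every configuration; if the diagonal test alone is insufficient, the off-diagonal entries become accessible through Lemma~\ref{lem:intercon}, which (after switching so that $S\bm \alpha_{\lambda_1}=\lambda_1 \bm \alpha_{\lambda_1}$) pins down the signs of $\bm \alpha_{\lambda_1}$ as an eigenvector and thus determines $P_{\lambda_1}+P_{\lambda_2}$ up to finitely many possibilities in each configuration.

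The main obstacle I foresee is that a single choice of pair $(\lambda,\mu)$ may yield a quadratic congruence that is satisfied in aggregate even though the configuration is not geometrically realizable. To overcome this I expect to combine the information coming from several pairs $(\lambda,\mu)\in\{-5,9,11\}^2$, or couple the quadratic congruence with the cubic congruence in Lemma~\ref{lem:euler_mod4} with $\nu\in\{-5,9,11\}$, to force a contradiction for every configuration. A fallback strategy, paralleling the end of Lemma~\ref{lem:last8_quad92}, is to identify an off-diagonal entry of $(S+5I)(S-9I)$ or $(S+5I)(S-11I)$ whose spectral-decomposition expression lies in $\mathbb Q(\sqrt{73})\setminus\mathbb Z$ for every surviving configuration, contradicting the integrality of that entry.
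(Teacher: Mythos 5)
Your setup matches the paper's: enumerate the 53 interlacing characteristic polynomials, find a warranted subset (the paper finds a warranted pair $\{\mathfrak f_3,\mathfrak f_4\}$ that are mutually Seidel-incompatible, splits into two cases, kills the $\mathfrak f_3$-case by a non-integral solution to the linear system, and is left with two configurations in the $\mathfrak f_4$-case), then use Lemma~\ref{lem:intercon} and Lemma~\ref{lem:euler_mod4}. The paper does exactly this up to the point where one configuration, $(16,28,4,1)$, survives with a unique $\bm\delta$ and a unique Euler switching $\bm\epsilon$.

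The gap is in how you propose to close. Your primary test --- comparing diagonal entries of $(S-\lambda I)(S-\mu I)$ modulo $4$ against $47-\lambda-\mu$ --- is vacuous: the $(i,i)$ entry equals $(S^2)_{ii}+\lambda\mu = 48+\lambda\mu$ for every $i$ and every configuration, and $48+\lambda\mu\equiv_4 47-\lambda-\mu$ holds identically for odd $\lambda,\mu$ since $(1+\lambda)(1+\mu)\equiv_4 0$. Your fallback --- an off-diagonal entry of $(S+5I)(S-9I)$ lying in $\mathbb Q(\sqrt{73})\setminus\mathbb Z$ --- also fails for the surviving configuration: the paper computes that $\frac{3}{2}(17-5\sqrt{73})\mathbf u\mathbf u^\transpose+\frac{3}{2}(17+5\sqrt{73})\mathbf v\mathbf v^\transpose$ is an integer block matrix for the unique admissible sign pattern, so no integrality obstruction remains at the level of the two irrational eigenvalues. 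The missing idea is to turn the mod-$4$ congruence $T_\epsilon\equiv_4 3J$ into constraints on the projection $32P_{11}$ onto the multiplicity-$3$ eigenspace $\mathcal E(11)$: subtracting the known rational rank-two part from $(S_\epsilon+5I)(S_\epsilon-9I)\equiv_4 3J$ forces each entry of $32P_{11}$ into a congruence class mod $4$, while Cauchy--Schwarz applied to an orthonormal basis $\mathbf a,\mathbf b,\mathbf c$ of $\mathcal E(11)$ (whose squared row norms are the known angles $\bm\alpha_{11}^2(i)$) bounds those entries so tightly that equality must hold, forcing every triple $(a_i,b_i,c_i)$ to be proportional to a single $(a,b,c)$. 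That makes $[\mathbf a\;\mathbf b\;\mathbf c]$ have rank $1$, contradicting $\dim\mathcal E(11)=3$. Without this eigenspace-rank argument (or an equivalent), the surviving configuration is not eliminated.
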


\begin{proof}
Suppose a Seidel matrix $S$ has characteristic polynomial
$$
\operatorname{Char}_S(x) = (x+5)^{32} (x-9)^{12} (x-11)^3 (x^2-19x+72).
$$
There are 53 interlacing characteristic polynomials for $\operatorname{Char}_S(x)$, two of which are $\mathfrak{f}_3(x)$ and $\mathfrak{f}_4(x)$:
\begin{align*}
    \mathfrak{f}_3(x) &= (x+5)^{31} (x-9)^{11} (x-11)^2 (x^4-34x^3+404x^2-1966x+3323), \\
    \mathfrak{f}_4(x) &= (x+5)^{31} (x-9)^{11} (x-11)^2 (x^4-34x^3+404x^2-1966x+3339).
\end{align*}
The set $\{ \mathfrak{f}_3(x), \mathfrak{f}_4(x) \}$ is warranted with certificate $(-37937620,$ $0,$ $0,$ $-29081,$ $-5816)$.
However, $\mathfrak{f}_3(x)$ and $\mathfrak{f}_4(x)$ are not Seidel-compatible.
Hence we can consider Seidel-compatibility with $\mathfrak{f}_3(x)$ and $\mathfrak{f}_4(x)$ separately.
From the 53 interlacing characteristic polynomials $\operatorname{Char}_S(x)$, only three are Seidel-compatible with $\mathfrak{f}_3(x)$:
\begin{align*}
    \mathfrak{f}_3(x) &= (x+5)^{31} (x-9)^{11} (x-11)^2 (x^4-34x^3+404x^2-1966x+3323), \\
    \mathfrak{f}_{10}(x) &= (x+5)^{31} (x-9)^{11} (x-11)^3 (x^3-23x^2+151x-297), \\
    \mathfrak{f}_{41}(x) &= (x+5)^{31} (x-9)^{11} (x-11)^2 (x-13) (x^3-21x^2+131x-215).
\end{align*}
These three polynomials are pairwise Seidel-compatible but we find that there is only one possible solution to \eqref{eq:sumofsubpoly_dim17Compat}: $(91/2,1,5/2)$, which is a contradiction.
On the other hand, only four out of the 53 polynomials are Seidel-compatible with $\mathfrak{f}_4(x)$:
\begin{align*}
    \mathfrak{f}_1(x) &= (x+5)^{31} (x-9)^{11} (x-11)^2 (x^4-34x^3+404x^2-1966x+3291), \\
    \mathfrak{f}_4(x) &= (x+5)^{31} (x-9)^{11} (x-11)^2 (x^4-34x^3+404x^2-1966x+3339), \\
    \mathfrak{f}_{24}(x) &= (x+5)^{31} (x-9)^{11} (x-11)^2 (x^4-34x^3+404x^2-1942x+3075), \\
    \mathfrak{f}_{32}(x) &= (x+5)^{31} (x-3) (x-7) (x-9)^{11} (x-11)^3 (x-13).
\end{align*}
These four polynomials are pairwise Seidel-compatible and we find that there are two possible interlacing configurations $(n_{\mathfrak f_1},n_{\mathfrak f_4},n_{\mathfrak f_{24}},n_{\mathfrak f_{32}})$: $(16, 28, 4, 1)$ and $(17, 28, 0, 4)$.
    
Let $(\lambda_1,\dots,\lambda_5)=\left( (19-\sqrt{73})/2, (19+\sqrt{73})/2, 11, -5, 9 \right)$ be a 5-tuple of the five distinct eigenvalues of $S$.
Firstly, we compute the following angles
$$ 
\begin{bmatrix}
    \alpha_{\lambda_1}^2(\mathfrak f_1) &    \alpha_{\lambda_2}^2(\mathfrak f_1) &    \alpha_{\lambda_3}^2(\mathfrak f_1) \\
    \alpha_{\lambda_1}^2(\mathfrak f_4) &    \alpha_{\lambda_2}^2(\mathfrak f_4) &    \alpha_{\lambda_3}^2(\mathfrak f_4) \\
    \alpha_{\lambda_1}^2(\mathfrak f_{24}) & \alpha_{\lambda_2}^2(\mathfrak f_{24}) & \alpha_{\lambda_3}^2(\mathfrak f_{24}) \\
    \alpha_{\lambda_1}^2(\mathfrak f_{32}) & \alpha_{\lambda_2}^2(\mathfrak f_{32}) & \alpha_{\lambda_3}^2(\mathfrak f_{32})
\end{bmatrix}
= 
\begin{bmatrix}
    (219+23\sqrt{73})/14016 & (219-23\sqrt{73})/14016 &
    1/8 \\
    (803-65\sqrt{73})/56064 & (803+65\sqrt{73})/56064 &
    1/32 \\
    (3723+71\sqrt{73})/56064 & (3723-71\sqrt{73})/56064 &
    1/32 \\
    (73+\sqrt{73})/876 & 
    (73-\sqrt{73})/876 &
    0
\end{bmatrix}.
$$

By Corollary~\ref{cor:alpha_eigenvec}, without loss of generality, we can assume that $\bm \alpha_{\lambda_1}$ is an eigenvector for $S$ with eigenvalue $\lambda_1$.
Using the angles calculated above, we can check that $2(\lambda_1+5)(\lambda_1-9)(\lambda_1-11)\bm \alpha_{\lambda_1}\bm \alpha_{\lambda_1}^\transpose$ does not contain any element in $\mathbb Z$.
By Lemma~\ref{lem:intercon}, there exists $\bm \delta \in \{\pm 1\}^{49}$ such that $\bm \delta(1) = 1$ and $\bm \alpha_{\lambda_1}$ is orthogonal to $\bm \delta \circ \bm \alpha_{\lambda_2}$.
Furthermore, instead of checking $2^{48}$ possibilities for $\bm \delta$, we need only check $2^3$ possibilities for $\bm \delta$.
We find that, out of the two possible interlacing configurations $(16,28,4,1)$ and $(17, 28, 0, 4)$, for only $(16, 28, 4, 1)$ does there exist a $\bm \delta \in \{\pm 1\}^{49}$ such that $\bm \delta(1) = 1$ and $\bm \alpha_{\lambda_1}$ is orthogonal to $\bm \delta \circ \bm \alpha_{\lambda_2}$.
Furthermore, this $\bm \delta$ is unique.
    
Suppose $S$ has interlacing configuration $(n_{\mathfrak f_1},n_{\mathfrak f_4},n_{\mathfrak f_{24}},n_{\mathfrak f_{32}}) = (16, 28, 4, 1)$.
Let $\mathcal{I}_1 = \{1,\dots,16\}$, $\mathcal{I}_4 = \{17,\dots,44\}$, $\mathcal{I}_{24} = \{45,\dots,48\}$, and $\mathcal{I}_{32} = \{ 49 \}$ and fix the ordering such that $\bm \alpha_{\lambda_i}(j) = \alpha_{\lambda_i}(\mathfrak f_k)$ for each $i \in \{1,\dots,5\}$ and each $j \in \mathcal{I}_k$.
Suppose we switch \(S\) to \(S_{\epsilon}\) such that the underlying graph of \(S_{\epsilon}\) is the unique Euler graph contained in the switching class of $S$.
Let \(T_{\epsilon} = (S_{\epsilon}+5I)(S_{\epsilon}-9I)(S_{\epsilon}-11I)\) and by Lemma~\ref{lem:euler_mod4}, we have $T_{\epsilon} \equiv_4 3J$.
We find that there exists an $\bm \epsilon \in \{\pm 1\}^{49}$ such that $\bm \epsilon(1) = 1$ and 
\begin{align*}
    T_{\epsilon} = \eta (\bm \epsilon \circ \bm \alpha_{\lambda_1}) (\bm \epsilon \circ \bm \alpha_{\lambda_1})^\transpose + \overline \eta ( \bm \epsilon \circ \bm \delta \circ \bm \alpha_{\lambda_2}) (\bm \epsilon \circ \bm \delta \circ \bm \alpha_{\lambda_2})^\transpose = 
    \begin{pmatrix}
        7J_{16} & -J & -13J & -9J \\
        -J & 7J_{28} & -5J & 15J  \\
        -13J & -5J & 31J_4 & 3J \\
        -9J & 15J & 3J & 39
    \end{pmatrix},
\end{align*}
where $\eta = 3(157-\sqrt{73})/2$ and $\overline \eta = 3(157+\sqrt{73})/2$.
Furthermore, such an \(\bm \epsilon\) is unique.
Indeed, any diagonal $\{\pm 1\}$-matrix \(D\) satisfying \(DT_{\epsilon}D \equiv_4 T_{\epsilon}\) is either $\pm I$.
Set $\mathbf u = \bm \epsilon \circ \bm \alpha_{\lambda_1}$ and $\mathbf v = \bm \epsilon \circ \bm \delta \circ \bm \alpha_{\lambda_2}$.
Let $\mathbf{a} = (a_1,\dots,a_{49})^\transpose$, $\mathbf{b} = (b_1,\dots,b_{49})^\transpose$, and $\mathbf{c} = (c_1,\dots,c_{49})^\transpose$ be an orthonormal basis for the eigenspace $\mathcal{E}(\lambda_3)$. 
By Lemma~\ref{lem:anglesq_sqsum}, the entries $a_i$, $b_i$, and $c_i$ satisfy $a_i^2 + b_i^2 + c_i^2 = 1/8$ for all $i \in \mathcal{I}_1$, $a_i^2 + b_i^2 + c_i^2 = 1/32$ for all $i \in \mathcal{I}_4 \cup \mathcal{I}_{24}$, and $a_{49} = b_{49} = c_{49} = 0$.
By the Cauchy-Schwarz inequality,
\begin{align}
    (a_i^2 + b_i^2 + c_i^2)(a_j^2 + b_j^2 +c_j^2) \geqslant (a_i a_j + b_i b_j +c_i c_j)^2
    \label{cs_ineq}
\end{align}
for all $i, j \in \{1,\dots,49\}$.
Now, by Lemma~\ref{lem:euler_mod4},
\begin{align*}
    (S_{\epsilon}+5I)(S_{\epsilon}-9I) &= \frac{3}{2} (17-5\sqrt{73}) \mathbf u \mathbf u^\transpose + \frac{3}{2} (17+5\sqrt{73})  \mathbf v \mathbf v^\transpose
    + 32P_{\lambda_3} \\
    &= \begin{pmatrix}
   -J_{16} & J & J & 3J \\
        J & 2J_{28} & -4J & 3J \\
        J & -4J & 2J_4 & -9J \\
        3J & 3J & -9J & 3
    \end{pmatrix} + 32(\mathbf{a} \mathbf{a}^\transpose + \mathbf{b} \mathbf{b}^\transpose + \mathbf{c} \mathbf{c}^\transpose) \equiv_4 3J.
\end{align*}
From \eqref{cs_ineq}, we have
\begin{align*}
    -4 &\leqslant 32(a_i a_j + b_i b_j + c_i c_j) \leqslant 4 \; \text{for all } i, j \in \mathcal{I}_1, \\
    -1 &\leqslant 32(a_i a_j + b_i b_j + c_i c_j) \leqslant 1 \; \text{for all } i, j \in \mathcal{I}_4 \cup \mathcal{I}_{24}, \\
    -2 &\leqslant 32(a_i a_j + b_i b_j + c_i c_j) \leqslant 2 \; \text{for all } i \in \mathcal{I}_1 \; \text{and for all } j \in \mathcal{I}_4 \cup \mathcal{I}_{24}.
\end{align*}
Therefore,
\begin{align}
    32(\mathbf{a} \mathbf{a}^\transpose + \mathbf{b} \mathbf{b}^\transpose + \mathbf{c} \mathbf{c}^\transpose) = \begin{pmatrix}
    M_{1,1} & M_{1,4} & M_{1,24} & O \\
    M_{1,4}^\transpose & J_{28} & -J & O  \\
    M_{1,24}^\transpose & -J & J_4 & O \\
    O & O & O & 0
    \end{pmatrix}
    \label{32P11_block_entries}
\end{align}
where the entries of $M_{1,1}$ are $-4$, $0$, or $4$ and the entries of $M_{1,4}$ and $M_{1,24}$ are $-2$ or $2$.

Let $(a_{17}, b_{17}, c_{17})=(a, b, c)$ for some $a, b, c \in \mathbb{R}$.
Using \eqref{32P11_block_entries}, for $i, j \in \mathcal{I}_4$ we have
\begin{align*}
    a_i^2 + b_i^2 +c_i^2 = a_j^2 + b_j^2 +c_j^2 = a_i a_j + b_i b_j + c_i c_j = \frac{1}{32},
\end{align*}
from which it follows that $(a_i, b_i, c_i)=(a_j, b_j, c_j)$.
Similarly, for $i \in \mathcal{I}_4$ and $j \in \mathcal{I}_{24}$, we have
\begin{align*}
    a_i^2 + b_i^2 +c_i^2 = a_j^2 + b_j^2 +c_j^2 = -(a_i a_j + b_i b_j + c_i c_j) = \frac{1}{32},
\end{align*}
from which it follows that $(a_i, b_i, c_i)=(-a_j, -b_j, -c_j)$.
So far, we have obtained 
\begin{align*}
    \mathbf{a}^\transpose &= (\underbrace{a_1, \dots, a_{16}}_{16}, \underbrace{a, \dots, a}_{28}, -a, -a, -a, -a ,0), \\
    \mathbf{b}^\transpose &= (\underbrace{b_1, \dots, b_{16}}_{16}, \underbrace{b, \dots, b}_{28}, -b, -b, -b, -b ,0), \\
    \mathbf{c}^\transpose &= (\underbrace{c_1, \dots, c_{16}}_{16}, \underbrace{c, \dots, c}_{28}, -c, -c, -c, -c ,0).
\end{align*}
Similarly again, using \eqref{32P11_block_entries}, for $i \in \mathcal{I}_1$, we have 
\begin{align*}
    a_i^2 + b_i^2 +c_i^2 = 4(a^2 + b^2 + c^2) = \frac{1}{8} \; 
    \text{ and } \;
    a a_i + b b_i + c c_i = \pm \frac{1}{16}.
\end{align*}
Therefore, $(a_i, b_i, c_i)= \pm 2(a, b, c)$ for all $i \in \mathcal{I}_1$.
Finally, consider the matrix
$B_{\mathcal{E}(\lambda_3)} = [\mathbf{a} \;\; \mathbf{b} \;\; \mathbf{c}]$.
Since the dimension of $\mathcal{E}(\lambda_3)$ is $3$ so is the rank of $B_{\mathcal{E}(\lambda_3)}$.
However, the row space of $B_{\mathcal{E}(\lambda_3)}$ is generated by $(a, b, c)$, which is a contradiction. 
\end{proof}

Finally, we dismiss the most stubborn candidate characteristic polynomial for a Seidel matrix corresponding to $49$ equiangular lines in $\mathbb R^{17}$.

\begin{lemma} \label{lem:last8_quad64}
There does not exist a Seidel matrix $S$ with characteristic polynomial
$$
\operatorname{Char}_S(x) = (x+5)^{32} (x-9)^{13} (x-13)^2 (x^2-17x+64).
$$
\end{lemma}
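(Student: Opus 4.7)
The plan is to follow the template of Lemma~\ref{lem:last8_quad72}, which handled the structurally similar polynomial with a quadratic irrational factor and one integer eigenvalue of small multiplicity. Writing $p(x)=(x+5)^{32}(x-9)^{13}(x-13)^2(x^2-17x+64)$, we have $\operatorname{Min}_S(x)=(x+5)(x-9)(x-13)(x^2-17x+64)$, $\operatorname{Quo}_S(x)=(x+5)^{31}(x-9)^{12}(x-13)$, and $\operatorname{Sim}_S(x)=x^2-17x+64$, whose zeros $\lambda_1,\lambda_2=(17\mp\sqrt{33})/2$ live in $\mathbb Q(\sqrt{33})$. I would first enumerate the full set $\mathfrak F$ of interlacing characteristic polynomials for $p(x)$ with the algorithm of \cite[Section 2.3]{GSY21}, together with the mod-$2^7$ sieve from item~\ref{itm:iv} via $\mathcal P_{48,7}$.

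Next, I would look for warranted polynomials or warranted subsets of size two, expecting (in analogy with Lemma~\ref{lem:last8_quad72}) to find a warranted pair $\{\mathfrak f,\mathfrak g\}$ that fails to be pairwise Seidel-compatible. This lets the analysis be split into two cases indexed by which of $\mathfrak f(x),\mathfrak g(x)$ appears among the $\operatorname{Char}_{S[i]}(x)$. In each case I would prune $\mathfrak F$ down to those polynomials Seidel-compatible with the chosen warranted polynomial using Proposition~\ref{pro:reducible_then_compatible} (to confirm compatibility via reducibility of $\rho(x^2)$) and Corollary~\ref{cor:incompatible} (to rule out the rest), then solve the linear system \eqref{eq:sumofsubpoly_dim17Compat} to produce a small finite list of candidate interlacing configurations.

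For each surviving configuration, I would compute the angle entries $\alpha_{\lambda_i}(\mathfrak f_j)$ by Lemma~\ref{lem:anglesq_formula} and, after switching so that $\bm\alpha_{\lambda_1}$ is a genuine $\lambda_1$-eigenvector via Corollary~\ref{cor:alpha_eigenvec}, verify that $2(\lambda_1+5)(\lambda_1-9)(\lambda_1-13)\bm\alpha_{\lambda_1}\bm\alpha_{\lambda_1}^{\transpose}$ has no integer entries. Lemma~\ref{lem:intercon} then pins down a unique sign vector $\bm\delta\in\{\pm 1\}^{49}$ with $\bm\delta(1)=1$ such that $\bm\delta\circ\bm\alpha_{\lambda_2}$ is a $\lambda_2$-eigenvector; checking the $2^k$ admissible candidates (where $k$ equals the number of distinct $\alpha_{\lambda_1}$-entries) should eliminate the bulk of the configurations on orthogonality grounds.

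For any configuration still alive, I would switch $S$ to its Euler representative $S_\epsilon$ and apply Lemma~\ref{lem:euler_mod4} with $(\lambda,\mu,\nu)=(-5,9,17)$ to the cubic $T_\epsilon=(S_\epsilon+5I)(S_\epsilon-9I)(S_\epsilon-13I)$; together with the spectral decomposition of $(S_\epsilon+5I)(S_\epsilon-9I)$ in the basis $\{\bm\epsilon\circ\bm\alpha_{\lambda_1},\bm\epsilon\circ\bm\delta\circ\bm\alpha_{\lambda_2},P_{13}\}$, this forces a rigid block-constant description of $32P_{13}$ indexed by the interlacing partition. The hard part, and the main obstacle, is that $\dim\mathcal E(13)=2$ rather than $3$ as in Lemma~\ref{lem:last8_quad72}, so the Cauchy--Schwarz endgame must be re-examined carefully: one needs to show that the block structure compels the two columns of an orthonormal basis matrix $B_{\mathcal E(13)}$ to lie in a one-dimensional subspace, contradicting $\operatorname{rank} B_{\mathcal E(13)}=2$. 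Bookkeeping the block indices coming from each $\mathfrak f_j$ and working inside $\mathbb Q(\sqrt{33})$ throughout the Seidel-compatibility and $\bm\delta$-enumeration steps will be the most error-prone portion, and I anticipate that several sub-cases arising from distinct surviving configurations will each require an independent (though formally identical) Euler-graph Cauchy--Schwarz argument.
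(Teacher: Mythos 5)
Your outline matches the paper's strategy for most of the proof: the paper finds $26$ interlacing characteristic polynomials, a single warranted one, prunes to six via Seidel-compatibility, obtains $13$ interlacing configurations, and uses Lemma~\ref{lem:intercon} exactly as you describe to cut these down to three, namely $(33,1,12,3,0,0)$, $(37,0,7,4,1,0)$, and $(40,0,4,4,0,1)$. Two of these are then killed by the Euler-graph argument you propose. (Minor slips: for $T_\epsilon=(S_\epsilon+5I)(S_\epsilon-9I)(S_\epsilon-13I)$ you want $(\lambda,\mu,\nu)=(-5,9,13)$, not $\nu=17$, and the projection coefficient is $(13+5)(13-9)=72$, not $32$.)

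There are, however, two genuine gaps. First, your anticipated endgame is wrong in kind: with $\dim\mathcal E(13)=2$ the block structure of $72(\mathbf a\mathbf a^\transpose+\mathbf b\mathbf b^\transpose)$ does \emph{not} force the rows of $B_{\mathcal E(13)}$ into a one-dimensional subspace, because on the $\mathcal I_3$-type blocks one only gets $a_i b_i=-\tfrac{3}{2}ab$ without pinning down $(a_i,b_i)$ individually, so no rank contradiction is available. The paper instead pushes the Cauchy--Schwarz constraints to explicit values of the $a_i^2,b_i^2$ and derives a contradiction from $\mathbf a\cdot\mathbf b=\pm 1/3\neq 0$ in one configuration, and from the mutual inconsistency of the orthogonality and unit-norm equations in another; you would need to find these (or equivalent) contradictions rather than the rank argument you sketch. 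Second, and more seriously, the third surviving configuration $(40,0,4,4,0,1)$ is not disposed of by the Euler-graph method at all: the paper switches to the common-interlacing technique of Section~\ref{sec:common2simple} (as in Lemma~\ref{lem:last8_fourint}), descending to the $73$ interlacing characteristic polynomials of $\mathfrak f_6(x)$, restricting to those that interlace one of $\mathfrak f_1,\mathfrak f_3,\mathfrak f_4$, finding a warranted but mutually Seidel-incompatible pair at that second level, and exhibiting certificates of infeasibility in each resulting case. Your plan, which assumes all surviving configurations yield to ``formally identical'' Euler-graph arguments, would stall on this configuration without importing that additional tool.
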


\begin{proof}
Suppose a Seidel matrix $S$ has characteristic polynomial
$$
\operatorname{Char}_S(x) = (x+5)^{32} (x-9)^{13} (x-13)^2 (x^2-17x+64).
$$
There are 26 interlacing characteristic polynomials and one of them is warranted:
$$(x+5)^{31} (x-9)^{12} (x-13) (x^4-34x^3+408x^2-2022x+3503)$$
with certificate of warranty $(-12716952, 0, 0, -7781, -864)$.
Using Seidel-compatibility, we reduce from 26 to six polynomials:
\begin{align*}
    \mathfrak{f}_1(x) &= (x+5)^{31}  (x-9)^{12} (x-13) (x^4-34x^3+408x^2-2022x+3503), \\
    \mathfrak{f}_2(x) &= (x+5)^{31} (x-7) (x-9)^{12} (x-13)^2 (x^2-14x+37), \\
    \mathfrak{f}_3(x) &= (x+5)^{31} (x-9)^{12} (x-13) (x^4-34x^3+408x^2-2014x+3415), \\
    \mathfrak{f}_4(x) &= (x+5)^{31} (x-9)^{12} (x-13) (x^4-34x^3+408x^2-2006x+3295), \\
    \mathfrak{f}_5(x) &= (x+5)^{31} (x-3) (x-9)^{12} (x-11) (x-13) (x^2-20x+95), \\
    \mathfrak{f}_6(x) &= (x+5)^{31} (x-9)^{13} (x-11) (x-13) (x^2-14x+29).
\end{align*}
These six polynomials are pairwise Seidel-compatible and we find that there are thirteen possible interlacing configurations $(n_{\mathfrak f_1},n_{\mathfrak f_2},n_{\mathfrak f_3},n_{\mathfrak f_4},n_{\mathfrak f_5},n_{\mathfrak f_6})$:
\begin{align*}
    & (30, 3, 16, 0, 0, 0), (33, 1, 12, 3, 0, 0), (34, 2, 11, 1, 1, 0), (37, 0, 7, 4, 1, 0), \\
    & (37, 2, 8, 1, 0, 1), (38, 1, 6, 2, 2, 0), (39, 2, 5, 0, 3, 0), (40, 0, 4, 4, 0, 1), \\
    & (41, 1, 3, 2, 1, 1), (42, 0, 1, 3, 3, 0), (42, 2, 2, 0, 2, 1), (43, 1, 0, 1, 4, 0), \\
    & (44, 1, 0, 2, 0, 2).
\end{align*}
Let $(\lambda_1,\dots,\lambda_5)=\left( (17-\sqrt{33})/2, (17+\sqrt{33})/2, 13, -5, 9 \right)$ be a 5-tuple of the five distinct eigenvalues of $S$.
Firstly, we compute the following angles
$$ 
\begin{bmatrix}
\alpha_{\lambda_1}^2(\mathfrak f_1) & \alpha_{\lambda_2}^2(\mathfrak f_1) & \alpha_{\lambda_3}^2(\mathfrak f_1)  \\
\alpha_{\lambda_1}^2(\mathfrak f_2) & \alpha_{\lambda_2}^2(\mathfrak f_2) & \alpha_{\lambda_3}^2(\mathfrak f_2)  \\
\alpha_{\lambda_1}^2(\mathfrak f_3) & \alpha_{\lambda_2}^2(\mathfrak f_3) & \alpha_{\lambda_3}^2(\mathfrak f_3)  \\
\alpha_{\lambda_1}^2(\mathfrak f_4) & \alpha_{\lambda_2}^2(\mathfrak f_4) & \alpha_{\lambda_3}^2(\mathfrak f_4)  \\
\alpha_{\lambda_1}^2(\mathfrak f_5) & \alpha_{\lambda_2}^2(\mathfrak f_5) & \alpha_{\lambda_3}^2(\mathfrak f_5)  \\
\alpha_{\lambda_1}^2(\mathfrak f_6) & \alpha_{\lambda_2}^2(\mathfrak f_6) & \alpha_{\lambda_3}^2(\mathfrak f_6)
\end{bmatrix}
= 
\begin{bmatrix}
(33-2\sqrt{33})/2871 & (33+2\sqrt{33})/2871 & 1/27  \\
(33-2\sqrt{33})/319 & (33+2\sqrt{33})/319 & 0  \\
(165+19\sqrt{33})/7656 & (165-19\sqrt{33})/7656 & 1/18  \\
(495-\sqrt{33})/5742 & (495+\sqrt{33})/5742 & 1/27  \\
(165+19\sqrt{33})/2088 & (165-19\sqrt{33})/2088 & 5/54  \\
(627+107\sqrt{33})/5742 & (627-107\sqrt{33})/5742 & 4/27
\end{bmatrix}.
$$

Using Lemma~\ref{lem:intercon}, we can reduce from 13 possible interlacing configurations to just three.
Indeed, by Corollary~\ref{cor:alpha_eigenvec}, without loss of generality, we can assume that $\bm \alpha_{\lambda_1}$ is an eigenvector for $S$ with eigenvalue $\lambda_1$.
Using the angles calculated above, we can check that $2(\lambda_1+5)(\lambda_1-9)(\lambda_1-13)\bm \alpha_{\lambda_1}\bm \alpha_{\lambda_1}^\transpose$ does not contain any element in $\mathbb Z$.
By Lemma~\ref{lem:intercon}, there exists $\bm \delta \in \{\pm 1\}^{49}$ such that $\bm \delta(1) = 1$ and $\bm \alpha_{\lambda_1}$ is orthogonal to $\bm \delta \circ \bm \alpha_{\lambda_2}$.
Furthermore, instead of checking $2^{48}$ possibilities for $\bm \delta$, we need only check $2^5$ possibilities for $\bm \delta$.
Now, out of the 13 possible interlacing configurations, for only
\begin{align*}
    (33, 1, 12, 3, 0, 0), (37, 0, 7, 4, 1, 0), (40, 0, 4, 4, 0, 1)
\end{align*}
do there exist a $\bm \delta \in \{\pm 1\}^{49}$ such that $\bm \delta(1) = 1$ and 
$$
(S+5I)(S-9I)(S-13I)= \frac{3}{2} (67+19\sqrt{33}) \bm \alpha_{\lambda_1} \bm \alpha_{\lambda_1}^\transpose + \frac{3}{2} (67-19\sqrt{33}) (\bm \delta \circ \bm \alpha_{\lambda_2})(\bm \delta \circ \bm \alpha_{\lambda_2})^\transpose
$$ 
is an integer matrix and $\bm \alpha_{\lambda_1}$ is orthogonal to $\bm \delta \circ \bm \alpha_{\lambda_2}$.
Furthermore, this $\bm \delta$ is unique for each of the three interlacing configurations.

First suppose that $S$ has interlacing configuration $(n_{\mathfrak f_1},n_{\mathfrak f_2},n_{\mathfrak f_3},n_{\mathfrak f_4},n_{\mathfrak f_5},n_{\mathfrak f_6}) = (33, 1, 12, 3, 0, 0)$.
Let $\mathcal{I}_1 = \{1,\dots,33\}$, $\mathcal{I}_2 = \{34\}$, $\mathcal{I}_{3} = \{35,\dots,46\}$, and $\mathcal{I}_{4} = \{ 47,48,49 \}$ and fix the ordering such that $\bm \alpha_{\lambda_i}(j) = \alpha_{\lambda_i}(\mathfrak f_k)$ for each $i \in \{1,\dots, 5\}$ and each $j \in \mathcal{I}_k$.
Suppose we switch \(S\) to \(S_{\epsilon}\) such that the underlying graph of \(S_{\epsilon}\) is the unique Euler graph contained in the switching class of $S$.
Let \(T_{\epsilon} = (S_{\epsilon}+5I)(S_{\epsilon}-9I)(S_{\epsilon}-13I)\) and by Lemma~\ref{lem:euler_mod4}, we have $T_{\epsilon} \equiv_4 J$.
We find that there exists $\bm \epsilon \in \{\pm 1\}^{49}$ such that $\bm \epsilon(1) = 1$ and \begin{align*}
    T_{\epsilon} = \eta ( \bm \epsilon \circ \bm \alpha_{\lambda_1}) (\bm \epsilon \circ \bm \alpha_{\lambda_1})^\transpose + \overline \eta( \bm \epsilon \circ \bm \delta \circ \bm \alpha_{\lambda_2} )(\bm \epsilon \circ \bm \delta \circ \bm \alpha_{\lambda_2})^\transpose = 
    \begin{pmatrix}
    J_{33} & -3J & 5J & 9J \\
    -3J & 9 & -15J & -27J  \\
    5J & -15J & 9J_{12} & 13J \\
    9J & -27J & 13J & 17J_{3}
    \end{pmatrix},
\end{align*}
where $\eta = 3(67+19\sqrt{33})/2$ and $\overline \eta = 3(67-19\sqrt{33})/2$.
Furthermore, such an \(\bm \epsilon\) is unique.
Indeed, any diagonal $\{\pm 1\}$-matrix \(D\) satisfying \(DT_{\epsilon}D \equiv_4 T_{\epsilon}\) is either $\pm I$.
Set $\mathbf u = \bm \epsilon \circ \bm \alpha_{\lambda_1}$ and $\mathbf v = \bm \epsilon \circ \bm \delta \circ \bm \alpha_{\lambda_2}$.
Let $\mathbf{a} = (a_1,\dots,a_{49})^\transpose$ and $\mathbf{b} = (b_1,\dots,b_{49})^\transpose$ be an orthonormal basis for the eigenspace $\mathcal{E}(\lambda_3)$. 
By Lemma~\ref{lem:anglesq_sqsum}, the entries $a_i$ and $b_i$ satisfy $a_i^2 + b_i^2= 1/27$ for all $i \in \mathcal{I}_1 \cup \mathcal{I}_4$, $a_{34} = b_{34} = 0$, and $a_i^2 + b_i^2 = 1/18$ for all $i \in \mathcal{I}_3$.
By the Cauchy-Schwarz inequality,
\begin{align}
    (a_i^2 + b_i^2)(a_j^2 + b_j^2 ) \geqslant (a_i a_j + b_i b_j)^2
    \label{cs_ineq_2d}
\end{align}
for all $i, j \in \{1,\dots,49\}$.
Now, by Lemma~\ref{lem:euler_mod4},
\begin{align*}
    (S_{\epsilon}+5I)(S_{\epsilon}-9I) &= \frac{3-13\sqrt{33}}{2} \mathbf{u} \mathbf{u}^\transpose + \frac{3+13\sqrt{33}}{2} \mathbf{v} \mathbf{v}^\transpose + 72P_{\lambda_3} \\
    &= \begin{pmatrix}
    1/3J_{33} & -J & -J & -7/3J \\
    -J & 3& 3J & 7J  \\
    -J & 3J & -J_{12} & -J \\
    -7/3J & 7J & -J & 1/3J_3
    \end{pmatrix} + 72(\mathbf{a} \mathbf{a}^\transpose + \mathbf{b} \mathbf{b}^\transpose) \equiv_4 3J.
\end{align*}
From \eqref{cs_ineq_2d}, we have
\begin{align*}
    -8/3 &\leqslant 72(a_i a_j + b_i b_j) \leqslant 8/3, & \text{for all } i, j \in \mathcal{I}_1 \cup \mathcal{I}_4; \\
    -4 &\leqslant 72(a_i a_j + b_i b_j) \leqslant 4, & \text{for all } i, j \in \mathcal{I}_3; \\
    -4\sqrt{6}/3 &\leqslant 72(a_i a_j + b_i b_j) \leqslant 4\sqrt{6}/3, & \text{for all } i \in \mathcal{I}_1 \cup \mathcal{I}_4 \; \text{and } j \in \mathcal{I}_3.
\end{align*}
Consequently, we have
\begin{align}
    72(\mathbf{a} \mathbf{a}^\transpose + \mathbf{b} \mathbf{b}^\transpose ) = \begin{pmatrix}
    M_{1,1} & O & O & M_{1,4} \\
    O & 0 & O & O  \\
    O & O & M_{3,3} & O \\
    M_{1,4}^\transpose & O & O & M_{4,4}
    \end{pmatrix}
    \label{72P13_block_entries}
\end{align}
where the entries of the matrices $M_{1,1}$ and $M_{4,4}$ of order $33$ and $3$ respectively are $-4/3$ or $8/3$, the entries of $M_{1,4}$ are $-8/3$ or $4/3$ and the entries of the matrix $M_{3,3}$ of order $12$ are $-4$, $0$, or $4$.
    
Let $(a_1, b_1) = (a, b)$ for some $a, b \in \mathbb{R}$.
Using \eqref{72P13_block_entries}, for $i \in \mathcal{I}_1 \cup \mathcal{I}_4$ and $j \in \mathcal{I}_3$, we have $a_i^2 a_j^2 = b_i^2 b_j^2$, from which it follows that 
\begin{align}
    a_i^2 (a_j^2 + b_j^2) &= b_j^2 (a_i^2 + b_i^2) \implies 3a_i^2 = 2b_j^2;
    \label{add_ai_bj} \\
    a_j^2 (a_i^2 + b_i^2) &= b_i^2 (a_j^2 + b_j^2) \implies 2a_j^2 = 3b_i^2.
    \label{add_aj_bi}
\end{align}

Using \eqref{add_ai_bj} we see that $a_i^2 = a^2$ for all $i \in \mathcal{I}_1 \cup \mathcal{I}_4$ and $b_j^2 = 3a^2/2$ for all $j \in \mathcal{I}_3$.
Similarly, from \eqref{add_aj_bi} we have $b_i^2 = b^2$ for all $i \in \mathcal{I}_1 \cup \mathcal{I}_4$ and $a_j^2 = 3b^2/2$ for all $j \in \mathcal{I}_3$.
Therefore,
\begin{align*}
    (\mathbf{a} \circ \mathbf{a})^\transpose &= (\underbrace{a^2, \dots, a^2}_{33}, 0, \underbrace{3b^2/2, \dots, 3b^2/2 }_{12}, a^2, a^2, a^2 ), \\
    (\mathbf{b} \circ \mathbf{b})^\transpose &= (\underbrace{b^2, \dots, b^2}_{33}, 0, \underbrace{3a^2/2, \dots, 3a^2/2}_{12}, b^2, b^2, b^2 ).
\end{align*}
Since $\mathbf{a}$ and $\mathbf{b}$ are unit vectors, we must have $a^2 = b^2 = 1/54$.
Based on the entries of $M_{1,1}$ in \eqref{72P13_block_entries} we have
\begin{align*}
    72(a_1 a_i + b_1 b_i) = -\frac{4}{3} \; \text{ or } \; \frac{8}{3} \implies a a_i + b b_i = -\frac{1}{54} \; \text{ or } \; \frac{1}{27}
\end{align*}
for all $i \in \mathcal{I}_1 \setminus \{1\}$.
The options are $a_i = \pm a$ and $b_i = \pm b$ and it is easy to check that only $a_i = a$ and $b_i = b$ are possible.
Similarly, if $i \in \mathcal{I}_4$ then $a_i = -a$ and $b_i = -b$ based on the entries of $M_{1,4}$ in \eqref{72P13_block_entries}.
Furthermore, if $i \in \mathcal{I}_3$ then
\begin{align*}
    & a a_i + b b_i = 0 \implies a a_i b_i = -b b_i^2 
    \implies a_i b_i = -\frac{3}{2} ab
\end{align*}
as $a \neq 0$.
Therefore, 
\begin{align*}
    \mathbf{a} \cdot \mathbf{b} = 33ab + 0 - 12 \cdot \frac{3}{2} ab + 3ab = 18 ab.
\end{align*}
Since $a^2=b^2=1/54$, then $ab=\pm 1/54$.
Hence we obtain $\mathbf{a} \cdot \mathbf{b} = \pm 1/3$, which contradicts the fact that $\mathbf{a}$ and $\mathbf{b}$ are orthogonal.

Secondly, suppose that $S$ has interlacing configuration $(n_{\mathfrak f_1},n_{\mathfrak f_2},n_{\mathfrak f_3},n_{\mathfrak f_4},n_{\mathfrak f_5},n_{\mathfrak f_6}) = (37, 0, 7, 4, 1, 0)$.
Let $\mathcal{I}_1 = \{1,\dots,37\}$, $\mathcal{I}_{3} = \{38,\dots,44\}$, $\mathcal{I}_{4} = \{ 45,\dots,48 \}$, and $\mathcal{I}_5 = \{49\}$ and fix the ordering such that $\bm \alpha_{\lambda_i}(j) = \alpha_{\lambda_i}(\mathfrak f_k)$ for each $i \in \{1,\dots, 5\}$ and each $j \in \mathcal{I}_k$.
Suppose we switch \(S\) to \(S_{\epsilon}\) such that the underlying graph of \(S_{\epsilon}\) is the unique Euler graph contained in the switching class of $S$.
Let \(T_{\epsilon} = (S_{\epsilon}+5I)(S_{\epsilon}-9I)(S_{\epsilon}-13I)\) and by Lemma~\ref{lem:euler_mod4}, we have $T_{\epsilon} \equiv_4 J$.
We find that there exists an $\bm \epsilon \in \{\pm 1\}^{49}$ such that $\bm \epsilon(1) = 1$ and \begin{align*}
    T_{\epsilon} = \eta ( \bm \epsilon \circ \bm \alpha_{\lambda_1}) (\bm \epsilon \circ \bm \alpha_{\lambda_1})^\transpose + \overline \eta( \bm \epsilon \circ \bm \delta \circ \bm \alpha_{\lambda_2} )(\bm \epsilon \circ \bm \delta \circ \bm \alpha_{\lambda_2})^\transpose = 
    \begin{pmatrix}
        J_{37} & 5J & 9J & -7J \\
        5J & 9J_7 & 13J & -19J  \\
        9J & 13J & 17J_4 & -31J \\
        -7J & -19J & -31J & 33
    \end{pmatrix},
\end{align*}
where $\eta = 3(67+19\sqrt{33})/2$ and $\overline \eta = 3(67-19\sqrt{33})/2$.
Furthermore, such an \(\bm \epsilon\) is unique.
Indeed, any diagonal $\{\pm 1\}$-matrix \(D\) satisfying \(DT_{\epsilon}D \equiv_4 T_{\epsilon}\) is either $\pm I$.
Set $\mathbf u = \bm \epsilon \circ \bm \alpha_{\lambda_1}$ and $\mathbf v = \bm \epsilon \circ \bm \delta \circ \bm \alpha_{\lambda_2}$.
Let $\mathbf{a} = (a_1,\dots,a_{49})^\transpose$ and $\mathbf{b} = (b_1,\dots,b_{49})^\transpose$ be an orthonormal basis for the eigenspace $\mathcal{E}(\lambda_3)$.
By Lemma~\ref{lem:anglesq_sqsum}, the entries $a_i$ and $b_i$
satisfy $a_i^2 + b_i^2= 1/27$ for all $i \in \mathcal{I}_1 \cup \mathcal{I}_4$, $a_i^2 + b_i^2 = 1/18$ for all $i \in \mathcal{I}_3$, and $a_{49}^2 + b_{49}^2 = 5/54$.
By the Cauchy-Schwarz inequality,
\begin{align}
    (a_i^2 + b_i^2)(a_j^2 + b_j^2 ) \geqslant (a_i a_j + b_i b_j)^2
    \label{second_cs_ineq_2d}
\end{align}
for all $i, j \in \{1,\dots,49\}$.
Now, by Lemma~\ref{lem:euler_mod4},
\begin{align*}
    (S_{\epsilon}+5I)(S_{\epsilon}-9I) &= \frac{3-13\sqrt{33}}{2} \mathbf{u} \mathbf{u}^\transpose + \frac{3+13\sqrt{33}}{2} \mathbf{v} \mathbf{v}^\transpose + 72P_{\lambda_3} \\
    &= \begin{pmatrix}
        1/3J_{37} & -J & -7/3J & 1/3J \\
        -J & -J_7 & -J & 3J \\
        -7/3J & -J & 1/3J_4 & 17/3J \\
        1/3J & 3J & 17/3J & -11/3
    \end{pmatrix} + 72(\mathbf{a} \mathbf{a}^\transpose + \mathbf{b} \mathbf{b}^\transpose) \equiv_4 3J.
\end{align*}
From \eqref{second_cs_ineq_2d}, we have
\begin{align*}
    -8/3 &\leqslant 72(a_i a_j + b_i b_j) \leqslant 8/3, & \text{for all } i, j \in \mathcal{I}_1 \cup \mathcal{I}_4; \\
    -4 &\leqslant 72(a_i a_j + b_i b_j) \leqslant 4, & \text{for all } i, j \in \mathcal{I}_3; \\
    -4\sqrt{6}/3 &\leqslant 72(a_i a_j + b_i b_j) \leqslant 4\sqrt{6}/3, & \text{for all } i \in \mathcal{I}_1 \cup \mathcal{I}_4 \; \text{and }  j \in \mathcal{I}_3; \\
    -4\sqrt{10}/3 &\leqslant 72(a_i a_{49} + b_i b_{49}) \leqslant 4\sqrt{10}/3, & \text{for all } i \in \mathcal{I}_1 \cup \mathcal{I}_4; \\
    -4\sqrt{15}/3 &\leqslant 72(a_i a_{49} + b_i b_{49}) \leqslant 4\sqrt{15}/3, & \text{for all } i \in \mathcal{I}_3.
\end{align*}
Consequently, we have
\begin{align}
    72(\mathbf{a} \mathbf{a}^\transpose + \mathbf{b} \mathbf{b}^\transpose ) = \begin{pmatrix}
    M_{1,1} & O & M_{1,4} & M_{1,5} \\
    O & M_{3,3} & O & M_{3,5} \\
    M_{1,4}^\transpose & O & M_{4,4} & M_{4,5} \\
    M_{1,5}^\transpose & M_{3,5}^\transpose & M_{4,5}^\transpose & 20/3
    \end{pmatrix}
    \label{second_72P13_block_entries}
\end{align}
where the entries of $M_{1,1}$, $M_{1,5}$, and $M_{4,4}$ are $-4/3$ or $8/3$, the entries of $M_{1,4}$ and $M_{4,5}$ are $-8/3$ or $4/3$, and the entries of $M_{3,3}$ and $M_{3,5}$ are $-4$, $0$, or $4$.
Note that the matrices $M_{1,1}$, $M_{3,3}$, and $M_{4,4}$ have orders $37$, $7$, and $4$ respectively.

Let $(a_1, b_1) = (a, b)$ for some $a, b \in \mathbb{R}$.
Using \eqref{second_72P13_block_entries}, for $i \in \mathcal{I}_1 \cup \mathcal{I}_4$ and $j \in \mathcal{I}_3$, we have $a_i^2 a_j^2 = b_i^2 b_j^2$.
Following the same reasoning as for the previous interlacing configuration, we obtain
\begin{align*}
    (\mathbf{a} \circ \mathbf{a})^\transpose &= (\underbrace{a^2, \dots, a^2}_{37}, \underbrace{3b^2/2, \dots, 3b^2/2}_{7}, a^2, a^2, a^2, a^2, a_{49}^2 ), \\
    (\mathbf{b} \circ \mathbf{b})^\transpose &= (\underbrace{b^2, \dots, b^2}_{37}, \underbrace{3a^2/2, \dots, 3a^2/2}_{7}, b^2, b^2, b^2, b^2, b_{49}^2 ).
\end{align*}
    
We observe that $a$ and $b$ are both nonzero.
Otherwise, by Lemma~\ref{lem:zero_block_eigenvecs}, there would be a Seidel matrix $\widehat{S}$ of order 8 with eigenvalue 13 of multiplicity at least one.
This is a contradiction since $\tr \widehat{S}^2 = 56 < 169 = 13^2$.

Now let $i \in \mathcal{I}_1 \setminus \{1\}$ and hence,
\begin{align*}
    72(a_1 a_i + b_1 b_i) = -\frac{4}{3} \; \text{ or } \; \frac{8}{3} \implies a a_i + b b_i = -\frac{1}{54} \; \text{ or } \; \frac{1}{27}
\end{align*}
based on the entries of $M_{1,1}$ in \eqref{second_72P13_block_entries}.
The options are $a_i = \pm a$ and $b_i = \pm b$.
We can exclude the possibility $a_i = -a$ and $b_i = -b$, otherwise $a a_i + b b_i = -1/27$.
    
Suppose $a_i = a$ and $b_i = -b$ so we have $a a_i + b b_i = a^2-b^2$.
If $a^2 - b^2 = 1/27$ then $b=0$, which is a contradiction.
If $a^2 - b^2 = -1/54$ then $a^2 = 1/108$ and $b^2 = 1/36$.
However, this implies that $37b^2 > 1$, which contradicts that $\mathbf{b}$ is a unit vector.
    
Similarly, suppose $a_i = -a$ and $b_i = b$ so we have $a a_i + b b_i = b^2-a^2$.
If $b^2 - a^2 = 1/27$ then $a=0$, which is a contradiction.
If $b^2 - a^2 = -1/54$ then $a^2 = 1/36$ and $b^2 = 1/108$.
However, this implies that $37a^2 > 1$, which contradicts that $\mathbf{a}$ is a unit vector.
Thus, we conclude that $a_i = a$ and $b_i = b$ for all $i \in \mathcal{I}_1$.
In a similar fashion, we also conclude that $a_i = -a$ and $b_i = -b$ for all $i \in \mathcal{I}_4$ based on the entries of $M_{1,4}$ in \eqref{second_72P13_block_entries}.

Furthermore, if $i \in \mathcal{I}_3$ then from $a a_i + b b_i = 0$ we derive $a_i b_i = -3ab/2$ since $a, b \neq 0$.
Combining with $\mathbf{a} \cdot \mathbf{b} = 0$, we obtain
\begin{align}
 a_{49}^2 b_{49}^2 &= \frac{3721}{4} a^2 \left( \frac{1}{27}-a^2 \right)
    \label{eq:a_b_ortho}.
\end{align}
Since $\mathbf{a}$ and $\mathbf{b}$ are unit vectors, we obtain
\begin{align}
    a_{49}^2 &= 1-41a^2-\frac{21}{2} b^2 = 1-41a^2-\frac{21}{2} \left( \frac{1}{27}-a^2 \right) = \frac{11}{18} - \frac{61}{2} a^2, \label{eq:a_unit} \\
    b_{49}^2 &= 1-41b^2-\frac{21}{2} a^2 = 1-41 \left( \frac{1}{27}-a^2 \right) - \frac{21}{2} a^2 = \frac{61}{2} a^2 -\frac{14}{27}.
    \label{eq:b_unit}
\end{align}
But together \eqref{eq:a_unit} and \eqref{eq:b_unit} are inconsistent with \eqref{eq:a_b_ortho}. 
Therefore, we arrive at a contradiction.
    
Lastly, suppose we have $(40, 0, 4, 4, 0, 1)$ as the interlacing configuration.
Here we employ different strategy compared with the previous two interlacing configurations.
There are 73 interlacing characteristic polynomials corresponding to the polynomial $\mathfrak{f}_6(x)$.
We note that the corresponding entry in the interlacing configuration is $1$ so suppose $\operatorname{Char}_{S[1]}(x) = \mathfrak{f}_6(x)$. 
For $i \in \{2,\dots,49\}$ we have $\operatorname{Char}_{S[i]}(x) = \mathfrak{f}_1(x)$, $\mathfrak{f}_3(x)$, or $\mathfrak{f}_4(x)$.
It follows that $\operatorname{Char}_{S[1,i]}(x)$ interlaces both $\mathfrak{f}_6(x)$ and $\operatorname{Char}_{S[i]}(x)$.
Out of the 73 polynomials, only 25 interlace at least one of $\mathfrak{f}_1(x)$, $\mathfrak{f}_3(x)$, $\mathfrak{f}_4(x)$.
Among them are
\begin{align*}
    \mathfrak{f}_{6,2}(x) &= (x+5)^{30} (x-9)^{12} (x-11) (x^4-31x^3+321x^2-1245x+1506),\\
    \mathfrak{f}_{6,3}(x) &= (x+5)^{30} (x-9)^{12} (x-11) (x^4-31x^3+321x^2-1241x+1462),
\end{align*}
where the set $\{ \mathfrak{f}_{6,2}(x), \mathfrak{f}_{6,3}(x) \}$ is warranted with certificate $(5716582502$, $0$, $0$, $1614215$, $145454$, $13175)$.
However, we also have that $\mathfrak{f}_{6,2}(x)$ and $\mathfrak{f}_{6,3}(x)$ are not Seidel-compatible.
Hence, we can consider Seidel-compatibility with $\mathfrak{f}_{6,2}(x)$ and $\mathfrak{f}_{6,3}(x)$ separately.
From the 25 polynomials, only two are Seidel-compatible with $\mathfrak{f}_{6,2}(x)$:
\begin{align*}
    \mathfrak{f}_{6,1}(x) &= (x+5)^{30} (x-9)^{12} (x-13) (x^2-14x+29) (x^2-15x+46), \\
    \mathfrak{f}_{6,2}(x) &= (x+5)^{30} (x-9)^{12} (x-11) (x^4-31x^3+321x^2-1245x+1506).
\end{align*}
But we have a certificate of infeasibility $(38051376, 0, 0, 10571, 823, 3)$.
On the other hand, only four out of the 25 polynomials are Seidel-compatible with $\mathfrak{f}_{6,3}(x)$:
\begin{align*}
    \mathfrak{f}_{6,1}(x) &= (x+5)^{30} (x-9)^{12} (x-13) (x^2-14x+29) (x^2-15x+46), \\
    \mathfrak{f}_{6,3}(x) &= (x+5)^{30} (x-9)^{12} (x-11) (x^4-31x^3+321x^2-1241x+1462), \\
    \mathfrak{f}_{6,11}(x) &= (x+5)^{30} (x-9)^{12} (x-13) (x^4-29x^3+285x^2-1047x+982), \\
    \mathfrak{f}_{6,18}(x) &= (x+5)^{30} (x-9)^{12} (x-11) (x^4-31x^3+321x^2-1209x+1046).
\end{align*}
But we have a certificate of infeasibility $(-7998906611, 0, 0, -2236872, -195255, -17128)$.
Therefore, we arrive at a contradiction.
\end{proof}

Thus the proof of Theorem~\ref{thm:main} is complete.

\section{Conclusion}

    Our proof of $N(17) = 48$ consists of two main parts.
    First we enumerate all the polynomials that can potentially be the characteristic polynomial of a Seidel matrix corresponding to an equiangular line system of cardinality $49$ in $\mathbb R^{17}$.
    We find that there are $194$ such polynomials.
    Second, we show that none of these polynomials can be the characteristic polynomial of a Seidel matrix.
    In principle, this technique can be applied in general to find which polynomials can be the characteristic polynomial of a Seidel matrix that corresponds to an equiangular line system of cardinality $n$ in $\mathbb R^{d}$.
    However, the amount of computation required to generate all of the polynomials in the first part can become too expensive for various combinations of $n$ and $d$.
    Furthermore, given a list of candidate characteristic polynomials, it can be very challenging to determine whether or not there exists a corresponding Seidel matrix for each of the polynomials.
    For example, to show that the polynomial $(x+5)^{32}(x-9)^{16}(x-16)$ cannot be the characteristic polynomial of a Seidel matrix, it is necessary to show the nonexistence of a strongly regular graph of order $49$~\cite{GG18}.
    The question of the existence of certain strongly regular graphs is a notoriously difficult problem - an entire paper was devoted to showing the nonexistence of a strongly regular graph of order $49$~\cite{srg49}, which was implicitly required to show that $N(17) = 48$.
    
    If we apply our techniques to the next open case, dimension $18$, we can enumerate all the candidate characteristic polynomials that correspond to a Seidel matrix for an equiangular line system of cardinality $60$ in $\mathbb R^{18}$.
    However, our current techniques cannot dispose of all the candidates that we find.
    In particular, the polynomial $(x+5)^{42}(x-11)^{15}(x-15)^3$, which was first identified in \cite{GG18} as a candidate for the characteristic polynomial for a putative Seidel matrix, corresponds to an equiangular line system of cardinality $60$ in $\mathbb R^{18}$.
    Looking further, enumerating all candidates for characteristic polynomials of a putative Seidel matrix for an equiangular line system of cardinality $59$ in $\mathbb R^{18}$ may be possible on a super computer, but it appears to be out of reach of our current methods using a personal computer.
    
    Lastly, we believe the methods we use may have more general applications for studying the existence of Hermitian matrices whose entries are restricted to a certain discrete ring and whose eigenvalues are geometrically constrained.

\section{Acknowledgements}

We are grateful to Yufei Zhao for his comments on an earlier version of this paper and to the referees, whose comments led to various improvements to this paper.

\bibliographystyle{amsplain}

\appendix

\section{Tables}

In this appendix we provide tables of polynomials, certificates of infeasibility, and certificates of warranty referenced in Lemma~\ref{lem:candpols}, Lemma~\ref{lem:firstTable}, Lemma~\ref{lem:secondTable}, and Lemma~\ref{lem:thirdTable}.

\begin{center}

\footnotesize
\begin{longtable}{@{\makebox[3em][r]{\rownumber\space}} | l}

\endfirsthead

\multicolumn{1}{c}
{{\bfseries \tablename\ \thetable{} -- continued from previous page}} \\ \hline 
\endhead

\hline \multicolumn{1}{r}{{Continued on next page}} \\ \hline
\endfoot

\endlastfoot

\multicolumn{1}{@{\makebox[3em][r]{Index~}} | l}{Candidate characteristic polynomial}\\

\hline

   $(x+5)^{32} (x-9)^{14} (x^3-34x^2+369x-1292)$ \\

     $(x+5)^{32} (x-9)^{14} (x^3-34x^2+369x-1280)$ \\

     $(x+5)^{32} (x-9)^{12} (x^5-52x^4+1062x^3-10664x^2+52713x-102724)$ \\

     $(x+5)^{32} (x-9)^{13} (x-11) (x^3-32x^2+323x-1028)$ \\

     $(x+5)^{32} (x-9)^{12} (x^5-52x^4+1062x^3-10656x^2+52569x-102092)$ \\

     $(x+5)^{32} (x-9)^{12} (x^5-52x^4+1062x^3-10652x^2+52481x-101608)$ \\

     $(x+5)^{32} (x-8) (x-9)^{12} (x^4-44x^3+710x^2-4972x+12737)$ \\

     $(x+5)^{32} (x-9)^{12} (x^5-52x^4+1062x^3-10648x^2+52393x-101140)$ \\

     $(x+5)^{32} (x-9)^{12} (x^5-52x^4+1062x^3-10648x^2+52425x-101428)$ \\

     $(x+5)^{32} (x-9)^{12} (x^5-52x^4+1062x^3-10644x^2+52321x-100832)$ \\

     $(x+5)^{32} (x-9)^{13} (x^4-43x^3+675x^2-4569x+11200)$ \\

     $(x+5)^{32} (x-9)^{12} (x^2-22x+109) (x^3-30x^2+293x-928)$ \\

     $(x+5)^{32} (x-9)^{12} (x^5-52x^4+1062x^3-10644x^2+52353x-101120)$ \\

     $(x+5)^{32} (x-9)^{14} (x-13) (x^2-21x+96)$ \\

     $(x+5)^{32} (x-7) (x-9)^{11} (x-11) (x^4-43x^3+679x^2-4665x+11756)$ \\

     $(x+5)^{32} (x-9)^{13} (x^4-43x^3+675x^2-4565x+11164)$ \\

     $(x+5)^{32} (x-7) (x-9)^{12} (x-13) (x^3-32x^2+331x-1108)$ \\

     $(x+5)^{32} (x-9)^{12} (x^2-22x+113) (x^3-30x^2+289x-892)$ \\

     $(x+5)^{32} (x-9)^{12} (x-11)^2 (x^3-30x^2+281x-824)$ \\

     $(x+5)^{32} (x-9)^{11} (x-11) (x^5-50x^4+980x^3-9414x^2+44331x-81896)$ \\

     $(x+5)^{32} (x-9)^{12} (x-11) (x^4-41x^3+611x^2-3915x+9096)$ \\

     $(x+5)^{32} (x-9)^{12} (x^5-52x^4+1062x^3-10636x^2+52161x-100024)$ \\

     $(x+5)^{32} (x-7) (x-9)^{12} (x-11) (x^3-34x^2+373x-1304)$ \\

     $(x+5)^{32} (x-9)^{12} (x^2-22x+113) (x^3-30x^2+289x-888)$ \\

     $(x+5)^{32} (x-8) (x-9)^{10} (x^3-31x^2+311x-1009)^2$ \\

     $(x+5)^{32} (x-9)^{12} (x^5-52x^4+1062x^3-10632x^2+52073x-99556)$ \\

     $(x+5)^{32} (x-9)^{12} (x^2-24x+139) (x^3-28x^2+251x-716)$ \\

     $(x+5)^{32} (x-7)^2 (x-9)^{10} (x-11)^3 (x^2-23x+124)$ \\

     $(x+5)^{32} (x-9)^{10} (x-11) (x^6-59x^5+1430x^4-18230x^3+128957x^2-480063x+734908)$ \\

     $(x+5)^{32} (x-9)^{11} (x^6-61x^5+1530x^4-20190x^3+147793x^2-568885x+899524)$ \\

     $(x+5)^{32} (x-9)^{12} (x^5-52x^4+1062x^3-10632x^2+52105x-99908)$ \\

     $(x+5)^{32} (x-9)^{12} (x^2-20x+87) (x^3-32x^2+335x-1148)$ \\

     $(x+5)^{32} (x-7) (x-9)^{10} (x-11) (x^2-22x+113) (x^3-30x^2+293x-932)$ \\

     $(x+5)^{32} (x-9)^{11} (x^2-20x+95)^2 (x^2-21x+100)$ \\

     $(x+5)^{32} (x-9)^{12} (x^5-52x^4+1062x^3-10628x^2+51969x-98896)$ \\

     $(x+5)^{32} (x-9)^{10} (x-11)^2 (x^5-48x^4+902x^3-8304x^2+37481x-66400)$ \\

     $(x+5)^{32} (x-9)^{12} (x^2-18x+73) (x^3-34x^2+377x-1360)$ \\

     $(x+5)^{32} (x-9)^{11} (x^6-61x^5+1530x^4-20186x^3+147653x^2-567289x+893584)$ \\

     $(x+5)^{32} (x-9)^{13} (x-13) (x^3-30x^2+285x-848)$ \\

     $(x+5)^{32} (x-7) (x-9)^{10} (x-11) (x^5-52x^4+1066x^3-10764x^2+53509x-104704)$ \\

     $(x+5)^{32} (x-9)^{10} (x-11) (x^2-20x+95) (x^4-39x^3+555x^2-3421x+7712)$ \\

     $(x+5)^{32} (x-9)^{12} (x-13) (x^4-39x^3+555x^2-3413x+7664)$ \\

     $(x+5)^{32} (x-7) (x-9)^{10} (x^2-20x+95)^2 (x^2-23x+128)$ \\

     $(x+5)^{32} (x-9)^{11} (x-11)^2 (x^4-39x^3+551x^2-3341x+7340)$ \\

     $(x+5)^{32} (x-9)^{11} (x-11) (x^5-50x^4+980x^3-9402x^2+44091x-80708)$ \\

     $(x+5)^{32} (x-9)^{12} (x^5-52x^4+1062x^3-10624x^2+51897x-98572)$ \\

     $(x+5)^{32} (x-9)^{10} (x-11)^2 (x^5-48x^4+902x^3-8300x^2+37441x-66316)$ \\

     $(x+5)^{32} (x-7) (x-9)^{11} (x-11) (x-12) (x^3-31x^2+307x-965)$ \\

     $(x+5)^{32} (x-9)^{11} (x-11) (x^5-50x^4+980x^3-9402x^2+44123x-81028)$ \\

     $(x+5)^{32} (x-9)^{12} (x-12) (x^2-18x+73) (x^2-22x+113)$ \\

     $(x+5)^{32} (x-9)^{11} (x-11) (x^5-50x^4+980x^3-9402x^2+44123x-80996)$ \\

     $(x+5)^{32} (x-9)^{12} (x-11) (x-13) (x^3-28x^2+247x-692)$ \\

     $(x+5)^{32} (x-9)^{10} (x^3-29x^2+271x-811) (x^4-41x^3+619x^2-4079x+9908)$ \\

     $(x+5)^{32} (x-7)^2 (x-9)^{12} (x-12) (x-13)^2$ \\

     $(x+5)^{32} (x-9)^{10} (x-11)^2 (x^5-48x^4+902x^3-8296x^2+37337x-65768)$ \\

     $(x+5)^{32} (x-9)^{11} (x-11) (x^5-50x^4+980x^3-9398x^2+44011x-80312)$ \\

     $(x+5)^{32} (x-9)^{12} (x-11) (x^4-41x^3+611x^2-3899x+8920)$ \\

     $(x+5)^{32} (x-7) (x-8) (x-9)^9 (x-11)^3 (x^3-31x^2+307x-965)$ \\

     $(x+5)^{32} (x-7) (x-9)^{11} (x-11)^2 (x^3-32x^2+327x-1048)$ \\

     $(x+5)^{32} (x-8) (x-9)^{10} (x-11)^2 (x^2-18x+73) (x^2-22x+113)$ \\

     $(x+5)^{32} (x-9)^{11} (x-11) (x^5-50x^4+980x^3-9398x^2+44043x-80664)$ \\

     $(x+5)^{32} (x-9)^{11} (x-11) (x^5-50x^4+980x^3-9398x^2+44043x-80632)$ \\

     $(x+5)^{32} (x-9)^{12} (x^5-52x^4+1062x^3-10620x^2+51841x-98504)$ \\

     $(x+5)^{32} (x-8) (x-9)^{11} (x-11) (x-13) (x^3-29x^2+267x-775)$ \\

     $(x+5)^{32} (x-9)^9 (x-11) (x^2-20x+95) (x^5-48x^4+906x^3-8408x^2+38373x-68920)$ \\

     $(x+5)^{32} (x-9)^{10} (x^3-31x^2+311x-1009) (x^4-39x^3+559x^2-3481x+7928)$ \\

     $(x+5)^{32} (x-9)^{10} (x-11)^2 (x^5-48x^4+902x^3-8292x^2+37249x-65284)$ \\

     $(x+5)^{32} (x-9)^{12} (x^5-52x^4+1062x^3-10616x^2+51721x-97620)$ \\

     $(x+5)^{32} (x-9)^{10} (x-11)^2 (x^5-48x^4+902x^3-8292x^2+37281x-65572)$ \\

     $(x+5)^{32} (x-9)^{10} (x-11)^2 (x^2-22x+113) (x^3-26x^2+217x-580)$ \\

     $(x+5)^{32} (x-9)^{10} (x-11) (x^6-59x^5+1430x^4-18214x^3+128493x^2-475631x+721004)$ \\

     $(x+5)^{32} (x-9)^{11} (x^2-20x+95) (x^4-41x^3+615x^2-3979x+9292)$ \\

     $(x+5)^{32} (x-9)^{11} (x-13) (x^5-48x^4+906x^3-8396x^2+38149x-67876)$ \\

     $(x+5)^{32} (x-7)^2 (x-9)^9 (x-11)^4 (x^2-21x+100)$ \\

     $(x+5)^{32} (x-7) (x-9)^9 (x-11)^2 (x^5-50x^4+984x^3-9522x^2+45287x-84668)$ \\

     $(x+5)^{32} (x-7) (x-9)^{10} (x-11)^2 (x^4-41x^3+615x^2-3987x+9404)$ \\

     $(x+5)^{32} (x-9)^{10} (x-13) (x^2-17x+68) (x^2-20x+95)^2$ \\

     $(x+5)^{32} (x-9)^{10} (x-11)^2 (x^2-16x+59) (x^3-32x^2+331x-1104)$ \\

     $(x+5)^{32} (x-9)^{10} (x-11)^2 (x^5-48x^4+902x^3-8288x^2+37193x-65104)$ \\

     $(x+5)^{32} (x-9)^{11} (x^6-61x^5+1530x^4-20170x^3+147157x^2-562217x+876448)$ \\

     $(x+5)^{32} (x-9)^9 (x-11)^2 (x^6-57x^5+1334x^4-16406x^3+111817x^2-400449x+588752)$ \\

     $(x+5)^{32} (x-9)^{10} (x-11)^2 (x^2-20x+87) (x^3-28x^2+255x-752)$ \\

     $(x+5)^{32} (x-9)^{10} (x-11)^2 (x^5-48x^4+902x^3-8288x^2+37225x-65392)$ \\

     $(x+5)^{32} (x-9)^{10} (x-11)^2 (x^2-20x+95) (x^3-28x^2+247x-688)$ \\

     $(x+5)^{32} (x-9)^{10} (x-11) (x^6-59x^5+1430x^4-18210x^3+128393x^2-474835x+719024)$ \\

     $(x+5)^{32} (x-7)^3 (x-9)^8 (x-11)^4 (x^2-23x+128)$ \\

     $(x+5)^{32} (x-9)^9 (x-11) (x^7-68x^6+1961x^5-31080x^4+292299x^3-1630868x^2+4997515x-6487424)$ \\

     $(x+5)^{32} (x-9)^9 (x-11) (x^3-29x^2+271x-811) (x^4-39x^3+559x^2-3489x+8000)$ \\

     $(x+5)^{32} (x-9)^{11} (x-13) (x^2-18x+73) (x^3-30x^2+293x-928)$ \\

     $(x+5)^{32} (x-7) (x-9)^9 (x-11) (x-13) (x^2-20x+95) (x^3-28x^2+255x-752)$ \\

     $(x+5)^{32} (x-9)^{10} (x-11) (x^2-19x+80) (x^2-20x+95)^2$ \\

     $(x+5)^{32} (x-9)^{12} (x-11)^2 (x^3-30x^2+281x-796)$ \\

     $(x+5)^{32} (x-9)^{10} (x-11)^2 (x^5-48x^4+902x^3-8284x^2+37121x-64796)$ \\

     $(x+5)^{32} (x-9)^{11} (x-11)^2 (x^4-39x^3+551x^2-3325x+7196)$ \\

     $(x+5)^{32} (x-9)^{11} (x-11) (x^5-50x^4+980x^3-9386x^2+43771x-79124)$ \\

     $(x+5)^{32} (x-9)^{12} (x-11) (x-13)^2 (x^2-15x+52)$ \\

     $(x+5)^{32} (x-9)^9 (x-11)^2 (x^6-57x^5+1334x^4-16402x^3+111709x^2-399493x+585980)$ \\

     $(x+5)^{32} (x-9)^{10} (x-11)^2 (x^2-18x+73) (x^3-30x^2+289x-892)$ \\

     $(x+5)^{32} (x-9)^{10} (x-11)^2 (x^5-48x^4+902x^3-8284x^2+37153x-65084)$ \\

     $(x+5)^{32} (x-7) (x-9)^8 (x-11)^3 (x^5-48x^4+906x^3-8404x^2+38309x-68668)$ \\

     $(x+5)^{32} (x-9)^9 (x-11) (x^7-68x^6+1961x^5-31076x^4+292147x^3-1628724x^2+4984227x-6456932)$ \\

     $(x+5)^{32} (x-9)^{10} (x-12) (x^3-29x^2+271x-811)^2$ \\

     $(x+5)^{32} (x-7)^2 (x-9)^8 (x-11)^2 (x-12) (x^2-20x+95)^2$ \\

     $(x+5)^{32} (x-9)^{10} (x-11)^3 (x^4-37x^3+495x^2-2835x+5864)$ \\

     $(x+5)^{32} (x-9)^{10} (x-11)^2 (x^5-48x^4+902x^3-8280x^2+37049x-64472)$ \\

     $(x+5)^{32} (x-9)^{11} (x-11) (x-13) (x^4-37x^3+499x^2-2895x+6056)$ \\

     $(x+5)^{32} (x-7) (x-9)^9 (x-11)^3 (x^4-39x^3+555x^2-3405x+7576)$ \\

     $(x+5)^{32} (x-9)^{10} (x-11)^2 (x^2-18x+73) (x^3-30x^2+289x-888)$ \\

     $(x+5)^{32} (x-9)^9 (x-11)^2 (x^6-57x^5+1334x^4-16398x^3+111601x^2-398521x+583064)$ \\

     $(x+5)^{32} (x-7) (x-9)^{10} (x-11)^2 (x-13) (x^3-28x^2+251x-712)$ \\

     $(x+5)^{32} (x-8) (x-9)^8 (x-11)^2 (x^3-29x^2+271x-811)^2$ \\

     $(x+5)^{32} (x-9)^9 (x-11) (x^2-20x+95) (x^5-48x^4+906x^3-8392x^2+38085x-67656)$ \\

     $(x+5)^{32} (x-7)^2 (x-8) (x-9)^6 (x-11)^4 (x^2-20x+95)^2$ \\

     $(x+5)^{32} (x-9)^{10} (x-11)^3 (x^4-37x^3+495x^2-2831x+5820)$ \\

     $(x+5)^{32} (x-9)^{10} (x-11)^2 (x^5-48x^4+902x^3-8276x^2+36961x-63988)$ \\

     $(x+5)^{32} (x-9)^9 (x-11)^3 (x^2-17x+68) (x^3-29x^2+267x-775)$ \\

     $(x+5)^{32} (x-9)^9 (x-11)^3 (x^5-46x^4+828x^3-7286x^2+31331x-52636)$ \\

     $(x+5)^{32} (x-9)^{10} (x-11)^2 (x^5-48x^4+902x^3-8276x^2+36993x-64340)$ \\

     $(x+5)^{32} (x-9)^{10} (x-11)^2 (x^2-18x+69) (x^3-30x^2+293x-932)$ \\

     $(x+5)^{32} (x-9)^8 (x-11)^2 (x^7-66x^6+1847x^5-28400x^4+259039x^3-1401034x^2+4158953x-5225156)$ \\

     $(x+5)^{32} (x-7)^2 (x-9)^8 (x-11)^4 (x-13) (x^2-17x+68)$ \\

     $(x+5)^{32} (x-7) (x-9)^8 (x-11)^3 (x^2-18x+73) (x^3-30x^2+293x-932)$ \\

     $(x+5)^{32} (x-9)^{10} (x-11)^2 (x^5-48x^4+902x^3-8272x^2+36873x-63520)$ \\

     $(x+5)^{32} (x-9)^{10} (x-11)^2 (x^5-48x^4+902x^3-8272x^2+36905x-63872)$ \\

     $(x+5)^{32} (x-9)^{10} (x-11)^2 (x^2-20x+95) (x^3-28x^2+247x-672)$ \\

     $(x+5)^{32} (x-7) (x-9)^8 (x-11)^4 (x^4-37x^3+499x^2-2903x+6128)$ \\

     $(x+5)^{32} (x-9)^8 (x-11)^2 (x^2-16x+59) (x^2-20x+95) (x^3-30x^2+293x-928)$ \\

     $(x+5)^{32} (x-9)^9 (x-11)^2 (x^2-17x+64) (x^2-20x+95)^2$ \\

     $(x+5)^{32} (x-7)^2 (x-9)^8 (x-11)^5 (x^2-19x+80)$ \\

     $(x+5)^{32} (x-9)^{10} (x-11)^2 (x^5-48x^4+902x^3-8268x^2+36801x-63212)$ \\

     $(x+5)^{32} (x-9)^9 (x-11)^2 (x^6-57x^5+1334x^4-16386x^3+111245x^2-395093x+572428)$ \\

     $(x+5)^{32} (x-9)^{10} (x-11)^2 (x-12) (x^4-36x^3+470x^2-2628x+5297)$ \\

     $(x+5)^{32} (x-9)^8 (x-11)^4 (x^2-16x+59) (x^3-28x^2+251x-724)$ \\

     $(x+5)^{32} (x-7) (x-9)^8 (x-11)^3 (x^5-48x^4+906x^3-8388x^2+37989x-67084)$ \\

     $(x+5)^{32} (x-9)^8 (x-11)^3 (x^2-15x+52) (x^2-20x+95)^2$ \\

     $(x+5)^{32} (x-9)^7 (x-11)^3 (x^3-29x^2+271x-811) (x^4-35x^3+451x^2-2533x+5228)$ \\

     $(x+5)^{32} (x-9)^{10} (x-11)^2 (x-12) (x^2-18x+73)^2$ \\

     $(x+5)^{32} (x-7)^4 (x-9)^6 (x-11)^6 (x-12)$ \\

     $(x+5)^{32} (x-9)^{11} (x-11)^3 (x^3-28x^2+243x-632)$ \\

     $(x+5)^{32} (x-9)^{10} (x-11)^2 (x^5-48x^4+902x^3-8264x^2+36729x-62888)$ \\

     $(x+5)^{32} (x-9)^9 (x-11)^4 (x^4-35x^3+443x^2-2401x+4712)$ \\

     $(x+5)^{32} (x-8) (x-9)^8 (x-11)^4 (x^4-36x^3+470x^2-2628x+5297)$ \\

     $(x+5)^{32} (x-7) (x-9)^9 (x-11)^3 (x^4-39x^3+555x^2-3389x+7400)$ \\

     $(x+5)^{32} (x-8) (x-9)^8 (x-11)^4 (x^2-18x+73)^2$ \\

     $(x+5)^{32} (x-7)^4 (x-8) (x-9)^4 (x-11)^8$ \\

     $(x+5)^{32} (x-9)^{10} (x-11)^4 (x^3-26x^2+209x-516)$ \\

     $(x+5)^{32} (x-7) (x-9)^9 (x-11)^4 (x^3-28x^2+247x-668)$ \\

     $(x+5)^{32} (x-9)^9 (x-11)^3 (x^5-46x^4+828x^3-7270x^2+31043x-51404)$ \\

     $(x+5)^{32} (x-7) (x-9)^7 (x-11)^5 (x^4-35x^3+447x^2-2465x+4948)$ \\

     $(x+5)^{32} (x-9)^8 (x-11)^4 (x^2-18x+73) (x^3-26x^2+217x-580)$ \\

     $(x+5)^{32} (x-9)^{10} (x-11)^2 (x^5-48x^4+902x^3-8256x^2+36553x-61936)$ \\

     $(x+5)^{32} (x-9)^8 (x-11)^4 (x^5-44x^4+758x^3-6380x^2+26193x-41920)$ \\

     $(x+5)^{32} (x-7) (x-9)^8 (x-11)^5 (x^3-26x^2+213x-544)$ \\

     $(x+5)^{32} (x-7)^2 (x-9)^7 (x-11)^6 (x^2-17x+64)$ \\

     $(x+5)^{32} (x-9)^8 (x-11)^4 (x^2-20x+95) (x^3-24x^2+183x-436)$ \\

     $(x+5)^{32} (x-9)^8 (x-11)^4 (x-12) (x^2-16x+59)^2$ \\

     $(x+5)^{32} (x-7)^2 (x-9)^6 (x-11)^7 (x^2-15x+52)$ \\

     $(x+5)^{32} (x-9)^9 (x-11)^4 (x^4-35x^3+443x^2-2385x+4568)$ \\

     $(x+5)^{32} (x-8) (x-9)^6 (x-11)^6 (x^2-16x+59)^2$ \\

     $(x+5)^{32} (x-9)^{10} (x-11)^4 (x^3-26x^2+209x-500)$ \\

     $(x+5)^{32} (x-9)^9 (x-11)^5 (x^3-24x^2+179x-412)$ \\

     $(x+5)^{32} (x-5) (x-9)^8 (x-11)^6 (x^2-17x+68)$ \\

     $(x+5)^{32} (x-7) (x-9)^8 (x-11)^6 (x^2-15x+48)$ \\

     $(x+5)^{32} (x-9)^{10} (x-11)^4 (x^3-26x^2+209x-488)$ \\

         \hline
             \caption{List of 164 candidate characteristic polynomials for 49 equiangular lines in $\mathbb R^{17}$.}
    \label{tab:164polydim17}
\end{longtable}
\end{center}

\begin{center}

\footnotesize
\begin{longtable}{@{\makebox[3em][r]{\rownumbers\space}} | l}

\endfirsthead

\multicolumn{1}{c}
{{\bfseries \tablename\ \thetable{} -- continued from previous page}} \\ \hline 
\endhead

\hline \multicolumn{1}{r}{{Continued on next page}} \\ \hline
\endfoot

\endlastfoot

\multicolumn{1}{@{\makebox[3em][r]{Index~}} | l}{Certificate of infeasibility}\\
\hline 
     $(-5488432, 0, 0, -2469, -274)$
    
     \\
    $(-63703708, 0, 0, -31134, -4447)$
    
     \\
    $(0, 0, 0, 0, -24516, -11277, -2933)$
    
     \\
    $(64094624850, 0, 0, 13282715, 1315121, 131512)$

     \\
    $(0, 0, 0, 0, -32892, -14918, -3771)$
    
     \\
    $(0, 0, 0, 2179953, 598246, 95219, 12792)$
    
     \\
    $(0, 0, 0, 0, -46941, -21298, -5385)$
    
     \\
    $(0, 0, 0, 2020374, 527071, 75373, 8375)$
    
     \\
    $(0, 0, 0, 0, -186488, -84667, -21412)$
    
     \\
    $(0, 0, 0, 2213481, 577512, 82588, 9177)$
    
     \\
    $(0, 0, 0, 262395, 85409, 13919)$
    
     \\
    $(0, 0, 0, 0, -50486, -22673, -5606)$
    
     \\
    $(0, 0, 0, 0, -166968, -76347, -19567)$

     \\
    $(0, 0, 0, -761, -407)$
    
     \\
    $(0, 0, 0, 0, 0, 48433, 27206, 8450)$
    
     \\
    $(43568416850, 0, 0, 8939852, 867947, 86794)$
    
     \\
    $(418335562857, 0, 0, 50603993, 5042602, 533922, 59325)$
    
     \\
    $(0, 0, 0, 2951038, 757444, 105637, 11738)$
    
     \\
    $(2170237224, 0, 0, 518291, 54779, 5899)$
    
     \\
    $(0, 0, 0, -102473516, -26628634, -4344437, -608687, -79085)$
    
     \\
    $(0, 0, 0, 3027878, 841889, 137248, 19051)$
    
     \\
    $(0, 0, 0, 3424439, 942309, 150950, 20564)$
    
     \\
    $(-350855974043, 0, 0, -36070743, -2365294, -118265, 0)$
    
     \\
    $(0, 0, 0, 19725270, 5270911, 802935, 104149)$
    
     \\
    $(0, 0, 0, 27926, 10159, 2010)$
    
     \\
    $(0, 0, 0, 2358490, 648675, 103793, 14105)$
    
     \\
    $(0, 0, 0, 0, -11689, -5069, -1158)$
    
     \\
    $(-4870391404, 0, 0, -1228192, -147036, -17299)$
    
     \\
    $(0, 0, 0, 0, 0, 0, 0, -2769, -2722)$
    
     \\
    $(0, 0, 0, 0, 0, 49785, 28128, 8819)$
    
     \\
    $(0, 0, 0, 2693660, 663478, 83879, 7626)$
    
     \\
    $(0, 0, 0, 2151377, 521244, 62701, 4824)$
    
     \\
    $(0, 0, 0, 0, 0, 2730500, 1336652, 374346, 80967)$
    
     \\
    $(0, 0, 0, -3968275, -1342119, -230256)$
    
     \\
    $(0, 0, 0, 0, -21822, -9463, -2157)$
    
     \\
    $(0, 0, 0, 0, -1569046, -573439, -115384, -17518)$
    
     \\
    $(4598007557155, 0, 0, 531045255, 47338744, 4256617, 386966)$
    
     \\
    $(0, 0, 0, 0, -3978492, -1477259, -314027, -53170)$
    
     \\
    $(20419347700, 0, 0, 4029188, 354569, 32233)$
    
     \\
    $(0, 0, 0, 0, 0, 5733949, 2909745, 855377, 195699)$
    
     \\
    $(0, 0, 0, 0, 0, 14228263, 7055510, 2012098, 444987)$
    
     \\
    $(0, 0, 0, 5766425, 1554289, 240786, 31992)$
    
     \\
    $(400497673181, 0, 0, 46699387, 3348961, 186053, 0)$
    
     \\
    $(695180798312, 0, 0, 89011625, 8091966, 735633, 66876)$
    
     \\
    $(0, 0, -83458849775, -11996698757, -1188052506, -118190915, -11828260, -1191996)$
    
     \\
    $(0, 0, 0, 0, -21332, -9279, -2128)$
    
     \\
    $(0, 0, 0, 0, 0, -48253, -17954, 0)$
    
     \\
    $(0, 0, -61036254227, -8328939996, -759497210, -69872634, -6493326, -610484)$
    
     \\
    $(0, 0, 0, -3673067283, -954462930, -155719105, -21817365, -2834629)$
    
     \\
    $(0, 0, 0, 3242353, 857188, 128112, 16317)$
    
     \\
    $(0, 0, 0, -136206225, -35394372, -5774558, -809058, -105118)$
    
     \\
    $(0, 0, 0, 0, 8891029, 3501307, 607418)$
    
     \\
    $(0, 0, 0, 0, 0, 5173067, 2563924, 730730, 161530)$
    
     \\
    $(-43386189, 0, 0, -21122, -1920)$
    
     \\
    $(0, 0, 0, -103422864, -26769889, -4290940, -589184, -74958)$
    
     \\
    $(0, 0, 0, -100965325, -26236718, -4280496, -599729, -77921)$
    
     \\
    $(-1299165141433, 0, 0, -149992114, -13410925, -1221759, -113651)$
    
     \\
    $(0, 0, 11091944806, 1414982133, 88436388, 0, -1403751, -356509)$
    
     \\
    $(-2370518793825, 0, 0, -304205617, -28068180, -2673161, -267316)$
    
     \\
    $(0, 0, 0, 0, -1329832, -537956, -127678, -24303)$
    
     \\
    $(0, 0, -77300917310, -10532936094, -956541670, -87328370, -8017435, -740071)$
    
     \\
    $(0, 0, 0, -3659439217, -950921617, -155141345, -21736416, -2824112)$
    
     \\
    $(0, 0, 0, 1667190, 446418, 67961, 8668)$
    
     \\
    $(0, 0, -27280263472, -3518058798, -293088094, -25041704, -2203669, -200334)$
    
     \\
    $(0, 0, 0, 0, 0, 0, -1064222, 0, 285444, 160397)$
    
     \\
    $(0, 0, 0, 0, 0, 2337512, 1158887, 330521, 73173)$
    
     \\
    $(0, 0, 0, 0, -1531587, -589537, -129964, -22695)$
    
     \\
    $(0, 0, 0, 0, -35406, -15155, -3334)$
    
     \\
    $(0, 0, 0, -71612334, -18804223, -3075477, -431366, -56080)$
    
     \\
    $(0, 0, 0, 0, 0, -12974, 0, 4359)$
    
     \\
    $(0, 0, 0, 0, 0, 4677012, 2215622, 593361, 122148)$
    
     \\
    $(0, 0, 0, -379426866, -98595447, -16085678, -2253718, -292816)$
    
     \\
    $(0, 0, 0, 0, -1001181, -347490, -65629, -9371)$
    
     \\
    $(-18918837294, 0, 0, -5728490, -787474, -108085)$
    
     \\
    $(0, 0, 0, -15349003083, -3936212014, -661763942, -97017114, -13355954, -1781204)$
    
     \\
    $(0, 0, 0, 0, 0, -60315, -31871, -8411)$
    
     \\
    $(0, 0, 0, 3059218, 768048, 97759, 8888)$
    
     \\
    $(0, 0, 0, -314129288, -78844475, -12081240, -1578458, -190430)$
    
     \\
    $(0, 0, 0, -64158067, -16846854, -2755344, -386464, -50242)$
    
     \\
    $(0, 0, 0, 0, -1704397, -620000, -127433, -20664)$
    
     \\
    $(0, 0, 0, 0, 0, 2880320, 1487377, 443235, 102448)$
    
     \\
    $(0, 0, 0, 0, 0, 262016, 152699, 49290)$
    
     \\
    $(0, 0, 0, -106796999, -28043138, -4586523, -643306, -83633)$
    
     \\
    $(0, 0, 0, 0, -6215064, -2516920, -598732, -114475)$
    
     \\
    $(0, 0, 0, 0, 0, 955948, 429395, 105520, 19167)$
    
     \\
    $(48031592256, 0, 0, 11040701, 1115611, 115408)$
    
     \\
    $(0, 0, 0, 0, 0, 0, -2143032, -1241360, -401833, -97740)$
    
     \\
    $(0, 0, 0, 0, 0, 0, -14047613, -8488569, -2909072, -758148)$
    
     \\
    $(0, 0, 0, 0, -1552628, -531841, -97912, -13407)$
    
     \\
    $(0, 0, 0, 0, 0, 0, -2618443, -1505910, -482407, -115715)$
    
     \\
    $(1617153799881, 0, 0, 197697391, 16639510, 1426243, 125110)$
    
     \\
    $(7063410652, 0, 0, 1589426, 144494, 13135)$
    
     \\
    $(0, 0, 0, -114103753, -29961502, -4900266, -687311, -89354)$
    
     \\
    $(0, 0, 0, 0, 31817, 13173, 2468)$
    
     \\
    $(0, 0, 0, 0, -10677039, -3964370, -842610, -142646)$
    
     \\
    $(0, 0, 0, 22638, 7941, 1375)$
    
     \\
    $(0, 0, 0, 0, 0, 6344046, 3245585, 956303, 218771)$
    
     \\
    $(0, 0, 0, -495338005, -121177788, -17614437, -2088461, -208847)$
    
     \\
    $(0, 0, 0, -393778194, -103398759, -16911081, -2371948, -308363)$
    
     \\
    $(0, 0, 0, 0, 0, 0, 12429, 8163, 2762)$
    
     \\
    $(0, 0, 0, 0, 0, 0, 0, 0, 1119, 1234)$
    
     \\
    $(-3885730122, 0, 0, -718522, -32660, -1)$
    
     \\
    $(-992098071642, 0, 0, -118216298, -9445854, -744219, -57247)$
    
     \\
    $(565131071791, 0, 0, 85703268, 9084766, 985095, 109456)$
    
     \\
    $(0, 0, 0, -138176827, -36282640, -5934101, -832316, -108205)$
    
     \\
    $(0, 0, 0, 0, -1213093, -449831, -95266, -15983)$
    
     \\
    $(0, 0, 0, 89931451, 24948054, 4341823, 647628, 89588)$
    
     \\
    $(0, 0, -19060138333, -2884207282, -300173384, -30699551, -3100964, -310097)$
    
     \\
    $(0, 0, 0, 0, 0, 9599576, 4809061, 1377672, 305452)$
    
     \\
    $(0, 0, -27750045610, -4064194735, -408278612, -41428056, -4245277, -439167)$
    
     \\
    $(0, 0, 0, 1087426, 314758, 52603, 7448)$
    
     \\
    $(0, 0, 0, 0, 0, 0, -1337775, -752950, -233517, -53538)$
    
     \\
    $(-2169430180, 0, 0, -198141, 4, 0, 0)$
    
     \\
    $(289768164194, 0, 0, 45345884, 5038432, 559825, 62203)$
    
     \\
    $(0, 0, 0, -70323109, -18465693, -3020109, -423600, -55070)$
    
     \\
    $(0, 0, 0, -89288753, -23286483, -3710063, -501572, -62051)$
    
     \\
    $(0, 0, 0, 0, 0, -15046, -7958, -2003)$
    
     \\
    $(0, 0, -55007201398, -7858513065, -756481319, -73298937, -7158493, -705767)$
    
     \\
    $(0, 0, 0, 0, -7607112, -2990049, -680855, -123814)$
    
     \\
    $(0, 0, 0, 0, 0, 0, -2570051, -1576448, -546592, -143665)$
    
     \\
    $(64738987904, 0, 0, 7963177, 461283, 0, -6621)$
    
     \\
    $(0, 0, -312204467074, -43839706334, -4468167233, -463940452, -49178872, -5332649, -592516)$ 
    
     \\
    $(0, 0, 0, -62996931, -16541959, -2705478, -379470, -49333)$
    
     \\
    $(0, 0, -34501373844, -4771922844, -433811167, -39437379, -3585216, -325929)$
    
     \\
    $(0, 0, 0, -1169094984, -287376982, -42813875, -5458939, -645630)$
    
     \\
    $(0, 0, 12829413037, 1887211369, 175111560, 14517041, 907320, 0)$
    
     \\
    $(0, 0, 0, 0, 0, 0, 0, -37148, -30261, -13559)$
    
     \\
    $(-178847202626, 0, 0, -25013595, -2273963, -206724, -18793)$
    
     \\
    $(1111960200, 0, 0, 365603, 48288, 6898)$
    
     \\
    $(0, 0, 0, -112031224, -29417295, -4811260, -674827, -87731)$
    
     \\
    $(0, 0, 0, 0, 0, 4443929, 2128486, 571736, 117372)$
    
     \\
    $(0, 0, -34375141077, -4754463451, -432223950, -39293087, -3572098, -324737)$
    
     \\
    $(0, 0, 0, 864770601, 222403153, 33916787, 4279939, 475548)$
    
     \\
    $(0, 0, 0, -3973255445, -998404736, -161564034, -22554302, -2924465, -362979)$
    
     \\
    $(0, 0, 0, 6247855, 1847930, 310209, 44003)$
    
     \\
    $(0, 0, 0, 0, 0, 0, 0, 0, 1137, 1309)$
    
     \\
    $(1294707168, 0, 0, 237202, 10317, 0)$
    
     \\
    $(39494208, 0, 0, 21372, 1645)$
    
     \\
    $(11007285490, 0, 0, 2883349, 285481, 28548)$
    
     \\
    $(0, 0, 0, 0, -16028637, -6112248, -1330674, -230596)$
    
     \\
    $(72932423060, 0, 0, 11832901, 1179751, 117975, 11798)$
    
     \\
    $(0, 0, 0, 0, -2848088, -1208753, -297136, -57413)$
    
     \\
    $(0, 0, 0, -364965233, -96892815, -15887789, -2230938, -290213)$
    
     \\
    $(0, 0, 0, -55660, -20011, -3553)$
    
     \\
    $(-1400886, 0, 0, -911, 0)$
    
     \\
    $(-2655400442, 0, 0, -824349, -91594, -10178)$
    
     \\
    $(163604694365, 0, 0, 26168863, 2536575, 246500, 23855)$
    
     \\
    $(0, 0, 0, -316467666, -81782200, -12927692, -1754267, -221407)$
    
     \\
    $(0, 0, 0, -411875739, -126767583, -24943894, -4253740, -676710)$
    
     \\
    $(0, 0, 0, 0, 0, 0, 2204, 2101)$
    
     \\
    $(0, 0, 0, 0, -2336823, -882211, -189036, -32146)$
    
     \\
    $(0, 0, 0, 0, -8943902, -3542882, -788607, -138366)$
    
     \\
    $(0, 0, 0, -2568167, -756561, -121188, -16321)$
    
     \\
    $(1294235800, 0, 0, 373032, 20725, -1)$
    
     \\
    $(0, 0, 0, 0, -1467454, -617730, -151051, -29543)$
    
     \\
    $(949975748, 0, 0, 235960, 21451, 1950)$
    
     \\
    $(0, 0, 0, 24726, 11067, 2893)$
    
     \\
    $(0, 0, 0, 1421580, 478465, 97821, 16654)$
    
     \\
    $(-364383063, 0, 0, -124960, -13884, -1543)$
    
     \\
    $(519279994, 0, 0, 161166, 17908, 1989)$
    
     \\
    $(0, 0, 0, -4803, -1591, -145)$
    
     \\
    $(-64725455, 0, 0, -25323, -2681, -298)$
    
     \\
    $(-3135702600, 0, 0, -1171021, -104988, -8077)$
    
     \\
    $(0, 0, 0, -84876, -27407, -2492)$ \\

         \hline
             \caption{ Certificates of infeasibility for each polynomial in Table~\ref{tab:164polydim17}.}
    \label{tab:164polydim17infeas}
\end{longtable}
\end{center}

\begin{center}
\footnotesize
    \begin{longtable}{r|l}

\endfirsthead

\multicolumn{2}{c}
{{\bfseries \tablename\ \thetable{} -- continued from previous page}} \\ \hline 
\endhead

\hline \multicolumn{2}{r}{{Continued on next page}} \\ \hline
\endfoot

\endlastfoot

	Index & Candidate characteristic polynomial \\
    \hline
     1 &  $(x+5)^{32} (x-9)^{14} (x-11) (x^2-23x+116)$  \\
 	 2 & $(x+5)^{32} (x-9)^{14} (x^3-34x^2+369x-1264)$ \\
	 3 & $(x+5)^{32} (x-9)^{12} (x^2-20x+95) (x^3-32x^2+327x-1068)$ \\
	 4 & $(x+5)^{32} (x-9)^{12} (x^5-52x^4+1062x^3-10636x^2+52193x-100376)$ \\
     5 & $(x+5)^{32} (x-9)^{12} (x^2-20x+95) (x^3-32x^2+327x-1052)$ \\
	 6 & $(x+5)^{32} (x-9)^{12} (x-11)^2 (x^3-30x^2+281x-812)$ \\
     7 & $(x+5)^{32} (x-9)^{12} (x-11)^2 (x^3-30x^2+281x-808)$ \\
     8 & $(x+5)^{32} (x-8) (x-9)^{11} (x-11)^3 (x^2-20x+83)$ \\
     9 & $(x+5)^{32} (x-7) (x-9)^{10} (x-11)^3 (x^3-30x^2+285x-852)$ \\
     10 & $(x+5)^{32} (x-9)^{11} (x-11)^2 (x-13) (x^3-26x^2+213x-556)$ \\
   	 11 & $(x+5)^{32} (x-5) (x-9)^{12} (x-11)^2 (x-12) (x-13)$ \\
    \hline
    
             \caption{List of 11 candidate characteristic polynomials for 49 equiangular lines in $\mathbb R^{17}$.}
    \label{tab:11polydim17}
\end{longtable}
\end{center}

\begin{center}

\footnotesize
    \begin{longtable}{r|l}

\endfirsthead

\multicolumn{2}{c}
{{\bfseries \tablename\ \thetable{} -- continued from previous page}} \\ \hline 
\endhead

\hline \multicolumn{2}{r}{{Continued on next page}} \\ \hline
\endfoot

\endlastfoot

		Index & Warranted polynomials and certificates of warranty \\
	\hline
        \multirow{4}{*}{1} &  
        $(x+5)^{31} (x-9)^{13} (x^4-38x^3+508x^2-2810x+5363)$ \\
        & $(-28491688, 0, 0, -13311, -1663)$ \\
	\cline{2-2}
        & $(x+5)^{31} (x-9)^{13} (x-11) (x^3-27x^2+211x-481)$ \\
        & $(-71206868, 0, 0, -31318, -3131)$ \\
    \hline
    
   \multirow{4}{*}{2} & 
    $(x+5)^{31} (x-9)^{13} (x^4-38x^3+508x^2-2802x+5339)$ \\
        & $(-31466028, 0, 0, -15413, -2201)$ \\
		\cline{2-2}
        &  $(x+5)^{31} (x-9)^{13} (x^4-38x^3+508x^2-2794x+5267)$ \\
        & $(-32393128, 0, 0, -14283, -1428)$ \\
	\hline
	
	  \multirow{4}{*}{3} & 
         $(x+5)^{31} (x-9)^{11} (x^6-56x^5+1273x^4-15008x^3+96523x^2-319816x+423835)$  \\
        & $(3081463054228, 0, 0, 329213615, 23608945, 1311608, 0)$ \\
	\cline{2-2}
        & $(x+5)^{31} (x-9)^{11} (x^2-20x+95) (x^4-36x^3+458x^2-2428x+4485)$ \\
        & $(0, 0, 0, 0, -239073, -107333, -26537)$ \\
    \hline

     \multirow{4}{*}{4} & 
    $(x+5)^{31} (x-9)^{11} (x^6-56x^5+1273x^4-15000x^3+96371x^2-319088x+423571)$  \\
   & $(0, 0, 0, 576862162, 159550736, 25867427, 3614120)$ \\
   	\cline{2-2}
   &  $(x+5)^{31} (x-9)^{11} (x^6-56x^5+1273x^4-15000x^3+96371x^2-319056x+423347)$ \\
   &     $(-900198292671, 0, 0, -70576836, 0, 678324, 136910)$ \\
	\hline
    
     \multirow{4}{*}{5} & 
     $(x+5)^{31} (x-9)^{11} (x^6-56x^5+1273x^4-14992x^3+96139x^2-316952x+417243)$  \\
      &   $(15798865000678, 0, 0, 1810892027, 159441460, 14258992, 1296273)$ \\
	   \cline{2-2}
      & $(x+5)^{31} (x-9)^{11} (x^2-20x+95) (x^4-36x^3+458x^2-2412x+4405)$ \\
      &   $(0, 0, 0, 8229987, 1467053, 0, -42311)$ \\
	\hline
	
	 \multirow{4}{*}{6} &  $(x+5)^{31} (x-9)^{11} (x-11) (x^2-14x+41) (x^3-31x^2+303x-921)$  \\
        
       & $(7723650238, 0, 0, 1795746, 177797, 17779)$ \\
        \cline{2-2} 
       &  $(x+5)^{31} (x-9)^{11} (x-11)^2 (x^2-14x+41) (x^2-20x+83)$ \\
        
       & $(1681764478, 0, 0, 411472, 45720, 5079)$ \\
    \hline 
	
	 \multirow{4}{*}{7} &   $(x+5)^{31} (x-9)^{11} (x-11) (x^5-45x^4+778x^3-6426x^2+25189x-37289)$  \\
        
      &  $(8311896038, 0, 0, 1932968, 192633, 19927)$  \\
        
       \cline{2-2}
		& $(x+5)^{31} (x-9)^{11} (x-11)^2 (x^4-34x^3+404x^2-1974x+3347)$ \\
        
       & $(2513477598, 0, 0, 614905, 68323, 7591)$ \\
    \hline 
	
	 \multirow{4}{*}{8} &  $(x+5)^{31} (x-9)^{10} (x-11)^2 (x^5-43x^4+710x^3-5618x^2+21257x-30675)$  \\
        
       & $(-615702770, 0, 0, -180140, -22209, -2468)$ \\
        
        \cline{2-2}
		
		& $(x+5)^{31} (x-9)^{10} (x-11)^3 (x^4-32x^3+358x^2-1672x+2753)$ \\
        
       & $(-3736443160, 0, 0, -1027960, -114217, -12691)$ \\
    \hline 
    
	 \multirow{4}{*}{9} &   $(x+5)^{31} (x-7) (x-9)^9 (x-11)^2 (x^5-45x^4+782x^3-6534x^2+26113x-39709)$  \\
        
      &  $(1614272396498, 0, 0, 236406108, 23278501, 2290900, 221701)$ \\
        
       \cline{2-2}
		
		& $(x+5)^{31} (x-9)^9 (x-11)^2 (x^6-52x^5+1097x^4-12000x^3+71603x^2-220076x+270467)$ \\
        
        & $(693739600051, 0, 0, 103053553, 10456851, 1071550, 110851)$ \\
    \hline 
    
	 \multirow{4}{*}{10} &  $(x+5)^{31} (x-9)^{10} (x-11) (x-13) (x^5-41x^4+650x^3-4970x^2+18293x-25869)$  \\
        
       & $(0, 0, 0, 0, 28753, 11854, 2194)$ \\
        
       \cline{2-2}
		
		& $(x+5)^{31} (x-9)^{10} (x-11)^2 (x^2-20x+95) (x^3-23x^2+155x-317)$ \\
        
       & $(0, 0, 0, 4052423, 1116206, 173878, 22882)$
    \\
    \hline 
	 
	 \multirow{4}{*}{11} &  $(x+5)^{31} (x-9)^{11} (x-11) (x-13) (x^4-32x^3+362x^2-1696x+2789)$  \\
        
        & $(-5637966054, 0, 0, -1233304, -103494, -8625)$ \\
        
       \cline{2-2}
		
		& $(x+5)^{31} (x-9)^{11} (x-11)^2 (x^2-16x+43) (x^2-18x+73)$ \\
        
       & $(-27025283304, 0, 0, -5833300, -470427, -37635)$ \\
         
         \hline
    
             \caption{Pairs of Seidel-incompatible warranted interlacing characteristic polynomials together with certificates of warranty for each of the polynomials listed in Table~\ref{tab:11polydim17}.}
    \label{tab:11polydim17warrant}
\end{longtable}
\end{center}

\begin{center}

\footnotesize
    \begin{longtable}{r|l}

\endfirsthead

\multicolumn{2}{c}
{{\bfseries \tablename\ \thetable{} -- continued from previous page}} \\ \hline 
\endhead

\hline \multicolumn{2}{r}{{Continued on next page}} \\ \hline
\endfoot

\endlastfoot

	Index & Candidate characteristic polynomial \\
    \hline
    1 & $(x+5)^{32} (x-9)^{12} (x-11) (x^4-41x^3+611x^2-3919x+9140)$ \\
    2 & $(x+5)^{32} (x-9)^{12} (x^5-52x^4+1062x^3-10640x^2+52249x-100508)$ \\
    3 & $(x+5)^{32} (x-7)^2 (x-9)^{11} (x-11)^2 (x^2-25x+152)$ \\
    4 & $(x+5)^{32} (x-9)^{12} (x^5-52x^4+1062x^3-10628x^2+52001x-99248)$ \\
    5 & $(x+5)^{32} (x-9)^{13} (x-11) (x-12) (x^2-20x+83)$ \\
    6 & $(x+5)^{32} (x-9)^{11} (x-11)^2 (x^4-39x^3+551x^2-3321x+7160)$ \\
    7 & $(x+5)^{32} (x-5) (x-8) (x-9)^{10} (x-11)^4 (x-13)$ \\
    8 & $(x+5)^{32} (x-9)^{11} (x-11)^4 (x^2-17x+56)$ \\
	\hline
    
             \caption{List of 8 candidate characteristic polynomials for 49 equiangular lines in $\mathbb R^{17}$.}
    \label{tab:8polydim17}
\end{longtable}
\end{center}

\begin{center}

\footnotesize
    \begin{longtable}{r|l}

\endfirsthead

\multicolumn{2}{c}
{{\bfseries \tablename\ \thetable{} -- continued from previous page}} \\ \hline 
\endhead

\hline \multicolumn{2}{r}{{Continued on next page}} \\ \hline
\endfoot

\endlastfoot

		Index & Warranted polynomial and certificate of warranty \\
	    \hline
    \multirow{2}{*}{1} & $(x+5)^{31} (x-9)^{11} (x-11) (x^5-45x^4+778x^3-6442x^2+25477x-38329)$ \\
    & $(0, 0, 0, 1746419, 491392, 81630, 11528)$ \\
	\hline

    \multirow{2}{*}{2} & $(x+5)^{31} (x-9)^{11} (x^6-56x^5+1273x^4-15000x^3+96339x^2-318544x+421459)$ \\
    & $(0, 0, 0, 2578890, 690376, 105095, 13403)$ \\
	\hline
    
    \multirow{2}{*}{3} & $(x+5)^{31} (x-7) (x-9)^{10} (x-11) (x^5-47x^4+854x^3-7466x^2+31161x-49015)$ \\
    & $(49638392174, 0, 0, 10008058, 946709, 90162)$ \\
	\hline
    
	\multirow{2}{*}{4} & $(x+5)^{31} (x-9)^{11} (x^6-56x^5+1273x^4-14992x^3+96155x^2-317304x+419371)$ \\
    & $(0, 0, 0, 3772987, 1029536, 162478, 21753)$ \\
    \hline
    
	\multirow{2}{*}{5} & $(x+5)^{31} (x-9)^{12} (x^5-47x^4+850x^3-7342x^2+30061x-46531)$ \\
    & $(-30845693350, 0, 0, -6031794, -513344, -42779)$ \\
    \hline
    
	\multirow{2}{*}{6} & $(x+5)^{31} (x-9)^{10} (x-11)^2 (x^5-43x^4+710x^3-5602x^2+21065x-30211)$ \\
    & $(0, 0, 0, 15489527, 4414523, 731381, 103163)$ \\
    \hline
    
	\multirow{2}{*}{7} & $(x+5)^{31} (x-9)^9 (x-11)^3 (x^5-41x^4+646x^3-4870x^2+17497x-23889)$ \\
    & $(-4146865216, 0, 0, -1459666, -205965, -26866)$ \\
    \hline
    
	\multirow{2}{*}{8} & $(x+5)^{31} (x-9)^{10} (x-11)^3 (x^2-14x+37) (x^2-18x+69)$ \\
    & $(13345166, 0, 0, 9135, 610)$ \\
	\hline
    
             \caption{Warranted interlacing characteristic polynomials together with certificates of warranty for each of the polynomials listed in Table~\ref{tab:8polydim17}.}
    \label{tab:8polydim17warrant}
\end{longtable}
\end{center}

\begin{center}

\footnotesize
    \begin{longtable}{r|l}

\endfirsthead

\multicolumn{2}{c}
{{\bfseries \tablename\ \thetable{} -- continued from previous page}} \\ \hline 
\endhead

\hline \multicolumn{2}{r}{{Continued on next page}} \\ \hline
\endfoot

\endlastfoot

		Index & Seidel-compatible interlacing characteristic polynomials and certificate of infeasibility \\
	    \hline
    \multirow{7}{*}{1} & $(x+5)^{31} (x-9)^{11} (x^6-56x^5+1273x^4-15000x^3+96275x^2-317328x+415699)$ \\
    & $(x+5)^{31}(x-7) (x-9)^{11}(x-11)^2 (x^3-27x^2+215x-493)$ \\
    & $(x+5)^{31} (x-9)^{11}(x-11) (x^5-45x^4+778x^3-6442x^2+25477x-38329)$ \\
    & $(x+5)^{31} (x-9)^{12} (x-11)(x^4-36x^3+454x^2-2332x+3929)$ \\
    & $(x+5)^{31} (x-9)^{12} (x^5-47x^4+850x^3-7326x^2+29629x-43715)$ \\
    & $(x+5)^{31} (x-9)^{11} (x-11) (x^2-22x+113) (x^3-23x^2+159x-313)$ \\
    \cline{2-2}
    & $(-733717562599, 0, 0, -88445449, -8599751, -854743, -87218)$ \\
    \hline
    
    \multirow{5}{*}{2}  & $(x+5)^{31} (x-9)^{12} (x^5-47x^4+850x^3-7350x^2+30141x-46283)$ \\
    & $(x+5)^{31} (x-9)^{11} (x^6-56x^5+1273x^4-15000x^3+96339x^2-318544x+421459)$ \\
     & $(x+5)^{31} (x-9)^{12} (x^5-47x^4+850x^3-7342x^2+29981x-45651)$ \\
      & $(x+5)^{31} (x-9)^{12} (x^5-47x^4+850x^3-7318x^2+29437x-42539)$ \\
       \cline{2-2}
    & $(1986879326641, 0, 0, 183663110, 8328599, 0, -81386)$ \\
	\hline
    
    \multirow{8}{*}{3} & $(x+5)^{31}(x-7) (x-9)^{11} (x-11)^2 (x^3-27x^2+215x-493)$ \\
    & $(x+5)^{31} (x-7) (x-9)^{10} (x-11) (x^5-47x^4+854x^3-7466x^2+31161x-49015)$ \\
    & $(x+5)^{31}(x-7) (x-9)^{12} (x-11) (x^3-29x^2+251x-591)$ \\
    & $(x+5)^{31} (x-7)(x-9)^{10} (x-11)^2(x^4-36x^3+458x^2-2420x+4397)$ \\
    & $(x+5)^{31} (x-7)(x-9)^{10} (x-11)^2(x^4-36x^3+458x^2-2420x+4429)$ \\
    & $(x+5)^{31} (x-7)(x-9)^{11} (x-11) (x^2-24x+139) (x^2-14x+37)$ \\
    & $(x+5)^{31}(x-7) (x-9)^{11} (x-11)^2 (x^3-27x^2+215x-445)$ \\
     \cline{2-2}
    & $(0, 0, 0, 58308, 28387, 9164)$ \\
    \newpage
	\multirow{7}{*}{4} & $(x+5)^{31} (x-9)^{11} (x^6-56x^5+1273x^4-14992x^3+96155x^2-317304x+419371)$ \\
	 & $(x+5)^{31}  (x-9)^{12} (x^5-47x^4+850x^3-7334x^2+29869x-45291)$ \\
	  & $(x+5)^{31} (x-9)^{13}(x^4-38x^3+508x^2-2754x+4843)$ \\
	  & $(x+5)^{31}  (x-9)^{12} (x^5-47x^4+850x^3-7310x^2+29277x-41779)$ \\
	  & $(x+5)^{31}  (x-9)^{12} (x^5-47x^4+850x^3-7310x^2+29341x-42483)$ \\
	  & $(x+5)^{31} (x-9)^{12}(x-13)(x^4-34x^3+408x^2-1990x+3119)$ \\
	  \cline{2-2}
    & $(877829032285, 0, 0, 0, -19643059, -4005362, -619899)$ \\
    \hline
    
	\multirow{4}{*}{5} & $(x+5)^{31} (x-9)^{12} (x^5-47x^4+850x^3-7342x^2+30061x-46531)$ \\
	& $(x+5)^{31} (x-9)^{12}(x-11)(x^4-36x^3+454x^2-2316x+3881)$ \\
	 & $(x+5)^{31} (x-3)(x-9)^{12} (x-11)^2 (x^2-22x+113)$ \\
	 \cline{2-2}
    & $(0, 0, 0, 17473, 5644, 889)$ \\
    \hline
    
	\multirow{10}{*}{6} & $(x+5)^{31} (x-9)^{10} (x-11)^2 (x^5-43x^4+710x^3-5602x^2+21049x-30131)$ \\
	& $(x+5)^{31} (x-9)^{10} (x-11)^2 (x^5-43x^4+710x^3-5602x^2+21065x-30211)$ \\
	& $(x+5)^{31}(x-7) (x-9)^{10} (x-11)^3 (x^3-25x^2+183x-367)$ \\
	& $(x+5)^{31}(x-9)^{11} (x-11)^2(x-13) (x^3-21x^2+131x-247)$ \\
	& $(x+5)^{31}(x-9)^{10} (x-11)^2(x^2 -18x+73) (x^3-25x^2+187x-395)$ \\
	& $(x+5)^{31} (x-9)^{10} (x-11)^2 (x^5-43x^4+710x^3-5578x^2+20553x-27611)$ \\
	 & $(x+5)^{31} (x-9)^{11}(x-11)^2(x^4-34x^3+404x^2-1934x+2939)$ \\
	 & $(x+5)^{31} (x-9)^{11}(x-11)^2(x^4-34x^3+404x^2-1926x+2931)$ \\
	 & $(x+5)^{31}(x-9)^{11} (x-11)^3(x^3-23x^2+151x-241)$ \\
	  \cline{2-2}
    & $(0, 0, 0, 0, 39830, 17496, 3881)$ \\
    \hline
    
	\multirow{4}{*}{7} & $(x+5)^{31} (x-9)^9 (x-11)^3 (x^5-41x^4+646x^3-4870x^2+17497x-23889)$ \\
	& $(x+5)^{31}(x-5)(x-9)^{9}(x-11)^4 (x-11)^3(x^3-25x^2+191x-431)$ \\
	& $(x+5)^{31} (x-9)^{10}(x-11)^3(x^4-32x^3+358x^2-1624x+2449)$ \\
	\cline{2-2}
    & $(0, 0, 0, -312193, -119497, -23881)$ \\
    \hline
    
	\multirow{2}{*}{8} & $(x+5)^{31} (x-9)^{10} (x-11)^3 (x^2-14x+37) (x^2-18x+69)$ \\
    & $(-11430609, 0, 0, -7006, 0)$ \\
	\hline
    
             \caption{Lists of Seidel-compatible interlacing characteristic polynomials together with certificates of infeasibility for each polynomial in 
             Table~\ref{tab:8polydim17} with respect to its warranted polynomial in Table~\ref{tab:8polydim17warrant}.}
    \label{tab:8polydim17compat}
\end{longtable}
\end{center}

\end{document}